\tikzstyle{VertexStyle} = [shape = circle, draw, fill]
\tikzset{pre/.style={-}}    %hide all loop arrow heads
\tikzstyle{every node}=[circle, inner sep=0pt, minimum width=4pt]
\newtheorem{theorem}{Theorem}[section]
\newtheorem{lemma}[theorem]{Lemma}
\newtheorem{corollary}[theorem]{Corollary}
\newtheorem{proposition}[theorem]{Proposition}
\newtheorem{sublemma}{}[theorem]
\newtheorem{conjecture}[theorem]{Conjecture}
\theoremstyle{definition}
\newtheorem{definition}{Definition}[section]
\newcommand{\dY}{$\Delta$-$Y$~exchange}
\newcommand{\Yd}{$Y$-$\Delta$~exchange}
\newcommand{\ba}{\backslash}
\DeclareMathOperator{\cl}{cl}
\DeclareMathOperator{\fcl}{fcl}
\DeclareMathOperator{\co}{co}
\DeclareMathOperator{\si}{si}
\newcommand{\cocl}{\cl^*}
\newcommand{\seq}[1]{[#1]}
\newcommand{\unfortunate}{$N$-grounded}
\newcommand{\pspider}{elongated-quad $3$-separator}
\newcommand{\spider}{double-quad $3$-separator}
\newcommand{\twisted}{skew-whiff $3$-separator}
\newcommand{\spikelike}{spike-like $3$-separator}
\newcommand{\tvamoslike}{twisted-cube-like $3$-separator}
\newcommand{\aug}{augmented}
\newcommand{\auging}{augmentation}
\newcommand{\planespider}{\aug\ quad $3$-separator}
\newcommand{\psep}{particular $3$-separator}
\newcommand{\Psep}{Particular $3$-separator}
\newcommand{\augsep}{\aug\ $3$-separator}
\newcommand{\prob}{problematic}
\newcommand{\tcn}{nest of twisted cubes}
\newcommand{\quadflower}{quad-flower}
\newenvironment{slproof}[1][Subproof]{\begin{proof}[#1]}{\end{proof}}
\crefname{enumi}{}{}
\crefname{sublemma}{}{}
\Crefname{sublemma}{Claim}{Claims}
\begin{document}

\title[$N$-detachable pairs III: the theorem]{$N$-detachable pairs in $3$-connected matroids III: the theorem}

\thanks{The authors were supported by the New Zealand Marsden Fund.}

\author{Nick Brettell \and Geoff Whittle \and Alan Williams}
\address{School of Mathematics and Statistics, Victoria University of Wellington, New Zealand}
\email{nbrettell@gmail.com}
\email{geoff.whittle@vuw.ac.nz}
\email{ayedwilliams@gmail.com}

\keywords{$3$-connected matroid, Splitter Theorem, simple $3$-connected graph, Wheels and Whirls Theorem}

\subjclass{05B35}
\date{\today}

\maketitle

\begin{abstract}
  Let $M$ be a $3$-connected matroid, and let $N$ be a $3$-connected minor of $M$.
  A pair $\{x_1,x_2\} \subseteq E(M)$ is \emph{$N$-detachable} if one of the matroids $M/x_1/x_2$ or $M \backslash x_1 \backslash x_2$ is $3$-connected and has an $N$-minor.
  This is the third and final paper in a series where we prove that if $|E(M)|-|E(N)| \ge 10$, then either $M$ has an $N$-detachable pair after possibly performing a single $\Delta$-$Y$ or $Y$-$\Delta$ exchange, or $M$ is essentially $N$ with a spike attached.
  Moreover, we describe the additional structures that arise if we require only that $|E(M)|-|E(N)| \ge 5$.
\end{abstract}

\pagenumbering{arabic}

\section{Introduction}

Let $M$ be a $3$-connected matroid, and let $N$ be a $3$-connected minor of $M$.
We say that a matroid \emph{has an $N$-minor} if it has an isomorphic copy of $N$ as a minor.
A pair $\{x_1,x_2\} \subseteq E(M)$ is \emph{$N$-detachable} if either 
\begin{enumerate}[label=\rm(\alph*)]
  \item $M/x_1/x_2$ is $3$-connected and has an $N$-minor, or
  \item $M \ba x_1 \ba x_2$ is $3$-connected and has an $N$-minor.
\end{enumerate}

This is the third paper in a series~\cite{paper1,paper2} where we describe the structures that arise when it is not possible to find an $N$-detachable pair in $M$.
We prove the following theorem:

\begin{theorem}
    \label{maintheoremdetailed}
    Let $M$ be a $3$-connected matroid with $|E(M)| \ge 13$, and let $N$ be a $3$-connected minor of $M$ such that $|E(N)| \ge 4$, and $|E(M)|-|E(N)| \ge 5$.
    Then either
    \begin{enumerate}
      \item $M$ has an $N$-detachable pair, %or
      \item there is a matroid $M'$ obtained by performing a single $\Delta$-$Y$ or $Y$-$\Delta$ exchange on $M$ such that $M'$ has %an $N$-minor and
        an $N$-detachable pair, or
      \item there exists $P \subseteq E(M)$ such that at most one element of $E(M)-E(N)$ is not in $P$, and $P$ is either 
        \begin{enumerate}[label=\rm(\alph*)]
          \item an augmented quad $3$-separator, wherein $|E(M)|-|E(N)| = 5$,
          \item a \tvamoslike\ of $M$ or $M^*$, in which case $|E(M)|-|E(N)| = 5$,
          \item a \twisted, in which case $|E(M)|-|E(N)| \le 7$, 
          \item an \pspider, wherein $|E(M)|-|E(N)| \le 7$,
          \item a \spider, in which case $|E(M)|-|E(N)| \le 9$, or
          \item a \spikelike. %\ of $M$ or $M^*$,
        \end{enumerate}
    \end{enumerate}
\end{theorem}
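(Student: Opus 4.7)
The plan is to prove the theorem by contradiction, assembling the structural analyses carried out in Papers~I and~II~\cite{paper1,paper2}. Suppose that $M$ has no $N$-detachable pair and that no such pair is produced by a single $\dY$ or $\Yd$. I then need to produce a subset $P \subseteq E(M)$ of one of the six named types, verify the associated size inequality on $|E(M)| - |E(N)|$, and establish the ``at most one escaped element'' condition under $|E(M)| \ge 13$.

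First I would apply the reduction theorems of Papers~I and~II to the elements of $E(M) - E(N)$: for each such element $x$, those results output either an $N$-detachable pair containing $x$ (possibly after a $\dY$ or $\Yd$), which is excluded by assumption, or a local $3$-separator certifying the obstruction to $x$ being in such a pair. I thus obtain a family of local witnesses indexed by $E(M) - E(N)$, and merge them into a single maximal $3$-separator $P$ using the standard exact $3$-separation technology---taking fclosures, moving guts and coguts elements across the separation, and uncrossing overlapping $3$-separators. A case analysis on the type of $P$ then reads off the arithmetic bound from its internal structure: a \planespider\ or a \tvamoslike\ is rigid enough to force $|E(M)| - |E(N)| = 5$; a \twisted\ or an \pspider\ yields the bound $7$; a \spider\ yields $9$; and only a \spikelike\ admits unbounded length. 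Finally, to establish the escaped-element clause under $|E(M)| \ge 13$, I would show that if two elements of $E(M) - E(N)$ lay outside $P$, then the matroid is large enough to pair one such escapee with an interior element of $P$ to form a forbidden detachable pair, contradicting the hypothesis.

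I expect the main obstacle to be the uncrossing step and the escaped-element bookkeeping. The uncrossing is delicate because the six named $3$-separator types are not closed under the natural merging operations, so substantial care is needed to show that the combined obstruction still lies in one of the six named families, rather than in some larger unnamed class; in particular, rolling a second local witness into an existing \twisted\ or \spider\ could in principle escalate it out of the list, and ruling this out requires a careful inventory of how two overlapping local obstructions can interact. The escaped-element bookkeeping is a second source of difficulty: each type has its own notion of ``interior'' and its own list of forbidden extensions, so the pairing argument producing a detachable pair must be reproduced in six variants. The \spider\ case, where the bound $9$ already leaves very little slack under $|E(M)| \ge 13$, is likely to be the tightest, and the threshold $13$ is presumably calibrated precisely so that the pigeonhole step succeeds uniformly across all six types.
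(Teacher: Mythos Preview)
Your proposal mischaracterises what Papers~I and~II actually supply. Their main structural theorem (\cref{usefulprecursor}) does not take as input an arbitrary element $x \in E(M)-E(N)$ and return a local $3$-separator witness for $x$; it takes as input a specific cyclic $3$-separation $(Y,\{d'\},Z)$ of some $M\backslash d$ with $|Y\cap E(N)|\le 1$, and outputs one of the named $3$-separators. There is no family of local witnesses indexed by $E(M)-E(N)$ to merge, so the uncrossing step you describe never gets off the ground.

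The paper's proof has a different architecture. After reducing to the case where every triangle and triad is $N$-grounded (this is where the single $\Delta$-$Y$ or $Y$-$\Delta$ is spent, via \cref{unfortunatetri}), the work splits along \cref{step}: either one can manufacture the cyclic $3$-separation that \cref{usefulprecursor} needs, or one lands in a residual case where, for every switching-equivalent $N$-labelling, each $N$-contractible pair lies in a $4$-element circuit and each $N$-deletable pair lies in a $4$-element cocircuit. This residual case is the genuine new content of the present paper (\cref{sechard}, culminating in \cref{weaktheoremdetailed}); it is a long direct analysis of how these forced $4$-circuits and $4$-cocircuits must interlock, and it is what your plan is missing entirely. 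The ``escaped element'' clause is also not handled by pairing an escapee with an interior element of $P$: it is \cref{problematiclemma}, which shows that any $N$-labelled element outside $P$ forces a vertical or cyclic $3$-separation whose analysis (via \cref{doublylabel3}) produces a doubly $N$-labelled element outside $P$, and that in turn yields a detachable pair via Bixby's Lemma and \cref{probconnmain}.
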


\noindent
The $3$-separators that appear in \cref{maintheoremdetailed}(iii) are defined momentarily, while $\Delta$-$Y$ and $Y$-$\Delta$ exchange are defined in \cref{secprelims}.

By restricting attention to the case where $|E(M)|-|E(N)| \ge 10$, \cref{maintheoremdetailed} implies the theorem stated in the first paper of the series \cite[Theorem~1.1]{paper1}.
The motivation for proving such a theorem comes from a desire to find exact excluded-minor characterisations of certain minor-closed classes of representable matroids (for more details, see the introduction to \cite{paper1}).

The structure of this paper is as follows.  In \cref{secprelims}, we review relevant preliminaries regarding connectivity and keeping an $N$-minor, and results from the first two papers in the series. %~\cite{paper1,paper2}.
In \cref{secsetup} we lay the ground work towards a proof of \cref{maintheoremdetailed}.
The main hurdle still to overcome is addressed in \cref{sechard}; there, we consider the case where every pair whose deletion or contraction keeps an $N$-minor results in a non-trivial series or parallel class, respectively.
Throughout the first two papers, we have built up a collection of \psep s that can appear in a matroid with no $N$-detachable pairs.
In \cref{secpseps}, we show that when $M$ has one of these particular $3$-separators, $P$, and no $N$-detachable pairs, then at most one element of $M$ is not in $E(N) \cup P$.
Finally, we put everything together in \cref{finalendgamesec} to prove \cref{usefulone}, which implies \cref{maintheoremdetailed}.

We close this series of papers with some final remarks in \cref{secend}.
There, we address the natural question of what additional structures might arise if we do not allow the single $\Delta$-$Y$ or $Y$-$\Delta$ exchange.
We also present an analogue of \cref{maintheoremdetailed} for graphic matroids, and discuss the implications for simple $3$-connected graphs.

\subsection*{\Psep s}
\label{secobstrs}

\begin{figure}[bp]
  %\begin{subfigure}{0.49\textwidth}
    \centering
    \begin{tikzpicture}[rotate=90,scale=0.8,line width=1pt]
      \tikzset{VertexStyle/.append style = {minimum height=5,minimum width=5}}
      \clip (-2.5,-6) rectangle (3.0,2);
      \node at (-1,-1.4) {\large$E-P$};
      \draw (0,0) .. controls (-3,2) and (-3.5,-2) .. (0,-4);
      \draw (0,0) -- (2.5,0.5);
      \draw (0,0) -- (2.25,-0.75);
      \draw (0,0) -- (2,-2);
      \draw (0,0) -- (1,-3);

      \SetVertexNoLabel
      \Vertex[x=1.25,y=0.25,LabelOut=true,L=$q_3$,Lpos=180]{c1}
      \Vertex[x=2.25,y=-0.75,LabelOut=true,L=$q_2$,Lpos=90]{c2}
      \Vertex[x=2.5,y=0.5,LabelOut=true,L=$q_1$,Lpos=180]{c3}
      \Vertex[x=1.5,y=-0.5,LabelOut=true,L=$q_4$,Lpos=135]{c4}

      \Vertex[x=1,y=-1,LabelOut=true,L=$q_1$,Lpos=180]{c5}
      \Vertex[x=2,y=-2,LabelOut=true,L=$q_1$,Lpos=180]{c6}

      \Vertex[x=1,y=-3,LabelOut=true,L=$q_4$,Lpos=135]{c7}
      \Vertex[x=0.67,y=-2,LabelOut=true,L=$q_4$,Lpos=135]{c8}

      \draw (0,0) -- (0,-4);

      %\SetVertexNoLabel
      %\tikzset{VertexStyle/.append style = {shape=rectangle,fill=white}}
      %\Vertex[x=0,y=0]{a1}
    \end{tikzpicture}
    \caption{An example of a \spikelike~$P$, %\ with $t=4$,
    which can appear in a matroid $M$ that does not contain any $N$-detachable pairs, even when $|E(M)|-|E(N)| > 9$.}
    \label{figspikelike}
  %\end{subfigure}
\end{figure}
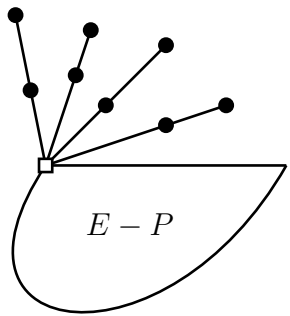

Let $M$ be a matroid with ground set~$E$.
We say that a $4$-element set $Q \subseteq E$ is a \emph{quad} if it is both a circuit and a cocircuit of $M$.
We can also view $Q$ as a \psep\ that can give rise to matroids with no $N$-detachable pairs.
For this reason, we also sometimes refer to $Q$ as a \emph{quad $3$-separator}.

\begin{definition}
\label{def-spike-like3}
Let $P \subseteq E$ be a $2t$-element, rank-$(t+1)$, corank-$(t+1)$, exactly $3$-separating set of $M$, for some $t \ge 3$.
If there exists a partition $\{L_1,\dotsc,L_t\}$ of $P$ such that 
\begin{enumerate}[label=\rm(\alph*)]
  \item $|L_i|=2$ for each $i\in \{1,2,\dotsc,t\}$, and
  \item $L_i\cup L_j$ is a quad for all distinct $i,j\in \{1,2,\dotsc,t\}$, %and
\end{enumerate}
then $P$ is a \emph{\spikelike} of $M$.
For each $i \in \{1,\dotsc,t\}$, we say $L_i$ is a \emph{leg} of $P$.
\end{definition}

\begin{definition}
\label{def-twisted3}
  Let $P \subseteq E$ be a $6$-element, rank-$4$, corank-$4$, exactly 3-separating set of $M$. If there exists a labelling $\{s_1,s_2,t_1,t_2,u_1,u_2\}$ of $P$ such that
  \begin{enumerate}[label=\rm(\alph*)]
    \item $\{s_1,s_2,t_2,u_1\}$, $\{s_1,t_1,t_2,u_2\}$, and $\{s_2,t_1,u_1,u_2\}$ are the circuits of $M$ contained in $P$, and
    \item $\{s_1,s_2,t_1,t_2\}$, $\{s_1,s_2,u_1,u_2\}$, and $\{t_1,t_2,u_1,u_2\}$ are the cocircuits of $M$ contained in $P$, 
  \end{enumerate}
  then $P$ is a \emph{\twisted} of $M$.
\end{definition}

\begin{definition}
\label{def-pspider3}
  Let $P \subseteq E$ be a $6$-element, rank-$4$, corank-$4$, exactly $3$-separating set such that $P = Q \cup \{p_1,p_2\}$ and $Q$ is a quad.  If there exists a labelling $\{q_1,q_2,q_3,q_4\}$ of $Q$ such that
  \begin{enumerate}[label=\rm(\alph*)]
    \item $\{p_1,p_2,q_1,q_2\}$, $\{p_1,p_2,q_3,q_4\}$, and $Q$ are the circuits of $M$ contained in $P$, and
    \item $\{p_1,p_2,q_1,q_3\}$, $\{p_1,p_2,q_2,q_4\}$, and $Q$ are the cocircuits of $M$ contained in $P$,
  \end{enumerate}
  then $P$ is an \emph{\pspider} of $M$. 
\end{definition}

\begin{definition}
  \label{def-spider-like3}
  Let $P \subseteq E$ be a rank-$5$, corank-$5$, exactly $3$-separating set such that $P = Q_1 \cup Q_2$ where $Q_1$ and $Q_2$ are disjoint quads of $M$.
  If there exist labellings $\{p_1,p_2,p_3,p_4\}$ of $Q_1$, and $\{q_1,q_2,q_3,q_4\}$ of $Q_2$ such that
  \begin{enumerate}[label=\rm(\alph*)]
    \item $\{p_1,p_2,q_1,q_2\}, \{p_1,p_2,q_3,q_4\}, \{p_3,p_4,q_1,q_2\}$, $\{p_3,p_4,q_3,q_4\}$, $Q_1$, and $Q_2$ are the circuits of $M$ contained in $P$, and
    \item $\{p_1,p_3,q_1,q_3\}, \{p_1,p_3,q_2,q_4\}, \{p_2,p_4,q_1,q_3\}$, $\{p_2,p_4,q_2,q_4\}$, $Q_1$, and $Q_2$ are the cocircuits of $M$ contained in $P$, 
  \end{enumerate}
  then $P$ is a \emph{\spider} with {\em associated partition} $\{Q_1,Q_2\}$.
\end{definition}

\begin{figure}[bp]
  \begin{subfigure}{0.49\textwidth}
    \centering
    \begin{tikzpicture}[rotate=90,scale=0.8,line width=1pt]
      \tikzset{VertexStyle/.append style = {minimum height=5,minimum width=5}}
      \clip (-2.5,2) rectangle (3.0,-6);
      \node at (-1,-1.8) {\large$E-Q$};
      \draw (0,-1) .. controls (-2.5,1.5) and (-3.0,-2.5) .. (0,-4);
      \draw (0,-1) -- (2,-2.5) -- (0,-4);
      \draw (1,-1.75) -- (0,-4);
      \draw (1,-3.25) -- (0,-1);

      \draw (0,-1) -- (0,-4);

      \SetVertexNoLabel

      \Vertex[x=2,y=-2.5]{a2}
      \Vertex[x=0.68,y=-2.5]{a3}
      \Vertex[x=1,y=-3.25]{a4}
      \Vertex[x=1,y=-1.75]{a5}

      \tikzset{VertexStyle/.append style = {shape=rectangle,fill=white}}
      %\Vertex[x=0,y=-1]{a6}
      %\Vertex[x=0,y=-4]{a7}
    \end{tikzpicture}
    \caption{A quad $3$-separator~$Q$.}
  \end{subfigure}
  \begin{subfigure}{0.49\textwidth}
    \centering
    \begin{tikzpicture}[rotate=90,scale=0.64,line width=1pt]
      \tikzset{VertexStyle/.append style = {minimum height=5,minimum width=5}}
      \clip (-2.5,-6) rectangle (4.4,2);
      \node at (-1,-1.4) {\large$E-P$};
      \draw (0,0) .. controls (-3,2) and (-3.5,-2) .. (0,-4);
      \draw (0,0) -- (4.0,0.9);
      \draw (0,-2) -- (2.5,-2.2); 
      \draw (0,-4) -- (3.8,-4.9); 

      \Vertex[x=3.0,y=0.67,LabelOut=true,Lpos=180,L=$s_1$]{a2}
      \Vertex[x=2.0,y=0.45,LabelOut=true,Lpos=180,L=$s_2$]{a3}

      \Vertex[x=2.5,y=-2.2,LabelOut=true,Lpos=90,L=$t_2$]{b1}
      \Vertex[x=0.64,y=-2.056,LabelOut=true,Lpos=-45,L=$t_1$]{b2}

      \Vertex[x=3.8,y=-4.9,LabelOut=true,L=$u_1$]{c1}
      \Vertex[x=2.8,y=-4.67,LabelOut=true,L=$u_2$]{c2}

      \draw[dashed] (3.8,-4.9) .. controls (2.0,-2) .. (4.0,0.9);
      \draw[dashed] (2.8,-4.67) .. controls (1.0,-2) .. (3.0,0.67);
      \draw[dashed] (1.8,-4.45) .. controls (0.25,-2) .. (2.0,0.45);

      \draw (0,0) -- (0,-4);

      \SetVertexNoLabel
      \tikzset{VertexStyle/.append style = {shape=rectangle,fill=white}}
      \Vertex[x=4.0,y=0.9]{a1}
      \Vertex[x=1.5,y=-2.12]{b3}
      \Vertex[x=1.8,y=-4.45]{c3}

      %\Vertex[x=0,y=0]{d1}
      %\Vertex[x=0,y=-2]{d2}
      %\Vertex[x=0,y=-4]{d3}
    \end{tikzpicture}
    \caption{A \twisted.}
  \end{subfigure}
  \begin{subfigure}{0.49\textwidth}
    \centering
    \begin{tikzpicture}[rotate=90,scale=0.8,line width=1pt]
      \tikzset{VertexStyle/.append style = {minimum height=5,minimum width=5}}
      \clip (-2.5,2) rectangle (3.0,-6);
      \node at (-1,-1.4) {\large$E-P$};
      \draw (0,0) .. controls (-3,2) and (-3.5,-2) .. (0,-4);
      \draw (0,0) -- (2,-2) -- (0,-4);
      \draw (0,0) -- (2.5,0.5) -- (2,-2);
      \draw (0,0) -- (2.25,-0.75);
      \draw (2,-2) -- (1.25,0.25);

      \Vertex[x=1.25,y=0.25,LabelOut=true,L=$q_3$,Lpos=180]{c1}
      \Vertex[x=2.25,y=-0.75,LabelOut=true,L=$q_2$,Lpos=90]{c2}
      \Vertex[x=2.5,y=0.5,LabelOut=true,L=$q_1$,Lpos=180]{c3}
      \Vertex[x=1.5,y=-0.5,LabelOut=true,L=$q_4$,Lpos=135]{c4}
      \Vertex[x=1.33,y=-2.67,LabelOut=true,L=$p_1$,Lpos=45]{c5}
      \Vertex[x=0.67,y=-3.33,LabelOut=true,L=$p_2$,Lpos=45]{c6}

      \draw (0,0) -- (0,-4);

      \SetVertexNoLabel
      \tikzset{VertexStyle/.append style = {shape=rectangle,fill=white}}
      %\Vertex[x=0,y=0]{a1}
      %\Vertex[x=0,y=-4]{a2}
    \end{tikzpicture}
    \caption{An \pspider.}
  \end{subfigure}
  \begin{subfigure}{0.49\textwidth}
    \centering
    \begin{tikzpicture}[rotate=90,scale=0.8,line width=1pt]
      \tikzset{VertexStyle/.append style = {minimum height=5,minimum width=5}}
      \clip (-2.5,-6) rectangle (3.0,2);
      \node at (-1,-1.4) {\large$E-P$};
      \draw (0,0) .. controls (-3,2) and (-3.5,-2) .. (0,-4);

      \draw (0,0) -- (2,-2) -- (0,-4);

      \draw (0,0) -- (2.5,0.5) -- (2,-2);
      \draw (0,0) -- (2.25,-0.75);
      \draw (2,-2) -- (1.25,0.25);

      \draw (0,-4) -- (2.5,-4.5) -- (2,-2);
      \draw (0,-4) -- (2.25,-3.25);
      \draw (2,-2) -- (1.25,-4.25);

      \Vertex[x=1.25,y=0.25,LabelOut=true,L=$q_3$,Lpos=180]{c1}
      \Vertex[x=2.25,y=-0.75,LabelOut=true,L=$q_2$,Lpos=90]{c2}
      \Vertex[x=2.5,y=0.5,LabelOut=true,L=$q_1$,Lpos=180]{c3}
      \Vertex[x=1.5,y=-0.5,LabelOut=true,L=$q_4$,Lpos=135]{c4}

      \Vertex[x=1.25,y=-4.25,LabelOut=true,L=$p_3$]{c1}
      \Vertex[x=2.25,y=-3.25,LabelOut=true,L=$p_2$,Lpos=90]{c2}
      \Vertex[x=2.5,y=-4.5,LabelOut=true,L=$p_1$]{c3}
      \Vertex[x=1.5,y=-3.5,LabelOut=true,L=$p_4$,Lpos=45]{c4}

      \draw (0,0) -- (0,-4);

      \SetVertexNoLabel
      \tikzset{VertexStyle/.append style = {shape=rectangle,fill=white}}
      %\Vertex[x=0,y=0]{a1}
      %\Vertex[x=0,y=-4]{a2}
    \end{tikzpicture}
    \caption{A \spider.}
  \end{subfigure}
  \begin{subfigure}{0.49\textwidth}
    \centering
    \begin{tikzpicture}[rotate=90,xscale=1.1,yscale=0.55,line width=1pt]
      \tikzset{VertexStyle/.append style = {minimum height=5,minimum width=5}}
      \clip (-1.5,2) rectangle (2.7,-6);
      \node at (-0.6,-1.4) {$E-P$};
      \draw (0,0) .. controls (-1.6,2) and (-2,-2) .. (0,-4);

      \draw (1.2,1) -- (0.8,-1);
      \draw (0,-4) -- (1.2,-3);

      \draw (1.2,1) -- (2.2,-1) -- (1.2,-3);
      \draw (2.2,-1) -- (2.0,-3);

      \draw (1.2,1) -- (1.2,-3);

      %\draw[dotted] (1.2,1) .. controls (-0.4,-0.3333) and (-0.4,-4.3333) .. (1.2,-3);

      \draw[white,line width=5pt] (0.8,-1) -- (2.0,-3) -- (0.8,-5);
      \draw[white,line width=5pt] (0.8,-1) -- (0.8,-5);
      \draw[white,line width=5pt] (0.8,-1) -- (0,0);
      %\draw[white,line width=5pt] (0.8,-1) .. controls (-0.2666,0.2222) and (-0.2666,-3.7777) .. (0.8,-5);
      \draw (0.8,-1) -- (2.0,-3) -- (0.8,-5);
      \draw (0.8,-1) -- (0.8,-5);
      \draw (1.2,-3) -- (0.8,-5) -- (0,-4);
      \draw (0.8,-1) -- (0,0);
      \draw (0,0) -- (0,-4);
      \draw (0,0) -- (1.2,1);

      %\draw[dashed] (0.8,-1) .. controls (-0.2666,0.2222) and (-0.2666,-3.7777) .. (0.8,-5);

      \Vertex[x=1.2,y=1,LabelOut=true,L=$q_2$,Lpos=180]{c1}
      \Vertex[x=1.2,y=-3,LabelOut=true,L=$p_2$,Lpos=225]{c4}
      \Vertex[x=0.8,y=-1,LabelOut=true,L=$q_1$,Lpos=-45]{c5}
      \Vertex[x=0.8,y=-5,LabelOut=true,L=$p_1$,Lpos=0]{c6}
      \Vertex[x=2.0,y=-3,LabelOut=true,L=$s_1$,Lpos=45]{c5}
      \Vertex[x=2.2,y=-1,LabelOut=true,L=$s_2$,Lpos=45]{c6}

      %\SetVertexNoLabel
      %\tikzset{VertexStyle/.append style = {fill=white}}
      %%\Vertex[x=0,y=0]{v0}
      %%\Vertex[x=0,y=-4]{v1}
      %\tikzset{VertexStyle/.append style = {shape=rectangle}}
      %\Vertex[x=0,y=-2]{v2}
    \end{tikzpicture}
    \caption{A \tvamoslike\ of $M$.}
    \label{tvamossubfig}
  \end{subfigure}
  \begin{subfigure}{0.50\textwidth}
    \centering
    \begin{tikzpicture}[rotate=90,scale=0.59,line width=1pt]
      \tikzset{VertexStyle/.append style = {minimum height=5,minimum width=5}}
      \clip (-2.78,-6) rectangle (5.0,2);
      \node at (-1,-1.4) {$E-P$};
      \draw (0,0) .. controls (-3,2) and (-3.5,-2) .. (0,-4);
      \draw (0,0) -- (4.0,0.9);
      \draw (0,-2) -- (2.5,-2.2); 
      \draw (0,-4) -- (3.8,-4.9); 

      \Vertex[x=2.0,y=0.45,LabelOut=true,Lpos=180,L=$p_2$]{a3}

      \Vertex[x=2.5,y=-2.2,LabelOut=true,Lpos=90,L=$q_1$]{b1}
      \Vertex[x=0.61,y=-2.05,LabelOut=true,Lpos=35,L=$q_2$]{b2}

      \Vertex[x=3.00,y=-4.7,LabelOut=true,L=$s_1$]{c1}
      \Vertex[x=2.15,y=-4.5,LabelOut=true,L=$s_2$]{c2}

      \Vertex[x=4.0,y=0.9,LabelOut=true,Lpos=180,L=$p_1$]{a1}

      \draw[dashed] (3.8,-4.9) .. controls (2.0,-2) .. (4.0,0.9);
      \draw[dashed] (1.3,-4.3) .. controls (0.25,-2) .. (2.0,0.45);

      \draw (0,0) -- (0,-4);

      \SetVertexNoLabel
      \tikzset{VertexStyle/.append style = {shape=rectangle,fill=white}}
      \Vertex[x=3.8,y=-4.9]{c1}
      \Vertex[x=1.3,y=-4.3]{c3}

      %\Vertex[x=0,y=0]{d1}
      %\Vertex[x=0,y=-2]{d2}
      %\Vertex[x=0,y=-4]{d3}
    \end{tikzpicture}
    \caption{A \tvamoslike\ of $M^*$.}
  \end{subfigure}
  %\caption{\Psep s that can appear in a matroid $M$ with no $N$-detachable pairs, and $5 \le |E(M)|-|E(N)| \le 9$.}
  \caption{A quad $3$-separator~$Q$, and each remaining \psep~$P$ that can appear in a matroid $M$ with no $N$-detachable pairs, where $|E(M)|-|E(N)| \ge 5$} % (but not in a matroid $M$ with no $N$-detachable pairs, where $|E(M)|-|E(N)| > 9$).}
  \label{figobstrs}
\end{figure}

  \begin{definition}
  Let $P \subseteq E$ be a rank-$4$, corank-$4$, exactly $3$-separating set with $P=\{p_1,p_2,q_1,q_2,s_1,s_2\}$.
  Suppose that
  \begin{enumerate}[label=\rm(\alph*)]
    \item $\{p_1,p_2,s_1,s_2\}$, $\{q_1,q_2,s_1,s_2\}$, and $\{p_1,p_2,q_1,q_2\}$ are the circuits of $M$ contained in $P$, and
    \item $\{p_1,q_1,s_1,s_2\}$, $\{p_2,q_2,s_1,s_2\}$, $\{p_1,p_2,q_1,q_2,s_1\}$, and $\{p_1,p_2,q_1,q_2,s_2\}$ are the cocircuits of $M$ contained in $P$.
  \end{enumerate}
  Then $P$ is a \emph{\tvamoslike} of $M$. 
  \end{definition}

  Let $M$ be a $3$-connected matroid with a $3$-connected minor~$N$ such that $|E(M)|-|E(N)| \ge 5$, and $M$ has no $N$-detachable pairs.
  For any \psep~$P$ as just defined, $P$ can appear in $M$, and have the property that at most one element of $M$ is not in $E(N) \cup P$.
Examples of such matroids with $E(M) = E(N) \cup P$ were given in \cite{paper1,paper2}.
There also exist such $M$, $N$, and $P$ where %$E(N) \cup P \subsetneqq E(M)$.
a single element of $E(M)$ is not in $E(N) \cup P$.
Before giving an example, we require the following definitions.

%We give a concrete example shortly.  First, we remark that $\cl(P)-P$ (or $\cocl(P)-P$) could contain a single element~$z$ that can only be contracted (or deleted, respectively) to keep the $N$-minor.
%The contraction (respectively, deletion) of such an element does not preserve $3$-connectivity.
%There exists an ``augmentation'' of $P$ by an element~$z$ such that the only pairs contained in $P \cup z$ whose deletion or contraction keep $3$-connectivity destroy the $N$-minor.
%%It follows that given one of the \psep s described above, there is an ``augmentation'' of the \psep\ that can appear in a matroid with no $N$-detachable pairs.

Let $P$ be a \psep. If there exists some $z \in E(M)-P$ such that $P \cup z$ is exactly $3$-separating, then we say that $P \cup z$ is an \emph{\augsep}.
In this case, either $z \in \cl(P)-P$ or $z \in \cocl(P)-P$ (due to the soon-to-appear \cref{gutsnew}).
We also say that an \augsep~$P \cup z$ is an \emph{\auging\ of $P$}.

When $M$ has no $N$-detachable pairs and contains an \augsep~$P \cup z$, it is possible that $z$, when removed in the correct way, preserves the $N$-minor.
A careful reader might have wondered why we have included the quad as a \psep, given that we are only interested in the case where $|E(M)|-|E(N)| \ge 5$. The reason is that an augmented quad $3$-separator $Q \cup z$ can appear in a matroid with no $N$-detachable pairs, where $E(M) = E(N) \cup (Q \cup z)$, in which case it is possible that $|E(M)|-|E(N)| = 5$.
We now give an example of such a matroid $M$, where $N$ is the Fano matroid.% with an $F_7$-minor but no $F_7$-detachable pairs. % with an augmented quad $3$-separator $P \cup z$ such that $E(M)-(P \cup z) \subseteq E(N)$.

Let $F_7$ be a copy of the Fano matroid with a triangle $\{a,b,c\}$, and let $F_7^-$ be a copy of the non-Fano matroid with a triangle $\{x,y,z\}$, where the ground sets of $F_7$ and $F_7^-$ are disjoint.
Let $Q = E(F_7^-)-\{x,y,z\}$.
Let $(F_7^-)'$ be the matroid obtained by freely adding the element $a$ on the line $\{x,y,z\}$.  Let $M' = (F_7^-)' \oplus_2 F_7$, where $a$ is the basepoint of this $2$-sum.
Finally, we obtain $M$ by freely adding an element $h$ on the line $\{x,c\}$, then deleting $\{x,y,z\}$.
Note that $M$ is $3$-connected, $Q \cup h$ is an \aug\ quad $3$-separator of $M$ with $h \in \cocl(Q)$, and $M$ has no $F_7$-detachable pairs.
In particular, $M \ba h$ has an $F_7$-minor, but $M/h$ does not.

In a similar manner, one can obtain \auging s of the other \psep s, except the \tvamoslike.  For a \tvamoslike~$P$, labelled as in \cref{tvamossubfig} say, the existence of an element $e \in \cl(P)-P$ for which $M /e$ has an $N$-minor ensures that a detachable pair like $\{q_1,p_2\}$ or $\{q_2,p_1\}$ is also $N$-detachable (and similarly for $e \in \cocl(P)-P$).

%(Add some intuitive chat: such an element $z$ in closure must be contracted to keep minor, but cannot be contracted to keep connectivity.  Define problematic here? Restate later.).
%We say that an \augsep~$P \cup z$ is \emph{\prob\ with respect to $N$} if
%\begin{enumerate}[label=\rm(\alph*)]
  %\item $z \in \cl(P)$ and $M/z$ has an $N$-minor but $M \ba z$ does not, or
      %$z \in \cocl(P)$ and $M \ba z$ has an $N$-minor but $M / z$ does not; and
  %\item $E(M)-(P \cup z) \subseteq E(N)$.
%\end{enumerate}

%Using a similar approach to the earlier construction, one can obtain \auging s of the other \psep s that are \prob\ with respect to $F_7$, say, and in particular have no $F_7$-detachable pairs, except the \tvamoslike.  For a \tvamoslike~$P$, labelled as in \cref{tvamossubfig} say, the existence of an element $e \in \cl(P)-P$ for which $M /e$ has an $N$-minor ensures that a detachable pair is $N$-detachable (and similarly for $e \in \cocl(P)-P$).

\section{Preliminaries}
\label{secprelims}

The notation and terminology in the paper follow Oxley~\cite{oxbook}.
For a set~$X$ and element~$e$, we write $X \cup e$ instead of $X \cup \{e\}$, and $X-e$ instead of $X-\{e\}$.
We say that $X$ meets $Y$ if $X \cap Y \neq \emptyset$.
We denote $\{1,2,\dotsc,n\}$ by $\seq{n}$.
The phrase ``by orthogonality'' refers to the fact that a circuit and a cocircuit cannot intersect in exactly one element.

Let $M$ be a matroid. % with ground set $E$.
For $P \subseteq E(M)$, we say that $P$ is a \emph{$5$-element plane} if $M|P \cong U_{3,5}$.  We also say $P$ is a \emph{$5$-element coplane} if $M^*|P \cong U_{3,5}$.
The following two lemmas are straightforward consequences of orthogonality, and are used freely without reference.

\begin{lemma}
  Suppose $M$ is a matroid with a $5$-element plane $P$.
  If $C^*$ is a cocircuit of $M$ that meets $P$, then $|P \cap C^*| \ge 3$.
\end{lemma}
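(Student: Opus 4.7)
The plan is a direct application of orthogonality, exploiting the fact that $M|P \cong U_{3,5}$ means every $4$-element subset of $P$ is a circuit of $M$. (Implicit in the statement is the assumption that $C^* \cap P$ is nonempty; otherwise the inequality fails trivially, so I would read the lemma as concerning cocircuits that meet $P$.)

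First, I would suppose for contradiction that $1 \le |P \cap C^*| \le 2$, and split into two cases. If $|P \cap C^*| = 1$, let $e$ denote the unique element of $P \cap C^*$. Since $P$ has five elements, there are $4$-element subsets of $P$ containing $e$; any such subset~$C$ is a circuit of $M|P$, hence of $M$, and $C \cap C^* = \{e\}$, violating orthogonality.

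In the remaining case, $|P \cap C^*| = 2$, say $P \cap C^* = \{e,f\}$. Then $C := P - f$ is a $4$-element subset of $P$, hence a circuit of $M$ (using $M|P \cong U_{3,5}$), and $C \cap C^* = \{e\}$. Again this contradicts orthogonality. Therefore $|P \cap C^*| \ge 3$, as required.

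There is no real obstacle: the proof is essentially immediate from the circuit structure of $U_{3,5}$ together with the standard orthogonality principle that a circuit and a cocircuit never meet in exactly one element (the fact the authors have just highlighted as being used ``freely without reference''). The only mild subtlety is noting that $P$ has enough elements (namely, at least four not equal to any chosen $f$) to realise the needed circuit inside $P$ in each case.
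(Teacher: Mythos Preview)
Your proof is correct and follows exactly the route the paper indicates: the paper does not spell out a proof but simply states that the lemma is a straightforward consequence of orthogonality, and your argument is precisely that straightforward consequence. Your parenthetical observation that the hypothesis $P \cap C^* \neq \emptyset$ is implicit is also apt; the lemma as literally stated fails for cocircuits disjoint from $P$, and the paper's applications all concern cocircuits already known to meet $P$.
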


\begin{lemma}
\label{swapSepSides3}
Let $(X,\{e\},Y)$ be a partition of the ground set of a matroid~$M$.
%Let $e$ be an element of a matroid $M$, and let $X$ and $Y$ be disjoint sets whose union is $E(M) - \{e\}$.
Then $e \in \cl(X)$ if and only if $e \notin \cl^{*}(Y)$.
\end{lemma}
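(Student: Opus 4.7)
The statement is a standard fact about the interplay between closure and coclosure across a bipartition with a distinguished singleton, so the plan is to reduce it to a direct rank computation using the dual rank formula
\[ r^{*}(A) = |A| + r(E(M) \setminus A) - r(M), \]
valid for any $A \subseteq E(M)$.

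First I would unpack the condition $e \in \cl^{*}(Y)$ as the rank equality $r^{*}(Y \cup e) = r^{*}(Y)$, recalling that $\cl^{*} = \cl_{M^{*}}$. Applying the dual rank formula to both sides, and noting that $E(M) \setminus Y = X \cup \{e\}$ while $E(M) \setminus (Y \cup e) = X$, the equality becomes
\[ (|Y|+1) + r(X) - r(M) \;=\; |Y| + r(X \cup e) - r(M), \]
which simplifies to $r(X \cup e) = r(X) + 1$. This is exactly the statement that $e \notin \cl(X)$. Since each step in the chain is an equivalence, the same computation read backwards shows that $e \notin \cl(X)$ forces $e \in \cl^{*}(Y)$, so $e \in \cl^{*}(Y) \iff e \notin \cl(X)$, which is the contrapositive of the desired biconditional.

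There is essentially no obstacle here: the lemma is a one-line calculation once the dual rank formula is written down with the correct sign convention, and the only care required is bookkeeping the cardinalities of $Y$ and $Y \cup e$ together with the complementary sets $X$ and $X \cup e$.
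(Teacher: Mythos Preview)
Your proof is correct. The paper does not actually prove this lemma; it only remarks that it is a ``straightforward consequence of orthogonality'' and then uses it freely. Your argument via the dual rank formula $r^{*}(A) = |A| + r(E(M)\setminus A) - r(M)$ is a clean alternative: it reduces the claim to the equivalence $r^{*}(Y\cup e)=r^{*}(Y) \iff r(X\cup e)=r(X)+1$, which is immediate. The orthogonality route the paper alludes to would instead observe that $e\in\cl(X)$ and $e\in\cl^{*}(Y)$ cannot hold simultaneously (a circuit through $e$ in $X\cup e$ and a cocircuit through $e$ in $Y\cup e$ would meet in exactly one element), and that $e\notin\cl(X)$ forces $e$ to be a coloop of $M|(X\cup e)$, hence to lie in a cocircuit contained in $Y\cup e$. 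Both arguments are one-liners; yours has the virtue of being a purely numerical check with no case analysis.
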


\subsection*{Connectivity}

Let $M$ be a matroid with ground set $E$.  The \emph{connectivity function} of $M$, denoted by $\lambda_M$, is defined as follows, for all subsets $X$ of \nopagebreak $E$:
\begin{align*}
  \lambda_M(X) = r(X) + r(E - X) - r(M).
\end{align*}
A subset $X$ or a partition $(X, E-X)$ of $E$ is \emph{$k$-separating} if $\lambda_M(X) \leq k-1$.
A $k$-separating partition $(X,E-X)$ is a \emph{$k$-separation} if $|X| \ge k$ and $|E-X|\ge k$.
A $k$-separating set $X$, or a $k$-separating partition $(X,E-X)$, %or a $k$-separation $(X,E-X)$
is \emph{exact} if $\lambda_M(X) = k-1$.
A matroid is \emph{$n$-connected} if it has no $k$-separations for all $k < n$.
When a matroid is $2$-connected, we simply say it is \emph{connected}.

The connectivity functions of a matroid and its dual are equal; that is,
$\lambda_M(X) = \lambda_{M^*}(X)$.  In fact, it is easily shown that
\begin{align*}
  \lambda_M(X) = r(X) + r^*(X) - |X|.
\end{align*}
The next lemma is a consequence of the fact that the connectivity function is submodular.  We write ``by uncrossing'' to refer to an application of this lemma.

\begin{lemma}
\label{onetrick3}
Let $M$ be a $3$-connected matroid, and let $X$ and $Y$ be $3$-separating subsets of $E$.
\begin{enumerate}%[label=\rm(\roman*)]
\item If $|X \cap Y| \ge 2$, then $X \cup Y$ is $3$-separating.
\item If $|E - (X \cup Y)| \ge 2$, then $X \cap Y$ is $3$-separating.
\end{enumerate}
\end{lemma}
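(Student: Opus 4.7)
The plan is a direct application of the submodularity of the connectivity function together with $3$-connectivity of $M$. Recall that submodularity gives, for any $X, Y \subseteq E$,
\begin{align*}
\lambda_M(X) + \lambda_M(Y) \ge \lambda_M(X \cup Y) + \lambda_M(X \cap Y).
\end{align*}
Since $X$ and $Y$ are $3$-separating, the hypothesis yields $\lambda_M(X \cup Y) + \lambda_M(X \cap Y) \le 4$. The strategy for both parts is to use the $3$-connectivity of $M$ to establish that one of $\lambda_M(X \cup Y)$ or $\lambda_M(X \cap Y)$ is at least $2$, forcing the other to be at most $2$.

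For (i), I want to bound $\lambda_M(X \cap Y)$ from below. The generic case is when $|X \cap Y| \ge 2$ and also $|E - (X \cap Y)| \ge 2$: then $(X \cap Y, E - (X \cap Y))$ would be a $2$-separation of $M$ were $\lambda_M(X \cap Y) \le 1$, contradicting $3$-connectivity, so $\lambda_M(X \cap Y) \ge 2$, and submodularity delivers $\lambda_M(X \cup Y) \le 2$ as required. The only loophole is $|E - (X \cap Y)| \le 1$; but then $X \cap Y$ differs from $E$ in at most one element, so $X \cup Y$ does too, and $\lambda_M(X \cup Y)$ is trivially at most~$1$ by a direct rank calculation (using $\lambda_M(A) = \lambda_M(E - A)$ and the fact that a set of size at most $1$ has connectivity at most $1$).

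For (ii), the argument is symmetric. I bound $\lambda_M(X \cup Y)$ from below: if $|X \cup Y| \ge 2$, then together with $|E - (X \cup Y)| \ge 2$ and $3$-connectivity, $\lambda_M(X \cup Y) \ge 2$, so submodularity gives $\lambda_M(X \cap Y) \le 2$. In the degenerate case $|X \cup Y| \le 1$, we have $|X \cap Y| \le 1$, and then $\lambda_M(X \cap Y) \le 1$ directly.

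There is no real obstacle here; the only mild care needed is in the degenerate small-set cases, where $3$-connectivity of $M$ does not immediately bound $\lambda_M$ from below on the relevant set. These cases are handled by observing that a set (or its complement) of size at most $1$ automatically has connectivity at most $1 \le 2$, so the desired $3$-separating conclusion follows trivially.
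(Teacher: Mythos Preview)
Your proof is correct and matches the paper's own treatment: the paper does not spell out a proof but simply remarks that the lemma is a consequence of the submodularity of the connectivity function, which is precisely the argument you give, with the degenerate small-set cases handled cleanly.
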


%\begin{lemma}
%\label{swapSepSides3}
%Let $e$ be an element of a matroid $M$, and let $X$ and $Y$ be disjoint sets whose union is $E - \{e\}$.  Then $e \in \cl(X)$ if and only if $e \notin \cl^{*}(Y)$.
%\end{lemma}

%\begin{lemma}
%\label{extendSep3}
%Let $X$ be an exactly $3$-separating set in a $3$-connected matroid~$M$, 
%and suppose that $e \in E - X$.  Then $X \cup e$ is $3$-separating if and only if either $e \in \cl(X)$ or $e \in \cocl(X)$.
%\end{lemma}

The following is well known.

%\begin{lemma}
    %\label{exactSeps3}
%Let $(X, Y)$ be an exactly $3$-separating partition of a $3$-connected matroid $M$. Suppose $|X| \ge 3$ and $x \in X$. Then
%$x \in \clstar(X-x)$.
%\end{lemma}
\begin{lemma}
  \label{gutsnew}
  Let $M$ be a $3$-connected matroid, and let $(X,\{e\},Y)$ be a partition of $E$ such that $X$ is exactly $3$-separating.
  Then 
  \begin{enumerate}[label=\rm(\roman*)]
    \item $X \cup e$ is $3$-separating if and only if $e \in \cl(X)$ or $e \in \cocl(X)$, and
    \item $X \cup e$ is exactly $3$-separating if and only if either $e \in \cl(X) \cap \cl(Y)$ or $e \in \cocl(X) \cap \cocl(Y)$.
  \end{enumerate}
\end{lemma}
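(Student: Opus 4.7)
The plan is to compute $\lambda(X \cup e)$ directly from the rank formula and compare it with $\lambda(X) = 2$. Since $(X,\{e\},Y)$ is a partition of $E$, we have
\[
\lambda(X \cup e) = r(X \cup e) + r(Y) - r(M),
\]
and the two new terms differ from $r(X)$ and $r(E-X) = r(Y \cup e)$ in a controlled way. First I would observe that
\[
r(X \cup e) = r(X) + \epsilon_X, \qquad r(Y) = r(Y \cup e) - \epsilon_Y,
\]
where $\epsilon_X \in \{0,1\}$ is $0$ precisely when $e \in \cl(X)$, and $\epsilon_Y \in \{0,1\}$ is $0$ precisely when $e \in \cl(Y)$. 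Substituting gives the key identity
\[
\lambda(X \cup e) = \lambda(X) + \epsilon_X - \epsilon_Y = 2 + \epsilon_X - \epsilon_Y.
\]

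With this identity in hand, both parts reduce to a case analysis on the four possibilities for $(\epsilon_X,\epsilon_Y)$. For part~(i), $X \cup e$ is $3$-separating iff $\lambda(X \cup e) \le 2$, which happens in every case except $\epsilon_X = 1$ and $\epsilon_Y = 0$, i.e.\ except when $e \notin \cl(X)$ and $e \in \cl(Y)$. By \cref{swapSepSides3} applied to $Y$, $e \in \cl(Y)$ is equivalent to $e \notin \cocl(X)$, so the bad case is precisely $e \notin \cl(X) \cup \cocl(X)$, giving (i).

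For part~(ii), $X \cup e$ is exactly $3$-separating iff $\epsilon_X = \epsilon_Y$. The case $\epsilon_X = \epsilon_Y = 0$ is $e \in \cl(X) \cap \cl(Y)$, and the case $\epsilon_X = \epsilon_Y = 1$ is $e \notin \cl(X)$ and $e \notin \cl(Y)$; by two applications of \cref{swapSepSides3} (with $\{e\}$ separating $X$ from $Y$ and with $\{e\}$ separating $Y$ from $X$), this second case is exactly $e \in \cocl(X) \cap \cocl(Y)$, which yields (ii).

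Since the argument is a direct rank computation combined with the closure/coclosure duality of \cref{swapSepSides3}, there is no real obstacle; the only point that wants minor care is making the signs in $\epsilon_X - \epsilon_Y$ line up correctly, so that part~(i) rules out the single "bad" case while part~(ii) identifies the two "exact" cases. No appeal to uncrossing or to $3$-connectedness beyond the given partition structure is needed.
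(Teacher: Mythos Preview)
Your argument is correct and is the standard direct rank computation for this fact. The paper does not actually prove \cref{gutsnew}; it simply records the lemma as ``well known'' and moves on, so there is nothing to compare against beyond noting that your approach is exactly the expected one.
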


%\begin{lemma}
    %\label{gutses3}
%%Let $(X, Y)$ be an exactly $3$-separating partition of a $3$-connected matroid $M$, with $|X| \ge 3$ and $x \in X$. Then
%%$(X-x, Y \cup x)$ is exactly $3$-separating if and only if $x$ is in %exactly
    %%one of $\cl(X-x) \cap \cl(Y)$ and $\cocl(X-x) \cap \cocl(Y)$.
    %Let $X$ be an exactly $3$-separating set in a $3$-connected matroid~$M$, for some $e \in E-X$, and let $Y = E-(X \cup e)$.  Then $X \cup e$ is exactly $3$-separating if and only if either $e \in \cl(X) \cap \cl(Y)$ or $\cocl(X) \cap \cocl(Y)$.
    %%Let $X$ and $X \cup e$ be exactly $3$-separating sets in a $3$-connected matroid~$M$, for some $e \in E-X$, and let $Y = E-(X \cup e)$.  Then either $e \in \cl(X) \cap \cl(Y)$ or $\cocl(X) \cap \cocl(Y)$.
%\end{lemma}

\noindent
When \cref{gutsnew}(ii) holds,
%If $(X, Y)$ and $(X-x, x \cup Y)$ are exactly $3$-separating partitions in a $3$-connected matroid, then 
we say $e$ is a \emph{guts element} if $e \in \cl(X) \cap \cl(Y)$, and $e$ is a \emph{coguts element} if $e \in \cocl(X) \cap \cocl(Y)$.

%The next lemma follows from \cref{swapSepSides3}.

\begin{lemma}
  \label{gutsstayguts3}
  %Let $(X,Y)$ be an exact $3$-separation in a $3$-connected matroid.  Then $\cl(X) \cap \cocl(X) \cap Y = \emptyset$.
  Let $M$ be a $3$-connected matroid, let $(X,E-X)$ be a $3$-separation of $M$, and let $e \in E-X$.
  %Let $M$ be a $3$-connected matroid, let $X$ be a $3$-separating set in $M$ such that $|E-X| \ge 3$, and let $e \in E-X$.
  Then $e \notin \cl(X) \cap \cocl(X)$.
\end{lemma}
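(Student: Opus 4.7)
The plan is to argue by contradiction, using the alternative form of the connectivity function that is noted just above the lemma, namely
\[
  \lambda_M(A) \;=\; r(A) + r^*(A) - |A|.
\]
This form makes the simultaneous use of $\cl$ and $\cocl$ very direct: belonging to $\cl(X)$ controls $r(X \cup e) - r(X)$, while belonging to $\cocl(X)$ controls $r^*(X \cup e) - r^*(X)$.

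Assume for contradiction that $e \in \cl(X) \cap \cocl(X)$. Then $r(X \cup e) = r(X)$ and $r^*(X \cup e) = r^*(X)$, so
\[
  \lambda_M(X \cup e) \;=\; r(X) + r^*(X) - (|X|+1) \;=\; \lambda_M(X) - 1.
\]
Because $(X, E-X)$ is a $3$-separation and $M$ is $3$-connected, $X$ is exactly $3$-separating, so $\lambda_M(X) = 2$ and hence $\lambda_M(X \cup e) \le 1$. It remains only to verify that $(X \cup e, E - (X \cup e))$ meets the size requirements for a $2$-separation: we have $|X \cup e| \ge |X|+1 \ge 4$, and $|E - (X\cup e)| = |E-X| - 1 \ge 2$ since $(X, E-X)$ being a $3$-separation gives $|E-X| \ge 3$. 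This produces a $2$-separation of $M$, contradicting $3$-connectivity.

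The only point requiring any care is the final bookkeeping on sizes, which relies on $(X,E-X)$ actually being a $3$-separation (and not merely $3$-separating); without $|E-X| \ge 3$ the set $E - (X\cup e)$ could be too small. Apart from this, the proof is essentially a one-line calculation, so I do not expect any genuine obstacle. Notably, the result can also be viewed as a dual reading of \cref{gutsnew}(ii): an element lying in both $\cl(X)$ and $\cocl(X)$ would, via the same arithmetic, force $\lambda_M(X\cup e) < 2$, which is inconsistent with the iff characterisation of exactly $3$-separating extensions provided by that lemma.
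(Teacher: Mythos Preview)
Your proof is correct. The paper does not supply its own proof of this lemma; it is stated without proof in the preliminaries as a standard fact, and your argument is the natural one using the rank--corank form of $\lambda_M$.
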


A $3$-separation $(X, E-X)$ of a matroid $M$ with ground set $E$ is \emph{vertical} if $r(X) \ge 3$ and $r(E-X) \ge 3$.
We also say a partition $(X, \{z\}, Y)$ of $E$ is a \emph{vertical $3$-separation} when $(X \cup z, Y)$ and $(X, Y \cup z)$ are both vertical $3$-separations and $z \in \cl(X) \cap \cl(Y)$.
%
%Note that, given a vertical $3$-separation $(X,Y)$ and some $z \in Y$, if $z \in \cl(X)$, then $(X,\{z\},Y)$ is a vertical $3$-separation, by \cref{extendSep,exactSeps}.

A vertical $3$-separation in $M^*$ is known as a cyclic $3$-separation in $M$.
More specifically, a $3$-separation $(X, E-X)$ of $M$ is \emph{cyclic} if $r^*(X) \ge 3$ and $r^*(E-X) \ge 3$; or, equivalently, if $X$ and $E-X$ contain circuits.
We also say that a partition $(X, \{z\}, Y)$ of $E$ is a \emph{cyclic $3$-separation} if $(X, \{z\}, Y)$ is a vertical $3$-separation in $M^*$.

%A proof of the next lemma is in~\cite{osw08}.

\begin{lemma}[{\cite[Lemma~3.5]{stabilizers}}]
\label{openVertSep2}
Let $M$ be a $3$-connected matroid and let $z \in E$.  
The following are equivalent:
\begin{enumerate}%[label=\rm(\roman*)]
    \item $M$ has a vertical $3$-separation $(X, \{z\}, Y)$.\label{vsi2}
    \item $\si(M/z)$ is not $3$-connected.\label{vsii2}
\end{enumerate}
\end{lemma}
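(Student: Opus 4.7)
The plan is to prove the two implications separately by transferring $2$-separations between $M$ and $\si(M/z)$ using the rank formulas $r_{M/z}(S) = r_M(S \cup z) - 1$ (which holds because $z$ is not a loop, as $M$ is $3$-connected with $|E(M)| \ge 4$) and $r(M/z) = r(M) - 1$.

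For (i) $\Rightarrow$ (ii): Suppose $(X, \{z\}, Y)$ is a vertical $3$-separation. Since $z \in \cl(X) \cap \cl(Y)$, we get $r_{M/z}(X) = r_M(X) - 1 \ge 2$ and symmetrically $r_{M/z}(Y) \ge 2$, while a direct computation gives $\lambda_{M/z}(X) = \lambda_M(X \cup z) - 1 \le 1$. Form $\si(M/z)$ by choosing, for each parallel class in $M/z$ meeting $X$, a representative in $X$, and representatives in $Y$ otherwise. The induced partition $(X^s, Y^s)$ of $E(\si(M/z))$ has $r_{\si(M/z)}(X^s) = r_{M/z}(X) \ge 2$ (since $X$ lies in the closure of $X^s$ in $M/z$) and likewise for $Y^s$, and $\lambda_{\si(M/z)}(X^s) = \lambda_{M/z}(X) \le 1$, so $\si(M/z)$ is not $3$-connected.

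For (ii) $\Rightarrow$ (i): Let $(A_0, B_0)$ be a $2$-separation of $\si(M/z)$ with $|A_0|, |B_0| \ge 2$. Because $M$ is $3$-connected, $M/z$ has no loops, so each element of $E(M) - z$ belongs to a unique parallel class of $M/z$ with representative in $E(\si(M/z))$; lift $(A_0, B_0)$ to a partition $(A, B)$ of $E(M) - z$ according to these representatives. Since parallel elements lie in the closure of their representative, $r_{M/z}(A) = r_{\si(M/z)}(A_0) \ge 2$, similarly for $B$, and $\lambda_{M/z}(A) = \lambda_{\si(M/z)}(A_0) \le 1$.

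It remains to show $z \in \cl_M(A) \cap \cl_M(B)$, which is the crux. Suppose for contradiction that $z \notin \cl_M(A)$. Then $r_M(A \cup z) = r_M(A) + 1$, hence $r_{M/z}(A) = r_M(A)$, and substituting into the connectivity formulas yields $\lambda_M(A) = \lambda_{M/z}(A) \le 1$. But $|A| \ge r_M(A) \ge 2$ and $|B \cup z| \ge 3$, contradicting that $M$ is $3$-connected. So $z \in \cl_M(A)$, and symmetrically $z \in \cl_M(B)$. Consequently $r_M(A) = r_{M/z}(A) + 1 \ge 3$ and $r_M(B) \ge 3$, so $(A, \{z\}, B)$ is a vertical $3$-separation. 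The main obstacle is precisely this last step, where $3$-connectivity of $M$ is essential to force $z$ into both closures.
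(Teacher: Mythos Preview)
The paper does not prove this lemma; it is quoted from \cite[Lemma~3.5]{stabilizers} without argument, so there is no in-paper proof to compare against. Your argument for (ii) $\Rightarrow$ (i) is correct: the key step, using $3$-connectivity of $M$ to force $z \in \cl_M(A) \cap \cl_M(B)$, is exactly what is needed, and the rank and connectivity computations that follow are fine.

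In (i) $\Rightarrow$ (ii) there is a small but genuine gap. You build the simplification asymmetrically, placing the representative of every parallel class meeting $X$ into $X$. This does give $X \subseteq \cl_{M/z}(X^s)$ and hence $r(X^s) = r_{M/z}(X)$, but ``likewise for $Y^s$'' is not justified: an element of $Y$ whose parallel class also meets $X$ has its representative in $X^s$, not $Y^s$, so in general $Y \not\subseteq \cl_{M/z}(Y^s)$, and neither $r(Y^s) = r_{M/z}(Y)$ nor even $|Y^s| \ge 2$ follows from your construction. Without $|Y^s| \ge 2$ you do not have a $2$-separation of $\si(M/z)$. The repair is short: since $\lambda_{M/z}(X) \le 1$, any parallel class meeting both $X$ and $Y$ lies in $\cl_{M/z}(X) \cap \cl_{M/z}(Y)$, a set of rank at most $\sqcap_{M/z}(X,Y) = \lambda_{M/z}(X) \le 1$, so at most one parallel class crosses. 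Combined with $r_{M/z}(X), r_{M/z}(Y) \ge 2$, each side meets at least two parallel classes, and one can assign the (at most one) crossing representative so that both $|X^s|, |Y^s| \ge 2$; the only degenerate case has $|E(\si(M/z))| = 3$ and rank $3$, giving $\si(M/z) \cong U_{3,3}$, which is not $3$-connected anyway.
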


We say that a partition $(X_1,X_2,\dotsc,X_m)$ of $E$ is a \emph{path of $k$-separations} if $(X_1 \cup \dotsm \cup X_i, X_{i+1} \cup \dotsm \cup X_m)$ is a $k$-separation for each $i \in \seq{m-1}$.
Observe that a vertical, or cyclic, $3$-separation $(X, \{z\},Y)$ is an instance of a path of $3$-separations.

We refer to the following as Bixby's Lemma.

\begin{lemma}[Bixby's Lemma~\cite{bixby}]
\label{bixbyL2}
Let $e$ be an element of a $3$-connected matroid $M$.
Then either $\si(M/e)$ is $3$-connected, or $\co(M\ba e)$ is $3$-connected.
\end{lemma}

The next lemma~\cite{stabilizers} shows, in particular, that an element that is in a quad but not in a triangle (or a triad) can be contracted (or deleted, respectively) without destroying $3$-connectivity.

\begin{lemma}%[Whittle 99?]
  \label{r3cocircsi3}
  Let $C^*$ be a rank-$3$ cocircuit of a $3$-connected matroid $M$.
If $x \in C^*$ has the property that $\cl_M(C^*)-x$ contains a triangle of $M/x$, then $\si(M/x)$ is $3$-connected.
\end{lemma}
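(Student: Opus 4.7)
The plan is to argue by contradiction. Suppose $\si(M/x)$ is not $3$-connected; by \cref{openVertSep2}, $M$ admits a vertical $3$-separation $(X,\{x\},Y)$ with $x\in\cl(X)\cap\cl(Y)$, $r(X),r(Y)\ge 3$, and $r(X)+r(Y)=r(M)+2$. The goal is to produce a $2$-separation of $M$, contradicting $3$-connectedness.

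First, I would use orthogonality to show $C^*$ meets both $X$ and $Y$: since $x\in\cl(Y)$, there is a circuit $D\subseteq Y\cup x$ containing $x$; orthogonality with $C^*$ forces $|C^*\cap D|\ge 2$, yielding $C^*\cap Y\ne\emptyset$, and symmetrically $C^*\cap X\ne\emptyset$. Next, since $T$ is a triangle of $M/x$, we have $r_M(T\cup x)=r_{M/x}(T)+1=3$; combined with $T\cup x\subseteq \cl(C^*)$ and $r(\cl(C^*))=3$, this gives $\cl_M(T\cup x)=\cl_M(C^*)$.

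By pigeonhole (swapping $X$ and $Y$ if necessary) I may assume $|T\cap X|\ge 2$. Any two elements of $T\cap X$ together with $x$ form an independent set of rank $3$ (when $T$ is a triangle of $M$, this is because $x\notin\cl(T)$; when $T\cup x$ is a $4$-circuit, any three elements of a circuit are independent), and therefore they span $\cl(C^*)$. As they lie in $X\cup x$, we deduce $\cl_M(C^*)\subseteq \cl_M(X\cup x)=\cl_M(X)$; in particular, every element of $C^*\cap Y$ lies in $\cl_M(X)$.

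Now pick $v\in C^*\cap Y$. Using $v\in\cl_M(X)$, I compute
\[
  \lambda_M(X\cup v)=r_M(X)+r_M((Y-v)\cup x)-r(M).
\]
If $v\notin \cl_M((Y-v)\cup x)$, then $r_M((Y-v)\cup x)=r(Y)-1$, so $\lambda_M(X\cup v)=1$, which exhibits a $2$-separation of $M$ with both sides of size at least $3$---the desired contradiction. The main obstacle will be ruling out the alternative subcase, where $v\in\cl_M((Y-v)\cup x)$. In that case, any circuit $C\subseteq Y\cup x$ witnessing this must, by orthogonality with $C^*$, either contain $x$ or contain a second element of $C^*\cap Y$. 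I would close the argument either by iterating the movement above on a carefully chosen alternative element of $C^*\cap Y$, or by uncrossing $\cl(C^*)$ with the $3$-separator $X\cup x$ via \cref{onetrick3}, leveraging the containment $\cl(C^*)\cap Y\subseteq\cl(X)$ to control the ranks involved and again force a $2$-separation of $M$.
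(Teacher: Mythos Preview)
The paper does not prove this lemma; it is quoted from \cite{stabilizers}. So there is no ``paper's proof'' to compare against, and I evaluate your argument on its own.

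Your setup is sound: obtaining a vertical $3$-separation $(X,\{x\},Y)$, using orthogonality to get $C^*\cap Y\ne\emptyset$, and showing $\cl_M(C^*)\subseteq\cl_M(X)$ via two elements of $T\cap X$ together with $x$. The gap is exactly where you flag it. Moving a single $v\in C^*\cap Y$ and trying to rule out $v\in\cl((Y-v)\cup x)$ does not work cleanly, and neither of your suggested fixes is made precise. In particular, ``iterate'' fails if you keep $x$ on the $Y$-side: after moving all of $C^*\cap Y$ across, the complement is $(Y-C^*)\cup x$, and since $x\notin\cl(Y-C^*)$ is possible, you only obtain $r((Y-C^*)\cup x)\le r(Y)$, which gives $\lambda=2$.

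The clean finish is to move $x$ as well. Set $X'=X\cup x\cup(C^*\cap Y)$, so $C^*\subseteq X'$ and $E-X'=Y-C^*$. Since $x$ and $C^*\cap Y$ all lie in $\cl(X)$, we have $r(X')=r(X)$. On the other side, $Y-C^*$ lies in the hyperplane $H=E-C^*$; as $H$ is a flat, $\cl(Y-C^*)\subseteq H$, so no element of $C^*\cap Y$ is in $\cl(Y-C^*)$. Picking any $v\in C^*\cap Y$ gives $r(Y-C^*)<r((Y-C^*)\cup v)\le r(Y)$, hence
\[
\lambda_M(X')=r(X)+r(Y-C^*)-r(M)\le r(X)+r(Y)-1-r(M)=1.
\]
It remains only to check $|Y-C^*|\ge 2$. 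If $Y\subseteq C^*$ then $Y\subseteq\cl(X)$, forcing $r(X)=r(M)$ and $r(Y)=2$, contradicting $r(Y)\ge 3$. If $|Y-C^*|=1$ with $Y-C^*=\{u\}$, then $E-u=X\cup x\cup(Y-u)\subseteq\cl(X)$, so $r(E-u)=r(X)\le r(M)$; either $u$ is a coloop or $r(X)=r(M)$, both impossible. Thus $(X',Y-C^*)$ is a $2$-separation of $M$, the desired contradiction.
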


%\begin{lemma}%[Whittle 99?]
    %\label{r3cocirc3}
    %Let $M$ be a $3$-connected matroid with $r(M) \ge 4$.
    %Suppose that $C^*$ is a rank-$3$ cocircuit of $M$.
    %If there exists some $x \in C^*$ such that $x \in \cl(C^*-x)$, then $\co(M \ba x)$ is $3$-connected.
    %%that does not contain a triangle.  Then $\co(M \ba e)$ is $3$-connected for any $e \in C^*$.
%\end{lemma}

If, rather than a quad, we have a $4$-element independent cocircuit, then the following lemma applies.
The proof of this lemma is similar to \cite[Lemma~4.5]{paper1}; we omit the details.

\begin{lemma}
  \label{r4cocirc3b}
  Let $M$ be a $3$-connected matroid and let $C^* = \{c_1,c_2,d_1,d_2\}$ be a $4$-element cocircuit such that $M \ba d_1$ and $M \ba d_2$ are $3$-connected, and neither $c_1$ nor $c_2$ is in a triangle. Then $M/c_i$ is $3$-connected for some $i \in \{1,2\}$.
\end{lemma}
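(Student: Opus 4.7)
The plan is to argue by contradiction: assume neither $M/c_1$ nor $M/c_2$ is $3$-connected. Since $M$ is $3$-connected and neither $c_i$ lies in a triangle, $M/c_i$ has no loops or parallel elements, so $\si(M/c_i)=M/c_i$; hence \cref{openVertSep2} yields vertical $3$-separations $(X_i,\{c_i\},Y_i)$ of $M$ for each $i\in\{1,2\}$. Because $c_i\in\cl(X_i)\cap\cl(Y_i)$, orthogonality between $C^*$ and the circuits of $M$ through $c_i$ contained in $X_i\cup c_i$ and $Y_i\cup c_i$ forces each of $X_i,Y_i$ to meet $C^*-c_i=\{c_{3-i},d_1,d_2\}$; equivalently, by \cref{gutsstayguts3}, $C^*-c_i$ cannot lie entirely on one side. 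After swapping $X_i\leftrightarrow Y_i$ as needed, I may assume $c_2\in X_1$ and $c_1\in X_2$, so each of $Y_1,Y_2$ contains at least one of $d_1,d_2$.

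Next I would exploit $3$-connectivity of $M\ba d_j$ for $j\in\{1,2\}$. Since $d_j\in C^*$, it is not a coloop of $M$, so $r(M\ba d_j)=r(M)$. When $d_j\in X_i$, a direct computation using $\lambda_M(X_i)=2$ gives
\[
  \lambda_{M\ba d_j}(X_i-d_j)=2-\bigl(r(X_i)-r(X_i-d_j)\bigr),
\]
and since $M\ba d_j$ is $3$-connected while both sides of the induced partition have size at least $2$ (a vertical $3$-separation has $r(X_i),r(Y_i)\ge 3$), this forces $d_j\in\cl(X_i-d_j)$; the symmetric statement holds when $d_j\in Y_i$. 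In each vertical $3$-separation this pins down the closure contributions of $d_1$ and $d_2$ on each side.

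The main obstacle is the case analysis on how $\{d_1,d_2\}$ distribute across the four regions $X_1\cap X_2$, $X_1\cap Y_2$, $Y_1\cap X_2$, $Y_1\cap Y_2$. I would control this by repeated uncrossing (\cref{onetrick3}) applied to the $3$-separating sets $X_1$, $X_2$, $Y_1\cup c_1$, $Y_2\cup c_2$: whenever two of these meet in at least $2$ elements, their union is $3$-separating, and similarly for intersections with sufficiently large complement. In each resulting subcase I aim for one of two contradictions: either \cref{gutsstayguts3} applied to a new $3$-separation places $c_1$ or $c_2$ in both $\cl$ and $\cocl$ of the same side, or the closure constraints on $d_1,d_2$ combined with orthogonality to $C^*$ produce a $3$-element circuit containing $c_1$ or $c_2$, contradicting the no-triangle hypothesis on $c_1,c_2$.
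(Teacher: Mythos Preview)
The paper does not actually prove this lemma; it states that ``the proof is similar to \cite[Lemma~4.5]{paper1}; we omit the details.'' So there is no in-paper argument to compare against directly, only a pointer to the companion paper.

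Your setup is correct and follows the standard line for such connectivity results. The passage from non-$3$-connectedness of $M/c_i$ to a vertical $3$-separation via \cref{openVertSep2} (using that $c_i$ is triangle-free so $\si(M/c_i)=M/c_i$), the orthogonality/\cref{gutsstayguts3} argument forcing $C^*-c_i$ to straddle both sides, and the computation showing that $3$-connectivity of $M\ba d_j$ forces $d_j\in\cl(X_i-d_j)$ whenever $d_j\in X_i$ are all valid and are exactly the ingredients one expects the omitted proof to use. These are the right tools, and your normalisation $c_2\in X_1$, $c_1\in X_2$ is the natural one.

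The part you have not carried out is the case analysis on the distribution of $\{d_1,d_2\}$ among the four regions, and you are right that this is where the work lies. Your plan---uncross the $3$-separating sets and seek a contradiction either via \cref{gutsstayguts3} (some $c_i$ lands in both closure and coclosure of one side of a genuine $3$-separation) or via a forced triangle through $c_1$ or $c_2$---is sound in principle. Be aware, though, that the bookkeeping is genuinely delicate: small cases where an uncrossed piece has only two or three elements need separate handling, since \cref{gutsstayguts3} requires a $3$-separation with both sides of size at least three. As written, your proposal is a correct outline rather than a proof; to make it rigorous you would need to execute at least one representative branch fully and argue that the remaining branches follow by symmetry. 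Nothing in your strategy is wrong, and it is almost certainly the intended approach.
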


We also require the following result, proved in \cite{paper1}. %, regarding the existence of $N$-detachable pairs when $M$ has a $5$-element plane.

%$5$-element plane propositions: \cref{6pointplane2,basicplaneupgrade}
\begin{lemma}[{\cite[Lemma~4.4]{paper1}}]
    \label{6pointplane2}
    Let $M$ be a $3$-connected matroid with $P \subseteq E$ such that $M|P \cong U_{3,6}$, and $X \subseteq P$ such that $|X|=4$. % such that $M\ba x\ba x'$ has an $N$-minor for any distinct $x,x' \in X$.
    Suppose that $\cl(P)$ contains no triangles.
    Then there are distinct elements $x_1,x_2 \in X$ such that $M\ba x_1\ba x_2$ is $3$-connected.
  \end{lemma}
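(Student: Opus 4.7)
The plan is to find the pair $\{x_1,x_2\}$ in two stages: first locate $x_1 \in X$ with $M \ba x_1$ $3$-connected, then locate $x_2 \in X - x_1$ with $M \ba x_1 \ba x_2$ $3$-connected. The decisive leverage throughout comes from the triangle-free hypothesis, which immediately yields that any triangle $T$ of $M$ meets $P$ in at most one element: if $|T \cap P| \ge 2$, then the two elements of $T \cap P$, being independent in $M|P \cong U_{3,6}$, span the third element of $T$ inside $\cl(P)$, producing a triangle there.

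For each $x \in P$, I would analyse the $2$-separations of $M \ba x$. Any $2$-separation $(A,B)$ of $M \ba x$ has at least three elements of the $5$-element set $P - x$ on one side (say $A$); since $r(P - x) = 3$, this gives $P - x \subseteq \cl_M(A)$, and by shifting any remaining elements of $P - x$ from $B$ to $A$ we may assume $P - x \subseteq A$. Because $r(M \ba x) = r(M)$, one has $\lambda_M(A) = \lambda_{M \ba x}(A) + [r(B \cup x) - r(B)]$, and the $3$-connectivity of $M$ then forces $x \notin \cl_M(B)$. By \cref{swapSepSides3}, $x \in \cl^*(A)$, so $(A,\{x\},B)$ is an exact $3$-separation of $M$ with $x$ a coguts element over $A$.

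Next, I would uncross these 3-separations over the bad set $F := \{x \in X : M \ba x \text{ is not } 3\text{-connected}\}$. For distinct $x, y \in F$ with associated sets $A_x, A_y$, we have $P - \{x,y\} \subseteq A_x \cap A_y$, so $|A_x \cap A_y| \ge 4$; by \cref{onetrick3}, $A_x \cup A_y$ is $3$-separating (and, when its complement is large enough, so is $A_x \cap A_y$). Iterating yields a single $3$-separating set $C \supseteq P$ in whose complement each bad $x$ has a coguts role. The triangle-free hypothesis severely restricts $\cl(P) - P$, ruling out most coguts extensions, and I would use this to conclude $|F| \le 2$. Hence some $x_1 \in X$ has $M \ba x_1$ $3$-connected, and a re-run of the same argument inside $M' := M \ba x_1$ — whose $5$-point plane $P - x_1$ inherits the triangle-free condition, since every triangle of $M'$ is a triangle of $M$ — produces the required $x_2 \in X - x_1$.

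The principal obstacle is the second stage: with only a $U_{3,5}$ plane available in $M'$, the overlaps $|A'_y \cap A'_z|$ drop to at least $3$, so uncrossing still applies but with noticeably less slack. The delicate point is verifying that the triangle-free hypothesis continues to rule out the new coguts configurations that could arise after $x_1$ has been removed, so that the analogous bound $|F'| \le 2$ still goes through for $F' \subseteq X - x_1$, leaving a valid choice of $x_2$.
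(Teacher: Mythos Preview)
The paper does not prove this lemma here; it is quoted from \cite[Lemma~4.4]{paper1}. So there is no in-paper proof to compare against, and I will simply assess your argument on its own terms.

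Your first stage contains a slip that, once fixed, trivializes it. You correctly arrange $P - x \subseteq A$ and then deduce $x \notin \cl_M(B)$, concluding that $x$ is a \emph{coguts} element. But you have overlooked that $x \in \cl_M(P - x) \subseteq \cl_M(A)$, since any three points of $P$ span the plane. Thus $\lambda_M(A \cup x) = \lambda_{M \ba x}(A) \le 1$, which is already a $2$-separation of $M$ (as $|B| \ge 2$); so $M \ba x$ is $3$-connected for \emph{every} $x \in P$, and the whole uncrossing apparatus aimed at bounding $|F| \le 2$ is unnecessary. (In particular, $x$ cannot be coguts: $x \in \cl(A)$ forces $x \notin \cl^*(B)$ by \cref{swapSepSides3}.)

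The genuine content of the lemma is therefore entirely in your second stage, and here your write-up is only a promise, not an argument. Once $x_1$ is deleted, the remaining plane $P' = P - x_1$ has only five points, so a $2$-separation $(A,B)$ of $M \ba x_1 \ba x_2$ may split the four points of $P' - x_2$ as $2$--$2$. In that configuration neither side spans the plane, $x_2 \notin \cl(A)$ and $x_2 \notin \cl(B)$ (by the triangle-free hypothesis), and the ``move everything to one side and close up'' trick no longer applies. You acknowledge this as the ``principal obstacle'' but offer nothing beyond the hope that ``the analogous bound $|F'| \le 2$ still goes through''. That is precisely what has to be proved: one must show that these $2$--$2$ splits cannot simultaneously obstruct all three choices of $x_2 \in X - x_1$ (or all six pairs in $X$), and this requires a real argument exploiting the circuit and cocircuit structure across the different bad pairs, not just a re-run of stage~1. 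As written, the proposal does not establish the lemma.
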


\subsection*{Retaining an $N$-minor}

\begin{lemma}
  \label{m2.73}
  Let $(S,T)$ be a $2$-separation of a connected matroid $M$ and let $N$ be a $3$-connected minor of $M$.  Then $\{S, T\}$ has a member $U$ such that $|U \cap E(N)| \leq 1$.  Moreover, if $u \in U$, then
  \begin{enumerate}%[label=\rm(\roman*)]
    \item $M/u$ has an $N$-minor if $M/u$ is connected, and
    \item $M \backslash u$ has an $N$-minor if $M \backslash u$ is connected.
  \end{enumerate}
\end{lemma}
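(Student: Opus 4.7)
The plan is to prove the two assertions separately, leveraging the tension between the $2$-separation of $M$ and the $3$-connectivity of $N$.

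For the existence of $U$, I argue by contradiction: suppose both $|S \cap E(N)| \ge 2$ and $|T \cap E(N)| \ge 2$. Write $N = M / C \ba D$ for disjoint $C, D \subseteq E(M) - E(N)$. By the standard monotonicity of the connectivity function under taking minors, $\lambda_N(S \cap E(N)) \le \lambda_M(S) \le 1$, so $(S \cap E(N), T \cap E(N))$ would be a $2$-separation of $N$ with both sides of size at least two, contradicting that $N$ is $3$-connected. Hence one of $S, T$, which we call $U$, satisfies $|U \cap E(N)| \le 1$.

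For the second part, by duality between contraction and deletion (noting that $(S,T)$ is also a $2$-separation of $M^*$, that $N^*$ is $3$-connected, and that $U\cap E(N)=U\cap E(N^*)$), it suffices to show that if $u \in U$ and $M/u$ is connected, then $M/u$ has an $N$-minor. Write $N = M / C \ba D$. If $u \in C$, then $N = (M/u)/(C-u) \ba D$ and we are done immediately. The nontrivial situations are $u \in D$ and $u \in E(N) \cap U$. To handle these, I pass to the canonical $2$-sum decomposition $M = M_S \oplus_2 M_T$ with basepoint $p$ determined by $(S,T)$; say $U = S$. Since $|U \cap E(N)| \le 1$, almost all of $E(N)$ sits inside $M_T$, and by the standard result that a $3$-connected minor of a $2$-sum is (up to the role of $p$) a minor of one of the two parts, $N$ arises essentially from $M_T$. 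Contracting $u \in S$ affects only $M_S$, and so preserves the $N$-minor on the $T$-side as long as the result remains connected, which is exactly the standing hypothesis.

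I expect the main obstacle to be the case $u \in E(N) \cap U$: here $u$ is itself a label of $N$, so contracting it cannot literally preserve the labelled minor. The resolution is that ``has an $N$-minor'' is defined up to isomorphism, so it suffices to produce a different copy of $N$ inside $M/u$ in which the role of $u$ is played by some element of $E(M)-E(N)$ on the $S$-side (such an element exists because $|U| \ge 2 > |U \cap E(N)|$). The $2$-sum structure, together with the hypothesis that $M/u$ is connected, is what permits this relabelling: it forces $p$ to behave as a series or parallel element in $M_S/u$ of the correct flavour to swap $u$ out for a non-$E(N)$ element. Once this swap is carried out, the remaining bookkeeping to exhibit the $N$-minor of $M/u$ is routine.
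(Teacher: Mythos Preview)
The paper does not prove this lemma; it is stated without proof in the preliminaries as a standard fact (it is essentially the well-known observation behind Seymour's Splitter Theorem machinery, appearing for instance in Oxley's textbook). So there is no paper proof to compare against directly.

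Your approach is the standard one and is correct in outline. The first part, using monotonicity of $\lambda$ under minors, is exactly right. For the second part, passing to the $2$-sum $M=M_S\oplus_2 M_T$ and invoking the fact that a $3$-connected minor of a $2$-sum is, up to relabelling the basepoint, a minor of one side is the canonical argument.

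Two places deserve a sentence more of care. First, you assert that $N$ arises from $M_T$ rather than $M_S$; the reason is that the labelled version of the minor-of-a-part result gives $N\cong N'$ with $E(N')\subseteq E(N)\cup\{p\}$, so if $N'$ were a minor of $M_S$ then $|E(N)|=|E(N')|\le |S\cap E(N)|+1\le 2$, which is impossible once $|E(N)|\ge 3$ (and the tiny cases are trivial). Second, the step ``contracting $u\in S$ preserves the $N$-minor on the $T$-side as long as $M/u$ is connected'' is the heart of the matter and is currently just asserted. The clean justification is that $(M_S\oplus_2 M_T)/u=(M_S/u)\oplus_2 M_T$ provided $p$ is neither a loop nor a coloop of $M_S/u$; if $p$ degenerates, the $2$-sum collapses to a direct sum involving $M_T\backslash p$ or $M_T/p$. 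In every case $M_T$, $M_T\backslash p$, or $M_T/p$ is a minor of $M/u$, and the hypothesis that $M/u$ is connected rules out exactly the configurations in which the relevant one fails to carry an $N$-minor. Your final paragraph correctly identifies that when $u\in E(N)\cap U$ the basepoint $p$ is forced to play the role of $u$ in $N'$, so the isomorphic copy of $N$ in $M/u$ uses $p$'s avatar rather than $u$; this is right, though it would read more convincingly stated as a consequence of the labelled minor-of-a-part result rather than as an ad hoc relabelling.
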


For a matroid $M$ with a minor $N$ and $e \in E(M)$, we say $e$ is \emph{$N$-contractible} if $M/e$ has an $N$-minor, we say $e$ is \emph{$N$-deletable} if $M \ba e$ has an $N$-minor, and we say $e$ is \emph{$N$-flexible} if $e$ is both $N$-contractible and $N$-deletable.

The dual of the following is proved in \cite{bs2014,ben}.

\begin{lemma}
  \label{doublylabel3}
  Let $N$ be a $3$-connected minor of a $3$-connected matroid $M$. Let $(X, \{z\}, Y)$ be a cyclic $3$-separation of $M$ such that $M\ba z$ has an $N$-minor with $|X \cap E(N)| \le 1$. Let $X' = X-\cocl(Y)$
  and $Y' = \cocl(Y) - z$.
  Then
  \begin{enumerate}%[label=\rm(\roman*)]
    \item each element of $X'$ is $N$-deletable; and
    \item at most one element of $\cocl(X)-z$ is not $N$-contractible, and if such an element~$x$ exists, then $x \in X' \cap \cl(Y')$ and $z \in \cocl(X' - x)$.
  \end{enumerate}
\end{lemma}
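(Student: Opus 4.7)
The plan is to prove the result by duality, invoking the cited result of \cite{bs2014,ben} applied to $M^*$. First I would recall that a cyclic $3$-separation of $M$ is, by definition, a vertical $3$-separation of $M^*$, so $(X,\{z\},Y)$ is a vertical $3$-separation of $M^*$. Since $N$ is a $3$-connected minor of $M$ and $M \ba z$ has an $N$-minor, the dual $N^*$ is a $3$-connected minor of $M^*$ and $M^* / z = (M \ba z)^*$ has an $N^*$-minor. The hypothesis $|X \cap E(N)| \le 1$ transfers unchanged, so the dual hypotheses hold in $M^*$.

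Next I would translate the sets appearing in the statement. Since $\cocl_M(Z) = \cl_{M^*}(Z)$ for any $Z \subseteq E$, the sets $X' = X - \cocl_M(Y)$ and $Y' = \cocl_M(Y) - z$ coincide with the sets $X - \cl_{M^*}(Y)$ and $\cl_{M^*}(Y) - z$ that appear in the dual statement. Applying the result from \cite{bs2014,ben} in $M^*$ yields: (a) every element of $X'$ is $N^*$-contractible in $M^*$, and (b) at most one element of $\cl_{M^*}(X) - z = \cocl_M(X) - z$ is not $N^*$-deletable in $M^*$, and if such an $x$ exists, then $x \in X' \cap \cocl_{M^*}(Y')$ and $z \in \cl_{M^*}(X' - x)$.

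Finally I would unpack these dual statements in $M$: $M^*/x$ having an $N^*$-minor is equivalent to $M \ba x$ having an $N$-minor, i.e.\ $x$ is $N$-deletable in $M$, which gives (i). Similarly, $M^* \ba x$ failing to have an $N^*$-minor is equivalent to $M/x$ failing to have an $N$-minor, i.e.\ $x$ is not $N$-contractible in $M$; the containment $x \in X' \cap \cocl_{M^*}(Y')$ translates to $x \in X' \cap \cl_M(Y')$, and $z \in \cl_{M^*}(X' - x)$ translates to $z \in \cocl_M(X' - x)$, yielding (ii).

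The only thing requiring care is bookkeeping: one must consistently swap $\cl \leftrightarrow \cocl$, swap deletion with contraction (both for the minor and for the detachability notions), and check that vertical/cyclic $3$-separation duality behaves as claimed. Since each of these is a routine transfer under $(M,N) \mapsto (M^*, N^*)$, I do not anticipate a substantive obstacle beyond this translation step.
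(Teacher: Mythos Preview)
Your approach is correct and mirrors exactly what the paper does: the paper states only that the dual of this lemma is proved in \cite{bs2014,ben}, and your proposal supplies the routine duality translation that converts that cited result into the statement here.
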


Suppose $C$ and $D$ are disjoint subsets of $E(M)$ such that $M/C \ba D \cong N$.
%Then $M / C \ba D = N'$ for some $C,D \subseteq E(M)$, where $N'$ is isomorphic to $N$.
We call the ordered pair $(C,D)$ an \emph{$N$-labelling of $M$}, and say that each $c \in C$ is \emph{$N$-labelled for contraction}, and each $d \in D$ is \emph{$N$-labelled for deletion}. %, and each $e \in E(N')$ is \emph{$N$-essential}.
We also say a set $C' \subseteq C$ is \emph{$N$-labelled for contraction}, and $D' \subseteq D$ is \emph{$N$-labelled for deletion}.
An element $e \in C \cup D$ or a set $X \subseteq C \cup D$
is \emph{$N$-labelled for removal}.

Let $(C,D)$ be an $N$-labelling of $M$, and let $c \in C$, $d \in D$, and $e \in E(M)-(C \cup D)$.
Then, we say that the ordered pair
%If there exists $c \in C$ and $d \in D$ such that
$((C-c) \cup d,(D-d)\cup c)$ is %also an $N$-labelling of $M$, then we say this $N$-labelling is
obtained from $(C,D)$ by \emph{switching the $N$-labels on $c$ and $d$}.
Similarly, %if there exists $c \in C$ (or $d \in D)$ and $e \in E(M)-(C \cup D)$ such that
$((C-c) \cup e,D)$ (or $(C,(D-d) \cup e)$, respectively) is %an $N$-labelling of $M$, then we say it is
obtained from $(C,D)$ by \emph{switching the $N$-labels on $c$} (respectively, $d$) \emph{and $e$}.

The following straightforward lemma, %and its dual,
which gives a sufficient condition for retaining a valid $N$-labelling after an $N$-label switch,
will be used freely.

\begin{lemma}
  Let $M$ be a $3$-connected matroid, let $N$ be a $3$-connected minor of $M$ with $|E(N)| \ge 4$, and let $(C,D)$ be an $N$-labelling of $M$. 
  Suppose $\{d,e\}$ is a parallel pair in $M/c$, for some $c \in C$.
  Let $(C',D')$ be obtained from $(C,D)$ by switching the $N$-labels on $d$ and $e$; then $(C',D')$ is an $N$-labelling.
\end{lemma}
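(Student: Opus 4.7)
The plan is to show that $M/C' \ba D' \cong M/C \ba D$, which will give $(C', D')$ an $N$-labelling. Following the conventions laid out just before the lemma statement, we have $d \in D$, $e \in E(M) - (C \cup D)$, and $(C', D') = (C, (D - d) \cup e)$. Setting $M'' = M/C \ba (D - d)$, we obtain $M/C \ba D = M'' \ba d$ and $M/C' \ba D' = M'' \ba e$, so it suffices to prove that these two matroids are isomorphic, which will follow once we establish that $\{d, e\}$ is a parallel pair of $M''$.

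The heart of the argument is to verify that $\{d, e\}$ is a parallel pair of $M/C$ (equivalently, of $M''$, since $d, e \notin D - d$ and deletion preserves parallel classes among surviving elements). Because $\{d, e\}$ is a parallel pair of $M/c$ for some $c \in C$, it is a $2$-element circuit of $M/c$, and hence has rank at most one in $M/C$. If this rank were zero, then $e$ would be a loop of $M/C$, and therefore of $M/C \ba D \cong N$, contradicting the simplicity of $N$; so the rank is exactly one. Hence either $\{d, e\}$ is a parallel pair of $M/C$ or exactly one of $d, e$ is a loop. Suppose, towards a contradiction, the latter holds, and without loss of generality that $d$ is the loop. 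Then $d \in \cl_M(C)$, so there is a circuit $D_0 \subseteq C \cup d$ containing $d$. On the other hand, since $\{d, e\}$ is a circuit of $M/c$, either $\{d, e\}$ or $\{c, d, e\}$ is a circuit of $M$. Applying strong circuit elimination to that circuit and $D_0$, eliminating $d$ and retaining $e$, yields a circuit of $M$ contained in $C \cup e$ and containing $e$; this forces $e \in \cl_M(C)$, making $e$ a loop of $M/C \ba D \cong N$, again contradicting the simplicity of $N$.

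Once $\{d, e\}$ is known to be a parallel pair of $M''$, the transposition interchanging $d$ and $e$ is a matroid automorphism of $M''$, because the circuits of $M''$ containing exactly one of $d, e$ are paired up by this swap via strong circuit elimination against $\{d, e\}$, while circuits avoiding both are unaffected. Hence $M'' \ba d \cong M'' \ba e$, giving $M/C' \ba D' \cong M/C \ba D \cong N$, as required. I expect the only nontrivial ingredient to be the circuit-elimination step that rules out a single loop among $\{d, e\}$ after contracting $C$; the rest is formal bookkeeping around the definitions of $M''$ and label switching.
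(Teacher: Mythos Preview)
Your proof is correct. The paper does not actually prove this lemma: it introduces it as a ``straightforward lemma'' to be ``used freely'' and moves on without argument. Your proof supplies the omitted details, and the approach is the natural one --- reduce to showing that $\{d,e\}$ remains a parallel pair in $M/C$ (hence in $M'' = M/C \ba (D-d)$), then use that parallel elements are clones so that $M'' \ba d \cong M'' \ba e$.

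One minor imprecision: your ``without loss of generality that $d$ is the loop'' is not strictly valid, since $d \in D$ and $e \in E(M)-(C \cup D)$ play asymmetric roles. But the case you treat is the harder one; if instead $e$ were the loop of $M/C$, then $e$ would already be a loop of $M/C \ba D \cong N$, contradicting simplicity of $N$ directly without any circuit elimination. So the gap is cosmetic rather than substantive.
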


Suppose that $(C,D)$ and $(C',D')$ are $N$-labellings of $M$, where $(C',D')$ can be obtained from $(C,D)$ by a sequence of $N$-label switches.  We say that $(C',D')$ is \emph{switching-equivalent} to $(C,D)$.

\subsection*{Delta-wye exchange}

Let $M$ be a matroid with a triangle $T=\{a,b,c\}$.
Consider a copy of $M(K_4)$ having $T$ as a triangle with $\{a',b',c'\}$ as the complementary triad labelled such that $\{a,b',c'\}$, $\{a',b,c'\}$ and $\{a',b',c\}$ are triangles.
Let $P_{T}(M,M(K_4))$ denote the generalised parallel connection of $M$ with this copy of $M(K_4)$ along the triangle $T$.
Let $M'$ be the matroid $P_{T}(M,M(K_4))\backslash T$ where the elements $a'$, $b'$ and $c'$ are relabelled as $a$, $b$ and $c$ respectively.
This matroid $M'$ %, denoted $T M$,
is said to be obtained from $M$ by a \emph{\dY} on the triangle~$T$.
Dually, a matroid $M''$ is obtained from $M$ by a \emph{\Yd} on the triad $\{a,b,c\}$
if $(M'')^*$ is obtained from $M^*$ by a \dY\ on $\{a,b,c\}$.

\subsection*{\unfortunate\ triangles and triads}

  Let $M$ be a $3$-connected matroid and let $N$ be a $3$-connected minor of $M$.
  We say that a triangle or triad~$T$ of $M$ is \emph{\unfortunate} if, for all distinct $a,b \in T$, none of $M/a/b$, $M/a\ba b$, $M\ba a/b$, and $M\ba a\ba b$ have an $N$-minor.

  \begin{lemma}[{\cite[Lemma~3.1]{paper1}}]
    \label{freegrounded}
    Let $M$ be a $3$-connected matroid with a $3$-connected minor $N$ where 
    $|E(N)| \ge 4$.
    If $T$ is an \unfortunate\ triangle of $M$ with $x \in T$, 
    then $x$ is not $N$-contractible.
  \end{lemma}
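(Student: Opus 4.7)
The plan is to argue by contradiction. Suppose $x$ is $N$-contractible, so there exists an $N$-labelling $(C,D)$ of $M$ with $x \in C$. Write $T=\{x,y,z\}$. The key observation is that since $T$ is a triangle of $M$, the pair $\{y,z\}$ is a parallel pair in $M/x$, and hence also a parallel pair in $M/C'$ for any $C' \supseteq \{x\}$ with $\{y,z\} \cap C' = \emptyset$.

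Next I would use the assumption that $N$ is $3$-connected with $|E(N)| \ge 4$: such an $N$ has no parallel pairs. Consequently, the parallel pair $\{y,z\}$ of $M/x$ cannot survive into $N = M/C\ba D$, so at least one of $y,z$ lies in $C \cup D$. Without loss of generality, say $y \in C \cup D$.

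Then I would do the case split. If $y \in C$, then $\{x,y\} \subseteq C$, so contracting the remaining elements of $C - \{x,y\}$ and deleting $D$ from $M/x/y$ produces $N$; hence $M/x/y$ has an $N$-minor. If $y \in D$, then an analogous argument shows $M/x \ba y$ has an $N$-minor. Either outcome contradicts the assumption that $T$ is \unfortunate.

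There is no real obstacle here; the lemma is a direct consequence of the definition of an \unfortunate\ triangle once one notes that a contraction in a triangle creates a parallel pair incompatible with the $3$-connectedness of $N$. The only subtle point is making sure the parallel pair $\{y,z\}$ is genuinely destroyed by the labelling $(C,D)$, which is precisely where the hypothesis $|E(N)| \ge 4$ is used to rule out $N \cong U_{1,2}$ or similar degenerate cases that could tolerate a parallel pair.
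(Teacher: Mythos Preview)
Your argument is correct and is the standard proof of this fact. The paper itself does not prove this lemma; it simply cites it from the first paper in the series. Your reasoning---contracting $x$ creates a parallel pair $\{y,z\}$, which must be destroyed in passing to the simple matroid $N$, forcing one of $y,z$ into $C\cup D$ and hence yielding an $N$-minor of $M/x/y$ or $M/x\ba y$---is exactly the expected argument and matches how the paper uses this lemma implicitly elsewhere (e.g.\ in the proof of \cref{catalyst}).
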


  We focus on matroids where every triangle or triad is \unfortunate, due to the following:

  \begin{lemma}[{\cite[Theorem~3.2]{paper1}}]
    \label{unfortunatetri}
    Let $M$ be a $3$-connected matroid, %with $|E(M)| \ge 9$,
    and let $N$ be a $3$-connected minor of $M$ with $|E(N)| \ge 4$, where $|E(M)|-|E(N)| \ge 5$.
    Then either
    \begin{enumerate}
      \item $M$ has an $N$-detachable pair, or
      \item there is a matroid $M'$ obtained by performing a single $\Delta$-$Y$ or $Y$-$\Delta$ exchange on $M$ such that $M'$ has %an $N$-minor and
        an $N$-detachable pair, or
      \item each triangle or triad of $M$ is \unfortunate.
    \end{enumerate}
  \end{lemma}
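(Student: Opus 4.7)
The plan is to argue by contradiction: suppose $M$ has no $N$-detachable pair and some triangle or triad $T$ of $M$ is not \unfortunate{}, and I aim to exhibit a matroid $M'$ obtained by a single \dY{} or \Yd{} on $M$ that has an $N$-detachable pair. By matroid duality, and the fact that \dY{} and \Yd{} are dual operations, I may assume $T = \{a,b,c\}$ is a triangle of $M$. The definition of not \unfortunate{} supplies distinct $x, y \in T$ and a combination of contractions and deletions of $\{x,y\}$ that produces an $N$-minor; in particular, there is at least one $N$-removable element of $T$, and moreover we gain information about how $N$ sits inside $M$ relative to $T$.

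The core of the proof is a dichotomy via Tutte's Triangle Lemma applied to $T$. In \emph{Case A}, at least two elements of $T$, say $a$ and $b$, have $M \ba a$ and $M \ba b$ both $3$-connected. Using the non-\unfortunate{} hypothesis together with the single-element $N$-preservation machinery (in particular \cref{doublylabel3}), I arrange that $M \ba a$ is $3$-connected and has an $N$-minor. I then combine \cref{r3cocircsi3} and \cref{r4cocirc3b} (and their duals, applied to cocircuits of $M \ba a$ inherited from cocircuits of $M$ through $a$) to locate an element $z \in E(M) - a$ such that $M \ba a \ba z$ is $3$-connected and retains an $N$-minor. Obstructions to the existence of such a $z$ are ruled out by uncrossing via \cref{onetrick3}, since any such obstruction would produce a $2$-separation of $M$. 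The resulting pair $\{a, z\}$ is an $N$-detachable pair in $M$, contradicting the assumption that $M$ has none. In \emph{Case B}, $T$ is contained in a $4$-element fan, say with triad $\{b,c,d\}$. I then perform a \dY{} on $T$ to obtain $M'$; standard results on \dY{} in $3$-connected matroids ensure $M'$ is $3$-connected, and the assumption $|E(M)|-|E(N)| \ge 5$ provides sufficient slack to guarantee $M'$ still has an $N$-minor. The fan of $M$ transforms into a local configuration in $M'$ in which the triangle has been replaced by a triad, and the reasoning of Case A (or its dual, applied to the new triad) can be replayed in $M'$ to find an $N$-detachable pair, giving the required conclusion (2).

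The principal obstacle in both cases is to ensure that the second removable element $z$ can be chosen so that both $3$-connectivity and the $N$-minor are simultaneously preserved by the pair removal. This requires careful tracking of $N$-labellings under switches (using the switching-equivalence machinery introduced just above the statement), and detailed treatment of cases where other triads or triangles of $M$ meet $T$. Case B in particular needs an explicit description of how fans transform under \dY{}, verified via orthogonality, in order to argue that the candidate pair produced in $M'$ is genuinely detachable and corresponds to legitimate operations in the reduced setting.
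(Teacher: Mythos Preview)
This lemma is not proved in the present paper; it is quoted verbatim as Theorem~3.2 of \cite{paper1}, so there is no proof here to compare against. That said, your proposal does not constitute a proof, and the gaps are substantive rather than cosmetic.

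The central difficulty in this result is precisely the step you dispose of in one sentence: given that $M \ba a$ is $3$-connected with an $N$-minor, find $z$ such that $M \ba a \ba z$ is $3$-connected and has an $N$-minor. Your appeal to \cref{r3cocircsi3} and \cref{r4cocirc3b} is misplaced: those lemmas concern contracting elements from rank-$3$ or $4$-element cocircuits, not deleting a second element after $a$. The assertion that ``obstructions\ldots are ruled out by uncrossing'' is not an argument; uncrossing tells you when intersections or unions of $3$-separating sets are $3$-separating, not how to locate a second $N$-deletable element whose deletion avoids creating a $2$-separation. Indeed, the paper devotes all of \cref{secsetup,sechard} to exactly this kind of problem under much stronger hypotheses (every triangle and triad already \unfortunate), and still encounters elaborate obstructions. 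You also never actually use the hypothesis $|E(M)|-|E(N)| \ge 5$ in Case~A, which is a sign that the argument is incomplete.

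Case~B is worse: you perform a \dY{} and then say ``the reasoning of Case~A\ldots can be replayed in $M'$''. But after the exchange the triangle becomes a triad, and there is no reason the resulting triad is not contained in a $4$-element fan of $M'$; you could be back in Case~B. Moreover, you have not verified that the non-\unfortunate{} property transfers to any triangle or triad of $M'$, nor that $M'$ has an $N$-minor at all (a \dY{} can destroy minors). The actual proof in \cite{paper1} is a theorem-length argument with a detailed case analysis of how fans and $N$-labellings interact with the exchange; your sketch does not engage with any of this.
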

%Also \cref{unfortunatetri,unfortunatetribasic} --- put these just before they are needed (later).

\section{Setup}
\label{secsetup}

We work towards a proof of \cref{maintheoremdetailed} in \cref{finalendgamesec}.
In this section, we lay the ground work.

\begin{lemma}
  \label{catalyst}
  Let $M$ be a $3$-connected matroid, and let $N$ be a $3$-connected minor of $M$, where every triangle or triad of $M$ is \unfortunate, and $|E(N)| \ge 4$.
  If $|E(M)| - |E(N)| \ge 3$, then, for some $(M_0,N_0) \in \{(M,N), (M^*,N^*)\}$, there exist elements $d,d' \in E(M_0)$ such that $M_0 \ba d$ is $3$-connected and $M_0 \ba d \ba d'$ has an $N_0$-minor.
\end{lemma}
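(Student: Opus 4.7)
The plan is to find, in at least one of $M$ and $M^*$, an element $d$ whose deletion preserves $3$-connectivity together with a second element $d'$ whose subsequent deletion preserves an $N_0$-minor. I would apply the Splitter Theorem twice, and handle the recalcitrant case where the forced chain alternates between deletion and contraction by invoking the hypothesis that every triangle and triad of $M$ is \unfortunate.

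First, by the Splitter Theorem (with Whittle's extension to cover the wheel and whirl cases), there is an element $e_1 \in E(M)$ such that $M \backslash e_1$ or $M / e_1$ is $3$-connected and has an $N$-minor. The duality available in the conclusion lets us assume the former, so we set $(M_0, N_0) := (M, N)$ and $d := e_1$. Since $|E(M \backslash e_1)| - |E(N)| \ge 2$, we may take an $N$-labelling $(C, D)$ of $M \backslash e_1$; if $D \ne \emptyset$, then any element $d' \in D$ completes the proof.

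The hard case is when every $N$-labelling of $M \backslash e_1$ satisfies $D = \emptyset$, so $M \backslash e_1 / C = N$ with $|C| \ge 2$ and no extra element of $M \backslash e_1$ is $N$-deletable. Applying the Splitter Theorem to $M \backslash e_1$ then forces the next step to be a contraction: some element $f$ satisfies $(M \backslash e_1) / f$ is $3$-connected with an $N$-minor. Using \cref{freegrounded}, since $f$ is $N$-contractible in $M$, $f$ lies in no triangle of $M$; a careful connectivity argument, using \cref{gutsnew} and orthogonality to rule out the only offending $2$-separations of $M/f$ (which would correspond to a triangle through $f$ or a $2$-cocircuit of $M$, both forbidden), then shows $M / f$ itself is $3$-connected. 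Switching to the dual, $(M_0, N_0) := (M^*, N^*)$, we have $M^* \backslash f$ is $3$-connected with an $N^*$-minor, and I would find the second deletion $d'$ by repeating the above argument in $M^*$, now using the dual of \cref{freegrounded} and the fact that all triads of $M$ are likewise \unfortunate. The main obstacle, as expected, is excluding the symmetric bad case in $M^*$ (where the first two chain steps there also alternate in type), which I expect to be ruled out by combining the unfortunate triangle and triad hypotheses simultaneously on $M$ and $M^*$, leveraging that $|E(M)|-|E(N)| \ge 3$ leaves enough room for the chain to shake free of the alternation on at least one side of the duality.
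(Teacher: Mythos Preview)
Your skeleton matches the paper's proof, but there are two gaps worth fixing.

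First, the wheel/whirl case cannot be swept under ``Whittle's extension.'' Seymour's Splitter Theorem does not apply when $M$ is a wheel or whirl and $N$ is a smaller wheel or whirl of the same type. The paper handles this explicitly: if $M$ is a wheel or whirl, every element lies in a triangle; since some element $x$ is $N$-contractible, the triangle through $x$ fails to be \unfortunate, contradicting the hypothesis. The same issue arises when you apply the Splitter Theorem a second time to $M\backslash e_1$: you must argue that $M\backslash e_1$ is not a wheel or whirl (the paper does this too).

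Second, and more importantly, your final paragraph invents an obstacle that does not exist. Once you have shown $M/f$ is $3$-connected, you are done: since $M\backslash e_1$ has no $N$-deletable element and $|E(M\backslash e_1/f)|-|E(N)|\ge 1$, there is some $c'$ with $M\backslash e_1/f/c'$ having an $N$-minor, hence $M/f/c'$ has an $N$-minor. Taking $(M_0,N_0)=(M^*,N^*)$, $d=f$, $d'=c'$ finishes the lemma immediately. There is no ``symmetric bad case in $M^*$'' to exclude and no need to iterate.

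As a minor point, your argument that $M/f$ is $3$-connected is more complicated than necessary. You do not need \cref{gutsnew} or orthogonality: since $(M\backslash e_1)/f$ is $3$-connected, any $2$-separation of $M/f$ must have one side equal to $\{e_1,x\}$ for some $x$; as $M$ is cosimple this forces $\{e_1,x\}$ to be parallel in $M/f$, i.e.\ $\{e_1,f,x\}$ is a triangle of $M$, contradicting (via \cref{freegrounded}) that $f$ is $N$-contractible. The paper phrases the contradiction slightly differently---observing directly that $M\backslash e_1/f$ having an $N$-minor means the triangle $\{e_1,f,x\}$ is not \unfortunate---but either route works.
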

\begin{proof}
  Suppose $M$ is a wheel or a whirl. Then every element is in a triangle and a triad.  So, by duality, we may assume that $M/x$ has an $N$-minor for some $x \in E(M)$, where $x$ is in a triangle $\{x,y,z\}$.  Now $\{y,z\}$ is a parallel pair in $M/x$; thus, as $|E(N)| \ge 4$, the matroid $M/x \ba y$ has an $N$-minor, up to an $N$-label switch.  But then the triangle $\{x,y,z\}$ is not \unfortunate; a contradiction.

  Now, by Seymour's Splitter theorem~\cite{pds}, there exists an element $d$ such that $M_0 \ba d$ is $3$-connected and has an $N_0$-minor, for some $(M_0,N_0) \in \{(M,N), (M^*,N^*)\}$.  For notational convenience, we will assume that $(M_0,N_0) = (M,N)$.
  So $M \ba d$ is $3$-connected and has an $N$-minor.  If $M \ba d \ba d'$ has an $N$-minor for any $d' \in E(M \ba d)$, then the \lcnamecref{catalyst} holds; so assume otherwise.  In particular, as $|E(M)| - |E(N)| \ge 3$, there exist distinct elements $c_1,c_2 \in E(M \ba d)$ such that $M \ba d/c_1/c_2$ has an $N$-minor.

  Suppose $M \ba d$ is a wheel or a whirl.  Then $c_1$ is in a triangle $\{c_1,t,t'\}$ of $M \ba d$.  Since $M \ba d/c_1$ has an $N$-minor and $|E(N)| \ge 4$, the matroid $M \ba d \ba t$ has an $N$-minor; a contradiction.
  So $M \ba d$ is not a wheel or a whirl.
  Thus, by Seymour's Splitter Theorem, there exists an element $c \in E(M \ba d)$ such that $M \ba d / c$ is $3$-connected and has an $N$-minor.
  As $|E(M)|-|E(N)| \ge 3$ and $M \ba d$ has no $N$-deletable elements, there exists an element $c'$ such that $M \ba d /c /c'$ has an $N$-minor.
  If $M/c$ is $3$-connected, then the \lcnamecref{catalyst} holds.
  So assume that $M /c$ is not $3$-connected.
  Since $M/c \ba d$ is $3$-connected, we deduce that $d$ is in a parallel pair in $M/c$, so $\{c,d\}$ is in a triangle of $M$.
  But $M\ba d/c$ has an $N$-minor, so this triangle is not \unfortunate; a contradiction.
\end{proof}

Suppose $M$ and $M \ba d$ are $3$-connected matroids, for some $d \in E(M)$, and let $X \subseteq E(M\ba d)$ be an exactly $3$-separating set in $M \ba d$.
We say that $d$ \emph{blocks} $X$ if $X$ is not $3$-separating in $M$. %, and $d$ \emph{fully blocks} $X$ if neither $X$ nor $X \cup d$ is $3$-separating in $M$.
If $d$ blocks $X$, then $d \in \cocl(X)$. %; whereas $d$ fully blocks $X$ if and only if $d \notin \cl(X) \cup \cl(E(M \ba d)-X)$.

\begin{lemma}
  \label{smallYcase}
  Let $M$ be a $3$-connected matroid, and let $N$ be a $3$-connected minor of $M$, where every triangle or triad of $M$ is \unfortunate, and $|E(N)| \ge 4$.
  Suppose that there exist elements $d,d' \in E(M)$ such that $M \ba d$ is $3$-connected and $M \ba d \ba d'$ has an $N$-minor.
  Then either:
  \begin{enumerate}
    \item $M$ has an $N$-detachable pair,\label{endwin1}
    \item $M$ has a $4$-element cocircuit containing $\{d,d'\}$, or\label{endwin34}
    %\item $M$ has a quad containing $\{d,d'\}$, or\label{endwin3}
    %\item $M$ has a $4$-element independent cocircuit containing $\{d,d'\}$, or\label{endwin4}
    \item $M \ba d$ has a cyclic $3$-separation $(Y,\{d'\},Z)$ such that $|Y \cap E(N)| \le 1$ and $|Y| \ge 4$.\label{endwin2}
  \end{enumerate}
\end{lemma}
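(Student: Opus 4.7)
The plan is to assume that outcomes (i) and (ii) of the lemma both fail, and deduce (iii). Since $\{d,d'\}$ is not $N$-detachable while $M\ba d\ba d'$ does have an $N$-minor, $M\ba d\ba d'$ is not 3-connected.

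The first step is to exclude the possibility that $d'$ lies in a triad of $M\ba d$. Such a triad $\{d',x,y\}$ corresponds to a cocircuit of $M$ that is either $\{d,d',x,y\}$---which is outcome (ii), contrary to assumption---or the triad $\{d',x,y\}$ itself. In the latter case $\{x,y\}$ is a series pair in $M\ba d'$, and since $N$ is 3-connected with $|E(N)|\ge 4$ it has no 2-cocircuits, so any $N$-labelling of $M\ba d'$ must involve contracting at least one of $x,y$. Using the dual of the label-switching lemma on this series pair, I arrange that $x$ is contracted; thus $M\ba d'/x$ has an $N$-minor, contradicting the $N$-grounded property of the triad $\{d',x,y\}$ of $M$.

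Consequently $M\ba d\ba d'$ has a 2-separation $(Y,Z)$ with $|Y|,|Z|\ge 3$. A short rank computation comparing $\lambda_{M\ba d}$ with $\lambda_{M\ba d\ba d'}$, together with \cref{swapSepSides3}, shows that $d'\in\cocl_{M\ba d}(Y)\cap\cocl_{M\ba d}(Z)$; in particular $(Y,\{d'\},Z)$ is a 3-separating partition of $E(M\ba d)$ with $d'$ a coguts element. Applying \cref{m2.73} to $(Y,Z)$ in the connected matroid $M\ba d\ba d'$, I relabel so that $|Y\cap E(N)|\le 1$.

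It remains to verify that $(Y,\{d'\},Z)$ is cyclic and that $|Y|\ge 4$. The cyclic condition requires that $Y$ and $Z$ each contain a circuit of $M\ba d$; if $Y$ is independent in $M\ba d\ba d'$, a rank computation gives $r^*_{M\ba d\ba d'}(Y)\le 1$, and the absence of coloops in $M\ba d\ba d'$ (which follows from the exclusion of triads through $d'$) forces $Y$ into a very degenerate configuration. The main obstacle, and where I expect the bulk of the work, is the border case $|Y|=3$. Here $Y$ is a triad of $M\ba d\ba d'$ coming from a cocircuit of $M$ of size 3, 4, or 5. The 3-cocircuit case yields an $N$-grounded triad $Y$ of $M$; then \cref{m2.73} produces an $N$-contractible $u\in Y$, and after contracting $u$ the remaining two elements form a series pair, so by label-switching $M/u/v$ has an $N$-minor---contradicting $N$-groundedness via \cref{freegrounded}. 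The 4-cocircuit and 5-cocircuit cases are dispatched analogously: each either reduces to outcome (ii), to the already excluded triad-through-$d'$ case, or allows \cref{r4cocirc3b} to extract an $N$-detachable pair in $M$, contradicting the failure of (i). Once $|Y|\ge 4$ and $Y$ contains a circuit, a symmetric argument handles $Z$, and $(Y,\{d'\},Z)$ is a cyclic 3-separation, giving outcome (iii).
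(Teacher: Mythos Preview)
Your overall structure is sound, and your step~1 (excluding triads of $M\ba d$ through $d'$) is exactly the content of the paper's case ``$\co(M\ba d\ba d')$ is $3$-connected''. The paper then invokes the dual of \cref{openVertSep2} to obtain the cyclic $3$-separation immediately, whereas you reconstruct it by hand via rank computations; that is fine, just longer.

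The genuine gap is in the $|Y|=3$ case. By this point you have already established that $Y$ contains a circuit, so $Y$ is a \emph{triangle} of $M$ (circuits survive deletion). Your argument ignores this and instead splits on the size of a cocircuit of $M$ giving rise to the triad $Y$ of $M\ba d\ba d'$. That case split does not work as you claim:
\begin{itemize}
\item The $3$-cocircuit case is vacuous: if $Y$ were simultaneously a triangle and a triad of $M$ then $\lambda_M(Y)=1$, impossible since $d,d'\in E(M)-Y$.
\item In the $4$-cocircuit cases ($Y\cup d$ or $Y\cup d'$) you cannot invoke \cref{r4cocirc3b}: that lemma requires two elements of the cocircuit to lie in no triangle, but every element of $Y$ lies in the triangle $Y$. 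Nor does either of these cocircuits contain $\{d,d'\}$, so outcome~(ii) is unavailable.
\item The $5$-cocircuit $Y\cup\{d,d'\}$ is not a $4$-element cocircuit, so it is neither outcome~(ii) nor amenable to \cref{r4cocirc3b}.
\end{itemize}

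The paper's argument here is one line: since $(Y,\{d'\},Z)$ is a cyclic $3$-separation of $M\ba d$ with $|Y\cap E(N)|\le 1$, \cref{doublylabel3} says at most one element of $Y$ fails to be $N$-contractible; hence the triangle $Y$ of $M$ is not $N$-grounded, contradicting the hypothesis. You should argue via the triangle structure, not the triad structure.
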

\begin{proof}
  If $M \ba d \ba d'$ is $3$-connected, then \cref{endwin1} holds, so assume otherwise.
  Let $(S,T)$ be a $2$-separation of $M \ba d \ba d'$.
  Suppose $\co(M \ba d \ba d')$ is $3$-connected.
  Then, without loss of generality, $S$ is contained in a series class in $M \ba d \ba d'$, and, by \cref{m2.73}, $|S \cap E(N)| \le 1$.
  Now $M \ba d \ba d' /(S-s)$ has an $N$-minor for any $s \in S$, since $|E(N)| \ge 4$.
  %Note that, since $M \ba d$ is $3$-connected, $S \cup d'$ is a corank-$2$ set in $M \ba d$ that does not contain any series pairs.
%
  Let $S'$ be a $2$-element subset of $S$.
  Since $M \ba d$ is $3$-connected, $S' \cup d'$ is a triad of $M \ba d$.
  As $M \ba d \ba d'$ has an $N$-minor, and every triad of $M$ is \unfortunate, $d$ blocks $S' \cup d'$, so $S' \cup \{d,d'\}$ is a $4$-element cocircuit,
  %If $d$ does not fully block $S' \cup d'$, then $d \in \cl(S' \cup d')$.
  %But $M/s$ has an $N$-minor for each $s \in S'$, so $S' \cup \{d,d'\}$ does not contain any triangles of $M$.
  %Hence $S' \cup \{d,d'\}$ is a quad, and \cref{endwin34} holds.
  %On the other hand, if $d$ does fully block $S' \cup d'$, then $S' \cup \{d,d'\}$ is an independent cocircuit,
  in which case \cref{endwin34} holds.

  Now we may assume that $\co(M \ba d \ba d')$ is not $3$-connected.
  Then $M \ba d$ has a cyclic $3$-separation $(Y, \{d'\},Z)$ with $|Y \cap E(N)| \le 1$, by \cref{openVertSep2,m2.73}.
  Suppose that $|Y|=3$.
  Then $Y$ is a triangle, since $Y$ %is $3$-separating and
  contains a circuit. 
  But there is at most one element in $Y$ that is not $N$-contractible, by \cref{doublylabel3}, so $Y$ is not an \unfortunate\ triangle.
  Hence $|Y| \ge 4$, and thus \cref{endwin2} holds.
\end{proof}

\begin{lemma}
  \label{step}
  Let $M$ be a $3$-connected matroid, and let $N$ be a $3$-connected minor of $M$ such that $|E(N)| \ge 4$, every triangle or triad of $M$ is \unfortunate, and $|E(M)|-|E(N)| \ge 5$.
  Then either
  \begin{enumerate}
    \item $M$ has an $N$-detachable pair;\label{step1}
    \item up to replacing $(M,N)$ by $(M^*,N^*)$, there exists $d \in E(M)$ such that $M \ba d$ is $3$-connected and has a cyclic $3$-separation $(Y, \{d'\}, Z)$ with $|Y| \ge 4$, where $M \ba d \ba d'$ has an $N$-minor with $|Y \cap E(N)| \le 1$;\label{step2}
    \item up to replacing $(M,N)$ by $(M^*,N^*)$, $M$ has an \planespider~$Q \cup z$ such that $z \in \cocl(Q)$ %(or $z \in \cl(Q)$)
      and $M \ba z$ %(or $M/z$, respectively)
      has an $N$-minor with $|Q \cap E(N)| \le 1$; or\label{step3}
    \item there is an $N$-labelling $(C,D)$ of $M$ such that, for every switching-equivalent $N$-labelling $(C',D')$,\label{step4}
      \begin{enumerate}[label=\rm(\alph*)]
        \item $M / c$ and $M \ba d$ are $3$-connected for every $c \in C'$ and $d \in D'$,
        \item each pair $\{c_1,c_2\} \subseteq C'$ is contained in a $4$-element circuit, and
        \item each pair $\{d_1,d_2\} \subseteq D'$ is contained in a $4$-element cocircuit.
      \end{enumerate}
  \end{enumerate}
\end{lemma}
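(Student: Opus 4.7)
The plan is to invoke \cref{catalyst} and \cref{smallYcase} to reduce to one specific subcase, and then build up an $N$-labelling. First, by \cref{catalyst}, after possibly replacing $(M,N)$ with $(M^*,N^*)$, there exist $d, d' \in E(M)$ with $M\ba d$ 3-connected and $M\ba d\ba d'$ having an $N$-minor. Applying \cref{smallYcase} to this $d, d'$: its first outcome is exactly conclusion~\cref{step1} of the present lemma, and its cyclic-3-separation outcome is exactly conclusion~\cref{step2}. So we may assume the remaining outcome of \cref{smallYcase}: there is a 4-element cocircuit $\{d, d', e_1, e_2\}$ of $M$.

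In this remaining case, I would construct an $N$-labelling $(C,D)$ of $M$ with $\{d,d'\}\subseteq D$ and show that either conclusion~\cref{step4} holds for it, or conclusion~\cref{step1} or~\cref{step3} can be extracted. The labelling is built by iterated application of Seymour's Splitter Theorem, adding one element at a time to $C$ or $D$, each time choosing an element whose single removal in $M$ preserves 3-connectivity and the $N$-minor; the hypothesis that every triangle and triad of $M$ is $N$-grounded is used to rule out obstructions coming from trivial parallel or series classes created along the way. For each pair $\{x,y\}$ that accumulates in $C$ or $D$, either $\{x,y\}$ is an $N$-detachable pair of $M$ (giving conclusion~\cref{step1}), or applying \cref{smallYcase} to $\{x,y\}$ in the appropriate dual form yields either a 4-element cocircuit (or circuit) containing $\{x,y\}$---maintaining (c) or (b) of \cref{step4}---or else a cyclic 3-separation of the type required for conclusion~\cref{step2}. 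Conclusion~\cref{step3} enters in the subcase where a 4-element cocircuit produced by this analysis is also a 4-element circuit, hence a quad $Q$, and some element $z\in\cocl(Q)-Q$ satisfies that $M\ba z$ has an $N$-minor with $|Q\cap E(N)|\le 1$; then $Q\cup z$ is an augmented quad 3-separator of the required form.

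The main obstacle is verifying that conditions (a)--(c) of \cref{step4} hold for every switching-equivalent labelling $(C',D')$, not just for the constructed $(C,D)$. Since the switching operations are local---exchanging a label between $C$ and $D$, or with an element of $E(N)$---the 4-element circuits and cocircuits witnessing conditions (b) and (c) should generally persist across switches, and 3-connectivity of single removals can be maintained by parallel/series-class arguments governed by the $N$-grounded hypothesis. The most delicate point will be switches involving elements of $E(N)$: such elements lie outside the structural constraints built up on $C\cup D$, so one must argue that any such switch does not create a pair whose joint removal evades the 4-element circuit/cocircuit condition, since any such escape would itself yield an $N$-detachable pair and hence conclusion~\cref{step1}.
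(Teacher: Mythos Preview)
Your opening moves are right: invoke \cref{catalyst} and then \cref{smallYcase} to land in the case of a $4$-element cocircuit through $\{d,d'\}$. But after that, your plan diverges from what is needed and has two genuine gaps.

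First, an $N$-labelling is not something you build element by element via the Splitter Theorem. Once $M/C\ba D\cong N$, the sets $C$ and $D$ are determined up to switches; you do not get to choose, at each step, an element whose removal in $M$ keeps $3$-connectivity. Iterated Splitter gives a chain $M=M_0,M_1,\dotsc$ of $3$-connected matroids, but the single-element removals are in $M_i$, not in $M$, so this does not yield condition~(a) that $M\ba d$ and $M/c$ are $3$-connected in $M$ for all $d\in D$ and $c\in C$. The paper instead proves a propagation claim: if $M\ba d$ is $3$-connected and $M\ba d\ba d'$ has an $N$-minor, then $M\ba d'$ is $3$-connected as well. This is the crux, and its proof is where a cyclic $3$-separation of $M$ with a small coplane side is analysed; when that side is a quad~$Q$ of $M$ with $d'\in\cocl(Q)$, one obtains exactly the augmented quad $3$-separator of conclusion~\cref{step3}. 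So your account of where \cref{step3} comes from---``when a $4$-element cocircuit produced by this analysis is also a $4$-element circuit''---is not how it arises.

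Second, you never explain how to produce a contractible element $c$ with $M/c$ $3$-connected, which is needed to start the dual argument for condition~(a) on $C$. The paper extracts this from the $4$-element cocircuit $\{d,d',e_1,e_2\}$: the pair $\{e_1,e_2\}$ is a series class in $M\ba d\ba d'$, hence each $e_i$ is $N$-contractible, hence not in a triangle; combined with the propagation claim giving that $M\ba d$ and $M\ba d'$ are both $3$-connected, \cref{r4cocirc3b} then forces $M/e_1$ or $M/e_2$ to be $3$-connected. Once you have one $3$-connected deletion and one $3$-connected contraction, the propagation claim and its dual immediately give (a) for every element of any switching-equivalent labelling (pick an anchor in $D\cap D'$ or $C\cap C'$ and propagate), and \cref{smallYcase} then gives (b) and (c). Your concern about switches into $E(N)$ is a red herring: after one switch, $D'$ still meets $D$ in at least one anchor, and the propagation claim does the rest.
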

\begin{proof}
  Assume that neither \cref{step1}, \cref{step2} nor \cref{step3} holds; we will show that \cref{step4} holds.
  By \cref{catalyst}, we may assume, up to replacing $M$ by $M^*$ and $N$ by $N^*$, that there is an element $d_0 \in E(M)$ such that $M \ba d_0$ is $3$-connected and $M \ba d_0$ has at least one $N$-deletable element, $d_0'$ say.
  Now, since neither \cref{step1} nor \cref{step2} holds, \cref{smallYcase} implies
  that $\{d_0,d_0'\}$ is contained in a $4$-element cocircuit~$C_0^*$.  More generally, we have
  the following:

  \begin{sublemma}
    \label{prop}
    If $\{d,d'\}$ is a pair of elements such that $M \ba d$ is $3$-connected and $M \ba d \ba d'$ has an $N$-minor,
    then $\{d,d'\}$ is contained in a $4$-element cocircuit. 
  \end{sublemma}

  We next show the following: 
  \begin{sublemma}
    \label{delisquasi}
    Let $d$ and $d'$ be distinct elements of $M$ such that $M \ba d$ is $3$-connected, and $M \ba d \ba d'$ has an $N$-minor.
    Then $M \ba d'$ is $3$-connected. 
  \end{sublemma}
  \begin{slproof}
    Suppose $M \ba d'$ is not $3$-connected.
    Since $d'$ is $N$-deletable, it is not in a triad, so $\co(M \ba d')$ is not $3$-connected.  Let $(Y,\{d'\},Z)$ be a cyclic $3$-separation of $M$.
    We may assume that $|Y \cap E(N)| \le 1$, by \cref{m2.73}.
    If $Y \cup d'$ is not coclosed, then there is some $z \in Z$ such that
    $(Y \cup z, \{d'\},Z-z)$ is a cyclic $3$-separation, and $|(Y \cup z) \cap E(N)| \le 1$ since $|E(N)| \ge 4$.
    So we may assume that $Y \cup d'$ is coclosed.
    If $Y$ contains a triangle $T$, then $t \in T$ is $N$-contractible in $M \ba d'$, by \cref{doublylabel3}, so $T$ is not an \unfortunate\ triangle; a contradiction.
    So $Y$ does not contain a triangle, which implies that $|Y| \ge 4$.
    Since $M \ba d$ is $3$-connected, $(Y,\{d'\},Z-d)$ or $(Y-d,\{d'\},Z)$ is a path of $3$-separations in $M \ba d$, where $d \in Z$ or $d \in Y$ respectively.
    As $|Z-d| \ge 2$ and $|Y-d| \ge 2$, the element $d'$ is a coguts element in either case.
    Thus, if $r^*_{M \ba d}(Z-d) \ge 3$ and $r^*_{M \ba d}(Y-d) \ge 3$, then this path of $3$-separations is a cyclic $3$-separation.

    Suppose $d \in Z$.
    As $d' \in \cocl_{M \ba d}(Z-d)$, it follows that $r^*_{M \ba d \ba d'}(Z-d) = r^*_{M \ba d}(Z-d) -1$.
    Since $M \ba d \ba d'$ has an $N$-minor with $|Y \cap E(N)| \le 1$, and $|E(N)| \ge 4$, we have $r^*_{M \ba d \ba d'}(Z-d) \ge 2$, thus
    $r^*_{M \ba d}(Z-d) \ge 3$.
    We deduce that $(Y,\{d'\},Z-d)$ is a cyclic $3$-separation.  Since $|Y| \ge 4$, \cref{step2} holds; a contradiction. 

    Now suppose $d \in Y$.
    If $r^*_{M \ba d}(Y-d) \ge 3$, then $(Y-d,\{d'\},Z)$ is a cyclic $3$-separation, and, since $Y-d$ does not contain a triangle, $|Y-d| \ge 4$, so \cref{step2} also holds in this case. 
    Suppose $r^*_{M \ba d}(Y-d) = 2$.
    Now $Y \cup d'$ is a corank-$3$ set in $M$ consisting of at least five elements.
    Let $Y' = \cocl(Y \cup d')-\{d,d'\}$.
    Since $r^*_{M \ba d \ba d'}(Y') = 1$, for any $y \in Y'$ the matroid $M\ba d \ba d'/(Y'-y)$ has an $N$-minor.
    Therefore,
    any triad contained in $\cocl(Y \cup d')$ would not be \unfortunate, so this set contains no triads.
    In particular, we observe that $Y \cup d'$ is a coplane in $M$ consisting of at least five elements.

    Suppose $|Y| \ge 5$.
    Then there exists $P \subseteq Y \cup d'$ such that $M^*|P = U_{3,6}$.
    Let $X = P-\{d,d'\}$. 
    Recall that $M / p_1 / p_2$ has an $N$-minor for all distinct $p_1,p_2 \in X$, and $\cocl(P)$ does not contain any triads.
    It follows, by \cref{6pointplane2}, that $M$ has an $N$-detachable pair.
    So we may assume that $|Y| = 4$.
    Since $Y$ does not contain any triangles or triads, $Y$ is a quad in $M$.
    So $Y \cup d'$ is an \planespider, thus \cref{step3} holds; a contradiction.
  \end{slproof}

  Let $C_0^*-\{d_0,d_0'\} = \{c_0,e\}$.
  Since $\{c_0,e\}$ is a series pair in $M \ba d_0 \ba d_0'$, both $M \ba d_0 \ba d_0'/c_0$ and $M \ba d_0 \ba d_0' / e$ have $N$-minors.
  So neither $c_0$ nor $e$ is contained in a triangle.
  Both $M \ba d_0$ and $M \ba d_0'$ are $3$-connected, by \cref{delisquasi}.
  Thus, \cref{r4cocirc3b} implies that either $M/c_0$ or $M/e$ is $3$-connected.
  Without loss of generality, $M/c_0$ is $3$-connected.

  Let $(C,D)$ be an $N$-labelling of $M$ with $d_0 \in D$ and $c_0 \in C$.
  For any $d_1 \in D-d_0$, the pair $\{d_0,d_1\}$ is contained in a $4$-element cocircuit by \cref{prop}, and $M \ba d_1$ is $3$-connected by \cref{delisquasi}.
  Now, for any $d_2 \in D-d_1$, the matroid $M \ba d_1 \ba d_2$ has an $N$-minor, so $\{d_1,d_2\}$ is contained in a $4$-element cocircuit, by \cref{prop}.
  Thus every pair of elements in $D$ is contained in a $4$-element cocircuit.
  A dual argument shows that
  $M / c$ is $3$-connected for every $c \in C$ and
  every pair $\{c_1,c_2\} \subseteq C$ is contained in a $4$-element circuit.

  Let $(C',D')$ be an $N$-labelling of $M$ that is switching-equivalent to $(C,D)$.
  It remains to show that %every pair $\{d_1,d_2\} \subseteq D'$ is contained in a $4$-element cocircuit, and every pair $\{c_1,c_2\} \subseteq C'$ is contained in a $4$-element circuit.
  these properties also hold for $(C',D')$.
  It is sufficient to show they hold when $(C',D')$ is obtained from $(C,D)$ by a single $N$-label switch.
  Suppose the switch is on an element $d \in D$; that is, $D' = (D-d) \cup e$ for some $e \in E(M)-D$.
  Let $d' \in D-d$.
  Then $M \ba d' \ba e$ has an $N$-minor, and $M \ba d'$ is $3$-connected.
  By \cref{delisquasi}, $M \ba e$ is $3$-connected.
  So $\{d',e\}$ is contained in a $4$-element cocircuit by \cref{prop}, for every $d' \in D-d$.
  The same argument applies for an $N$-label switch on an element in $C$.
  So \cref{step4} holds, thus completing the proof.
\end{proof}

\section{When every $N$-deletable pair is in a $4$-element cocircuit} %, and every $N$-contractible pair is in a $4$-element circuit}
\label{sechard}

Next, we focus on the case where \cref{step}\cref{step4} holds. 
In this section, we prove \cref{weaktheoremdetailed}.
Before stating this \lcnamecref{weaktheoremdetailed}, we define some $12$-element matroids that can have no $N$-detachable pairs, for an appropriately chosen minor $N$.

\begin{definition}
  A matroid~$M$ is a \emph{\quadflower} if $|E(M)|=12$ and $r(M) = r^*(M) = 6$, and there is a partition $(Q_1,Q_2,Q_3)$ of $E(M)$ such that $Q_1 \cup Q_2$, $Q_1 \cup Q_3$, and $Q_2 \cup Q_3$ are \spider s.
\end{definition}

We note that a \quadflower~$M$ is $3$-connected, but neither $M \ba x \ba y$ nor $M / x / y$ is $3$-connected, for every pair $\{x,y\} \subseteq E(M)$.
Moreover, for a $3$-connected minor $N$ of $M$, it is possible that $E(N)$ meets each of the three quads that partition $E(M)$.

\begin{definition}
  A matroid~$M$ is a \emph{\tcn} if $|E(M)|=12$ and $r(M) = r^*(M) = 6$, and there exists a labelling $\bigcup_{i \in \seq{6}}\{e_i,e_i'\}$ of $E(M)$ such that 
  $\{e_1, e_2, e_3', e_4'\}$, $\{e_3, e_4, e_5', e_6'\}$, $\{e_5, e_6, e_1', e_2'\}$, \\
  $\{e_4, e_5, e_1', e_3'\}$, $\{e_1, e_3, e_6', e_2'\}$, $\{e_6, e_2, e_4', e_5'\}$, \\
  $\{e_2, e_5, e_3', e_6'\}$, $\{e_1, e_4, e_2', e_5'\}$, $\{e_3, e_6, e_1', e_4'\}$, \\
  $\{e_5, e_1, e_4', e_6'\}$, $\{e_4, e_6, e_2', e_3'\}$, $\{e_2, e_3, e_5', e_1'\}$, \\
  $\{e_2, e_4, e_6', e_1'\}$, $\{e_6, e_1, e_3', e_5'\}$, and $\{e_3, e_5, e_2', e_4'\}$
  are circuits; and \\
  $\{e_1, e_2, e_5', e_6'\}$, $\{e_3, e_4, e_1', e_2'\}$, $\{e_5, e_6, e_3', e_4'\}$, \\
  $\{e_4, e_5, e_6', e_2'\}$, $\{e_1, e_3, e_4', e_5'\}$, $\{e_6, e_2, e_1', e_3'\}$, \\
  $\{e_2, e_5, e_1', e_4'\}$, $\{e_1, e_4, e_3', e_6'\}$, $\{e_3, e_6, e_2', e_5'\}$, \\
  $\{e_5, e_1, e_2', e_3'\}$, $\{e_4, e_6, e_5', e_1'\}$, $\{e_2, e_3, e_4', e_6'\}$, \\
  $\{e_2, e_4, e_3', e_5'\}$, $\{e_6, e_1, e_2', e_4'\}$, and $\{e_3, e_5, e_6', e_1'\}$
  are cocircuits.
\end{definition}

A \tcn\ is a $12$-element matroid that, for an appropriately chosen $3$-connected minor $N$, has no $N$-detachable pairs.
Indeed, let $M$ be a \tcn, with labelling as defined above.
Then it can be readily checked that $M$ is self-dual under the isomorphism that maps $e_i$ to $e_i'$ and $e_i'$ to $e_i$, for each $i \in \seq{6}$, and $M$ has the property that $M \ba x \ba y$ is $3$-connected for distinct $x,y \in E(M)$ if and only if $\{x,y\}=\{e_i,e_i'\}$ for some $i \in \seq{6}$.
Moreover, $M$ is $4$-connected.
Let $N=U_{2,4}$.
Then $M$ has an $N$-minor, but neither $M/e_i/e_i'$ nor $M \ba e_i \ba e_i'$ has an $N$-minor for any $i \in \seq{6}$.

\begin{proposition}
  \label{weaktheoremdetailed}
  Let $M$ be a $3$-connected matroid, and let $N$ be a $3$-connected minor of $M$ such that
  $|E(N)| \ge 4$,
  every triangle or triad of $M$ is \unfortunate, and
  $|E(M)|-|E(N)| \ge 5$.
  Let $(C,D)$ be an $N$-labelling of $M$ such that, for every switching-equivalent $N$-labelling $(C',D')$,
  \begin{enumerate}[label=\rm(\alph*)]
    \item $M / c$ and $M \ba d$ are $3$-connected for every $c \in C'$ and $d \in D'$,
    \item each pair $\{c_1,c_2\} \subseteq C'$ is contained in a $4$-element circuit, and
    \item each pair $\{d_1,d_2\} \subseteq D'$ is contained in a $4$-element cocircuit.
  \end{enumerate}
  Then, up to replacing $(M,N)$ by $(M^*,N^*)$, one of the following holds:
  \begin{enumerate}
    \item there is some $P \subseteq E(M)$ such that $E(M)-E(N) \subseteq P$ and $P$ is a \spikelike, a \spider, an \pspider, or a \twisted; 
    \item there exists $d \in E(M)$ such that $M \ba d$ is $3$-connected and has a cyclic $3$-separation $(Y, \{d'\}, Z)$ with $|Y| \ge 4$, where $M \ba d \ba d'$ has an $N$-minor with $|Y \cap E(N)| \le 1$;\label{wtd2}
    \item $M$ has an \planespider~$Q \cup z$ such that $M$ has an $N$-minor with $|(Q \cup z) \cap E(N)| \le 1$; 
    \item $|E(M)| =12$, and $M$ is either a \quadflower\ or a \tcn; or
    \item $|E(M)| \leq 10$.
  \end{enumerate}
\end{proposition}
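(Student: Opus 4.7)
The plan is to exploit the rigid structure imposed by the hypothesis that every pair in $D$ lies in a common $4$-element cocircuit, and dually every pair in $C$ lies in a $4$-element circuit. Since $|E(M)|-|E(N)| \ge 5$ forces $|C|+|D| \ge 5$, after possibly swapping to the dual we may assume $|D| \ge 3$. Fix distinct $d_1,d_2,d_3 \in D$; for each pair $\{i,j\} \subseteq \seq{3}$ choose a $4$-cocircuit $C^*_{ij}$ containing $\{d_i,d_j\}$, and write $C^*_{ij} = \{d_i,d_j,p_{ij},p'_{ij}\}$.

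The first step is to classify how the three cocircuits $C^*_{12}$, $C^*_{13}$, $C^*_{23}$ can intersect. Cocircuit elimination generates further cocircuits, and orthogonality against the forbidden triangles and triads (recall every triangle and triad is \unfortunate) eliminates most configurations. The surviving possibilities should correspond, up to symmetry, to: a single quad containing $\{d_1,d_2,d_3\}$; a spike configuration in which the cocircuits are unions of legs of a \spikelike; an \pspider\ pattern; or a \twisted. In each case I would assemble the circuits and cocircuits required by the relevant definition (\cref{def-spike-like3,def-twisted3,def-pspider3,def-spider-like3}) via repeated orthogonality, pulling in the matching $4$-circuit structure on nearby elements of $C$.

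The next step is to grow the candidate $3$-separator over all of $D$ and, by duality, all of $C$. For each additional element, the hypothesis produces a $4$-cocircuit (or $4$-circuit) with an existing element, and orthogonality with the already-identified circuits and cocircuits forces consistency with the structure built so far. Uncrossing via \cref{onetrick3} keeps the running set exactly $3$-separating, while \cref{gutsnew,gutsstayguts3} decide whether each added element attaches as a guts or coguts element. This produces a \psep\ $P$ of one of the four types listed in conclusion~(i). If $E(M)-E(N) \subseteq P$, we are in conclusion~(i). Otherwise, fix $e \in (E(M)-E(N))-P$: if the initial classification gave a quad, then $P \cup e$ is an \planespider\ giving conclusion~(iii); in the \spikelike, \pspider, and \twisted\ cases, the \auging\ discussion at the end of \cref{secobstrs}, combined with the $N$-deletability data, yields either an $N$-detachable pair (contradicting the hypothesis) or a cyclic $3$-separation of $M \ba d$ as in conclusion~(ii); and for the \spider, the rank/corank budget tightly limits the number of elements outside $P$, forcing $|E(M)| \le 12$ and conclusion~(iv).

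The main obstacle will be the classification in the second paragraph. I expect a moderate number of subcases, each needing careful orthogonality bookkeeping; the hardest should be the subcase producing the \twisted, where the interplay of three circuits and three cocircuits on six elements is most intricate. A further subtlety is that the hypothesis is closed only under switching-equivalent relabellings, which preserve $|C|$ and $|D|$, so any use of the partners $p_{ij},p'_{ij}$ lying outside $C \cup D$ must proceed through direct minor-theoretic or orthogonality arguments rather than by relabelling them into $C$ or $D$.
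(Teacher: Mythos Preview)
Your high-level strategy---fix three elements of $D$, study the $4$-cocircuits through each pair, classify the resulting configurations, and then grow---is essentially what the paper does, but the paper's organisation is different and your proposal misses a key mechanism.

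The paper's top-level split is not by the intersection pattern of three cocircuits but by whether \emph{some} pair in $C'$ or $D'$ (for some switching-equivalent labelling) lies in a quad. The quad case (\cref{wtdp1}) produces the \spikelike, \pspider, \spider, and \planespider\ outcomes; the no-quad case (\cref{weaktheoremdetailed2}) produces the \twisted. Each of these lemmas is long and involves many nested subcases driven by orthogonality and circuit/cocircuit elimination; your ``classification'' paragraph substantially understates this.

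More importantly, your final caveat is backwards. Switching \emph{does} let you move the partners $p_{ij},p'_{ij}$ into $C$: since $\{p_{ij},p'_{ij}\}$ is a series pair in $M\ba d_i\ba d_j$, an $N$-label switch (the dual of the parallel-pair lemma) makes either of them $N$-labelled for contraction, and then hypothesis~(b) puts that element in a $4$-circuit. This is the engine of the entire argument---the paper uses it dozens of times to generate new circuits and cocircuits. Without it you cannot access hypothesis~(b) for these elements, and the classification cannot proceed.

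Finally, you do not mention the tool that actually delivers conclusion~(ii). The paper proves a dedicated lemma (\cref{3sepwin2}): whenever you find a $4$-element $3$-separating set $X$ in $M\ba d$ containing a circuit, with enough $N$-removable elements and some $d'\in D$ in its coclosure, you get the cyclic $3$-separation of conclusion~(ii). This is invoked repeatedly throughout both case lemmas to dispose of configurations that do not assemble into one of the named $3$-separators, and it is what makes the case analysis terminate.
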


  For the entirety of this section we work under the hypotheses of \cref{weaktheoremdetailed}.
  We start with a lemma that provides sufficient conditions for (ii) to hold. 
  We will use this frequently in what follows.

\begin{lemma}
  \label{3sepwin2}
  If there exists a pair $\{d,d'\} \subseteq D'$, for some $N$-labelling $(C',D')$ that is switching-equivalent to $(C,D)$, and a set $X \subseteq E(M)-\{d,d'\}$ such that $|X| \ge 4$, $X$ is $3$-separating in $M \ba d$, $X$ contains a circuit, $|X-(C' \cup D')| \le 2$, and $d' \in \cocl_{M \ba d}(X)$; then \cref{wtd2} of \cref{weaktheoremdetailed} holds.
\end{lemma}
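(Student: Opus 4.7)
The plan is to take the cyclic $3$-separation in conclusion~\cref{wtd2} to be $(Y,\{d'\},Z)$ in $M\ba d$, with $d$ and $d'$ the given elements, $Y$ derived from $X$, and $Z$ the complement. Since $d\in D'$, property~(a) of $(C',D')$ ensures $M\ba d$ is $3$-connected, which is the first requirement of~\cref{wtd2}.

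First I would set $Y=X$ and $Z=E(M\ba d)-(X\cup\{d'\})$, and verify $(Y,\{d'\},Z)$ is a cyclic $3$-separation of $M\ba d$. The coguts condition $d'\in\cocl_{M\ba d}(X)\cap\cocl_{M\ba d}(Z)$ reduces, via \cref{gutsnew}(ii), to showing $X\cup\{d'\}$ is exactly $3$-separating. The latter follows from $d'\in\cocl(X)$ together with a size check $|Z|\ge 3$, which uses $|X\cap E(N)|\le 2$ and the global hypothesis $|E(M)|-|E(N)|\ge 5$. The corank conditions $r^*_{M\ba d}(X)\ge 3$ and $r^*_{M\ba d}(Z)\ge 3$ follow from the identity $r^*(W)=\lambda(W)+|W|-r(W)$: since $X$ is exactly $3$-separating and contains a circuit, $r^*_{M\ba d}(X)\ge 3$, and the analogous statement for $Z$ follows from the rich circuit structure on the complement, ensured by labelling properties~(b) and~(c), which force $4$-element circuits and cocircuits through every pair in $C'$ and $D'$.

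The crucial step is reducing $|Y\cap E(N)|$ from at most~$2$ (as given for $X$) to at most~$1$. If $|X\cap E(N)|\le 1$, set $Y=X$. Otherwise $X\cap E(N)=\{e_1,e_2\}$. Since $X$ contains a circuit, one can find $e_i\in\{e_1,e_2\}$ that is a guts element of $X$, namely $e_i\in\cl_{M\ba d}(X-e_i)\cap\cl_{M\ba d}(E(M\ba d)-X)$, so that $X-e_i$ is still exactly $3$-separating; provided $|X|\ge 5$, taking $Y=X-e_i$ satisfies $|Y|\ge 4$ and $|Y\cap E(N)|=1$. In the edge case $|X|=4$ with $|X\cap E(N)|=2$, a label-switching argument using properties~(b) and~(c) together with the \unfortunate-triangle structure (\cref{unfortunatetri,freegrounded}) should produce a switching-equivalent $N$-labelling $(C'',D'')$ in which one of the $e_i$ becomes labelled, giving the desired bound with $Y=X$. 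The main obstacle is exactly this reduction: the $4$-element cocircuit through $\{d,d'\}$ guaranteed by property~(c), together with its interaction with $X$ and the labels in $C'\cup D'$, is where the hypotheses on $(C',D')$ do the heavy lifting.
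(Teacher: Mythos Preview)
Your overall setup is right, but the ``crucial step'' --- reducing $|X\cap E(N)|$ from at most~$2$ down to at most~$1$ --- has a genuine gap, and the paper handles it by a completely different and much simpler idea. Your plan is to shrink $X$ by peeling off a guts element $e_i\in X\cap E(N)$, or to relabel. But there is no reason an element of $E(N)$ lying in $X$ should be a guts element of the separation $(X,E(M\ba d)-X)$; you give no argument for this, and in general it will fail. Likewise, your handling of the $|X|=4$ edge case is a hope rather than an argument: properties (b) and (c) give you $4$-element circuits and cocircuits through labelled pairs, but an element of $E(N)$ need not lie in any such circuit, so there is no obvious switch that relabels it.

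The paper bypasses all of this with a single application of \cref{m2.73}. Since $d'\in\cocl_{M\ba d}(X)$ and $X$ is $3$-separating in $M\ba d$, the partition $(X,Z)$ is a $2$-separation of the connected matroid $M\ba d\ba d'$ (which has an $N$-minor because $\{d,d'\}\subseteq D'$). Now $|X\cap E(N)|=|X-(C'\cup D')|\le 2$ and $d,d'\notin E(N)$, so $|Z\cap E(N)|\ge|E(N)|-2\ge 2$. Hence \cref{m2.73} forces $|X\cap E(N)|\le 1$ directly --- no shrinking, no relabelling, and no use of properties (b) or~(c). The same $E(N)$-count also cleanly gives $r^*_{M\ba d}(Z)\ge 3$: if $r^*_{M\ba d}(Z)=2$, then $r^*_{M\ba d\ba d'}(Z)=1$, so $Z$ lies in a series class and $|Z\cap E(N)|\le 1$, contradicting $|Z\cap E(N)|\ge 2$. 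This replaces your vague appeal to ``rich circuit structure on the complement.''
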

\begin{proof}
  Let $Y = X \cup \{d,d'\}$ and $Z = E(M)-Y$.
  We claim that $(X,\{d'\},Z)$ is a cyclic $3$-separation of $M \ba d$.
  Indeed, $X \cup d'$ is $3$-separating in $M \ba d$ since $d' \in \cocl_{M \ba d}(X)$ and $M \ba d$ is $3$-connected.
  So $(X,\{d'\},Z)$ is a path of $3$-separations in $M \ba d$ where $d'$ is a coguts element.
  Since $|E(N)| \ge 4$ and $|Y-(C' \cup D')| \le 2$, \cref{m2.73} implies that $|X \cap E(N)| \le 1$.
  If $r^*_{M \ba d}(Z) =2$, then $r^*_{M \ba d \ba d'}(Z)=1$, so $|Z \cap E(N)| \le 1$.
  But this contradicts that $|E(N)| \ge 4$.
  As $X$ contains a circuit, $(X,\{d'\},Z)$ is a cyclic $3$-separation of $M \ba d$ as claimed, so
  \cref{weaktheoremdetailed}(ii) holds. 
\end{proof}

Towards a proof of \cref{weaktheoremdetailed}, we first handle the case where an $N$-deletable or $N$-contractible pair is contained in a quad.

\begin{lemma}
  \label{wtdp1}
  Suppose $(C',D')$ is an $N$-labelling of $M$ that is switching-equivalent to $(C,D)$, and there is a pair $\{d,d'\} \subseteq D'$ or $\{c,c'\} \subseteq C'$ that is contained in a quad.
  Then, up to replacing $(M,N)$ by $(M^*,N^*)$, either
  \begin{enumerate}
    \item there exists $d_0 \in E(M)$ such that $M \ba d_0$ is $3$-connected and has a cyclic $3$-separation $(Y, \{d_0'\}, Z)$ with $|Y| \ge 4$, where $M \ba d_0 \ba d_0'$ has an $N$-minor with $|Y \cap E(N)| \le 1$;\label{wtdp1c2}
    \item $M$ has an \planespider~$Q \cup z$ such that $M$ has an $N$-minor with $|(Q \cup z) \cap E(N)| \le 1$;\label{wtdp1c3}
    \item there is some $P \subseteq E(M)$ such that $E(M)-E(N) \subseteq P$ and $P$ is a \spikelike, an \pspider, or a \spider\ of $M$; or\label{wtdp1c1}
    %\item $|E(M)| \le 12$, and $M$ has a \spider.\label{wtdp1c4}
    \item $|E(M)| \in \{10,12\}$, and there is a partition $(Q_1,Q_2,Z)$ of $E(M)$ such that $Q_1 \cup Q_2$ is a \spider, and $Q_1 \cup Z$ and $Q_2 \cup Z$ are both \spider s or \pspider s.\label{wtdp1c4}
  \end{enumerate}
\end{lemma}

\begin{proof}
  Suppose that \cref{wtdp1c2} does not hold; we will show that \cref{wtdp1c3}, \cref{wtdp1c1}, or \cref{wtdp1c4} holds.
  For notational convenience, in what follows we use $(C,D)$, rather than $(C',D')$, to refer to an $N$-labelling that is switching-equivalent to $(C,D)$.

  \begin{sublemma}
    \label{nomeet3con}
    Let $Q$ be a quad with $|Q \cap (C \cup D)| \ge 2$.
    If $e \in \cl(Q)-Q$ and $e \in C$, 
    or $e \in \cocl(Q)-Q$ and $e \in D$,
    then $Q \cup e$ is an \planespider\ such that \cref{wtdp1c3} holds.
  \end{sublemma}
  \begin{slproof}
    Let $e \in \cl(Q)-Q$ where $e$ is $N$-labelled for contraction.
    Clearly $Q \cup e$ is an \planespider.
    Since $|Q \cap (C \cup D)| \ge 2$ and $|E(N)| \ge 4$, \cref{m2.73} implies that $M/e$ has an $N$-minor such that $|Q \cap E(N)| \le 1$, so \cref{wtdp1c3} holds.
    Similarly, if some $e \in \cocl(Q)-Q$ is $N$-labelled for deletion, then $Q \cup e$ is an \planespider\ such that  \cref{wtdp1c3} holds.
  \end{slproof}

  We may assume, by taking the dual if necessary, that a pair of elements in $D$ is contained in a quad~$Q_1$.
  Suppose that $|D| = 2$.
  Let $\{d,d'\} \subseteq D \cap Q_1$.
  Since $|E(M)|-|E(N)| \ge 5$, we have $|C| \ge 3$.
  Let $c_1,c_2,c_3$ be distinct elements in $C$.
  For distinct $i,j \in [3]$, the pair $\{c_i,c_j\}$ is in a $4$-element circuit $C_{i,j}$. 
  Consider the set $X=(C_{1,2} \cup C_{1,3} \cup C_{2,3}) - \{c_1,c_2,c_3\}$ in $M/c_1/c_2/c_3$.
  If $r_{M/c_1/c_2/c_3}(X)=3$, then $X$ consists of three disjoint parallel pairs, so $|D| \ge 3$; a contradiction.
  So $r_{M/c_1/c_2/c_3}(X) \le 2$.
  Then, for some $\{i,j,k\} = [3]$, the set $X' = (C_{i,j} \cup C_{i,k}) - \{c_1,c_2,c_3\}$ satisfies $r_{M/c_1/c_2/c_3}(X')=1$.
  If $|X'| \ge 3$, then $|X' \cap D| \ge 2$ and $|D| \ge 3$; a contradiction.
  So $|X'| = 2$; that is, $C_{i,j} = \{c_i,c_j,d_0,e\}$ and $C_{i,k} = \{c_i,c_k,d_0,e\}$ for some $d_0 \in \{d,d'\}$.
  Now $C_{i,j} \cup C_{i,k}$ is a $5$-element plane that meets $Q_1$.
  By orthogonality, $|Q_1 \cap (C_{i,j} \cup C_{i,k})| \ge 3$, so $Q_1 \subseteq \cl_M(C_{i,j} \cup C_{i,k})$.
  It now follows from \cref{nomeet3con} that $Q_1 \cup c$ is an \planespider\ for some $c \in \{c_1,c_2,c_3\}$, so  \cref{wtdp1c3} holds.

  We may now assume that $|D| \ge 3$.

  \begin{sublemma}
    \label{nomeet3del}
    Let $d$, $d'$, and $d''$ be distinct elements in $D$, and suppose $\{d,d'\}$ is contained in a quad~$Q$, $d'' \in E(M)-Q$, and $\{d,d''\}$ is contained in a $4$-element cocircuit~$C^*$.
    Then either $|Q \cap C^*|=2$, or \cref{wtdp1c3} holds.
  \end{sublemma}
  \begin{slproof}
    By orthogonality, $|Q \cap C^*| \neq 1$, so $|Q \cap C^*| \in \{2,3\}$.
    Suppose $|Q \cap C^*| = 3$, in which case $C^* - Q = \{d''\}$.
    Now $d'' \in \cocl(Q) - Q$ and $d''$ is $N$-deletable, so \cref{wtdp1c3} holds, by \cref{nomeet3con}.
  \end{slproof}

  Recall that there is a pair of elements of $D$ contained in a quad~$Q_1$.
  We claim that, up to an $N$-label switch, there are distinct elements $d_0,d_1,d_2 \in D$ with $\{d_0,d_1\} \subseteq Q_1$ and $d_2 \in E(M)-Q_1$.
  Suppose $\{d_0,d_1\}$ is a pair of elements contained in $D \cap Q_1$, and $D - Q_1 = \emptyset$.
  Since $|D| \ge 3$, there exists an element $d' \in D-\{d,d_1\}$, so $d' \in Q_1$.
  Let $Q_1-\{d_0,d_1,d'\} = \{c\}$.
  As $c$ is a coloop in $M \ba d_0 \ba d_1 \ba d'$, we may assume that $c \in C$.
  Then, as $|C \cup D| \ge 5$, there exists an element $c' \in C-Q_1$.
  Now $\{c,c'\}$ is contained in a $4$-element circuit~$C_1$.
  By orthogonality and \cref{nomeet3con}, we may assume that $|C_1 \cap Q_1| = 2$, otherwise \cref{wtdp1c3} holds.
  Let $C_1 - Q_1 = \{c',d_2\}$.
  Since $M\ba d_0 \ba d' /c/c'$ has an $N$-minor, and $d_2$ is in a parallel pair in this matroid, we may assume, up to a possible $N$-label switch, that $d_2 \in D$, and $|\{d_0,d',d_1\} \cap D| \ge 2$.  Up to relabelling $d_0,d',d_1$, we may assume that $d_0,d_1,d_2 \in D$ (and $\{d_0,d_1\} \subseteq Q_1$).  This proves the claim.

  Now let $d_0$, $d_1$, and $d_2$ be distinct elements of $D$ with $\{d_0,d_1\} \subseteq Q_1$ and $d_2 \notin Q_1$.
  In what follows we also assume that \cref{wtdp1c3} does not hold.
  If $C_1^*$ is the $4$-element cocircuit containing $\{d_0,d_2\}$, then $|Q_1 \cap C_1^*| = 2$ by \cref{nomeet3del}.
  First we rule out the case where $d_1 \in C_1^*$.
  More generally, we prove the following:

  \begin{sublemma}
    \label{qq0}
    Let $d,d',d''$ be distinct elements in $D$.
    If $\{d,d'\}$ is contained in a quad~$Q$, and $\{d,d''\}$ is contained in a $4$-element cocircuit~$C^*$ where $d'' \notin Q$, then $d' \notin C^*$.
  \end{sublemma}
  \begin{slproof}
    Suppose $d' \in C^*$.
    By \cref{nomeet3del}, $Q \cap C^*=\{d,d'\}$.
    Pick $y$ so that $C^*-Q = \{y,d''\}$.
    Since $M \ba d \ba d'\ba d''$ has an $N$-minor, and $y$ is a coloop in this matroid, $y$ is $N$-flexible and, in particular, we may assume that $y \in D$, so $C^* \subseteq D$.
    Observe that $Q$ is $3$-separating in $M \ba d''$, and $y \in \cocl_{M \ba d''}(Q)$.
    It follows, by \cref{3sepwin2}, that \cref{wtdp1c2} holds; a contradiction.
  \end{slproof}

  We next handle the case where the $4$-element cocircuit containing $\{d_0,d_2\}$ is a quad.
  More generally, we prove the following:

  \begin{sublemma}
    \label{qq}
    If there is an element $d'' \in D - Q_1$ such that $\{d,d''\}$ is contained in a quad $Q_2$, and $d_1 \notin Q_2$,
    then $Q_1 \cup Q_2$ is a \spikelike\ where $d_0$, $d_1$, and $d''$ are in different legs.
  \end{sublemma}
  \begin{slproof}
    By \cref{qq0}, $d_1 \notin Q_2$.
    Observe that $d_0 \in Q_1 \cap Q_2$, $d_1 \in Q_1-Q_2$ and $d'' \in Q_2-Q_1$.
    By \cref{nomeet3del}, $|Q_1 \cap Q_2|=2$.
    Now $\{d_1,d''\}$ is contained in a $4$-element cocircuit~$C_1^*$ and, by \cref{nomeet3del} again, $|Q_1 \cap C_1^*| = |Q_2 \cap C_1^*| = 2$.
    By \cref{qq0} again, $d_0 \notin C_1^*$.
    Hence, either $C_1^* = Q_1 \triangle Q_2$, or $C_1^* = \{d_1,d'',x,q'\}$ where $(Q_1 \cap Q_2)-d_0 = \{x\}$ and $q' \in E(M) - (Q_1 \cup Q_2)$.

    We first consider the latter case.
    Pick $q_1$ and $q_2$ so that $Q_1= \{d_0,d_1,x,q_1\}$ and $Q_2=\{d_0,x,d'',q_2\}$. 
    As $Q_1 \cup Q_2$ is $3$-separating, $q' \in \cocl(Q_1 \cup Q_2)$, and, as $|E(M)| \ge 9$, it follows from \cref{gutsstayguts3} that $q' \notin \cl(Q_1 \cup Q_2)$. 
    Since $\{q_1,q_2,q',x\}$ is a series class in $M \ba d_0 \ba d_1 \ba d''$,
    we have $|E(M)|-|E(N)| \ge 6$, so $|E(M)| \ge 10$.
    Moreover, we may assume that $\{x,q'\} \subseteq C$, so $\{x,q'\}$ is contained in a $4$-element circuit~$C_1$. 
    By orthogonality, $C_1$ meets $Q_1-x$ and $Q_2-x$.
    As $q' \notin \cl(Q_1 \cup Q_2)$, we deduce that $C_1 = \{x,q',d_0,f\}$ where $f \in E(M)- (Q_1 \cup Q_2 \cup q')$.

    Since $M \ba d_1 \ba d''/x/q'$ has an $N$-minor, and $\{d_0,f\}$ is a parallel pair in this matroid, we may assume, up to an $N$-label switch, that $\{d_1,d'',f\} \subseteq D$.
    So $\{f,d''\}$ is contained in a $4$-element cocircuit~$C_2^*$. 
    The cocircuit~$C_2^*$ meets $\{x,d_0,q_2\}$, by orthogonality with $Q_2$.
    If $q_2 \in C_2^*$, then $C_2^*$ also meets $\{x,d_0,q'\}$, by orthogonality with $C_1$.
    On the other hand, if $q_2 \notin C_2^*$, then $C_2^*$ meets $\{x,d_0\}$, and hence intersects $Q_1$ in two elements, by orthogonality.  In either case, $C_2^* \subseteq Q_1 \cup Q_2 \cup \{q',f\}$.  Thus $f$ is in the closure and coclosure of the $3$-separating set $Q_1 \cup Q_2 \cup q'$; since $|E(M)| \ge 10$, this is contradictory.

    Now we consider the case where $C_1^* = Q_1 \triangle Q_2$.
    If $C_1^*$ is dependent, then $C_1^*$ is a quad, and $Q_1 \cup Q_2$ is a \spikelike, as illustrated in \cref{spikelikecase}.
    So suppose $C_1^*$ is independent.
    We let $Q_1-Q_2 = \{q_1,d_1\}$ and $Q_2-Q_1=\{q_2,d''\}$.
    The matroid $M \ba d_0 \ba d_1 \ba d'' / q_1 /q_2$ has an $N$-minor,
    so $\{q_1,q_2\}$ is contained in a $4$-element circuit~$C_1$. 
    Since $C_1^*$ is independent, $C_1 \neq C_1^*$, so $C_1$ meets $\{d_0,x\}$, by orthogonality.  But $\{d_0,x\} \nsubseteq C_1$, by \cref{nomeet3con}.
    So $C_1 = \{q_1,q_2,x',f\}$, where $x' \in \{x,d_0\}$ and $f \in E(M)-(Q \cup q_2)$.

    If $x' \neq d_0$, then it follows that $M \ba d_0 \ba d_1 \ba d'' / x' / q_1$ has an $N$-minor, where $\{q_2,f\}$ is a parallel pair in this matroid.
    If $f = d''$, then $q_1 \in \cl(Q_2) \cap C$, in which case \cref{wtdp1c3} holds by \cref{nomeet3con}; so we may assume $f \neq d''$. Now, up to switching the $N$-labels on $q_2$ and $f$, we have $\{d'',q_2\} \subseteq D$, in which case \cref{wtdp1c2} holds by \cref{3sepwin2}; a contradiction.

    So let $C_1 = \{q_1,q_2,d_0,f\}$.
    Now, as $M \ba d_1 \ba d'' / q_1 / q_2$ has an $N$-minor, and $\{d_0,f\}$ is a parallel pair in this matroid, $M \ba d_1 \ba d'' \ba f/ q_1 / q_2$ has an $N$-minor.
    So $\{f,d''\}$ is contained in a cocircuit~$C_2^*$. 
    Since $Q_1 \cup Q_2$ is $3$-separating, and $f \in \cl(Q_1 \cup Q_2)$, we have $f \notin \cocl(Q_1 \cup Q_2)$, since $|E(M)| \ge 9$.
    In particular, $C_2^* \nsubseteq Q_1 \cup Q_2 \cup f$.
    It follows, by orthogonality between $C_2^*$ and either $Q_2$ or $Q_1$, that $q_2 \in C_2^*$, and $C_2^* = \{f,q_2,d'',h\}$ for some $h \in E(M)-(Q_1 \cup Q_2 \cup f)$.
    As $M \ba d_1 \ba d'' \ba f/q_1$ has an $N$-minor, and $\{q_2,h\}$ is a series pair in this matroid, $M \ba f/q_1/h$ has an $N$-minor.
    So $\{q_1,h\}$ is contained in a circuit~$C_2$. 
    By orthogonality with $Q_1$, $Q_2$, and $C_2^*$, we have $C_2 \subseteq Q_1 \cup Q_2 \cup \{f,h\}$, so $h \in \cl(Q_1 \cup Q_2 \cup f) \cap \cocl(Q_1 \cup Q_2 \cup f)$.
    By \cref{gutsstayguts3}, we deduce that $|E(M)|=9$.  Since $Q_1 \cup Q_2$ and $Q_1 \cup Q_2 \cup f$ are exactly $3$-separating, $f$ is a guts element, so $E(M)-(Q_1 \cup Q_2)$ is a triangle containing $\{f,h\}$.  But since $M \ba f/h$ has an $N$-minor, this triangle is not \unfortunate; a contradiction.
\end{slproof}

    \begin{figure}[bt]
      \centering
      \begin{tikzpicture}[rotate=90,xscale=0.8,yscale=0.9,line width=1pt]
        \tikzset{VertexStyle/.append style = {minimum height=5,minimum width=5}}
        \clip (-2.5,-6) rectangle (3.0,2);
        \node at (-1,-1.4) {$E(M)-(Q_1 \cup Q_2)$};
        \draw (0,0) .. controls (-3,2) and (-3.5,-2) .. (0,-4);
        \draw (0,0) -- (2.25,-0.75);
        \draw (0,0) -- (2,-2);
        \draw (0,0) -- (1,-3);

        \Vertex[x=2.25,y=-0.75,LabelOut=true,L=$d_0$,Lpos=90]{c2}
        \Vertex[x=2,y=-2,LabelOut=true,L=$d_1$,Lpos=90]{c6}
        \Vertex[x=1,y=-3,LabelOut=true,L=$d''$,Lpos=90]{c7}

        \SetVertexNoLabel
        \Vertex[x=1.5,y=-0.5,LabelOut=true,L=$q_4$,Lpos=135]{c4}
        \Vertex[x=1,y=-1,LabelOut=true,L=$x$,Lpos=180]{c5}
        \Vertex[x=0.67,y=-2,LabelOut=true,L=$q_4$,Lpos=135]{c8}

        \draw (0,0) -- (0,-4);

        %\SetVertexNoLabel
        %\tikzset{VertexStyle/.append style = {shape=rectangle,fill=white}}
        %\Vertex[x=0,y=0]{a1}
      \end{tikzpicture}
      \caption{The labelling of the \spikelike\ that arises in \cref{qq}.} %, where $\{d_0,d_1,d''\} \subseteq D$.}
      \label{spikelikecase}
    \end{figure}

    When \cref{qq} holds, it remains to show that $E(M)-E(N)$ is contained in a spike-like $3$-separator.

  \begin{sublemma}
    \label{qq2}
    Suppose $X'$ is a \spikelike\ where each leg has an element that is $N$-labelled for deletion.
    Then $X'$ is contained in a \spikelike~$X$ such that $E(M)-E(N) \subseteq X$.
  \end{sublemma}
  \begin{slproof}
    Let $X'$ be a \spikelike\ containing elements $d_0$, $d_1$, and $d_2$, no two of which is contained in one leg, where $\{d_0,d_1,d_2\}$ is $N$-labelled for deletion.
    Let $X$ be a \spikelike\ containing $X'$ that is maximal subject to the constraint that each leg has an element that is $N$-labelled for removal.
    Let $L_1,L_2,\dotsc,L_t$ be the legs of the \spikelike~$X$, where $L_i = \{d_i,c_i\}$ for each $i \in [t-1]$, and $L_t = \{d_0,x\}$.
    We may assume, up to switching $N$-labels, that $c_1$ and $c_2$ are $N$-labelled for contraction.
    Towards a contradiction, suppose there is either some $c' \in E(M) - X$ that is $N$-labelled for contraction, or some $d' \in E(M)-X$ that is $N$-labelled for deletion.

    First, we claim that if there exists some $d' \in D-X$, then $d'$ is in a $4$-element cocircuit~$C^*$ such that $C^* \cap X = L_i$ for some $i \in [t]$.
    Let $d' \in D-X$.
    Then $\{d',d_1\}$, $\{d',d_2\}$, and $\{d',d_0\}$ are contained in $4$-element cocircuits. 
    Suppose that each of these cocircuits is contained in $X \cup d'$.
    It follows, by orthogonality with the circuits of $X$, that $t=3$. 
    Since $r^*(X) = 4$, and $d' \in \cocl(X)$, the set $\{d',d_0,d_1,d_2,c_2\}$ contains a cocircuit.
    But if $\{d',d_1,d_2,c_2\}$ is a cocircuit, then it intersects the circuit $\{x,d_0,c_1,d_1\}$ in a single element, contradicting orthogonality.
    So the cocircuit contained in $\{d',d_0,d_1,d_2,c_2\}$ contains $d_0$, and similarly we deduce that it contains $d_1$ and $d_2$.
    Thus either $\{d',d_0,d_1,d_2\}$, $\{d_0,d_1,d_2,c_2\}$, or $\{d',d_0,d_1,d_2,c_2\}$ is a cocircuit.
    In the first case, we may assume that $d_1$ is $N$-labelled for contraction, while in the latter two cases, we can swap $N$-labels on $d_1$ and $c_2$; in any case, $\{c_1,d_1\}$ is $N$-labelled for contraction.
    Observe that $d_1 \in \cl_{M/c_1}(X-L_1)$, so \cref{wtdp1c2} holds by the dual of \cref{3sepwin2}.
    We deduce that for some $i \in \{0,1,2\}$, the $4$-element cocircuit $C_i^*$ containing $\{d',d_i\}$ is not contained in $X \cup d'$.
    By orthogonality, $C_i^* \cap X = L_i$.
    This proves the claim.

    We can argue similarly in the dual.
    That is, if there is some $c' \in C-X$, then either $c'$ is in a $4$-element circuit~$C$ such that $C \cap X = L_i$ for some $i \in [t]$, or $t=3$ and $c' \in \cl(X)$.
    In the latter case, $r(X \cup c') = 4$, so $\{c',x,c_1,c_2,d_2\}$ contains a circuit.
    But if $\{c',c_1,c_2,d_2\}$ is a circuit, then it intersects the cocircuit $\{x,d_0,c_1,d_1\}$ in a single element, contradicting orthogonality.
    So the circuit contained in $\{c',x,c_1,c_2,d_2\}$ contains $x$, and, similarly, it contains $c_1$ and $c_2$.
    So either $\{c',x,c_1,c_2\}$, $\{x,c_1,c_2,d_2\}$, or $\{c',x,c_1,c_2,d_2\}$ is a circuit.
    In any case, we can perform an $N$-label switch so that $x$ is $N$-labelled for deletion, without affecting the $N$-label on $d_0$.  Since $\{x,d_0\}$ is $N$-labelled for deletion, \cref{wtdp1c2} holds by \cref{3sepwin2}.
    So we may assume that if there is some $c' \in C-X$, then $c'$ is in a $4$-element circuit~$C$ such that $C \cap X = L_i$ for some $i \in [t]$.

    Next we claim that if there exists some $d' \in D-X$, there also exists some $c' \in C-X$.
    Suppose $d' \in D-X$.
    Without loss of generality, the $4$-element cocircuit containing $\{d',d_1\}$ is not contained in $X \cup d'$.
    Let $C^* = \{d_1,c_1,d',x'\}$ be this cocircuit, for some $x' \in E(M)-(X \cup d')$.
    Now, as $\{d_0,x,d',x',d_1,c_1\}$ is a corank-$4$ set where $d_0$, $d'$ and $d_1$ are $N$-labelled for deletion, we may assume $x'$ and $c_1$ are $N$-labelled for contraction. 
    As $x' \in C-X$, this proves the claim.

    Now let $c' \in C-X$.
    Recall that $c'$ is in a $4$-element circuit~$C$ such that $C \cap X = L_i$ for some $i \in [t]$.
    Without loss of generality, $C \cap X = L_1$.
    Let $C = \{d_1,c_1,d',c'\}$ for some $d' \in E(M)-(X \cup c')$.
    Pick distinct $i,j \in \{2,3,\dotsc,t\}$.
    By circuit elimination on $C$ and $L_1 \cup L_i$, there is a circuit contained in $L_i \cup \{c_1,c',d'\}$.
    But, by orthogonality with the cocircuit $L_1 \cup L_j$, this circuit does not contain $c_1$.
    We deduce that $L_i \cup \{c',d'\}$ is a circuit for each $i \in [t]$.

    Due to the circuit $L_1 \cup \{c',d'\}$, we may assume $d'$ is $N$-labelled for deletion up to an $N$-label switch with $d_1$ (note that $c_1$ is an element in $L_1$ that is $N$-labelled for contraction).
    Thus $\{d',d_0\}$ is contained in a $4$-element cocircuit~$C^*$ such that $C^* \cap X = L_t$.
    Moreover, by orthogonality with the circuit $\{c_2,d_2,c',d'\}$, the cocircuit~$C^*$ meets $\{c_2,d_2,c'\}$, so $C^* = \{d_0,x,d',c'\}$.
    By cocircuit elimination with the cocircuits in $X$, and orthogonality with the circuits in $X$, it follows that $L_i \cup \{d',c'\}$ is a cocircuit for each $i \in [t]$.
    Now $X \cup \{c',d'\}$ is a \spikelike, where $c'$ is $N$-labelled for contraction, so $X$ is not maximal; a contradiction.
  \end{slproof}

  Now, by \cref{qq,qq2}, if the $4$-element cocircuit~$C_1^*$ containing $\{d_0,d_2\}$ is a quad, then \cref{wtdp1c1} holds.
  Next we handle the case where $C_1^*$ is independent.  We break it into two parts: first, the case where $Q_1 \cup C_1^*$ is not $3$-separating; and second, the case where $\lambda(Q_1 \cup C_1^*)=2$.

  \begin{sublemma}
    \label{qn2}
    If $\{d_0,d_2\}$ is contained in a $4$-element independent cocircuit $C_1^*$, where $d_1 \notin C_1^*$, then either
    \begin{enumerate}[label=\rm(\Roman*)]
      \item $\lambda(Q_1 \cup C_1^*)=2$, or
      \item $Q_1 \cup C_1^*$ is contained in a \spider~$Y$, and either $E(M)-E(N) \subseteq Y$, or $|E(M)| \in \{10,12\}$ and \cref{wtdp1c4} holds.
    \end{enumerate}
  \end{sublemma}
  \begin{slproof}
    Suppose that (I) does not hold; we will show that (II) holds.
    Recall that $\{d_0,d_1\}$ is contained in the quad $Q_1$, and $d_2 \notin Q_1$.
    Let $X = Q_1 \cup C_1^*$, and pick $c_1$ and $c_2$ so that $X-C_1^* = \{c_1,d_1\}$ and $X-Q_1 = \{c_2,d_2\}$.
    If $r(X) = 4$, then $\lambda(X)=2$; so we may assume that $r(X) = 5$.
    Since $M \ba d_0 \ba d_1 \ba d_2$ has an $N$-minor, and $X-\{d_0,d_1,d_2\}$ has corank one in this matroid, $M\ba d_0 \ba d_1 \ba d_2/c_1/c_2$ has an $N$-minor.  
    Now $\{c_1,c_2\}$ is contained in a $4$-element circuit~$C_1$. 
    If $C_1 \subseteq X$, then $r(X)=4$; a contradiction.
    Thus, by orthogonality, $|Q_1 \cap C_1^* \cap C_1| = 1$.

    Let $C_1 = \{c_1,c_2,x',e\}$, where $x' \in Q_1 \cap C_1^*$ and $e \in E(M)-X$.
    Since $M \ba d_1 \ba d_2 / c_1 / c_2$ has an $N$-minor, and $\{e,x'\}$ is a parallel pair in this matroid, $M \ba d_1 \ba d_2 \ba e / c_1 / c_2$ has an $N$-minor.
    So $\{e,d_1\}$ is contained in a $4$-element cocircuit~$C_2^*$. 
    If $C_2^* \subseteq Q_1 \cup C_1$, then $Q_1 \cup C_1$ has rank and corank at most four, so $Q_1 \cup C_1$ is $3$-separating.
    Similarly, if $C_2^* \subseteq Q_1 \cup C_1 \cup d_2$, then $Q_1 \cup C_1 \cup d_2$ is $3$-separating in $M$, and $d_2 \notin \cl(Q_1 \cup C_1)$, so $Q_1 \cup C_1$ is also $3$-separating.
    Since $M \ba d_1$ is $3$-connected, it follows that $\lambda_{M \ba d_1}((Q_1-d_1) \cup C_1)=2$, and $d_2 \in \cocl_{M \ba d_1}((Q_1-d_1) \cup C_1)$, in which case \cref{wtdp1c2} holds by \cref{3sepwin2}.
    Thus we may assume that $C_2^* \nsubseteq Q_1 \cup C_1 \cup d_2$.
    By orthogonality, we deduce that $C_2^*$ meets $\{x',c_1\}$, and $f \in C_2^*$ for some $f \in E(M)-(Q_1 \cup C_1 \cup d_2)$.

    Let $Y = Q_1 \cup C_1 \cup \{f,d_2\}$.
    Since $M \ba d_1 \ba e / c_2$ has an $N$-minor, and $f$ is in a series pair in this matroid, $M / c_2 / f$ has an $N$-minor, so $\{c_2,f\}$ is contained in a $4$-element circuit~$P$. 
    By orthogonality, $P$ meets $C_1^*-c_2$ and $C_2^*-f$.
    Since $x'$ is the only possible element in $(C_1^*-c_2) \cap (C_2^*-f)$, we see that $P \subseteq Y$ when $x' \notin P$.
    But if $x' \in P$, then $P$ also meets $Q_1-x'$ by orthogonality, in which case $P \subseteq Y$.
    So $r(Y) =5$, and it follows that $r^*(Y)=5$ and $\lambda(Y) =2$.
    In the case that $P$ meets $Q_1$, we have $r(Y-d_2) =4$, so $d_2 \notin \cl(Y-d_2)$.
    It follows that $Y-\{d_1,d_2\}$ is $3$-separating in $M \ba d_1$, and $d_2 \in \cocl_{M \ba d_1}(Y-\{d_1,d_2\})$.  By another application of \cref{3sepwin2}, \cref{wtdp1c2} holds.

    In the remaining case, $P=\{c_2,e,f,d_2\}$.
    Recall that $x' \in Q_1 \cap C_1$, and choose $x$ so that $Q_1 \cap C_1 = \{x,x'\}$.
    If $d_0=x$, then $M \ba d_0 \ba d_1 \ba d_2/c_1/x'$ has an $N$-minor and $\{c_2,e\}$ is a parallel pair in this matroid, so $M \ba d_2 \ba c_2$ has an $N$-minor.
    Then $Q_1$ is $3$-separating in $M \ba d_2$ with $c_2 \in \cocl_{M \ba d_2}(Q_1)$, and it follows that \cref{wtdp1c2} holds by \cref{3sepwin2}.
    So we may assume that $d_0=x'$.
    Recall also that $C_2^*$ meets $\{d_0,c_1\}$.  In a similar vein, if $d_0 \in C_2^*$, then $\{e,f\}$ is a series pair in $M \ba d_0 \ba d_1 \ba d_2 / c_1$, so $M \ba d_2 / e / c_1$ has an $N$-minor, and hence $M \ba d_2 \ba c_2$ has an $N$-minor.
    As before, in this case \cref{wtdp1c2} holds by \cref{3sepwin2}.
    So we may assume that $c_1 \in C_2^*$.

    We work towards showing that $Y$ is a \spider\ of $M$, labelled as illustrated in \cref{psst}.
    The pair $\{d_1,d_2\}$ is in a $4$-element cocircuit~$C_3^*$ that meets both $Q_1-d_1$ and $P-d_2$, by orthogonality.
    As these sets are disjoint, $C_3^* \subseteq Y$.
    If $c_2 \notin C_3^*$, then $d_2 \in \cocl(Q_1 \cup C_2^*)$, and it follows that $r^*(Y) \le 4$; a contradiction.
    So $c_2 \in C_3^*$.
    Now, if $c_1 \notin C_3^*$, then $d_1 \in \cocl(C_1^*)$, and it follows that $r^*(Y) \le 4$; a contradiction.
    Note also that $x \notin C_3^*$, for otherwise $r^*(X) = 3$ and $\lambda(X) = 2$.
    So $C_3^* = \{d_1,d_2,c_1,c_2\}$.

    Recall that $\{c_2,x\}$ is contained in a series class of size at least three in $M \ba d_0 \ba d_1 \ba d_2$, so $\{c_2,x\}$ is $N$-contractible, and hence is contained in a $4$-element circuit~$C_2$. 
    By orthogonality with $Q_1$ and $C_3^*$, either $C_2$ meets $\{c_1,d_1\}$, or $C_2=C_1^*$.
    But $C_1^*$ is independent, so the former case holds. 
    By orthogonality with $C_2^*$, we see that $C_2 \subseteq C_2^* \cup \{c_2,x\}$.
    Moreover, $C_2$ meets $\{e,f\}$, otherwise $C_2 \subseteq Q_1 \cup c_2$, which contradicts \cref{nomeet3con}.
    If $f \in C_2$, then $f \in \cl(Q_1 \cup C_1)$, and it follows that $r(Y) \le 4$; a contradiction.
    So $e \in C_2$.
    Now, if $c_1 \in C_2$, then $x \in \cl(C_1)$, and it follows that $r(Q_1 \cup C_1) \le 3$; a contradiction.
    So $C_2 = \{c_2,x,e,d_1\}$.

    Now, as $M \ba d_0 / c_2/x$ has an $N$-minor, and $\{e,d_1\}$ is a parallel pair in this matroid, $M \ba d_0 \ba e$ has an $N$-minor.
    So $\{d_0,e\}$ is contained in a $4$-element cocircuit~$C_4^*$. 
    By orthogonality, this cocircuit meets $C_2-e=\{c_2,x,d_1\}$ and $Q_1-d_0=\{c_1,x,d_1\}$, so either $C_4^*=\{d_0,e,c_1,c_2\}$, or $C_4^*$ meets $\{x,d_1\}$.
    In the former case, $e \in \cocl(Q_1 \cup C_1^*)$, and it follows that $r^*(Y) \le 4$; a contradiction.
    So $C_4^*$ meets $\{x,d_1\}$.
    By orthogonality with $P$, the cocircuit $C_4^*$ also meets $\{c_2,f,d_2\}$.
    If $f \notin C_4^*$, then $e \in \cocl(Q_1 \cup C_1^*)$, and it follows that $r^*(Y) \le 4$; a contradiction.
    So $f \in C_4^*$.
    Now, if $d_1 \in C_4^*$, then $r^*(C_2^* \cup Q_1) = 3$, so $r^*(Y) \le 4$; a contradiction.
    We deduce that $C_4^* = \{d_0,e,f,x\}$.

    Recall that $M \ba d_1 \ba d_2 \ba e$ has an $N$-minor, and note that $\{c_2,f,c_1\}$ is contained in a series class in this matroid, since $r^*(C_2^* \cup C_3^*) = 4$.
    Thus $M/c_1/f$ has an $N$-minor, so $\{c_1,f\}$ is contained in a $4$-element circuit~$C_3$.
    By orthogonality, $C_3$ meets $Q_1-c_1$ and $C_4^*-f$.
    Thus, either $C_3 = \{c_1,f,d_1,e\}$, or $C_3$ meets $\{d_0,x\}$.
    In the former case, $f \in \cl(Q_1 \cup C_1)$, and it follows that $r(Y) \le 4$; a contradiction.
    So $C_3$ meets $\{d_0,x\}$, and hence, by orthogonality with $C_1^*$, the circuit~$C_3$ also meets $\{c_2,d_2\}$.
    If $c_2 \in C_3$, then $f \in \cl(Q_1\cup C_1)$, and it follows that $r(Y) \le 4$; a contradiction.  So $d_2 \in C_3$.
    Now, if $x \in C_3$, then $x \in \cl(P \cup C_1)$, and $r(Y) \le 4$; a contradiction.
    So $C_3=\{c_1,f,d_0,d_2\}$.

    Since $M \ba d_1 \ba e / c_1 / f$ has an $N$-minor, and $\{d_0,d_2\}$ is a parallel pair in this matroid, $M \ba d_0 \ba d_1 \ba e$ has an $N$-minor.
    As $r^*(Q_1 \cup C_4^*) = 4$, we see that $\{c_1,x,f\}$ is contained in a series class in $M \ba d_0 \ba d_1 \ba e$, so $\{x,f\}$ is $N$-contractible.
    Now $\{x,f\}$ is contained in a $4$-element circuit~$C_4$.
    By orthogonality with the cocircuits $Q_1$ and $C_2^*$, either $C_4=\{x,f,d_0,e\}$, or $C_4$ meets $\{c_1,d_1\}$.
    In the former case, $C_4=C_4^*$ is a quad, and it follows that \cref{wtdp1c1} holds by \cref{qq,qq2}.
    In the latter case, $C_4$ meets $\{c_2,d_2\}$, by orthogonality with the cocircuits $C_1^*$ and $C_3^*$.
    If $c_1 \in C_4$, then $x \in \cl(P \cup C_3)$, and it follows that $r(Y) \le 4$; a contradiction.  So $d_1 \in C_4$.
    Likewise, if $c_2 \in C_4$, then $f \in \cl(Q_1 \cup C_1)$, and $r(Y) \le 4$; a contradiction.
    So $C_4 = \{x,f,d_1,d_2\}$.

    Finally, we show that $P$ is a cocircuit.
    As $\{e,d_2\}$ is $N$-deletable, there is certainly a $4$-element cocircuit~$C_5^*$ containing $\{e,d_2\}$.
    By orthogonality with the circuits $C_1$ and $C_3$, either $P=C_5^*$, or $C_5^*$ meets $\{c_1,d_0\}$.
    Similarly, due to the circuits $C_2$ and $C_4$, either $P=C_5^*$, or $C_5^*$ meets $\{d_1,x\}$.
    So, if $P \neq C_5^*$, then $C_5^* \subseteq Q_1 \cup \{e,d_2\}$, in which case $e \in \cocl(Q_1 \cup C_1^*)$, implying $r^*(Y) \le 4$; a contradiction.
    We deduce that $P$ is a cocircuit, and hence $Y$ is a \spider\ of $M$.
%
    %circuits:
    %%Q_1={d_0,x,c_1,d_1}
    %%P={c_2,e,f,d_2}
    %%C_1={c_1,d_0,c_2,e}
    %%C_2={d_1,x,c_2,e}
    %%C_3={c_1,d_0,d_2,f}
    %%C_4={d_1,x,d_2,f}
    %cocircuits
    %%Q_1={d_0,x,c_1,d_1}
    %%C_1^*{d_0,x,c_2,d_2}
    %%C_2^*{c_1,d_1,e,f}
    %%C_3^*={c_1,d_1,c_2,d_2}
    %%C_4^*={d_0,x,e,f}
    %P={c_2,e,f,d_2}
%
    \begin{figure}[htb]
      \begin{tikzpicture}[rotate=90,scale=0.8,line width=1pt]
        \tikzset{VertexStyle/.append style = {minimum height=5,minimum width=5}}
        \clip (-2.5,-6) rectangle (3.0,2);
        \node at (-1,-1.4) {$E(M)-Y$};
        \draw (0,0) .. controls (-3,2) and (-3.5,-2) .. (0,-4);

        \draw (0,0) -- (2,-2) -- (0,-4);

        \draw (0,0) -- (2.5,0.5) -- (2,-2);
        \draw (0,0) -- (2.25,-0.75);
        \draw (2,-2) -- (1.25,0.25);

        \draw (0,-4) -- (2.5,-4.5) -- (2,-2);
        \draw (0,-4) -- (2.25,-3.25);
        \draw (2,-2) -- (1.25,-4.25);

        \Vertex[x=1.25,y=0.25,LabelOut=true,L=$c_1$,Lpos=180]{c1}
        \Vertex[x=2.25,y=-0.75,LabelOut=true,L=$x$,Lpos=90]{c2}
        \Vertex[x=2.5,y=0.5,LabelOut=true,L=$d_1$,Lpos=180]{c3}
        \Vertex[x=1.5,y=-0.5,LabelOut=true,L=$d_0$,Lpos=135]{c4}

        \Vertex[x=1.25,y=-4.25,LabelOut=true,L=$e$]{c1}
        \Vertex[x=2.25,y=-3.25,LabelOut=true,L=$d_2$,Lpos=90]{c2}
        \Vertex[x=2.5,y=-4.5,LabelOut=true,L=$f$]{c3}
        \Vertex[x=1.5,y=-3.5,LabelOut=true,L=$c_2$,Lpos=45]{c4}

        \draw (0,0) -- (0,-4);

        %\SetVertexNoLabel
        %\tikzset{VertexStyle/.append style = {shape=rectangle,fill=white}}
        %\Vertex[x=0,y=0]{a1}
        %\Vertex[x=0,y=-4]{a2}
      \end{tikzpicture}
      \caption{The labelling of the \spider\ in \cref{qn2}, where $\{d_0,d_1,d_2\} \subseteq D$ and $\{c_1,c_2\} \subseteq C$.}
      \label{psst}
    \end{figure}
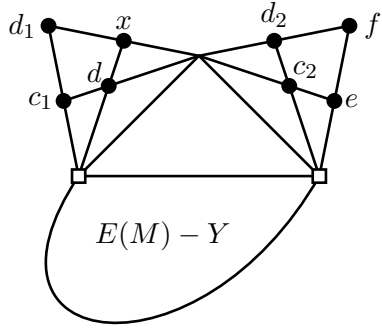

    \smallbreak

    Suppose that $E(M)-E(N) \nsubseteq Y$.
    It remains to show that $|E(M)| \in \{10,12\}$ and \cref{wtdp1c4} holds.
    Let $(C,D)$ be an $N$-labelling of $M$ with $\{c_1,c_2\} \subseteq C$ and $\{d_0,d_1,d_2\} \subseteq D$, and observe that there exists either some $c' \in C - Y$ or some $d' \in D-Y$.

    First, we show that if there is some such $c'$, then any $4$-element circuit containing $c'$ and an element of $C \cap Y$ is not contained in $Y \cup c'$.
    Consider the $4$-element circuit~$C_5$ containing $\{c',c_1\}$.  If this circuit does not contain $d_1$, then, by orthogonality, it meets $\{d_0,x\}$, $\{d_2,c_2\}$, and $\{e,f\}$; a contradiction.
    So $d_1 \in C_5$.
    Now $C_5$ meets $C_1^*$ and $P$ in at most one element, so $C_5 \cap (C_1^* \cup P) = \emptyset$, and thus $C_5 \cap Y = \{c_1,d_1\}$.
    The argument is similar when considering the $4$-element circuit containing $\{c',c\}$ for some $c \in C \cap (Y-c_1)$, where in each case the circuit intersects $Y$ in one of $\{c_1,d_1\}$, $\{d_0,x\}$, $\{c_2,d_2\}$, or $\{e,f\}$.
    A dual argument shows that if there exists some $d' \in D-Y$, then any $4$-element cocircuit containing $d'$ and an element in $D \cap Y$ intersects $Y$ in one of $\{d_1,x\}$, $\{c_1,d_0\}$, $\{d_2,f\}$, or $\{c_2,e\}$.

    Suppose there exists some $d' \in D-Y$.
    Then $d'$ is contained in a $4$-element cocircuit with $d_1$, and hence $x$. Let this cocircuit be $\{d',d_1,x,c'\}$ for $c' \in E(M)-Y$.
    Up to an $N$-label switch on $x$ and $c'$, the element $c'$ is $N$-labelled for contraction.
    So we may assume that there is some $c' \in C-Y$.

    Let $C_5$ be the $4$-element circuit containing $\{c',c_1\}$.
    Recall that $C_5 = \{c',z_1,c_1,d_1\}$ for some $z_1 \in E(M)-(Y \cup c')$.
    Let $C_6$ be the $4$-element circuit containing $\{c',c_2\}$; then $C_6=\{c',z_2,c_2,d_2\}$ for some $z_2 \in E(M)-(Y \cup c')$.
    By circuit elimination and orthogonality, $C_7 = \{c',z_1,d_0,x\}$ and $C_8 = \{c',z_2,e,f\}$ are also circuits.
    Now $r(Y \cup \{c',z_1,z_2\}) \le r(Y) + 1 = 6$.

    Due to the circuit $C_5$, we may assume $\{z_1,d_2\}$ is $N$-labelled for deletion, up to an $N$-label switch.
    Consider the $4$-element cocircuit~$C_6^*$ containing $\{z_1,d_2\}$.
    This cocircuit contains $f$.
    By orthogonality with $C_5$, the cocircuit~$C_6^*$ meets $\{c',c_1,d_1\}$,
    and by orthogonality with $C_6$, it meets $\{c',c_2,z_2\}$.
    So $C_6^* = \{z_1,c',d_2,f\}$.
    Similarly, due to the circuit $C_6$, we may assume $\{z_2,d_1\}$ is $N$-labelled for deletion.
    Now consider the $4$-element cocircuit~$C_7^*$ containing $\{z_2,d_1\}$.
    In a similar fashion, we deduce that $x \in C_7^*$ and, by orthogonality with $C_5$ and $C_6$, the final element of $C_7^*$ is $c'$.
    So $C_7^* = \{z_2,c',d_1,x\}$.
    By cocircuit elimination and orthogonality, $C_8^* = \{z_1,c',c_2,e\}$ and $C_9^* = \{z_2,c',c_1,d_0\}$ are also cocircuits.

    Now $z_1 \in \cocl(Y \cup c')$, implying $r^*(Y \cup \{c',z_1\}) \le r^*(Y) + 1=6$.
    If $z_1 = z_2$, then $\{c',z_1,c_1,d_0,c_2\}$ spans $Y \cup \{c',z_1\}$, so $r(Y \cup \{c',z_1\}) = 5$, implying $\lambda(Y \cup \{c',z_1\}) \le 5+6-10=1$.
    Hence $|E(M)| \in \{10,11\}$.
    If $|E(M)| = 10$, then $Q \cup \{c',z_1\}$ and $P \cup \{c',z_1\}$ are \pspider s, so \cref{wtdp1c4} holds.

    Suppose that $|E(M)| = 11$. Let $E(M)-(Y \cup \{c',z_1\}) = \{q\}$.
    If $q \notin \cl(Q)$, then $r(Q \cup q) = 4 = r(M)-1$, so $\cl(Q \cup q)$ is a hyperplane.
    But $E(M) - (Q \cup q)$ is the union of cocircuits $P$ and $C_6^*$; a contradiction.
    So $q$ is in a circuit properly contained in $Q \cup q$.
    By orthogonality with $C_1^*$ and $C_2^*$, it follows that $q$ is in a triangle with $\{c_1,d_1\}$ or $\{d_0,x\}$.  But $c_1$ and $x$ are $N$-contractible, so this triangle is not \unfortunate; a contradiction.

    We may now assume that $z_1 \neq z_2$.
    So $r^*(Y \cup \{c',z_1,z_2\}) \le r^*(Y) + 1 = 6$.
    Now $\lambda(Y \cup \{c',z_1,z_2\}) \le 6+6-11 = 1$, so $|E(M)| \in \{11,12\}$.
    If $|E(M)| = 11$, then $\{c',z_1,z_2\}$ is a triad, but $z_1$ is $N$-deletable so this triad is not \unfortunate; a contradiction.
    So $|E(M)| = 12$, and it follows that $Z=E(M)-Y$ is a quad.
    Pick $q$ so that $Z = \{q,c',z_1,z_2\}$.
    By (co)circuit elimination and orthogonality, it is easily checked that $\{q,z_2,c_1,d_1\}$, $\{q,z_2,d_0,x\}$, $\{q,z_1,c_2,d_2\}$, and $\{q,z_1,e,f\}$ are circuits, and $\{q,z_2,d_2,f\}$, $\{q,z_2,c_2,e\}$, $\{q,z_1,c_1,d_0\}$, and $\{q,z_1,d_1,x\}$ are cocircuits, so $Q \cup Z$ and $P \cup Z$ are \spider s, and \cref{wtdp1c4} holds.
  \end{slproof}

  \begin{sublemma}
    \label{qn3}
    Suppose $\{d_0,d_2\}$ is contained in a $4$-element independent cocircuit~$C_1^*$, where $d_1 \notin C_1^*$.
    Let $X = Q_1 \cup C_1^*$.
    If $\lambda(X)=2$, then $X$ is an \pspider\ where either $E(M)-E(N) \subseteq X$, or $|E(M)|=10$ and \cref{wtdp1c4} holds. 
  \end{sublemma}
  \begin{slproof}
    Pick $c_1$, $c_2$ and $x$ so that $X-C_1^* = \{c_1,d_1\}$, $X-Q_1 = \{c_2,d_2\}$, and $Q_1 \cap C_1^* = \{d_0,x\}$.
    By \cref{nomeet3con,nomeet3del}, $r(X) \ge 4$ and $r^*(X) \ge 4$.
    Since $\lambda(X)=2$, we have $r(X) = r^*(X) = 4$.
    As $M \ba d_0 \ba d_1 \ba d_2$ has an $N$-minor, any pair of elements contained in $\cocl(X)-\{d_0,d_1,x\}$ is $N$-contractible in this matroid, so such a pair is contained in a $4$-element circuit. 
    In particular, there are $4$-element circuits $C_1$ and $C_2$ containing $\{c_1,c_2\}$ and $\{c_2,x\}$, respectively.
    By orthogonality with the cocircuit $Q_1$, these two circuits intersect $X$ in at least three elements.
    We will show that we may assume that $C_i \subseteq X$, for $i \in \{1,2\}$.

    First, we claim that $Q_1 \triangle C_1^*$ is a cocircuit.
    As the pair $\{d_1,d_2\}$ is $N$-deletable, it is contained in a $4$-element cocircuit~$C_2^*$. 
    Suppose $C_2^* \nsubseteq X$.  By orthogonality, $|C_2^*-X|=1$.  Let $C_2^*-X=\{q\}$.
    Since $q \in \cocl(X)$, the set $X \cup q$ is $3$-separating, and the pair $\{c_2,q\}$ is contained in a $4$-element circuit~$P$. 
    If $P \subseteq X \cup q$, then $q \in \cl(X) \cap \cocl(X)$, so $\lambda(X \cup q) = 1$ and $|E(M)| \le 8$; a contradiction.
    It follows, by orthogonality, that $P = \{c_2,d_2,q,f\}$, for some $f \in E(M)-(X \cup q)$.
    Recall that $M \ba d_0 \ba d_1 / c_2 / q$ has an $N$-minor. By swapping the $N$-labels on $d_2$ and $f$, we deduce that $\{d_1,f\}$ is $N$-deletable, so this pair is contained in a $4$-element cocircuit~$C_3^*$.  By orthogonality with the disjoint circuits $Q_1$ and $P$, the cocircuit $C_3^*$ is contained in $X \cup \{q,f\}$.  Now $f \in \cl(X \cup q) \cap \cocl(X \cup q)$, so, by \cref{gutsstayguts3}, $|E(M)|=9$.  Then, since $X$ is exactly $3$-separating and $q$ is a coguts element, $E(M)-X$ is a triad containing $f$, but $M\ba f$ has an $N$-minor, so this triad is not \unfortunate.
    From this contradiction we deduce that $C_2^* \subseteq X$.
    Since $r^*(X)=4$, it follows that $C_2^*=Q_1\triangle C_1^*$, as claimed.

    Suppose $\{c_1,c_2,x,e\}$ is a circuit, for some $e \in E(M)-X$.
    Since $M \ba d_0 \ba d_1 \ba d_2 / x / c_1$ has an $N$-minor, and $\{c_2,e\}$ is a parallel pair in this matroid, $M \ba d_2 \ba c_2$ has an $N$-minor.
    As $Q_1$ is $3$-separating in $M \ba d_2$ with $c_2 \in \cocl_{M \ba d_2}(Q_1)$, it follows that \cref{wtdp1c2} holds by \cref{3sepwin2}.

    Now, if $C_i \nsubseteq X$, for some $i \in \{1,2\}$, then either $C_1=\{c_1,c_2,d_0,e\}$, or $C_2=\{c_2,x,d_1,e\}$, for some $e \in E(M)-X$.
    In either case, we have that $M \ba d_2 \ba e/c$ has an $N$-minor, for some $c \in \{c_1,x\}$.
    Indeed, in the first case, $M \ba d_2 \ba e / c_1$ has an $N$-minor by swapping the $N$-labels on $d_0$ and $e$;
    in the second, $M \ba d_2 \ba e/x$ has an $N$-minor, by switching the $N$-labels on $d_1$ and $e$.
    So, in either case, $\{e,d_2\}$ is contained in a $4$-element cocircuit~$C_3^*$.
    If $C_3^*$ is contained in $X \cup e$, then $e \in \cl(X) \cap \cocl(X)$, so $|E(M)| \le 8$; a contradiction.
    It follows, by orthogonality, that $C_3^*=\{e,d_2,c_2,h\}$ for some $h \in E(M)-(X \cup e)$.
    Since $M \ba d_2 \ba e/c$ has an $N$-minor, and $\{c_2,h\}$ is a series pair in this matroid, $M/c/h$ has an $N$-minor.
    Therefore, $\{h,c\}$ is contained in a $4$-element circuit.
    As $\{h,c\}$ meets both $C_3^*$ and $Q_1$, and these cocircuits are disjoint, we see that $h \in \cl(X \cup e)$, by orthogonality.
    But as $X \cup e$ is $3$-separating and $h \in \cocl(X \cup e)$, this implies $|E(M)|=9$.
    Then $E(M)-X$ is a triangle containing $h$, but $M/h$ has an $N$-minor, so this triangle is not \unfortunate; a contradiction.
    Hence we may assume that $C_i \subseteq X$ for $i \in \{1,2\}$.

    We will show that $X$ is an \pspider, labelled as illustrated in \cref{psss}.
    By \cref{nomeet3con}, $c_2 \notin \cl(Q_1)$, so
    $d_2 \in C_1$ and $d_2 \in C_2$.
    As $C_1^*$ is independent, $d_0 \notin C_2$, so $C_2 = \{c_2,x,d_2,q\}$ for $q \in \{c_1,d_1\}$.
    Suppose $q = c_1$.
    Then, as $M \ba d_0 \ba d_1 / c_1 / c_2$ has an $N$-minor, and $\{x,d_2\}$ is a parallel pair in this matroid, $M \ba d_0 \ba d_1 \ba x$ has an $N$-minor.
    Since $\{c_1,c_2,d_2\}$ is a series class in $M \ba d_0 \ba d_1 \ba x$, the matroid $M/c_2/d_2$ has an $N$-minor.
    Observe that $Q_1$ is $3$-separating in $M/c_2$, and $d_2 \in \cl_{M /c_2}(Q_1)$.
    It follows that \cref{wtdp1c2} holds by the dual of \cref{3sepwin2}.
    So we may assume that $C_2 = \{c_2,x,d_1,d_2\}$.
    Recall that $\{c_1,c_2,d_2\} \subseteq C_1 \subseteq X$.
    If $d_0 \notin C_1$, then, due to the circuits $Q_1$ and $C_2$, the set $\{c_2,x,d_2\}$ spans $X$, so $r(X) \le 3$; a contradiction.
    We deduce that $C_1 = \{c_1,c_2,d_2,d_0\}$.
    Finally, as $C_2^* = \{d_1,d_2,c_1,c_2\}$ is a cocircuit, $X$ is an \pspider. 

    \begin{figure}[htb]
      \begin{tikzpicture}[rotate=90,scale=0.7,line width=1pt]
        \tikzset{VertexStyle/.append style = {minimum height=5,minimum width=5}}
        \clip (-2.5,2) rectangle (3.0,-6);
        \node at (-1,-1.4) {$E(M)-X$};
        \draw (0,0) .. controls (-3,2) and (-3.5,-2) .. (0,-4);
        \draw (0,0) -- (2,-2) -- (0,-4);
        \draw (0,0) -- (2.5,0.5) -- (2,-2);
        \draw (0,0) -- (2.25,-0.75);
        \draw (2,-2) -- (1.25,0.25);

        \Vertex[x=1.25,y=0.25,LabelOut=true,L=$c_1$,Lpos=180]{c1}
        \Vertex[x=2.25,y=-0.75,LabelOut=true,L=$x$,Lpos=90]{c2}
        \Vertex[x=2.5,y=0.5,LabelOut=true,L=$d_1$,Lpos=180]{c3}
        \Vertex[x=1.5,y=-0.5,LabelOut=true,L=$d_0$,Lpos=135]{c4}
        \Vertex[x=1.33,y=-2.67,LabelOut=true,L=$d_2$,Lpos=45]{c5}
        \Vertex[x=0.67,y=-3.33,LabelOut=true,L=$c_2$,Lpos=45]{c6}

        \draw (0,0) -- (0,-4);

        %\SetVertexNoLabel
        %\tikzset{VertexStyle/.append style = {shape=rectangle,fill=white}}
        %\Vertex[x=0,y=0]{a1}
        %\Vertex[x=0,y=-4]{a2}
      \end{tikzpicture}
      \caption{The labelling of the \pspider\ in \cref{qn3}, where $\{d_0,d_1,d_2\} \subseteq D$ and $\{c_1,c_2\} \subseteq C$.}
      \label{psss}
    \end{figure}
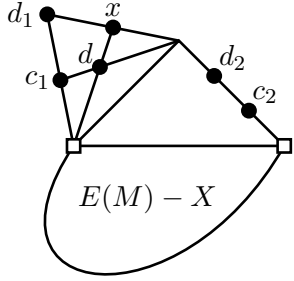

    \smallbreak

    Suppose that $E(M)-E(N) \nsubseteq X$.
    It remains to show that $|E(M)| = 10$ and \cref{wtdp1c4} holds.
    Let $(C,D)$ be an $N$-labelling of $M$ with $\{c_1,c_2\} \subseteq C$ and $\{d_0,d_1,d_2\} \subseteq D$, and observe that there exists either some $c' \in C - X$ or some $d' \in D-X$.

    First, we claim that if there is some such $c'$, and $C_3$ is a $4$-element circuit containing $c'$ and meeting $X$, then $C_3 \cap X \in \{\{x,d_0\}, \{c_1,d_1\}, \{c_2,d_2\}, \{d_0,d_1,d_2\}, \{d_0,d_1,c_2\}\}$.
    Let $C_3$ be such a circuit.
    By orthogonality with $Q_1$, $C_1^*$ and $C_2^*$, either $C_3 \cap X$ is one of $\{x,d_0\}$, $\{c_1,d_1\}$, or $\{c_2,d_2\}$; or $C_3$ has one element from each of these three sets.
    Suppose the latter. 
    If $|C_3 \cap C| \ge 3$, then, after an $N$-label switch, one of $x$, $c_1$, or $c_2$ is $N$-labelled for deletion, while retaining that $d_0$, $d_1$, or $d_2$ is $N$-labelled for deletion, respectively.
    That is, one of $\{x,d_0\}$, $\{c_1,d_1\}$, or $\{c_2,d_2\}$ is $N$-labelled for deletion, and it follows that \cref{wtdp1c2} holds by \cref{3sepwin2}.
    So $|C_3 \cap C| \le 2$.
    Then $C_3$ contains at least two of $\{d_0,d_1,d_2\}$.
    If $\{d_0,d_1\} \nsubseteq C_3$, then either $C_3=\{x,d_1,d_2,c'\}$ or $C_3=\{d_0,c_1,d_2,c'\}$; so either $C_2 \cup c'$ or $C_1 \cup c'$ is a $5$-element plane intersecting $Q_1$ in two elements, respectively, which contradicts orthogonality.
    So $C_3 \cap X \in \{\{d_0,d_1,c_2\},\{d_0,d_1,d_2\}\}$, thus proving the claim.

    By a dual argument if there exists some $d' \in D-X$, and $C_3^*$ is a $4$-element cocircuit containing $d'$ and meeting $X$, then $C_3^* \cap X \in \{\{x,d_1\}, \{c_1,d_0\}, \{c_2,d_2\}, \{x,c_1,c_2\}, \{x,c_1,d_2\} \}$.

    Suppose there exists some $d' \in D-X$.
    Then $d'$ is contained in a $4$-element cocircuit with $d_1$, and hence $x$. Let this cocircuit be $\{d',d_1,x,c'\}$ for some $c' \in E(M)-(X-d')$.
    Up to an $N$-label switch on $x$ and $c'$, the element $c'$ is $N$-labelled for contraction.
    So we may assume that there is some $c' \in C-X$.

    Let $C_3$ be the $4$-element circuit containing $\{c',c_1\}$.
    Then $C_3 \cap X = \{c_1,d_1\}$.
    So let $C_3 = \{c',c_1,d_1,z_1\}$ for some $z_1 \in E(M)-(X \cup c')$.
    By an $N$-label switch on $d_1$ and $z_1$, the element $z_1$ is $N$-labelled for deletion.

    So $\{z_1,d_2\}$ is contained in a $4$-element cocircuit~$C_3^*$.
    Either $C_3^* = \{z_1,d_2,c_1,x\}$, or $C_3^* \cap X = \{c_2,d_2\}$.
    In the latter case, it follows, by orthogonality with $C_3$ and $Q_1$, that $C_3^* = \{z_1,c_2,d_2,c'\}$.
    Thus, in either case, $r^*(X \cup \{c',z_1\}) \le r^*(X) + 1 =5$.

    Now if $c' \in \cl(X)$, then $r(X \cup \{c',z_1\})=r(X)=4$, so $\lambda(X \cup \{c',z\}) \le 4+5-8=1$.
    It follows that $|E(M)|=9$, and hence $E(M)-X$ is a triangle containing the $N$-contractible element $c'$; a contradiction.
    Thus the $4$-element circuit~$C_4$ containing $\{c',c_2\}$ is not contained in $X \cup c'$.  
    Hence $C_4 = \{c',c_2,d_2,z_2\}$ for some $z_2 \in E(M)-(X \cup c')$.
    If $z_1 = z_2$, then $\{c',z_1,c_1,c_2\}$ spans $X \cup \{c',z_1\}$, so $\lambda(X \cup \{c',z_1\}) \le 4 + 5 - 8 = 1$; as before, this is contradictory.  So $z_1 \neq z_2$.

    Let $C_4^*$ be the $4$-element cocircuit containing $\{z_2,d_1\}$.
    Then $C_4^* \cap X = \{d_1,x\}$.
    By orthogonality with $C_3$ and $C_4$, we see that $C_4^* = \{z_2,d_1,x,c'\}$,
    so $z_2 \in \cocl(X \cup c')$.
    Now $r^*(X \cup \{c',z_1,z_2\}) \le 5$, so $\lambda(X \cup \{c',z_1,z_2\}) \le 5 + 5 - 9 = 1$.  Hence $|E(M)|=10$.
    Let $Q_2 = E(M)-(Q_1 \cup \{c_2,d_2\})$ and pick $q$ so that $Q_2 = \{q,c',z_1,z_2\}$.
    It is now easily checked that $C_3$, $\{d_0,x,c',z_1\}$, $\{c_1,d_1,z_2,q\}$, and $\{d_0,x,z_2,q\}$ are circuits; $C_4^*$, $\{d_0,c_1,z_2,c'\}$, $\{d_1,x,q,z_1\}$, and $\{d_0,c_1,q,z_1\}$ are cocircuits; and $Q_1$ and $\{q,c',z_1,z_2\}$ are quads. So $Q_1 \cup Q_2$ is a \spider.
    Moreover, $C_4$ and $\{c_2,d_2,z_1,q\}$ are circuits, and $C_3^* = \{c_2,d_2,z_1,c'\}$ and $\{c_2,d_2,z_2,q\}$ are cocircuits; so $Q_2 \cup \{c_2,d_2\}$ is an \pspider.  Thus \cref{wtdp1c4} holds. 
    This completes the proof of \cref{qn3}.
  \end{slproof}

  The proof now follows from \cref{qq0,qq,qq2,qn2,qn3}. 
\end{proof}

\begin{lemma}
  \label{weaktheoremdetailed2}
  Suppose that for every $N$-labelling $(C',D')$ that is switching-equivalent to $(C,D)$, and for every pair $\{x,y\} \subseteq C'$ or $\{x,y\} \subseteq D'$, the pair $\{x,y\}$ is not contained in a quad.
  Then, up to replacing $(M,N)$ by $(M^*,N^*)$, either
  \begin{enumerate}
    \item there is some $X \subseteq E(M)$
      such that $E(M)-E(N) \subseteq X$ and $X$ is a \twisted;\label{wtd2i1}
    \item there exists $d \in E(M)$ such that $M \ba d$ is $3$-connected and has a cyclic $3$-separation $(Y, \{d'\}, Z)$ with $|Y| \ge 4$, where $M \ba d \ba d'$ has an $N$-minor with $|Y \cap E(N)| \le 1$;\label{wtd2i2}
    \item $M$ is a \tcn; or\label{wtd2i4}
    \item $|E(M)| \le 10$.\label{wtd2i3}
  \end{enumerate}
\end{lemma}

\begin{proof}
  Suppose that neither \cref{wtd2i2} nor \cref{wtd2i4} holds; we will show that \cref{wtd2i1} or \cref{wtd2i3} holds.
  Again, we will abuse notation by using $(C,D)$ to refer to an $N$-labelling that is switching-equivalent to $(C,D)$.

  \begin{sublemma}
    \label{qqcc0}
    Let $C_0$ be a $4$-element circuit containing at least two elements in $C \cup D$, and suppose there are distinct elements $d',d'' \in D \cap (\cocl(C_0)-C_0)$.  Then \cref{wtd2i2} holds.
  \end{sublemma}
  \begin{slproof}
    Since $C_0$ is not a quad, it is coindependent. Hence $C_0$ is $3$-separating in the $3$-connected matroid $M \ba d'$, and $d'' \in \cocl_{M \ba d'}(C_0)$.
    It now follows from \cref{3sepwin2} that \cref{wtd2i2} holds.
  \end{slproof}

  Since $|E(M)| - |E(N)| \ge 5$, up to duality we may assume that $|D| \ge 3$.
  Let $\{d_0,d_1,d_2\} \subseteq D$.
  Then there is a $4$-element cocircuit~$C_1^*$ containing $\{d_0,d_1\}$, and a $4$-element cocircuit~$C_2^*$ containing $\{d_0,d_2\}$.
  We start by showing that if $C_1^* \neq C_2^*$, then $r^*(C_1^* \cup C_2^*) \ge 4$.
  More generally, we prove the following:

  \begin{sublemma}
    \label{qqcc1}
    Let $d$, $d'$, and $d''$ be distinct elements in $D$, and suppose $\{d,d'\}$ is contained in a $4$-element cocircuit~$C^*$, and $\{d,d''\}$ is contained in a $4$-element cocircuit~$C_0^*$, where $C^* \neq C_0^*$.
    Then $r^*(C^* \cup C_0^*) \ge 4$.
  \end{sublemma}
  \begin{slproof}
    Let $X=C^* \cup C_0^*$, and suppose $r^*(X)=3$.
    Note that $|X| \ge 5$.
    Since each $x \in X-\{d,d',d''\}$ is a coloop in $M \ba d \ba d' \ba d''$, and this matroid has an $N$-minor, every pair contained in $X$ is $N$-deletable.
    As each $x \in X$ is not contained in a triad, $X$ is a coplane.
    Now, for distinct $d_0,d_0' \in X$, the set $\cocl(X)-\{d_0,d_0'\}$ is a series class in $M \ba d_0 \ba d_0'$, so each pair $\{c_1,c_2\} \subseteq X$ is $N$-contractible.
    Note that since $M$ is $3$-connected, $r(X) \ge |X|-1$.
    Thus $X$ contains at most one circuit.
    Let $\{c,c'\}$ be a pair of elements of $X$ not contained in such a circuit.
    Up to switching $N$-labels, we may assume that the pair $\{c,c'\}$ is $N$-labelled for contraction, and the elements in $X-\{c,c'\}$ are $N$-labelled for deletion.
    Thus $\{c,c'\}$ is contained in a circuit $C_0$, and this circuit is not contained in $X$.
    By orthogonality, $|C_0 \cap X| = 3$, and hence $C_0$ cospans $X$.
    Now $X-C_0$ contains a pair that is $N$-labelled for deletion, and is contained in $\cocl(C_0)-C_0$.
    So \cref{wtd2i2} holds by \cref{qqcc0}; a contradiction.
    We deduce that $r^*(X) \ge 4$.
  \end{slproof}

  Next we show that we may assume that $d_2 \notin C_1^*$.
  Suppose $D \subseteq C_1^*$.
  Then $C_1^* = \{c,d_0,d_1,d_2\}$ for some $c \in C$.
  Since $|E(M)| - |E(N)| \ge 5$, there exists some $c' \in C - C_1^*$, and $\{c,c'\}$ is contained in a $4$-element circuit~$C_1$.
  By orthogonality, $|C_1 \cap C_1^*| \ge 2$.
  Suppose $|C_1 \cap C_1^*| = 3$.
  Without loss of generality, let $C_1 = \{c,c',d_1,d_2\}$.
  Then we can swap the $N$-labels on $c$ and $d_0$ to deduce that there is a $4$-element circuit~$C_2$ containing $\{c',d_0\}$.
  If $C_2$ is contained in $C_1 \cup d_0$, then $r(C_1^*) = 3$, so $C_1^*$ is a quad; a contradiction.
  We deduce, by orthogonality, that $|C_2 \cap C_1^*| = 2$.
  We may now assume, up to relabelling and an $N$-label switch, that $|C_1 \cap C_1^*| = 2$.
  Without loss of generality, let $C_1 = \{x,c,c',d_2\}$, where $x \in E(M)-(C_1^* \cup c')$.
  Now, by switching the $N$-labels on $x$ and $d_2$, we obtain an element $x \in D - C_1^*$.

  We may now assume that $d_2 \notin C_1^*$.
  Let $X = C_1^* \cup C_2^*$.
  By \cref{qqcc1}, $r^*(X) \ge 4$, so $|C_1^* \cap C_2^*| \le 2$.
  We work towards \cref{qqcc4}, which handles the case where $|C_1^* \cap C_2^*| = 2$.
  First, in \cref{qqcc2}, we consider the case where $|C_1^* \cap C_2^*| = 2$ and $\lambda(X)=2$.

  \begin{sublemma}
    \label{qqcc2}
    Suppose $|C_1^* \cap C_2^*| = 2$. 
    If $X$ is $3$-separating, then $X$ is a \twisted\ of $M$ with $E(M)-E(N) \subseteq X$.
  \end{sublemma}
  \begin{slproof}
    Observe that $r^*(X)=4$, by \cref{qqcc1} and since $|C_1^* \cap C_2^*| = 2$.
    As $\lambda(X)=2$ and $|X|=6$, it follows that $r(X)=4$.
    We claim that $d_1 \notin C_2^*$.
    Suppose $d_1 \in C_2^*$.
    Then $C_1^* \cap C_2^* = \{d_0,d_1\}$, so let $C_1^* - C_2^* = \{c_1,c_2\}$.
    Since $M \ba d_0 \ba d_1 \ba d_2$ has an $N$-minor, $\{c_1,c_2\}$ is $N$-contractible.
    Now $\{c_1,c_2\} \subseteq \cl(C_2^*)-C_2^*$, since $C_2^*$ is independent and $r(X) = 4$. So \cref{wtd2i2} holds by the dual of \cref{qqcc0}; a contradiction.
    This proves the claim.

    So let $C_1^* = \{d_1,c_1,d_0,x\}$ and $C_2^* = \{d_2,c_2,d_0,x\}$ for distinct $c_1,c_2,x \in E(M)-\{d_0,d_1,d_2\}$.
    Since the pair $\{c_1,c_2\}$ is $N$-contractible, it is contained in a $4$-element circuit~$C_0$. 

    To begin with, we work under the assumption that no $4$-element circuit containing $\{c_1,c_2\}$ is contained in $X$.
    Then, by orthogonality, $C_0 = \{c_1,c_2,x',e\}$, where $x' \in \{d_0,x\}$, and $e \in E(M)-X$.
    Observe that $(X, E(M)-X)$ is a $3$-separation, and $e \in \cl(X)-X$, so $e \notin \cocl(X)$.
    Since $\{x',e\}$ is a parallel pair in $M/c_1/c_2$, by possibly swapping the $N$-labels on $x'$ and $e$ we deduce that the pairs $\{e,d_1\}$ and $\{e,d_2\}$ are $N$-deletable.
    These pairs are contained in $4$-element cocircuits $C_3^*$ and $C_4^*$ respectively.
    Neither of these cocircuits is contained in $X \cup e$, otherwise $e \in \cocl(X)$; a contradiction.
    Moreover, each of these cocircuits meets $C_0$, so they do so in at least two elements, by orthogonality.

    Since $C_1^*$ and $C_2^*$ are independent and $r(X)=4$, the element $c_2$ is in a circuit~$C_2$ contained in $C_1^* \cup c_2$, and $c_1$ is in a circuit~$C_1$ contained in $C_2^* \cup c_1$.
    The circuit~$C_1$ (or $C_2$) can only intersect $C_3^*$ (or $C_4^*$, respectively) in at most one element, so, by orthogonality, $C_3^* \cap C_1 = \emptyset$ and $C_4^* \cap C_2 = \emptyset$.
    In particular, $c_1 \notin C_3^*$ and $c_2 \notin C_4^*$.
    Since $X$ does not contain any triangles, $C_1=X-C_3^*$ and $C_2=X-C_4^*$.
    If $x' \in C_3^* \cap C_4^*$, then $C_3^* \cap C_1=\{d_1\}$; a contradiction to orthogonality.
    Without loss of generality, we may now assume that $x' \notin C_4^*$, in which case $c_1 \in C_4^*$.

    Suppose $x' \in C_3^*$.
    Let $C_3^*-\{e,d_1,x'\} = \{f\}$.
    Since $M \ba d_1 \ba d_2 / c_1 / c_2$ has an $N$-minor, and $\{e,x'\}$ is a parallel pair in this matroid, $M \ba d_1 \ba e / c_2$ has an $N$-minor, implying $M / c_2 /f$ has an $N$-minor.
    So $\{c_2,f\}$ is contained in a $4$-element circuit~$C_2'$.
    Observe that $C_2'$ is not contained in $X \cup \{e,f\}$, for otherwise $f \in \cl(X \cup e) \cap \cocl(X \cup e)$, so $\lambda(X \cup \{e,f\}) \le 1$ and $|E(M)| \le 9$; a contradiction. %, but then $E(M)-X$ is a triangle containing the $N$-contractible element $f$, so the triangle is not \unfortunate.
    Now, by orthogonality, $C_2'$ meets $C_2^*-c_2$ and $C_3^*-f$.
    As the intersection of these last two sets is $\{x'\}$, we deduce $x' \in C_2'$.
    But then $C_2'$ also has an element in $C_1^*-x'$, by orthogonality; a contradiction.
    So $x' \notin C_3^*$, hence $c_2 \in C_3^*$.
    Now $C_3^* = \{e,d_1,c_2,f\}$ and $C_4^* = \{e,d_2,c_1,f'\}$ for some $f,f' \in E(M)-(X \cup e)$, and $C_1 = \{x,d_0,c_1,d_2\}$ and $C_2 = \{x,d_0,c_2,d_1\}$.

    Recall that $C_0 = \{c_1,c_2,x',e\}$, where $x' \in \{d_0,x\}$.
    If $x' = x$, then $M \ba d_0 \ba d_1 \ba d_2 /c_1/c_2$ has an $N$-minor and $\{x,e\}$ is a parallel pair in this matroid, so $M \ba d_0 \ba d_1 \ba d_2 \ba x$ has an $N$-minor.  But $c_2$ and $c_1$ are coloops in this matroid, so we may assume that $X \subseteq D$, in which case there is a pair of $N$-deletable elements in $\cocl(C_2)-C_2$, %where $C_2$ is a coindependent circuit,
    so \cref{wtd2i2} holds by \cref{qqcc0}.  Hence $x'=d_0$.

    The pair $\{d_1,d_2\}$ is also contained in a $4$-element cocircuit~$C_5^*$.  
    By orthogonality with the circuits $C_1$ and $C_2$, either $C_5^* = C_1^* \triangle C_2^*$, or $C_5^*$ meets $C_1^* \cap C_2^*=\{x,d_0\}$.

    We start with the former case.
    Recall that $C_3^* = \{e,d_1,c_2,f\}$ and $C_4^* = \{e,d_2,c_1,f'\}$.
    Recall also that $M \ba d_1 \ba d_2 \ba e$ has an $N$-minor, and observe that $\{c_1,c_2\}$ and $\{c_1,f'\}$ are both series pairs in this matroid, due to the cocircuits $C_5^*$ and $C_4^*$.
    Now $\{c_1,f'\}$ is $N$-contractible, so this pair is contained in a $4$-element circuit~$C_1'$.
    This circuit is not contained in $X \cup \{e,f'\}$, otherwise $f' \in \cl(X \cup e)$ in which case $|E(M)|=9$. % and $f'$ is in a triangle that is not \unfortunate.
    By orthogonality, $C_1'$ meets $C_1^*-c_1 = \{d_1,x,d_0\}$, and $C_5^*-c_1=\{d_1,d_2,c_2\}$.  So $d_1 \in C_1'$.
    Now, by orthogonality with $C_3^*$, we see that $C_1'$ contains an element in $\{e,c_2,f\}$.  If $f \notin C_1'$, then $f' \in \cl(X \cup e)$; a contradiction.
    Suppose $f \neq f'$.
    Then $C_1' = \{c_1,f',d_1,f\}$.
    Recall that the pair $\{c_1,f'\}$ is $N$-contractible in $M \ba e$.
    As $\{d_1,f\}$ is a parallel pair in $M \ba e/c_1 /f'$, the matroid $M \ba e \ba f/c_1$ has an $N$-minor.
    In turn, we see that $\{d_1,c_2\}$ is a series pair in this matroid, due to the cocircuit $C_3^*$, so the pair $\{c_1,d_1\}$ is $N$-contractible.
    As $\{c_1,d_1\} \subseteq \cl(C_2^*)-C_2^*$, it follows that \cref{wtd2i2} holds, by \cref{qqcc0}.
    Now $f=f'$, and $C_1' = \{c_1,d_1,f,g\}$ for some $g \in E(M)-(X \cup \{e,f\})$.
    Since $M \ba e/c_1/f$ has an $N$-minor, and $\{d_1,g\}$ is a parallel pair in this matroid, $\{e,g\}$ is contained in a $4$-element cocircuit.
    By orthogonality with $C_0$, $C_1$, and $C_2$, this cocircuit is contained in $X \cup \{e,g\}$.
    Hence $g \in \cl(X \cup \{e,f\}) \cap \cocl(X \cup \{e,f\})$, implying $\lambda(X \cup \{e,f,g\}) \le 1$, so $|E(M)| \in \{9,10\}$; a contradiction.
    %If $|E(M)|=9$, then $\{e,f,g\}$ is a triangle containing the $N$-contractible element $f$; a contradiction.
    %So $|E(M)|=10$, but then $E(M)-X$ is a quad containing the pair $\{e,g\} \subseteq D$; a contradiction.

    So we may assume that $C_5^*$ meets $\{d_0,x\}$.
    Suppose $d_0 \in C_5^*$.
    Then, by orthogonality with $C_0$, the cocircuit $C_5^*$ meets $\{c_1,c_2,e\}$.
    If $C_5^*$ meets $\{c_1,c_2\}$, it follows that $r^*(X)=3$; on the other hand, if $e \in C_5^*$, then $e \in \cocl(X)$; either case is contradictory.
    We deduce that $x \in C_5^*$.
    Let $C_5^* = \{d_1,d_2,x,f\}$.
    Since $M \ba d_1 \ba d_2 / c_1 / c_2$ has an $N$-minor, and in this matroid $\{x,f\}$ is a series pair and $\{e,d_0\}$ is a parallel pair, we see that $M / c_1 / x \ba e$ has an $N$-minor.
    Now $\{d_0,d_2\}$ is a parallel pair in this matroid, due to the circuit $C_1$, so $M \ba e \ba d_0$ has an $N$-minor.
    Hence $\{e,d_0\}$ is contained in a $4$-element cocircuit~$C_6^*$.
    This cocircuit cannot be contained in $X \cup e$, as $e \notin \cocl(X)$.
    By orthogonality with the circuits $C_1$ and $C_2$, we deduce that $x \in C_6^*$.
    However, by circuit elimination on $C_1$ and $C_2$, the set $X-x$ contains a circuit.
    By orthogonality with $C_6^*$, we deduce that $C_1^* \triangle C_2^*$ is a circuit.
    But this contradicts our assumption that no $4$-element circuit containing $\{c_1,c_2\}$ is contained in $X$.

    \medbreak
    Now we may assume that $\{c_1,c_2\}$ is contained in a $4$-element circuit $C_0 \subseteq X$.
    Since $C_0$ is not a quad, $C_0$ is coindependent.
    If $x \in C_0$, then $X-C_0$ is a pair that is $N$-labelled for deletion, and in the coclosure of the circuit $C_0$, in which case \cref{wtd2i2} holds by \cref{qqcc0}.
    So we may assume that $x \notin C_0$.
    Now, either $C_0 = C_1^* \triangle C_2^*$, or we may assume, by symmetry (that is, up to swapping $d_1$ and $d_2$), that $C_0 = \{c_1,d_0,c_2,d_2\}$.

    Suppose we are in the former case, where $C_0 = C_1^* \triangle C_2^*$ is a circuit.
    Suppose $\{c_1,c_2,x,d_0\}$ is a circuit.
    Then $\{d_1,d_2\} \subseteq \cocl(\{c_1,c_2,x,d_0\})$, and the pair $\{d_1,d_2\}$ is $N$-labelled for deletion.  Thus \cref{wtd2i2} holds by \cref{qqcc0}.
    So $\{c_1,c_2,x,d_0\}$ is not a circuit.

    Next, we claim that $X-x$ and $X-d_0$ are cocircuits.
    Certainly, as $C_0$ is coindependent and cospans $X$, each of the sets $X-x$ and $X-d_0$ contains a cocircuit, and these cocircuits contain $d_0$ and $x$, respectively.  If either of these sets properly contains a cocircuit, it follows that $r^*(X)=3$; a contradiction.  So $X-x$ and $X-d_0$ are cocircuits, as claimed.

    Let $C_0^*$ be the $4$-element cocircuit that contains $\{d_1,d_2\}$.
    As $C_2^* \cup c_1$ contains a circuit, and this circuit is not $\{c_1,c_2,x,d_0\}$, the circuit must contain $d_2$.
    So $C_0^*$ meets $X-\{d_1,d_2\}$, by orthogonality.
    Now, if $C_0^* \subseteq X$, then $|C_0^*-C_1^*|>1$ and $|C_0^*-C_2^*| > 1$, otherwise $r^*(X)=3$.
    Moreover, $C_0^* \neq C_0$, since $C_0$ is coindependent.
    So $C_0^* \nsubseteq X$; hence
    $|C_0^* \cap X| = 3$.
    Let $C_0^*-X = \{f\}$.

    Since $f \in \cocl(X)$ and $|E(M)-(X \cup f)| \ge 2$, the set $X \cup f$ is exactly $3$-separating.  So $f \notin \cl(X)$.
    In $M \ba d_0 \ba d_1 \ba d_2$, the set $\{c_1,c_2,x,f\}$ is contained in a series class.  So $\{f,x\}$ is $N$-contractible, and thus is contained in a $4$-element circuit~$C_0'$. 
    By orthogonality with the cocircuits $C_1^*$ and $C_2^*$, 
    either $C_0' \subseteq X \cup f$ or $d_0 \in C_0'$.
    Since $f \notin \cl(X)$, we deduce that $d_0 \in C_0'$.
    But now $\{f,x,d_0\}$ meets the cocircuit $X-x$, so, by orthogonality, $C_0'$ also meets $X-\{x,d_0\}$, implying $f \in \cl(X)$; a contradiction.

    \smallbreak
    Now, suppose that $C_0 = \{c_1,d_0,c_2,d_2\}$.
    Since $\{d_1,d_2\}$ is $N$-labelled for deletion, $\{d_1,d_2\}$ is contained in a $4$-element cocircuit~$C_0^*$ that, by orthogonality, intersects $C_0$ in at least two elements.
    Suppose $C_0^*$ is not contained in $X$.
    If $d_0 \in C_0^*$, then $\{d_1,c_1\}$ is an $N$-contractible pair in the closure of the independent cocircuit~$C_2^*$, so \cref{wtd2i2} holds by the dual of \cref{qqcc0}.
    So $C_0^*=\{d_1,d_2,c',f\}$ for some $c' \in \{c_1,c_2\}$ and $f \in E(M)-X$.
    Since $\{c_1,c_2,x,f\}$ is contained in a series class in $M \ba d_0 \ba d_1 \ba d_2$, the pair $\{f,x\}$ is contained in a circuit~$C_0'$. 
    Since $X$ and $X \cup f$ are exactly $3$-separating, and $f \in \cocl(X)$, we have $f \notin \cl(X)$.
    Hence, the circuit~$C_0'$ is not contained in $X \cup f$.
    It follows, by orthogonality, that $d_0 \in C_0'$, and $C_0' \cap X = \{x,d_0\}$. But then we get a contradiction to orthogonality using the cocircuit $C_0^*=\{d_1,d_2,c',f\}$.
    So $C_0^* \subseteq X$.  Since $r^*(X)=4$, it follows that $C_0^* = C_1^* \triangle C_2^*$.

    Let $C_1$ and $C_2$ be the $4$-element circuits containing $\{c_1,x\}$, and $\{c_2,x\}$ respectively.
    By orthogonality with $C_0^*$ and $C_1^*$, and since $C_2 \neq C_2^*$ as $C_2^*$ is independent, $C_2$ meets $\{c_1,d_1\}$.
    Note also that $C_2 \neq \{c_1,c_2,x,d_1\}$, for otherwise the pair $\{d_0,d_2\}$ is $N$-labelled for deletion and contained in $\cocl(\{c_1,c_2,x,d_1\})$, so that \cref{wtd2i2} holds by \cref{qqcc0}.
    Now, if $c_1 \in C_2$, then, as $M \ba d_1 / c_1 / c_2$ has an $N$-minor, $\{d_1,x\}$ is $N$-deletable.
    As $\{d_1,x\} \subseteq \cocl(C_0)-C_0$, it follows that \cref{wtd2i2} holds, by \cref{qqcc0}; a contradiction.
    So $d_1 \in C_2$.
    By a similar argument with $C_1$, we deduce that $C_1$ meets $\{c_2,d_2\}$, and $c_2 \notin C_1$, so $d_2 \in C_1$.

    Suppose $C_2 \nsubseteq X$.  Let $C_2 = \{c_2,x,d_1,e\}$, where $e \in E(M)-X$.
    Since $M \ba d_0 \ba d_2 / c_2 / x$ has an $N$-minor, and $\{d_1,e\}$ is a parallel pair in this matroid, $\{d_0,d_2,e\}$ is $N$-deletable.
    Now $\{d_0,e\}$ and $\{d_2,e\}$ are contained in $4$-element cocircuits $C_3^*$ and $C_4^*$ respectively. 
    If $C_3^*$ or $C_4^*$ is contained in $X \cup e$, then $e \in \cocl(X) \cap \cl(X)$; a contradiction.
    So $C_3^*$ and $C_4^*$ each contain some element in $E(M)-(X \cup e)$.
    By orthogonality with $C_0$ and $C_2$, the cocircuit $C_3^*$ meets $\{c_1,c_2,d_2\}$ and $\{c_2,x,d_1\}$; so $c_2 \in C_3^*$.
    Now $X \cap C_3^* = \{d_0,c_2\}$.  Since $C_1^* \cup d_2$ contains a circuit consisting of at least four elements, this circuit is $\{c_1,d_1,x,d_2\}$, by orthogonality.
    But now $C_4^*$ meets $\{c_1,d_1,x\}$ and $C_0-d_2=\{c_1,c_2,d_0\}$ and $C_2-e=\{c_2,x,d_1\}$, by orthogonality.
    As no element is in the intersection of these three sets, $C_4^* \subseteq X \cup e$; a contradiction.

    We deduce that $C_2 \subseteq X$.
    Recall that $c_1 \notin C_2$.  So let $C_2 = \{c_2,x,d_1,e\}$ where $e \in \{d_2,d_0\}$.
    We will show that %if $e = d_2$, then $X$ is a \tvamoslike\ in $M^*$ (\cref{tv2b}), whereas
    if $e = d_0$, then $X$ is a \twisted\ (see \cref{tv2main}).  But first we consider the case where $e=d_2$.

    %Former case: $C_2 = \{c_2,x,d_1,d_2\}$.
    Let $C_2 = \{c_2,x,d_1,d_2\}$.
    Since $r(X)=4$ and $C_1^*$ is independent, $c_2$ is in a circuit that is contained in $C_1^* \cup c_2$.
    If this circuit is properly contained in $C_1^* \cup c_2$, then it consists of four elements, and hence it contains either $\{d_1,x\}$ and one of $c_1$ and $d_0$, or $\{c_1,d_0\}$ and one of $d_1$ and $x$.
    In any such case, due to the circuits $C_0$ and $C_2$, it follows that $r(X)=3$; a contradiction.
    So $C_1^* \cup c_2$ is a circuit.
    Similarly, we deduce that $C_1^* \cup d_2$ is a circuit.
    %Thus $X$ is a \tvamoslike\ of $M^*$ as labelled in \cref{tv2b}.
    Now $X$ is in fact a \tvamoslike\ of $M^*$, but there is still the circuit~$C_1$ containing $\{c_1,x,d_2\}$ to consider.
    If $C_1 \subseteq X$, then $r(X)=3$; a contradiction.
    So let $C_1 = \{c_1,x,d_2,g\}$, where $g \in E(M)-X$.

    As $g \in \cl(X)$, and $X$ and $X \cup g$ are exactly $3$-separating, we have $g \notin \cocl(X)$.
    Since $M \ba d_0 \ba d_1 / c_1/x$ has an $N$-minor, it follows that $\{g,d_0\}$ is $N$-deletable in $M \ba d_1$ and $M$.
    So there is a $4$-element cocircuit~$C_3^*$ containing $\{g,d_0\}$
    that is not contained in $X \cup g$.
    By orthogonality, $C_3^*$ meets $C_0-d_0=\{c_1,c_2,d_2\}$ and $C_1-g = \{c_1,x,d_2\}$.
    As $C_3^* \nsubseteq X \cup g$, we deduce that $C_3^*$ meets $\{c_1,d_2\}$.
    But if $d_2 \in C_3^*$, then $C_3^*$ intersects $C_2$ in two elements; a contradiction.
    So $C_3^* = \{g,d_0,c_1,h\}$ for some $h \in E(M)-(X \cup g)$.

    %%Note:
    %In this case, we essentially have a long flan, dealt with in paper 1, and the following somewhat duplicates these arguments.  But there the requirements on what elements are deletable/contractible are different, so we can't just invoke that result, annoyingly.

    Recall that $M \ba d_0 \ba d_1 \ba g$ has an $N$-minor.
    As $\{c_1,x\}$ and $\{c_1,h\}$ are series pairs in this matroid, we see that $M/x/h$ has an $N$-minor.
    So the pair $\{x,h\}$ is contained in a $4$-element circuit~$C_3$ that, by orthogonality, meets $C_1^*-x$ and $C_2^*-x$. %and $C_3^*-h$.
    Since $h \in \cocl(X \cup g)$, the set $X \cup \{g,h\}$ is $3$-separating.
    Moreover, either $X \cup \{g,h\}$ is exactly $3$-separating, %or $|E(M)|=9$.
    for otherwise $|E(M)| = 9$.
    %In the latter case, $E(M)-X$ is a triangle, but this triangle contains the $N$-contractible element $h$, so it is not \unfortunate; a contradiction.
    %%In the latter case, $N \cong U_{2,4}$, and $X \cup g$ is spanning, so $r(M)=4$.  Then it is easily checked that $M \ba d_1 \ba x$ has a $U_{2,4}$-minor, and as $\{d_1,x\} \subseteq \cocl(C_0)$, \cref{wtd2i2} holds by \cref{qqcc0}.
    So we may assume that $h \notin \cl(X \cup g)$; in particular, $C_3 \nsubseteq X \cup \{g,h\}$.
    Since $d_0$ is the only element in the intersection of $C_1^*-x$ and $C_2^*-x$, we see that the $d_0 \in C_3$.
    Let $C_3=\{x,h,d_0,q\}$ for $q \in E(M)-(X \cup \{g,h\})$.

    Now, $q \in \cl(X \cup \{g,h\})$, so $X \cup \{g,h,q\}$ is $3$-separating.
    If this set is not exactly $3$-separating, then $|E(M)| = 10$.
    %As $E(M)-X$ is $3$-separating and contains an $N$-contractible element $h$ and an $N$-deletable element $g$, it follows that $E(M)-X$ is a quad.
    %But as $M \ba g / x / h$ has an $N$-minor, the pair $\{g,q\}$ is $N$-deletable and contained in the quad $E(M)-X$; a contradiction.
    So we may assume that $q \notin \cocl(X \cup \{g,h\})$.
    Recall that $M \ba d_1 / x / h$ has an $N$-minor.
    It follows that $\{q,d_1\}$ is $N$-deletable, implying that this pair is contained in a $4$-element cocircuit.
    By orthogonality, this cocircuit meets $C_2-d_1 = \{x,c_2,d_2\}$.  But if it meets $\{c_2,d_2\}$, then it intersects $C_0=\{c_1,d_0,c_2,d_2\}$ in at least two elements.
    Then $q$ is in a cocircuit contained in $X \cup q$, so $q \in \cocl(X \cup \{g,h\})$; a contradiction.
    So the cocircuit contains $x$.
    Hence it meets $C_1-x = \{c_1,d_2,g\}$.
    Again, we deduce the contradiction that $q \in \cocl(X \cup \{g,h\})$.

    %circuits: 
    %     C_0=  {c_1, c_2, d_0, d_2}
    %     C_2=  {c_2, x, d_1, d_2}
    %     C_1=  {c_1, x, d_2, g}
    %     C_3=  {h, x, d_0, .}
    %  (also:   {h, c_1, d_1, .})
    %cocircuits:
    %    C_3^*= {g, d_0, c_1, h}
    %   (C_4^*= {g, d_1, x, h})

    \smallbreak
    %Latter case: $C_2 = \{c_2,x,d_1,d_0\}$.
    Finally, we let $C_2 = \{c_2,x,d_1,d_0\}$.
    Recall the $4$-element circuit $C_1$ containing $\{c_1,x,d_2\}$.  Let $C_1 = \{c_1,x,d_2,g\}$.
    Suppose that $g \notin X$.
    Since $M \ba d_0 / c_1/x$ has an $N$-minor, $\{g,d_0\}$ is $N$-deletable.
    So $\{g,d_0\}$ is contained in a $4$-element cocircuit that, by orthogonality with $C_0$, $C_1$, and $C_2$, meets $\{c_1,c_2,d_2\}$, $\{c_1,x,d_2\}$ and $\{c_2,x,d_1\}$.
    Since no element is contained in all three of these sets, $g \in \cocl(X)-X$.  But $X$ is $3$-separating and $g \in \cl(X)$, so this is contradictory.
    Hence $g \in X$.

    Recall that $c_2 \notin C_1$, so $g \in \{d_0,d_1\}$.  But if $g = d_0$, then $x \in \cl(\{c_1,d_0,d_2\})$, and, due to the circuits $C_0$ and $C_2$, it follows that $\{c_1,d_0,d_2\}$ spans $X$, and $r(X) < 4$; a contradiction.
    Thus $g = d_1$.
    Now $X$ is a \twisted\ of $M$, labelled as in \cref{tv2main}.

    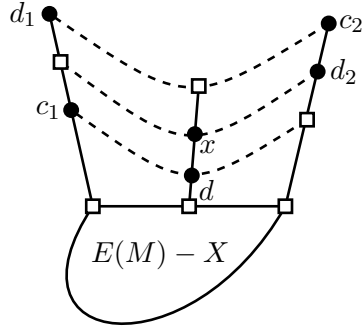
\begin{figure}[htb]
      \centering
      \begin{tikzpicture}[rotate=90,scale=0.64,line width=1pt]
        \tikzset{VertexStyle/.append style = {minimum height=5,minimum width=5}}
        \clip (-2.5,-6) rectangle (4.4,2);
        \node at (-1,-1.4) {$E(M)-X$};
        \draw (0,0) .. controls (-3,2) and (-3.5,-2) .. (0,-4);
        \draw (0,0) -- (4.0,0.9);
        \draw (0,-2) -- (2.5,-2.2); 
        \draw (0,-4) -- (3.8,-4.9); 

        \Vertex[x=4.0,y=0.9,LabelOut=true,Lpos=180,L=$d_1$]{a2}
        \Vertex[x=2.0,y=0.45,LabelOut=true,Lpos=180,L=$c_1$]{a3}

        \Vertex[x=1.5,y=-2.12,LabelOut=true,Lpos=-45,L=$x$]{b1}
        \Vertex[x=0.64,y=-2.056,LabelOut=true,Lpos=-45,L=$d_0$]{b2}

        \Vertex[x=3.8,y=-4.9,LabelOut=true,L=$c_2$]{c1}
        \Vertex[x=2.8,y=-4.67,LabelOut=true,L=$d_2$]{c2}

        \draw[dashed] (3.8,-4.9) .. controls (2.0,-2) .. (4.0,0.9);
        \draw[dashed] (2.8,-4.67) .. controls (1.0,-2) .. (3.0,0.67);
        \draw[dashed] (1.8,-4.45) .. controls (0.25,-2) .. (2.0,0.45);

        \draw (0,0) -- (0,-4);

        \SetVertexNoLabel
        \tikzset{VertexStyle/.append style = {shape=rectangle,fill=white}}
        \Vertex[x=3.0,y=0.67]{a1}
        \Vertex[x=2.5,y=-2.2]{b3}
        \Vertex[x=1.8,y=-4.45]{c3}

        %\Vertex[x=0,y=0]{d1}
        %\Vertex[x=0,y=-2]{d2}
        %\Vertex[x=0,y=-4]{d3}
      \end{tikzpicture}
      \caption{The labelling of the \twisted\ of $M$ that arises in \cref{qqcc2}, where $\{d_0,d_1,d_2\} \subseteq D$ and $\{c_1,c_2\} \subseteq C$.}
      \label{tv2main}
    \end{figure}

    \smallbreak

    It remains to show that $E(M)-E(N) \subseteq X$.
    Towards a contradiction,
    suppose the $N$-labelling $(C,D)$ of $M$, with $\{c_1,c_2\} \subseteq C$ and $\{d_0,d_1,d_2\} \subseteq D$, also has either some element $c' \in C - X$ or $d' \in D-X$.

    First, we show that if there is some such $c'$, then any $4$-element circuit containing $c'$ and an element of $C \cap X$ is not contained in $X \cup c'$.
    Let $c'$ be such an element, and consider the $4$-element circuit~$C_3$ containing $\{c',c_1\}$.
    Suppose $C_3 \subseteq X \cup c'$.
    By orthogonality with $C_0^*$, $C_1^*$, and $C_2^*$, the circuit~$C_3$ is $\{c',c_1,a,b\}$ for some $a \in \{x,d_0\}$ and $b \in \{c_2,d_2\}$.
    But if $a=x$, then, up to an $N$-label switch on $x$ and $b$, the pair $\{x,d_1\}$ is $N$-labelled for deletion, and $\{x,d_1\} \subseteq \cocl(C_0)-C_0$, so \cref{wtd2i2} holds by \cref{qqcc0}.
    So $a=d_0$, and, similarly, $b=d_2$.
    Now $C_3 = \{c_1,c',d_0,d_2\}$, so $C_0 \cup C_3$ is a $5$-element plane that intersects the cocircuit $C_1^*$ in two elements; a contradiction.
    So $c'$ is not in a $4$-element circuit contained in $X \cup c'$, as claimed.
    Dually, if there exists some $d' \in D-X$, then any $4$-element cocircuit containing $d'$ and an element in $D \cap X$ is not contained in $X \cup d'$.

    Next we show that if there is some $d' \in D-X$, then there is also some $c' \in C-X$.
    Suppose $d' \in D-X$. Then $d'$ is contained in a $4$-element cocircuit with $d_1$, and an element $c' \in E(M)-X$. 
    By orthogonality, the final element in this cocircuit is $x$.  Up to an $N$-label switch on $x$ and $c'$, the element $c'$ is $N$-labelled for contraction.  

    So we may assume there is an element $c' \in C-X$.
    Since the $4$-element circuit~$C_3$ containing $\{c',c_1\}$ is not contained in $X \cup c'$, it follows from orthogonality that $C_3=
    \{c',c_1,d_1,z_1\}$ for some $z_1 \in E(M)-(X \cup c')$.
    Similarly, the $4$-element circuits containing $\{c',c_2\}$ and $\{c',x\}$ are $C_4 = \{c',c_2,d_2,z_2\}$ and $C_5 = \{c',x,d_0,z\}$ for some $z,z_2 \in E(M)-(X \cup c')$.

    By an $N$-label switch on $d_1$ and $z_1$, we may assume $z_1 \in D$.
    So there is a $4$-element cocircuit $C_3^*$ containing $\{z_1,d_0\}$.
    As this cocircuit is not contained in $X \cup z_1$, it follows from orthogonality that $c_2 \in C_3^*$.
    Now, by orthogonality with $C_3$, the final element is in $\{c_1,d_1,c'\}$.
    Again using that $C_3^* \nsubseteq X \cup z_1$, we deduce that $C_3^* = \{z_1,d_0,c_2,c'\}$.
    In particular, $r^*(X \cup \{c',z_1\}) \le 5$.
    By symmetry, we deduce that $r^*(X \cup \{c',z_2\}) \le 5$ and $r^*(X \cup \{c',z\}) \le 5$.

    Suppose that $z_1=z_2$.  Then $\{z_1,c',c_1,c_2\}$ spans $X \cup \{c',z_1\}$, so $r(X \cup \{c',z_1\}) = 4$. 
    But $r^*(X \cup \{c',z_1\}) \le 5$, so $\lambda(X \cup \{c',z_1\})\le 1$.
    It follows that $|E(M)|=9$; %, so the $3$-separating set $E(M)-X$ is a triangle or a triad.  But $E(M)-X$ contains $c' \in C$ and $z_1 \in D$, so this triangle or triad is not \unfortunate;
    a contradiction.
    So $z_1\neq z_2$.
    By symmetry, $z$, $z_1$, and $z_2$ are pairwise distinct.

    Now consider the cocircuit $C_4^*$ containing $\{z_1,d_2\}$.
    Using orthogonality and the fact that $C_4^* \nsubseteq X \cup z_1$, we have $c_1 \in C_4^*$.
    Then, by orthogonality with $C_4$, and since $C_4^* \nsubseteq X \cup z_1$ again,
    the cocircuit $C_4^*$ meets $\{c',z_2\}$.
    But then $C_4^*$ intersects $C_5$ in at most one element, $c'$, so $C_4^* = \{z_1,d_2,c_1,z_2\}$.
    Now $r^*(X \cup \{c',z_1,z_2\}) \le 5$.
    As $r(X \cup \{c',z_1,z_2\}) \le 5$, we have $\lambda(X \cup \{c',z_1,z_2\}) \le 1$.
    It follows that $|E(M)|=10$; % and, as the $3$-separating set $E(M)-X$ contains $c' \in C$ and $z_1 \in D$, the set $E(M)-X$ is a quad.
    %But, up to an $N$-label switch, $\{z_1,z_2\} \subseteq D \cap (E(M)-X)$;
    a contradiction.
    This completes the proof of \cref{qqcc2}.
  \end{slproof}

  \begin{sublemma}
    \label{qqcc25}
    Suppose $|C_1^* \cap C_2^*| = 2$.
    If $C_1^* \triangle C_2^*$ is a circuit, then the lemma holds.
  \end{sublemma}
  \begin{slproof}
    Recall that $X = C_1^* \cup C_2^*$, and observe that $r^*(X)=4$, by \cref{qqcc1} and since $|C_1^* \cap C_2^*| = 2$.
    Suppose $C_0 = C_1^* \triangle C_2^*$ is a circuit.

    We claim that $d_1 \notin C_2^*$.
    Towards a contradiction, suppose $d_1 \in C_2^*$.
    Recall that $d_2 \notin C_1^*$.
    So, without loss of generality, let $C_1^* = \{d_0,d_1,c_1,c'\}$ and $C_2^* = \{d_0,d_1,d_2,c_2\}$ where $c_1$ and $c_2$ are $N$-labelled for contraction.
    Since $C_0$ contains $\{c_1,c_2\}$, it is a coindependent circuit, so it cospans $X$.
    Hence $\{d_0,d_1\} \subseteq \cocl(C_0)-C_0$, and it follows that \cref{wtd2i2} holds by \cref{qqcc0}; a contradiction.
    So $d_1 \notin C_2^*$, as claimed.

    Now let $C_1^* = \{d_1,c_1,d_0,x\}$ and $C_2^* = \{d_2,c_2,d_0,x\}$ for distinct $c_1,c_2,x \in E(M)-\{d_0,d_1,d_2\}$.
    Since $M \ba d_0 \ba d_1 \ba d_2$ has an $N$-minor, and $\{x,c_1,c_2\}$ is contained in a series class in this matroid, the pairs $\{x,c_1\}$ and $\{x,c_2\}$ are $N$-contractible.
    Let $C_1$ and $C_2$ be the $4$-element circuits containing $\{x,c_1\}$ and $\{x,c_2\}$ respectively.  Observe that $|C_1 \cap X| \ge 3$ and $|C_2 \cap X| \ge 3$, by orthogonality.

    If $X$ is $3$-separating, then \cref{wtd2i1} holds by \cref{qqcc2}.
    So $\lambda(X) \ge 3$.
    Since $r^*(X)=4$ and $X$ contains a circuit, $r(X) =5$.
    Hence, $C_1 \nsubseteq X$ and $C_2 \nsubseteq X$, so $|C_1 \cap X| = |C_2 \cap X| = 3$.
    So let $C_1 = \{c_1,x,x_1,g_1\}$ and $C_2 = \{c_2,x,x_2,g_2\}$ where $x_1,x_2 \in X$ and $g_1,g_2 \in E(M)-X$.
    If $x_1 = c_2$, then $\{d_0,x\} \subseteq D$ up to an $N$-label switch, and $\{d_0,x\} \subseteq \cocl(C_0)-C_0$ since $C_0$ is coindependent, in which case \cref{wtd2i2} holds by \cref{qqcc0}.
    So we may assume that $c_2 \notin C_1$ and, by symmetry, $c_1 \notin C_2$.
    Hence, by orthogonality, $x_1 \in \{d_0,d_2\}$ and $x_2 \in \{d_0,d_1\}$.
    If $g_1 \in \cocl(X)$, then the coindependent circuit~$C_1$ cospans $X \cup g_1$.
    In this case, the pair $\{d_0,d_1,d_2\} - x_1$ is $N$-labelled for deletion, and contained in $\cocl(C_1)-C_1$.  Thus \cref{wtd2i2} holds by \cref{qqcc0}.
    So we may assume that $g_1 \notin \cocl(X)$.
    Similarly, $g_2 \notin \cocl(X)$.

    Suppose that $x_1 = d_2$, so $C_1 = \{c_1,x,d_2,g_1\}$.
    Let $C_0^*$ be the $4$-element cocircuit containing $\{d_1,d_2\}$.
    By orthogonality, $C_0^*$ meets $\{c_1,x,g_1\}$.

    We claim that $g_1 \notin C_0^*$.  Towards a contradiction, suppose $g_1 \in C_0^*$.  Then, as $g_1 \notin \cocl(X)$, we have $C_0^* = \{d_1,d_2,g_1,f\}$ for some $f \in E(M)-(X \cup g_1)$.
    As $M \ba d_0 \ba d_1 /c_1/x$ has an $N$-minor, and $\{d_2,g_1\}$ is a parallel pair in this matroid, $M \ba d_0 \ba d_1 \ba g_1 / c_1$ has an $N$-minor.
    Now $\{d_2,f\}$ is a series pair in this matroid, so $M \ba d_0 / c_1 / d_2$ has an $N$-minor.
    Finally, $\{x,g_1\}$ is a parallel pair in this matroid, so $\{d_0,x\}$ is $N$-deletable.
    But $\{d_0,x\} \subseteq \cocl(C_0)-C_0$, so \cref{wtd2i2} holds in this case, by \cref{qqcc0}.
    So we may assume that $g_1 \notin C_0^*$.

    Recall that $x_2 \in \{d_0,d_1\}$;
    we now distinguish two subcases, depending on whether $x_2=d_0$ or $x_2=d_1$.
    We first handle the case that $x_2 = d_1$, so $C_2 = \{c_2,x,d_1,g_2\}$.
    Then, by orthogonality, $C_0^*$ meets $\{c_2,x,g_2\}$.
    By the argument in the previous paragraph, $g_2 \notin C_0^*$.
    Hence $C_0^*$ meets $\{c_1,x\}$ and $\{c_2,x\}$.
    Since $r^*(X)=4$ and $C_0$ is coindependent, $C_0^* = \{d_1,d_2,x,f\}$ for some $f \in E(M)-(X \cup \{g_1,g_2\})$.
    As any pair in $\cocl(X)-\{d_0,d_1,d_2\}$ is $N$-contractible, and 
    $\{f,x\}$ is such a pair, $\{f,x\}$ is contained in a $4$-element circuit~$C_3$.
    By orthogonality, this circuit meets $C_1^* - x$ and $C_2^* - x$.
    Thus, if $C_3 \nsubseteq X \cup f$, then $d_0 \in C_3$.
    However, $X-x$ also contains a cocircuit, by cocircuit elimination on $C_1^*$ and $C_2^*$, and this cocircuit contains $d_0$, since $C_0$ is coindependent.  So $C_3 \subseteq X \cup f$.
    Now $C_3$ cospans $X \cup f$, so if there is a pair of $N$-deletable elements in $(X \cup f) - C_3$, then \cref{wtd2i2} holds by \cref{qqcc0}.
    So $C_3 = \{x,f,d',d''\}$ for some distinct $d',d'' \in \{d_0,d_1,d_2\}$.
    Moreover, as $C_0^*$ is independent, $C_3 \neq C_0^*$.  So, without loss of generality, $C_3 = \{x,f,d_0,d_1\}$.

    As $\{c_1,c_2,x,f\}$ is contained in a series class in $M \ba d_0 \ba d_1 \ba d_2$, the matroid $M / c_1/c_2/x$ has an $N$-minor.
    As $\{g_1,d_2\}$ and $\{d_1,g_2\}$ are parallel pairs in this matroid, it follows that $M \ba g_1 \ba d_1$ has an $N$-minor.
    So the pair $\{g_1,d_1\}$ is contained in a $4$-element cocircuit~$C_3^*$.
    By orthogonality with $C_0$ and $C_1$, this cocircuit meets $\{c_1,c_2,d_2\}$ and $\{c_1,x,d_2\}$.
    Thus, if $C_3^*$ does not meet $\{c_1,d_2\}$, then $C_3^* \subseteq X \cup g_1$, implying $g_1 \in \cocl(X)$; a contradiction.
    So $C_3^*$ meets $\{c_1,d_2\}$.
    But, by orthogonality with $C_2$ and $C_3$, the cocircuit $C_3^*$ also meets $\{c_2,x,g_2\}$ and $\{x,f,d_0\}$.
    Thus $x \in C_3^*$, and again we arrive at the contradiction that $g_1 \in \cocl(X)$.
    This completes the case where $x_1 = d_2$ and $x_2 = d_1$.

    We now consider the subcase where $x_2=d_0$, so $C_2=\{c_2,x,d_0,g_2\}$.
    Recall that $C_1 = \{c_1,x,d_2,g_1\}$, and the $4$-element cocircuit $C_0^*$ that contains $\{d_1,d_2\}$ meets $\{c_1,x\}$.
    As $r^*(X) = 4$ and $C_0^* \neq C_0$ (as $C_0$ is coindependent), $C_0^* \nsubseteq X$.
    If $x \in C_0^*$, then $C_0^*$ also meets $C_2-x = \{c_2,d_0,g_2\}$, by orthogonality, so $C_0^* = \{d_1,d_2,x,g_2\}$, implying $g_2 \in \cocl(X)$; a contradiction.
    So $C_0^* = \{d_1,d_2,c_1,f\}$ where $f \in E(M)-X$.

    Since $M \ba d_0 \ba d_1 \ba d_2 / x$ has an $N$-minor, and $\{c_1,f\}$ is a series pair in this matroid, $\{f,x\}$ is $N$-contractible in $M \ba d_0$ and $M$.
    So $\{f,x\}$ is contained in a $4$-element circuit~$C_4$.
    As $C_4$ cospans $X \cup f$, if $|\{d_0,d_1,d_2\}-C_4| \ge 2$, then there is a pair of $N$-deletable elements in $\cocl(C_4)-C_4$, so \cref{wtd2i2} holds by \cref{qqcc0}.
    So $C_4 = \{f,x,d',d''\}$ for some distinct $d',d'' \in \{d_0,d_1,d_2\}$.
    We may assume that $d' \in \{d_1,d_2\}$.
    Observe that $\{g_2,d'\}$ is $N$-deletable for any such $d'$.
    Thus there is a $4$-element cocircuit~$C_4^*$ containing $\{g_2,d'\}$.
    This cocircuit meets both $C_2-g_2=\{c_2,x,d_0\}$ and $C_0-d'$, by orthogonality.
    Since $g_2 \notin \cocl(X)$, we have $c_2 \in C_4^*$.
    By orthogonality with $C_4$, we see that $C_4^*$ also meets $\{f,x,d''\}$.
    But then $g_2 \in \cocl(X \cup f) = \cocl(X)$; a contradiction.
    This completes the case where $x_1 = d_2$.

    Now we may assume that $x_1 \neq d_2$, and, by symmetry, $x_2 \neq d_1$.
    Hence $x_1 = x_2 = d_0$, so $C_1 = \{c_1,x,d_0,g_1\}$ and $C_2 = \{c_2,x,d_0,g_2\}$.
    If $g_1 = g_2$, then $\{c_1,c_2,x,d_0\}$ contains a circuit, by circuit elimination on $C_1$ and $C_2$, but then $r(X) \le 4$; a contradiction. So $g_1 \neq g_2$.
    Since $M \ba d_1 \ba d_2 / c_2 / x$ has an $N$-minor, and $\{d_0,g_2\}$ is a parallel pair in this matroid, we see that any pair contained in $\{g_2,d_1,d_2\}$ is $N$-deletable in $M/x$ and $M$.
    In particular, $\{g_2,d_1\}$ is contained in a $4$-element cocircuit~$C_3^*$.
    By orthogonality, this cocircuit meets $C_2-g_2 = \{c_2,x,d_0\}$ and $C_0-d_1 = \{c_1,c_2,d_2\}$.
    Since $g_2 \notin \cocl(X)$, we deduce that $c_2 \in C_3^*$.
    So let $C_3^* = \{g_2,d_1,c_2,h_1\}$, where $h_1 \in E(M)-(X \cup g_2)$.
    Also, we let $C_4^*$ be the $4$-element cocircuit containing $\{g_2,d_2\}$, and observe, similarly, that $c_2 \in C_4^*$.  So $C_4^* = \{g_2,d_2,c_2,h_2\}$ for some $h_2 \in E(M)-(X \cup g_2)$.

    Recall that $M \ba d_1 \ba g_2 / x$ has an $N$-minor; it follows that $\{x,h_1\}$ is $N$-contractible.
    So let $C_3$ be the $4$-element circuit containing $\{x,h_1\}$.
    By orthogonality, $C_3$ meets $C_1^*-x=\{c_1,d_1,d_0\}$ and $C_2^*-x=\{d_0,c_2,d_2\}$, as well as $C_3^*-h_1 = \{g_2,d_1,c_2\}$.

    Suppose that $g_2 \in C_3$.
    Then $C_3 = \{x,h_1,g_2,d_0\}$.  But $X-x$ contains a cocircuit, by cocircuit elimination of $C_1^*$ and $C_2^*$, and this cocircuit must contain $d_0$, as $C_0$ is coindependent.  So $X-x$ is a cocircuit that intersects $C_3$ in a single element; a contradiction.
    So $C_3$ meets $\{d_1,c_2\}$.

    We claim that $c_2 \notin C_3$.
    Suppose $c_2 \in C_3$.  Then $C_3$ also meets $\{c_1,d_0,d_1\}$.
    Let $C_3 = \{x,h_1,c_2,c_1\}$.  Then, as $M \ba d_0 /c_1/c_2$ has an $N$-minor, $M \ba d_0 \ba x$ has an $N$-minor.  But $\{d_0,x\} \subseteq \cocl(C_0)-C_0$, so \cref{wtd2i2} holds by \cref{qqcc0}.
    %So if $c_2 \in C_3$, then $C_3$ meets $\{d_1,d_0\}$.
    Now let $C_3 = \{x,h_1,c_2,d_0\}$.  Then, by circuit elimination with $C_2$, there is a circuit contained in $\{x,h_1,c_2,g_2\}$.  But this set intersects $C_1^*$ in a single element $x$, so $\{h_1,c_2,g_2\}$ is a triangle; a contradiction.
    So if $c_2 \in C_3$, then $C_3 = \{x,h_1,c_2,d_1\}$.

    Let $C_3 = \{x,h_1,c_2,d_1\}$.  Then, as $M \ba d_2 / x /c_2$ has an $N$-minor, where $\{d_0,g_2\}$ and $\{h_1,d_1\}$ are parallel pairs in this matroid, $M \ba d_2 \ba g_2 \ba h_1 /x$ has an $N$-minor.
    Now, $\{d_1,c_2\}$ is a series pair in this matroid, due to $C_3^*$, so $M \ba d_2 \ba g_2 /x/ d_1$ has an $N$-minor.
    %As $\{h_1,c_2\}$ is a parallel pair in this matroid, due to $C_3$, the matroid $M \ba d_2 \ba g_2 \ba c_2 / x /d_1$ has an $N$-minor.
    As $\{c_2, h_2\}$ is a series pair in this matroid, due to $C_4^*$, the matroid $M / x /d_1/c_2$ has an $N$-minor.
    In turn, $\{d_0,g_2\}$ is a parallel pair in this matroid, due to $C_2$, so $M \ba d_0 /d_1 / c_2$ has an $N$-minor.
    Finally, $\{h_1,x\}$ is a parallel pair in this matroid, due to $C_3$, so $M \ba d_0 \ba x$ has an $N$-minor.
    But $\{d_0,x\} \subseteq \cocl(C_0)-C_0$.  So \cref{wtd2i2} holds, by \cref{qqcc0}.
    This proves the claim that $c_2 \notin C_3$.
    It now follows that $C_3 = \{x,h_1,d_1,d'\}$ for $d' \in \{d_0,d_2\}$.

    Suppose that $h_1 = h_2$. Then, by cocircuit elimination on $C_3^*$ and $C_4^*$, there is a cocircuit $\{g_2,d_1,d_2,c_2\}$.  But then $g_2 \in \cocl(X)$; a contradiction.
    So $h_1 \neq h_2$.

    Now, if $d' = d_2$, so $C_3 = \{x,h_1,d_1,d_2\}$,
    then, as $h_1 \notin \{h_2,g_2\}$, the circuit $C_3$ intersects $C_4^*$ in a single element $d_2$; a contradiction.
    So $d' = d_0$; that is, $C_3 = \{x,h_1,d_1,d_0\}$.
    Similarly, by symmetry we obtain $C_4 = \{x,h_2,d_2,d_0\}$.

    Consider the $4$-element cocircuit~$C_0^*$ containing $\{d_1,d_2\}$.
    By orthogonality with $C_3$, the cocircuit $C_0^*$ meets $\{x,h_1,d_0\}$.
    But if $C_0^*$ meets $\{x,d_0\}$, then, by orthogonality with $C_1$ and $C_2$, and since $g_1 \neq g_2$, we have $C_0^* = \{d_1,d_2,x,d_0\}$.
    Then $r^*(X)=3$, contradicting \cref{qqcc1}.
    So $h_1 \in C_0^*$.
    Similarly, using $C_4$ in place of $C_3$, we see that $C_0^* = \{d_1,d_2,h_1,h_2\}$.

    %Claim: $\{g_1,c_1,d_j,h_j\}$ is a cocircuit for $j \in \{1,2\}$.
    Using a similar approach taken to reveal the cocircuits $C_3^*$ and $C_4^*$, we can observe that the pairs $\{g_1,d_1\}$ and $\{g_1,d_2\}$ are $N$-deletable, so are contained in $4$-element cocircuits $C_5^*$ and $C_6^*$ respectively, each containing $c_1$.
    These cocircuits meet $C_3$ and $C_4$, respectively, but neither cocircuit meets $C_2$.  
    It follows, by orthogonality, that $C_5^*=\{g_1,d_1,c_1,h_1\}$ and $C_6^*=\{g_1,d_2,c_1,h_2\}$.

    As $M \ba d_1 \ba d_2 / c_1$ has an $N$-minor, and $\{h_1,h_2\}$ is a series pair in this matroid due to the cocircuit $C_0^*$, the pair $\{c_1,h_1\}$ is $N$-contractible.
    Consider the $4$-element circuit~$C_5$ containing $\{c_1,h_1\}$.  By orthogonality with $C_1^*$, $C_3^*$, and $C_6^*$, it meets $\{d_1,x,d_0\}$, $\{g_2,d_1,c_2\}$, and $\{g_1,d_2,h_2\}$.
    Since the last two sets are disjoint, $C_5$ does not meet $\{x,d_0\}$, so $d_1 \in C_5$.
    The final element in $C_5$ is in $\{g_1,d_2,h_2\}$.
    Now $C_5$ intersects $C_2^*$ and $C_4^*$ in at most one element.
    By orthogonality, $C_5$ and $C_2^*$ are disjoint, so $d_2 \notin C_5$; similarly, $C_5 \cap C_4^* = \emptyset$, so $h_2 \notin C_5$.
    Now $C_5 = \{c_1,h_1,d_1,g_1\} = C_5^*$ is a quad containing the $N$-deletable pair $\{d_1,g_1\}$; a contradiction.
  \end{slproof}

  \begin{sublemma}
    \label{qqcc29}
    Suppose $C_1^* = \{d_0,x,c_1,d_1\}$ and $C_2^* = \{d_0,x,c_2,d_2\}$. 
    If there is a $4$-element circuit~$C_0$ such that $\{c_1,c_2\} \subseteq C_0 \subseteq C_1^* \cup C_2^*$, then the lemma holds.
  \end{sublemma}
  \begin{slproof}
    Let $X = C_1^* \cup C_2^*$. 
    If $X$ is $3$-separating, then \cref{wtd2i1} holds by \cref{qqcc2}.
    So we may assume that $X$ is not $3$-separating.
    Let $C_0$ be a $4$-element circuit containing $\{c_1,c_2\}$ and contained in $X$.
    As $r^*(X)=4$, by \cref{qqcc1}, it now follows that $r(X) =5$.
    If $x \in C_0$, then $X-C_0$ is a pair that is $N$-deletable.
    Since $C_0$ is coindependent, and $r^*(X)=4$, \cref{wtd2i2} holds by \cref{qqcc0} in this case.
    So we may assume that $x \notin C_0$.
    Now either $C_0 = C_1^* \triangle C_2^*$ or, up to symmetry, $C_0 = \{c_1,c_2,d_0,d_2\}$.
    But in the former case, the lemma holds by \cref{qqcc25}.
    So let $C_0 = \{c_1,c_2,d_0,d_2\}$. 

    Let $C_0^*$ be the $4$-element cocircuit containing $\{d_1,d_2\}$.
    By orthogonality, $C_0^*$ meets $C_0-d_2$.  
    So let $C_0^* = \{d_1,d_2,y,f\}$, where $y \in \{d_0,c_1,c_2\}$ and $f \in E(M)-\{d_1,d_2,y\}$.
    If $C_0^* \subseteq X$, then, as $r^*(X) = 4$, we have $C_0^* = C_1^* \triangle C_2^*$.
    That is, either $\{y,f\} = \{c_1,c_2\}$ or $f \in E(M)-X$.

    Let $C_1$ and $C_2$ be the $4$-element circuits containing $\{c_1,x\}$ and $\{c_2,x\}$ respectively, and observe that $|C_1 \cap X| \ge 3$ and $|C_2 \cap X| \ge 3$, by orthogonality.
    Recall that $r(X) = 5$.
    Hence, $C_1 \nsubseteq X$ and $C_2 \nsubseteq X$, so $|C_1 \cap X| = |C_2 \cap X| = 3$.
    So let $C_1 = \{c_1,x,x_1,g_1\}$ and $C_2 = \{c_2,x,x_2,g_2\}$ where $x_1,x_2 \in X$ and $g_1,g_2 \in E(M)-X$.
    If $x_1 = c_2$, then $\{d_1,x\}$ is $N$-deletable, and $\{d_1,x\} \subseteq \cocl(C_0)-C_0$, so \cref{wtd2i2} holds by \cref{qqcc0}.
    So we may assume that $c_2 \notin C_1$ and, by symmetry, $c_1 \notin C_2$.
    By orthogonality, we now have $x_1 \in \{d_0,d_2\}$ and $x_2 \in \{d_0,d_1\}$.
    If $g_1 \in \cocl(X)$, then the coindependent circuit~$C_1$ cospans $X \cup g_1$.
    In this case, $\{d_0,d_1,d_2\} - x_1$ is a pair of $N$-deletable elements contained in $\cocl(C_1)-C_1$.  Thus \cref{wtd2i2} holds by \cref{qqcc0}.
    So we may assume that $g_1 \notin \cocl(X)$.
    Similarly, $g_2 \notin \cocl(X)$.

    Suppose $x_1 = d_0$.
    If $C_0^*$ meets $C_1=\{c_1,x,d_0,g_1\}$, then it does so in at least two elements, by orthogonality.  So in this case $C_0^* = \{d_1,d_2,y,g_1\}$ where $y \in \{d_0,c_1\}$.
    But then $g_1 \in \cocl(X)$; a contradiction.
    So we may assume that $C_0^*$ and $C_1$ are disjoint.
    Then $C_0^* = \{d_1,d_2,c_2,f\}$ for $f \in E(M)-(X \cup g_1)$.
    Note also that $f \neq g_2$, since $f \in \cocl(X)$ but $g_2 \notin \cocl(X)$.
    Now $C_2$ and $C_0^*$ both contain the element $c_2$; hence we deduce, by orthogonality, that $x_2 = d_1$.  So $C_2 = \{c_2,x,d_1,g_2\}$.
    
    By cocircuit elimination, $X-x$ contains a cocircuit.
    By orthogonality with $C_1$, this cocircuit is not $C_1^* \triangle C_2^*$.
    So the cocircuit contains $d_0$.  Since $r^*(X)=4$, by \cref{qqcc1}, it follows that $X-x$ is a cocircuit.

    Since $M \ba d_1 \ba d_2 /x$ has an $N$-minor, and $\{f,y\}$ is a series pair in this matroid, $\{f,x\}$ is $N$-contractible.
    Let $C_3$ be the circuit containing $\{f,x\}$.
    By orthogonality, $C_3$ meets $C_1^*-x$ and thus, as $X-x$ is a cocircuit, it intersects $X-x$ in two elements.
    Now $C_3$ is coindependent, so it cospans the corank-$4$ set $X \cup f$.  Thus if $X-C_3$ contains a pair of $N$-deletable elements, then \cref{wtd2i2} holds by \cref{qqcc0}.
    So $|C_3 \cap \{d_0,d_1,d_2\}| = 2$.

    As $M \ba d_0 \ba d_2 / c_2 / x$ has an $N$-minor and $\{d_1,g_2\}$ is a parallel pair in this matroid, $\{g_2,d_0\}$ is $N$-deletable.
    So $\{g_2,d_0\}$ is contained in a $4$-element cocircuit~$C_3^*$.
    By orthogonality with $C_0$, $C_1$, and $C_2$, the cocircuit $C_3^*$ meets $\{c_1,c_2,d_2\}$, $\{c_1,x,g_1\}$, and $\{c_2,x,d_1\}$.
    If $C_3^*$ meets $\{c_1,x\}$, then $C_3^* \subseteq X \cup g_2$, contradicting that $g_2 \notin \cocl(X)$.  So $g_1 \in C_3^*$.  It follows that $C_3^* = \{g_1,g_2,d_0,c_2\}$.
    By orthogonality, we now deduce that $C_3 = \{f,x,d_1,d_2\}$.

    We claim that $\{f,x\}$ is $N$-deletable.
    First, observe that as $\{c_1,c_2,x,f\}$ is contained in a series class in $M \ba d_0 \ba d_1 \ba d_2$, the matroid $M \ba d_0 \ba d_1 \ba d_2 / c_1 /c_2 /x$ has an $N$-minor.
    Due to the circuits $C_1$ and $C_2$, it follows that $M \ba g_1 \ba g_2 \ba d_2 / c_1$ has an $N$-minor.
    Now $\{d_0,c_2\}$ is a series pair in this matroid due to $C_3^*$, so $M \ba g_2 / c_1 / d_0$ has an $N$-minor.
    As $\{c_2,d_2\}$ and $\{x,g_1\}$ are parallel pairs in this matroid, due to $C_0$ and $C_1$ respectively, the matroid $M \ba g_2 \ba c_2 \ba x /c_1$ has an $N$-minor.
    As $\{d_0,d_2\}$ is a series pair in this matroid due to $C_2^*$, the matroid $M \ba g_2 \ba x /c_1 / d_2$ has an $N$-minor.
    Now $\{d_0,c_2\}$ is a parallel pair in this matroid due to $C_0$, so $M \ba g_2 \ba x \ba d_0 / d_2$ has an $N$-minor.
    As $\{c_1,d_1\}$ and $\{g_1,c_2\}$ are series pairs in this matroid, due to $C_1^*$ and $C_3^*$ respectively, the matroid $M / c_2 / d_1 / d_2$ has an $N$-minor.
    Finally, $\{x,f\}$ and $\{x,g_2\}$ are parallel pairs in this matroid due to $C_3$ and $C_2$, so $M \ba f \ba x$ has an $N$-minor, as required.

    Since $C_0$ cospans $X \cup f$, the $N$-deletable pair $\{f,x\}$ is in $\cocl(C_0)-C_0$, implying that \cref{wtd2i2} holds by \cref{qqcc0}.

    \smallbreak

    Now we may assume that $x_1 = d_2$ and, by symmetry, $x_2 = d_1$, so that $C_1 = \{c_1,x,d_2,g_1\}$ and $C_2 = \{c_2,x,d_1,g_2\}$.
    If $g_1=g_2$, then $\{c_1,c_2,x,d_1,d_2\}$ contains a circuit, by circuit elimination, so $r(X-d_0) \le 4$.  But $d_0 \in \cl(X-d_0)$, due to the circuit $C_0$, so this is contradictory.  So $g_1 \neq g_2$.
    Now $C_0^*$ meets $C_1$ and $C_2$, but $x \notin C_0^*$ and $C_0^*$ contains at most one of $g_1$ and $g_2$.  It follows, by orthogonality, that $C_0^*$ meets $\{c_1,c_2\}$; by symmetry, we may assume that $c_1 \in C_0^*$.
    Then $C_0^* = \{d_1,d_2,c_1,f\}$ for some $f \in \{c_2,x,g_2\}$.
    But $f \neq x$, otherwise $r^*(X) = 3$.
    If $f = g_2$, then $g_2 \in \cocl(X)$; a contradiction.
    So $f = c_2$ and $C_0^* = C_1^* \triangle C_2^*$.

    As $M \ba d_0 \ba d_2/c_2/x$ has an $N$-minor, $\{d_0,g_2,d_2\}$ is $N$-deletable.
    Let $C_3^*$ be the $4$-element cocircuit containing $\{g_2,d_2\}$.
    Recall that $g_2 \notin \cocl(X)$, so $C_3^* \nsubseteq X \cup g_2$.
    As $C_3^*$ meets $C_0-d_2=\{c_1,c_2,d_0\}$ and $C_2-g_2=\{c_2,x,d_1\}$, by orthogonality, it follows that $c_2 \in C_3^*$.
    But $C_3^*$ also meets $C_1-d_2=\{c_1,x,g_1\}$, so $g_1 \in C_3^*$.
    That is, $C_3^* = \{g_2,d_2,c_2,g_1\}$.

    Now $M \ba d_0 \ba d_2 \ba g_2$ has an $N$-minor, and $\{x,c_2\}$ and $\{c_2,g_1\}$ are series pairs in this matroid. %, so $\{x,g_1\}$ is $N$-contractible.
    So $M \ba g_2 / x / g_1$ has an $N$-minor, where $\{c_1,d_2\}$ is a parallel pair in this matroid due to the circuit $C_1$, implying $\{g_2,c_1\}$ is $N$-deletable.
    Consider the cocircuit $C_4^*$ containing $\{g_2,c_1\}$.
    By orthogonality, $C_4^*$ meets $C_0-c_1=\{d_0,d_2,c_2\}$ and $C_2-g_2=\{d_1,x,c_2\}$.  As $g_2 \notin \cocl(X)$, we have $c_2 \in C_4^*$.
    By orthogonality with $C_1$, and again using that $g_2 \notin \cocl(X)$, we have $C_4^* = \{g_1,g_2,c_1,c_2\}$.
    Now $r^*(C_3^* \cup C_4^*) = 3$, and $d_1 \in \cocl(C_3^* \cup C_4^*)$ due to the cocircuit $C_0^*$.
    It follows that $r^*(X \cup \{g_1,g_2\}) =r^*(X)= 4$, contradicting that $g_1,g_2 \notin \cocl(X)$.
  \end{slproof}

  Next we handle one particularly awkward case that arises.
  \begin{sublemma}
    \label{awkwardmissedcase}
    Suppose $C_1^* = \{d_0,x,c_1,d_1\}$ and $C_2^* = \{d_0,x,c_2,d_2\}$ and 
let $X = C_1^* \cup C_2^*$. 
    If $C_0=\{c_1,c_2,d_0,e\}$ is a circuit, for some $e \in E(M)-X$, and 
    $C_3^*=\{d_1,e,c_2,f_1\}$ and $C_4^*=\{d_2,e,c_1,f_2\}$ are cocircuits, for $f_1,f_2 \in E(M)-(X \cup e)$, then the lemma holds.
  \end{sublemma}
  \begin{slproof}
    %Recall also the cocircuits $C_1^*=\{c_1,d_1,d_0,x\}$, $C_2^*=\{c_2,d_2,d_0,x\}$ and $C_3^* = \{d_1,e,c_2,f_1\}$; and the circuit $C_0 = \{c_1,c_2,d_0,e\}$.
%
    Since $M \ba d_0 \ba d_1 \ba d_2 / c_1 /c_2$ has an $N$-minor, it follows that $M \ba d_1 \ba d_2 \ba e/ c_1 /f_1$ and $M \ba d_1 \ba d_2 \ba e/ c_2 /f_2$ have $N$-minors.
    Let $C_3$ be the $4$-element circuit containing $\{c_1,f_1\}$, and let $C_4$ be the $4$-element circuit containing $\{c_2,f_2\}$.

    Suppose $f_1 \neq f_2$.
    By orthogonality with $C_3^*$ and $C_4^*$, the circuit~$C_3$ meets $\{e,c_2,d_1\}$ and $\{e,d_2,f_2\}$.
    If $e \in C_3$, then $C_2^* \cap C_3 = \emptyset$, so, by orthogonality with $C_1^*$, we see that $C_3 = \{c_1,f_1,e,d_1\}$.
    On the other hand, if $e \notin C_3$ and $C_3$ meets $C_2^*$, then $C_3$ contains both $c_2$ and $d_2$, which then contradicts orthogonality with $C_1^*$.
    So when $e \notin C_3$, the sets $C_2^*$ and $C_3$ are disjoint, in which case $\{d_1,f_2\} \subseteq C_3$.
    Now %either $f_1 = f_2$, or
    $C_3= \{c_1,f_1,d_1,p_1\}$ for some $p_1 \in \{e,f_2\}$, and, by symmetry, $C_4 = \{c_2,f_2,d_2,p_2\}$ for some $p_2 \in \{e,f_1\}$.

    Let $Y = X \cup \{e,f_1,f_2\}$.
    Next, we claim that $x \notin \cl(Y-x)$.
    Observe that $e \in \cl(X-x)$, due to the circuit~$C_0$.
    If $e \in \{p_1,p_2\}$, then $r(Y-x)=r(X-x) \le 5$, due to the circuits $C_3$ and $C_4$.
    Otherwise, $C_3 = \{c_1,f_1,d_1,f_2\}$ and $C_4= \{c_2,f_2,d_2,f_1\}$, in which case $\{f_1,f_2,c_1,c_2,d_0\}$ spans $Y-x$.
    So $r(Y-x) \le 5$.
    %Due to the circuits $C_0$, $C_3$, and $C_4$, we have $r(Y-x) = r(X-x) \le 5$ (in the case that $C_3 = \{c_1,f_1,d_1,f_2\}$ and $C_4= \{c_2,f_2,d_2,f_1\}$, observe that $\{f_1,f_2,c_1,c_2,d_0\}$ spans $Y-x$).
    Since $r^*(Y-x) \le 5$, we have $\lambda(Y-x) \le 5+5-8=2$.
    Now, if $x \in \cl(Y-x)$, then $\lambda(Y) \le 1$, so $|E(M)| \le 10$; a contradiction.

    Consider the $4$-element circuit~$C_1$ containing $\{c_1,x\}$.
    By orthogonality with $C_2^*$, this circuit meets $\{d_0,d_2,c_2\}$.
    If $d_0 \in C_1$, then, by orthogonality with $C_4^*$, the final element is in $\{e,d_2,f_2\}$, in which case $x \in \cl(Y-x)$; a contradiction.
    If $c_2 \in C_1$, then, by orthogonality with $C_3^*$ and $C_4^*$, the final element is $e$; again, we obtain the contradiction that $x \in \cl(Y-x)$.
    So $d_2 \in C_1$.
    Let $C_1 = \{c_1,x,d_2,g_1\}$, for some $g_1 \in E(M)-\{c_1,x,d_2\}$.  %Now $g \notin \cl(Y-x)$, since $x \notin \cl(Y-x)$.
    Note that $g_1 \notin Y$, since $x \notin \cl(Y-x)$.
    By symmetry, there is a circuit $C_2 = \{c_2,x,d_1,g_2\}$ where $g_2 \in E(M)-Y$.
    If $g_1 = g_2$, then $\{c_1,c_2,x,d_1,d_2\}$ contains a circuit, which cannot contain $x$ since $x \notin \cl(Y-x)$.  But then $\{c_1,c_2,d_1,d_2\}$ is a circuit, so the lemma holds by \cref{qqcc25}. 
    So we may assume that $g_1 \neq g_2$.

    As $M \ba d_0 / c_1 /x$ has an $N$-minor, the pair $\{d_0,g_1\}$ is $N$-deletable.
    Thus $\{d_0,g_1\}$ is contained in a $4$-element cocircuit~$C_5^*$.
    By orthogonality with $C_1$ and $C_0$, the cocircuit $C_5^*$ meets $\{c_1,x,d_2\}$ and $\{c_1,c_2,e\}$.

    Consider when $c_1 \notin C_5^*$.
    Then, by orthogonality with $C_2$, either $C_5^* = \{d_0,g_1,d_2,e\}$ or $C_5^* = \{d_0,g_1,x,c_2\}$.
    Suppose $C_5^* = \{d_0,g_1,d_2,e\}$.
    Then $M \ba d_0 \ba d_1 \ba d_2 / c_1 / c_2 / g_1$ has an $N$-minor, so there is a $4$-element circuit $C_5$ containing $\{c_2,g_1\}$.
    If $x \notin C_5$, then $C_5=\{c_2,g_1,d_0,d_1\}$ or $C_5 = \{c_2,g_1,d_2,e\}$ by orthogonality with $C_3^*$ and $C_4^*$, in which case $g_1 \in \cl(Y-x)$.
    As $\lambda(Y-x) = 2$, we have $\lambda((Y-x) \cup g_1) \le 1$ and $|E(M)| \le 10$; a contradiction.
    So $x \in C_5$, in which case $C_5 = \{c_2,g_1,x,d_1\}$, and $C_2 \cup C_5$ is a $5$-element plane that intersects $C_1^*$ in two elements; a contradiction.
    Now suppose $C_5^* = \{d_0,g_1,x,c_2\}$.
    Then $C_2^* \cup C_5^*$ is a $5$-element coplane that intersects the circuit $C_0$ in two elements; a contradiction.

    So now we may assume $\{d_0,g_1,c_1\} \subseteq C_5^*$.
    By orthogonality with $C_3$, the final element of $C_5^*$ is in $\{f_1,d_1,p_1\}$. 
    If $d_1 \in C_5^*$, then $C_1^* \cup C_5^*$ is a $5$-element coplane that intersects the circuit $C_0$ in two elements; a contradiction.
    By orthogonality with $C_3$, either $C_5^* = \{d_0,g_1,c_1,f_1\}$, or $C_5^* = \{d_0,g_1,c_1,e\}$ and $p_1 = e$.
    Consider the former case, where $C_5^* = \{d_0,g_1,c_1,f_1\}$.
    As $M \ba d_0 / c_1 / x$ has an $N$-minor, $M \ba d_0 \ba g_1 / x$ has an $N$-minor, and thus $\{f_1,x\}$ is $N$-contractible.
    Now there is a $4$-element circuit~$C_6$ containing $\{f_1,x\}$,
    which meets $\{d_0,d_1,c_1\}$ and $\{d_0,d_2,c_2\}$, by orthogonality.
    If $d_0 \notin C_6$, then $C_6 = \{f_1,x,c_1,d_2\}$ or $C_6 = \{f_1,x,c_2,d_1\}$, by orthogonality with $C_3^*$.
    But then $C_1 \cup f_1$ or $C_2 \cup f_1$, respectively, is a $5$-element plane that intersects $C_1^*$ in two elements; a contradiction.
    So $d_0 \in C_6$.  By orthogonality with $C_3^*$ and $C_4^*$, we have $C_6=\{f_1,x,d_0,d_1\}$ or $C_6=\{f_1,x,d_0,c_2\}$.  In either case, $x \in \cl(Y-x)$; a contradiction. 
    So $C_5^* = \{d_0,g_1,c_1,e\}$ and $C_3 = \{c_1,f_1,d_1,e\}$.
    By circuit elimination on $C_3$ and $C_0$, there is a circuit contained in $\{c_1,f_1,d_1,d_0,c_2\}$.  By orthogonality with $C_4^*$, this circuit is $\{f_1,d_1,d_0,c_2\}$, but then it intersects $C_5^*$ in a single element; a contradiction.

    \medskip
    Now suppose $f_1 = f_2=f$.
    Let $Y = X \cup \{e,f\}$.
    By orthogonality, $C_3$ meets $\{d_1,d_0,x\}$ and $\{d_1,e,c_2\}$.
    If $d_1 \notin C_3$, then by orthogonality with $C_2^*$, we have $c_2 \in C_3$.
    But if $C_3 = \{c_1,f,d_0,c_2\}$, then $C_0 \cup C_3$ is a $5$-element plane that intersects $C_1^*$ in two elements, contradicting orthogonality.
    So $C_3 = \{c_1,f,x,c_2\}$ when $d_1 \notin C_3$.
    On the other hand, if $d_1 \in C_3$, then by orthogonality with $C_2^*$, either $C_3=\{c_1,f,d_1,e\}$, or $C_3=\{c_1,f,d_1,g_3\}$ for some $g_3 \in E(M)-Y$.

    We claim that if $\{c_1,f,x,c_2\}$ is a circuit, then the lemma holds.
    Suppose $C_3 = \{c_1,f,x,c_2\}$ is a circuit.
    As $M \ba d_0 \ba d_1 \ba d_2 / c_1 /c_2$ has an $N$-minor and $\{f,x\}$ is a parallel pair in this matroid, $M \ba d_0 \ba d_1 \ba d_2 \ba x$ has an $N$-minor.
    This matroid has $c_1$ and $c_2$ as coloops, so $M \ba X$ has an $N$-minor.
    Now, since $M \ba (X-\{d_1,d_2\})$ has an $N$-minor, we see that $\{d_1,d_2\}$ is $N$-contractible, so this pair is contained in a $4$-element circuit $C_4$.
    If $C_4=\{d_1,d_2,c_1,c_2\}$, then the lemma holds by \cref{qqcc25}.
    So we may assume, by orthogonality with $C_1^*$ and $C_3^*$, that $C_4$ meets $\{x,d_0\}$ and $\{e,f\}$.
    It now follows that $d_1 \in \cl(Y-d_1)$, so $r(Y) \le 5$.
    Moreover, as $\{e,d_0\}$ and $\{f,x\}$ are parallel pairs in the matroid $M \ba d_1 \ba d_2 / c_1 /c_2$, the matroid $M \ba d_1 \ba d_2 \ba e \ba f$ has an $N$-minor.
    If $\{d_1,d_2,e,f\}$ is a cocircuit, then it follows that $r^*(Y) \le 4$; a contradiction.
    So $\{d_1,d_2,e,f\}$ is coindependent, and hence $r^*_{M \ba d_1 \ba d_2 \ba e \ba f}(\{c_1,c_2,x,d_0\}) = 1$, implying the pair $\{x,d_0\}$ is $N$-contractible.
    Now $r(Y-\{x,d_0\}) \le 5$ and $r^*(Y-\{x,d_0\})=4$, so $\lambda_{M/x}(Y-\{x,d_0\})=2$.
    Since $d_0 \in \cl_{M/x}(Y-\{x,d_0\})$, \cref{wtd2i2} holds by the dual of \cref{3sepwin2}.
    Henceforth, we may assume that $\{c_1,f,x,c_2\}$ is not a circuit.

    Now $C_3=\{c_1,f,d_1,e\}$ or $C_3=\{c_1,f,d_1,g_3\}$ for some $g_3 \in E(M)-Y$.
    By symmetry, $C_4=\{c_2,f,d_2,e\}$ or $C_4=\{c_2,f,d_2,g_4\}$ for some $g_4 \in E(M)-Y$.
    If $C_3=\{c_1,f,d_1,e\}$ and $C_4=\{c_2,f,d_2,e\}$, then $x \notin \cl(Y-x)$, and $\{c_1,x,d_2\} \subseteq C_1$.  In this case, by using a similar argument as in the case that $f_1 \neq f_2$, it follows that $|E(M)| \le 10$; we omit the details. %a slightly adapted version the earlier argument (when $f_1 \neq f_2$) works.
    So we may assume, without loss of generality, that $C_3=\{c_1,f,d_1,g_3\}$ for some $g_3 \in E(M)-Y$.

    Recall also the $4$-element circuit $C_1$ containing $\{c_1,x\}$.
    We claim that either $C_1 = \{c_1,x,d_0,d_2\}$ or $C_1 = \{c_1,x,d_2,g_1\}$ for some $g_1 \in E(M)-X$.
    If $c_2 \in C_1$, then, by orthogonality with $C_3^*$ and $C_4^*$, either $C_1 = \{c_1,x,c_2,e\}$ or $C_1 = \{c_1,x,c_2,f\}$.
    We have already argued that the latter is not a circuit.
    In the former case, $C_0 \cup x$ is a $5$-element plane that intersects $C_3^*$ in two elements; a contradiction.
    By orthogonality with $C_2^*$, it now follows that either $d_0 \in C_1$ or $d_2 \in C_1$.
    If $d_0 \in C_1$, then $C_1 = \{c_1,x,d_0,d_2\}$, by orthogonality with $C_3^*$ and $C_4^*$.
    Otherwise, $d_2 \in C_1$, and $C_1$ intersects $C_4^*$ in at most one element.
    Thus, by orthogonality, either $C_1 = \{c_1,x,d_0,d_2\}$ or $C_1 = \{c_1,x,d_2,g_1\}$ for some $g_1 \in E(M)-X$.
    Similarly, either $C_2 = \{c_2,x,d_0,d_1\}$ or $C_2 = \{c_2,x,d_1,g_2\}$ for $g_2 \in E(M)-X$.

    Suppose $C_1 = \{c_1,x,d_0,d_2\}$ and $C_2 = \{c_2,x,d_0,d_1\}$.
    Then $r(X) \le 4$, and $r^*(X) \le 4$, so $X$ is $3$-separating, and \cref{wtd2i1} holds by \cref{qqcc2}.
    So we may assume that, for some $\{i,j\} = \{1,2\}$, we have $C_i=\{c_i,x,d_j,g_i\}$.
    %Note that either $C_j=\{c_j,x,d_i,d_0\}$ or $C_j=\{c_j,x,d_i,g_j\}$, so $\{c_1,x,d_2\} \subseteq C_1 \subseteq \{c_1,x,d_2,d_0,g_1\}$ and $\{c_2,x,d_1\} \subseteq C_2 \subseteq \{c_2,x,d_1,d_0,g_2\}$.

    Suppose that $C_4=\{c_2,f,d_2,e\}$.
    Recall that $M \ba d_1 \ba d_2 \ba e / c_1 / c_2$ has an $N$-minor.
    Due to the cocircuit $C_3^*$, it follows that $M \ba d_2 \ba e / c_1 / f$ has an $N$-minor.
    As $\{d_1,g_3\}$ is a parallel pair in this matroid, $M \ba g_3 \ba e$ also has an $N$-minor, so there is a $4$-element cocircuit $C_5^*$ containing $\{g_3,e\}$.
    As $C_5^*$ meets the circuits $C_0$, $C_3$, and $C_4$, the cocircuit also contains an element from each of $\{d_0,c_1,c_2\}$, $\{d_1,c_1,f\}$, and $\{d_2,c_2,f\}$, by orthogonality.
    It follows that $C_5^* \subseteq Y \cup g_3$, so
    %By orthogonality with $C_3$, the cocircuit meets $\{c_1,d_1,f\}$.
    %If $c_1 \in C_5^*$ or $d_1 \in C_5^*$, then by orthogonality with $C_4$ and $C_1$ or $C_2$ (respectively), we have $C_5^*=\{g_3,e,c_1,d_2\}$ or $C_5^*=\{g_3,e,d_1,c_2\}$, respectively.
    %If $f \in C_5^*$, then by orthogonality with $C_0$, the final element is in $\{c_1,d_0,c_2\}$.  By orthogonality with $C_1$ and $C_2$, the cocircuit is then $\{g_3,e,f,d_0\}$.  In any case
    $g_3 \in \cocl(Y)$.
    If $r(Y) \le 5$, then,
    as $r^*(Y) \le 5$ and $g_3 \in \cl(Y)$, due to the circuit $C_3$, we have $\lambda(Y \cup g_3) \le 1$, implying $|E(M)| \le 10$; a contradiction.
    So $r(Y) \ge 6$.
    In particular, 
    if, for some $\{i,j\}=\{1,2\}$, we have $C_j=\{c_j,x,d_0,d_i\}$, then we obtain the contradiction that $r(Y) \le 5$.

    So $C_1 = \{c_1,x,d_2,g_1\}$ and $C_2=\{c_2,x,d_1,g_2\}$.
    Moreover, if $g_3 \in \{g_1,g_2\}$, then, after circuit elimination on $C_3$ and either $C_1$ or $C_2$, it follows that $r(Y) \le 5$; a contradiction.
    By a similar argument, $g_1 \neq g_2$.
    So we may assume that $g_1$, $g_2$, and $g_3$ are distinct.
%
    %This proves the claim.
    Since $M \ba d_2 \ba e / c_1 / f$ has an $N$-minor, and $\{g_3,d_1\}$ is a parallel pair in this matroid, $\{g_3,d_2\}$ is $N$-deletable.
    So there is a $4$-element cocircuit~$C_6^*$ containing $\{g_3,d_2\}$.
    By orthogonality with $C_1$ and $C_4$, the cocircuit contains an element in $\{g_1,x,c_1\}$ and an element in $\{c_2,e,f\}$.
    If $C_6^*$ meets $\{c_2,e\}$, then, by orthogonality with $C_0$, the other element is $c_1$.  Then, by orthogonality with $C_2$, we have $C_6^*=\{g_3,d_2,c_1,e\}$, in which case $C_3^* \cup g_3$ is a $5$-element coplane intersecting $C_0$ in two elements; a contradiction.
    So $f \in C_6^*$.
    Then, the final element of $C_6^*$ is in $\{g_1,x,c_1\}$, and by orthogonality $C_6^*$ avoids $C_0$ and $C_2$; so $C_6^*=\{g_3,d_2,f,g_1\}$.

    %Recall that due to the cocircuit $C_5^*$ containing $\{g_3,e\}$, we have $g_3 \in \cocl(Y)$.
    %So $r^*(Y \cup \{g_1,g_3\}) \le 5$.
    %As $r(Y \cup \{g_1,g_3\}) \le 6$, we have $\lambda(Y \cup \{g_1,g_3\}) \le 1$, implying $|E(M)| \le 11$; a contradiction.
    %We deduce that $C_4 = \{c_2,f,d_2,g_4\}$, as well as $C_3 = \{c_1,f,d_1,g_3\}$, for $g_3,g_4 \in E(M)-Y$.
    Now $M \ba d_2 \ba e / c_1 / f$ has an $N$-minor, and $\{d_1,g_3\}$ is a parallel pair in this matroid, due to $C_3$, so $M \ba g_3 \ba d_2 \ba e / c_1$ has an $N$-minor.
    As $\{f,g_1\}$ is a series pair in this matroid, due to $C_6^*$, the matroid $M \ba e / c_1 / g_1$ has an $N$-minor.
    As $\{x,d_2\}$ is a parallel pair in this matroid, due to $C_1$, the pair $\{x,e\}$ is $N$-deletable.
    So there is a $4$-element cocircuit~$C_7^*$ containing $\{x,e\}$, and meeting $\{c_1,d_2,g_1\}$, $\{c_2,d_1,g_2\}$ by orthogonality with $C_1$ and $C_2$.
    By orthogonality with $C_3$ and $C_0$, this cocircuit is either $\{x,e,c_1,d_1\}$, $\{x,e,d_2,c_2\}$, or $\{x,e,g_1,c_2\}$.
    If $C_7^*=\{x,e,c_1,d_1\}$ or $C_7^*=\{x,e,d_2,c_2\}$, then $C_1^* \cup e$ or $C_2^* \cup e$ is a $5$-element coplane intersecting $C_1$ or $C_2$ in two elements, respectively; a contradiction.
    So $C_7^*=\{x,e,c_2,g_1\}$.
    Then, by cocircuit elimination with $C_2^*$, there is a cocircuit~$C_8^*$ contained in $\{e,g_1,x,d_0,d_2\}$.
    By orthogonality with $C_2$, we have $C_8^*=\{d_0,d_2,e,g_1\}$.
    Now, both $\{x,c_2\}$ and $\{e,g_1\}$ are series pairs in $M \ba d_0 \ba d_2$, so $M / x / e$ has an $N$-minor.
    Consider the $4$-element circuit containing $\{e,x\}$.
    By orthogonality with $C_1^*$, $C_2^*$, $C_3^*$, and $C_4^*$, the circuit is either $\{e,x,f,d_0\}$, $\{e,x,d_1,d_2\}$, or $\{e,x,c_1,c_2\}$.
    In the first two cases, the circuit intersects $C_6^*$ in a single element, while in the latter case the circuit intersects $C_8^*$ in a single element; a contradiction.

    So in what follows, we may assume that $C_3 = \{c_1,f,d_1,g_3\}$ and $C_4 = \{c_2,f,d_2,g_4\}$, for $g_3,g_4 \in E(M)-Y$.
    By symmetry, we may also assume $C_1=\{c_1,x,d_2,g_1\}$.
    Suppose $C_2= \{c_2,x,d_0,d_1\}$.
    Since $M \ba d_1 \ba e / c_2 / f$ has an $N$-minor, and $\{d_2,g_4\}$ is a parallel pair in this matroid, $\{g_4,d_1\}$ is $N$-deletable.
    %Let $C_5^*$ be the cocircuit containing $\{g_3,d_2\}$.
    %By orthogonality with $C_1$ and $C_4$, this cocircuit contains an element in $\{g_1,x,c_1\}$ and an element in $\{g_4,c_2,f\}$.
    %If $C_5^*$ meets $C_0$, then it does so in two elements, so $C_5^*=\{g_3,d_2,c_1,c_2\}$, but then $C_5^* \cap C_2=\{c_2\}$; a contradiction.
    %So $C_5^*$ contains $\{g_3,d_2\}$, an element in $\{g_1,x\}$, and an element in $\{g_4,f\}$.
    %Now $C_5^*$ intersects $C_2$ in at most one element, so $g_1 \in C_5^*$.
    %By orthogonality with $C_3$, we have $C_5^* = \{g_3,d_2,g_1,f\}$.
    %
    Consider the $4$-element cocircuit containing $\{g_4,d_1\}$.
    By orthogonality with $C_2$ and $C_4$, the cocircuit meets $\{c_2,x,d_0\}$ and $\{c_2,d_2,f\}$.
    If the cocircuit contains $c_2$, then the final element is $c_1$, by orthogonality with $C_3$ and $C_0$.
    But then the cocircuit intersects $C_1$ in a single element, contradicting orthogonality.
    So the cocircuit contains an element in $\{x,d_0\}$ and an element in $\{d_2,f\}$.
    By orthogonality with $C_1$, the cocircuit is either $\{g_4,d_1,x,d_2\}$ or $\{g_4,d_1,d_0,f\}$.
    But the former intersects $C_3$ in a single element, and the latter intersects $C_0$ in a single element.
    From this contradiction, we deduce that $C_1 = \{c_1,x,d_2,g_1\}$ and $C_2 = \{c_2,x,d_1,g_2\}$ for $g_1,g_2 \in E(M)-Y$.

    \medskip
    Now suppose $C_1 = \{c_1,x,d_2,g_1\}$, $C_2 = \{c_2,x,d_1,g_2\}$, $C_3 = \{c_1,f,d_1,g_3\}$, and $C_4 = \{c_2,f,d_2,g_4\}$, for $g_1,g_2,g_3,g_4 \in E(M)-Y$.
    %Note: some duplication of above argument, below
    Since $M \ba d_0 / c_1 /x$ has an $N$-minor and $\{g_1,d_2\}$ is a parallel pair in this matroid, $\{g_1,d_0\}$ is $N$-deletable.
    Consider the $4$-element cocircuit $C_5^*$ containing $\{g_1,d_0\}$.
    This cocircuit meets $\{c_1,x,d_2\}$ and $\{c_1,c_2,e\}$ by orthogonality with $C_1$ and $C_0$.
    Also, $|C_5^* \cap C_1^*| \neq 3$, for otherwise $C_1^* \cup g_1$ is a $5$-element coplane that intersects $C_0$ or $C_1$ in two elements; a contradiction.
    Similarly, $|C_5^* \cap C_2^*| \neq 3$.

    Suppose $g=g_1=g_2$.  Then $C_5^*$ also meets $\{c_2,x,d_1\}$, and it follows that $C_5^* = \{g,d_0,c_1,c_2\}$ or $C_5^* = \{g,d_0,x,e\}$.
    In the former case, it follows that $g=g_3=g_4$, so %$C_1 = \{c_1,x,d_2,g\}$ and $C_2 = \{c_2,x,d_1,g\}$,
    $C_3 = \{c_1,f,d_1,g\}$ and $C_4 = \{c_2,f,d_2,g\}$.
    Now $r^*(Y \cup g) \le 5$ and $r(Y \cup g) \le 5$, so $\lambda(Y \cup g) \le 1$ and $|E(M)| \le 10$; a contradiction.
    So $C_5^* = \{g,d_0,x,e\}$.
    By circuit elimination on $C_1$ and $C_2$, the set $\{c_1,d_1,x,c_2,d_2\}$ contains a circuit.  By orthogonality with $C_5^*$, we see that $C_1^* \triangle C_2^* = \{c_1,d_1,c_2,d_2\}$ is a circuit, so the lemma holds by \cref{qqcc25}.
    So we may assume that $g_1 \neq g_2$.

    Now suppose $g'=g_3=g_4$.
    Since $M \ba d_1 \ba e / c_1$ has an $N$-minor, so does $M \ba e / c_1 / f$, and it follows that $\{e,g'\}$ is $N$-deletable.  Thus there is a $4$-element cocircuit $C^*$ containing $\{e,g'\}$.
    If $C^* = \{e,g',c_1,c_2\}$, then, by orthogonality with $C_1$ and $C_2$, we have $g' = g_1 = g_2$; a contradiction.
    Now, if $c_1 \in C^*$, then by orthogonality with $C_3$, the final element is either $d_2$ or $f$; but then $C_4^* \cup g'$ is a $5$-element coplane intersecting $C_0$ in two elements; a contradiction.
    So $c_1 \notin C^*$ and, similarly, $c_2 \notin C^*$; hence $d_0 \in C^*$.
    By orthogonality with $C_3$ and $C_4$, we have $C^* = \{d_0,e,f,g'\}$.
    Now by circuit elimination on $C_3$ and $C_4$, there is a circuit contained in $\{c_1,d_1,f,c_2,d_2\}$.
    By orthogonality with $C^*$, we see that $C_1^* \triangle C_2^* = \{c_1,d_1,c_2,d_2\}$ is a circuit, so the lemma holds by \cref{qqcc25}.
    So we may assume that $g_3 \neq g_4$.

    We return to the $4$-element cocircuit $C_5^*$ containing $\{g_1,d_0\}$ and meeting $\{c_1,x,d_2\}$ and $\{c_1,c_2,e\}$.
    Consider when $c_1 \notin C_5^*$.
    Then, by orthogonality with $C_2$, either $C_5^* = \{d_0,g_1,d_2,e\}$ or $C_5^* = \{d_0,g_1,x,c_2\}$.
    But if $C_5^* = \{d_0,g_1,x,c_2\}$, then $C_2^* \cup g_1$ is a $5$-element coplane that intersects $C_0$ in two elements; a contradiction.
    So $C_5^* = \{d_0,g_1,d_2,e\}$. % and, by symmetry (!!), there is a cocircuit $C_6^* = \{d_0,g_2,d_1,e\}$.
    Similarly, there is a $4$-element cocircuit $C_6^*$ containing $\{d_0,g_2\}$ where either $c_2 \in C_6^*$ or $C_6^* = \{d_0,g_2,d_1,e\}$.

    Suppose $C_5^* = \{d_0,g_1,d_2,e\}$ and $C_6^* = \{d_0,g_2,d_1,e\}$.
    Then, by cocircuit elimination, there is a cocircuit contained in $\{g_1,g_2,d_1,d_2,e\}$.
    By orthogonality with $C_0$, this cocircuit is $\{g_1,g_2,d_1,d_2\}$.
    %Now by orthogonality with $C_3$ and $C_4$, we have $g_3,g_4\in\{g_1,g_2\}$.
    Now by orthogonality we have $g_2=g_3$ and $g_1=g_4$, so $C_3 = \{c_1,f,d_1,g_2\}$ and $C_4 = \{c_2,f,d_2,g_1\}$.
%
    %It follows that $r(Y \cup \{g_1,g_2\}) \le 6$ and $r^*(Y \cup \{g_1,g_2\}) \le 5$, so $\lambda(Y \cup \{g_1,g_2\}) \le 1$, implying $|E(M)| \le 11$; a contradiction.
    As $M \ba d_1 \ba d_2 \ba e / c_1 / c_2$ has an $N$-minor, and $\{g_1,g_2\}$ is a series pair in this matroid, $M \ba e / c_2 / g_1$ has an $N$-minor.
    As $\{d_2,f\}$ is a parallel pair in this matroid, $M \ba e \ba f / g_1$ also has an $N$-minor.
    Then $\{d_1,c_2\}$ is a series pair in the latter matroid, so $\{g_1,d_1\}$ is $N$-contractible.
    So there is a $4$-element circuit containing $\{g_1,d_1\}$.
    By orthogonality with $C_5^*$ and $C_1^*$, this circuit meets $\{d_0,d_2,e\}$ and $\{d_0,x,c_1\}$.
    If the circuit contains $d_0$, then, by orthogonality with $C_2^*$ and $C_4^*$, it is $\{g_1,d_1,d_0,c_2\}$.
    Otherwise, when the circuit does not contain $d_0$, then, by orthogonality with $C_4^*$ and $C_3^*$, it is $\{g_1,d_1,e,c_1\}$.
    In either case, $g_1 \in \cl(C_0 \cup d_1)$, and it follows that $r(Y \cup \{g_1,g_2\}) \le 5$.  As $r^*(Y \cup \{g_1,g_2\}) \le 5$, we have $\lambda(Y \cup \{g_1,g_2\}) = 0$, implying $|E(M)| = 10$; a contradiction.
    We deduce that either $c_1 \in C_5^*$ or $c_2 \in C_6^*$.

    By symmetry, we may now assume that $\{d_0,g_1,c_1\} \subseteq C_5^*$.
    First we assume that $g_1 \neq g_3$.
    By orthogonality with $C_3$, the final element of $C_5^*$ is in $\{f,d_1,g_3\}$. 
    If $d_1 \in C_5^*$, then $C_1^* \cup C_5^*$ is a $5$-element coplane that intersects the circuit $C_0$ in two elements; a contradiction.
    So either $C_5^* = \{d_0,g_1,c_1,f\}$, or $C_5^* = \{d_0,g_1,c_1,g_3\}$.

    Suppose $C_5^* = \{d_0,g_1,c_1,f\}$.
    As $M \ba d_0 / c_1 / x$ has an $N$-minor, $M \ba d_0 \ba g_1 / x$ has an $N$-minor, and thus $\{f,x\}$ is $N$-contractible.
    So there is a $4$-element circuit~$C_6$ containing $\{f,x\}$,
    which meets $\{d_0,d_1,c_1\}$ and $\{d_0,d_2,c_2\}$, by orthogonality.
    If $d_0 \notin C_6$, then $C_6 = \{f,x,c_1,c_2\}$ or $C_6 = \{f,x,d_1,d_2\}$, by orthogonality with $C_3^*$ and $C_4^*$.
    By orthogonality with $C_5^*$, we have $C_6 = \{f,x,c_1,c_2\}$.
    But $\{c_1,f,x,c_2\}$ is not a circuit, by an earlier claim.
    So $d_0 \in C_6$.  By orthogonality with $C_3^*$ and $C_4^*$, we now have $C_6 = \{f,x,d_0,e\}$.
    %Let $C_6^*$ be the $4$-element cocircuit containing $\{d_0,g_2\}$.
    %By orthogonality with $C_0$, $C_2$, and $C_6$, this cocircuit is contained in $Y \cup g_2$.
    %Now $r(Y \cup \{g_1,g_2\}) \le 6$ and $r^*(Y \cup \{g_1,g_2\}) \le 5$, so $\lambda(Y \cup \{g_1,g_2\}) \le 1$, implying $|E(M)| \le 11$; a contradiction.
%
    By cocircuit elimination on $C_3^*$ and $C_5^*$, there is a cocircuit contained in $\{c_1,d_2,e,d_0,g_1\}$.
    By orthogonality with $C_3$, the cocircuit is $C_7^* = \{d_2,e,d_0,g_1\}$.
    Now, by circuit elimination on $C_0$ and $C_6$, there is a circuit contained in $\{e,c_1,c_2,f,x\}$; by orthogonality with $C_7^*$, this circuit is $\{c_1,c_2,f,x\}$.  But, as argued earlier, this is not a circuit.

    Now suppose $C_5^* = \{d_0,g_1,c_1,g_3\}$.
    Since $M \ba d_1 /c_1 /c_2$ has an $N$-minor, it follows that
    $M \ba e \ba d_1 /c_1$ has an $N$-minor, so
    $M \ba e /c_1 /f$ has an $N$-minor, and finally %by C_3^*
    $\{e,g_3\}$ is $N$-deletable.
    %$M \ba e \ba g_3$ has an $N$-minor. %by C_3
    So there is a $4$-element cocircuit~$C_7^*$ containing $\{e,g_3\}$.
    By orthogonality with $C_3$ and $C_0$, this cocircuit meets $\{d_1,c_1,f\}$ and $\{c_1,c_2,d_0\}$.
    If it contains $c_1$, so that $\{e,g_3,c_1\} \subseteq C_7^*$, then by orthogonality with $C_1$, the final element is in $\{x,d_2,g_1\}$.
    By orthogonality with $C_2$ and $C_4$, it follows that $C_7^* = \{e,g_3,c_1,g_1\}$.  But then $C_5^* \cup C_7^*$ is a $5$-element coplane that intersects $C_3$ in two elements; a contradiction.
    So $c_1 \notin C_7^*$, and thus $C_7^*$ meets $\{c_2,d_0\}$ and $\{d_1,f\}$, by orthogonality with $C_0$ and $C_3$.
    But this contradicts orthogonality with $C_1$, or with $C_4$.

    \medskip
    Next we assume that $g_1=g_3$.
    Suppose also that $g_2=g_4$.
    %Observe that $r(Y \cup \{g_1,g_2\}) \le 6$.
    Recall that $\{d_0,g_1,c_1\} \subseteq C_5^*$.
    By orthogonality, $C_5^*$ does not meet $C_2$ or $C_4$, so either $C_5^* = \{d_0,g_1,c_1,e\}$ or $C_5^* = \{d_0,g_1,c_1,h_1\}$ for some $h_1 \in E(M)-(Y \cup \{g_1,g_2\})$.
    Suppose $C_5^* = \{d_0,g_1,c_1,e\}$ and observe that $r^*(Y \cup g_1) \le 5$.
    We claim that $M / d_0 /f$ has an $N$-minor.
    As $M \ba d_1 \ba d_2 \ba e / c_1$ has an $N$-minor and $\{f,c_2\}$ is a series pair in this matroid, $M \ba d_2 \ba e / c_1 / f$ has an $N$-minor.
    Due to $C_3$, the latter matroid has the parallel pair $\{d_1,g_1\}$, so $M \ba g_1 \ba d_2 \ba e / f$ has an $N$-minor.
    Then $\{d_0,c_1\}$ is a series pair in this matroid, due to $C_5^*$, so $M \ba g_1 \ba d_2 / d_0 / f$ has an $N$-minor, thus showing the claim.
    Now $\{d_0,f\}$ is contained in a $4$-element circuit.
    By orthogonality with $C_1^*$, $C_2^*$, $C_3^*$, $C_4^*$, and $C_5^*$, this circuit is either $\{d_0,f,c_1,c_2\}$ or $\{d_0,f,x,e\}$.
    But in the former case, $C_0 \cup f$ is a $5$-element plane that intersects $C_1^*$ in two elements; a contradiction.
    So $\{d_0,f,x,e\}$ is a circuit.
    Hence $r(Y \cup \{g_1,g_2\}) \le 5$.
    We now claim that $M \ba g_1 \ba g_2$ has an $N$-minor.
    As $\{x,e\}$ is a parallel pair in the matroid $M \ba g_1 \ba d_2 / d_0 / f$, which has an $N$-minor, the matroid
    $M \ba x \ba g_1 \ba d_2 / f$ also has an $N$-minor, and it follows that
    $M \ba g_1 / c_2 / f$ has an $N$-minor, and finally
    $M \ba g_1 \ba g_2$ also has an $N$-minor as claimed.
    Now $\{g_1,g_2\}$ is contained in a $4$-element cocircuit $C_8^*$.
    By orthogonality, $C_8^* \subseteq Y \cup \{g_1,g_2\}$, so $r^*(Y \cup \{g_1,g_2\}) \le 5$.
    Thus $\lambda(Y \cup \{g_1,g_2\}) =0$, in which case $|E(M)| = 10$; a contradiction. 

    Now suppose $C_5^* = \{d_0,g_1,c_1,h_1\}$ for $h_1 \in E(M)-(Y \cup \{g_1,g_2\})$.
    Recall that there is a $4$-element cocircuit $C_6^*$ containing $\{d_0,g_2\}$.
    By orthogonality with $C_2$ this cocircuit meets $\{d_1,x,c_2\}$, but cannot contain $d_1$, by orthogonality with $C_0$, $C_1$, and $C_3$, and cannot contain $x$, by orthogonality with $C_0$, $C_1$, and $C_4$.
    So, by orthogonality with $C_3$, either $C_6^* = \{d_0,g_2,c_2,e\}$ or $C_6^* = \{d_0,g_2,c_2,h_2\}$ for $h_2 \in E(M)-(Y \cup \{g_1,g_2\})$.

    %Note: later we use $C_7$, giving p = h_1$.
    Next we claim that there is a cocircuit~$C_7^*$ where either $C_7^*=\{d_1,g_1,x,p\}$ for $p \in E(M)-(Y \cup \{g_1,g_2\})$, or $C_7^* = \{d_1,g_1,g_2,d_2\}$ in which case one of $\{x,f,c_1,c_2\}$, $\{x,f,d_1,d_2\}$, or $\{x,f,g_1,g_2\}$ is also a cocircuit.
    Since $M \ba d_1 / c_1 / x$ has an $N$-minor, and $\{d_2,g_1\}$ is a parallel pair in this matroid, $M \ba d_1 \ba g_1$ has an $N$-minor.
    So there is a $4$-element cocircuit~$C_7^*$ containing $\{d_1,g_1\}$.
    By orthogonality with $C_2$, the cocircuit~$C_7^*$ meets $\{x,c_2,g_2\}$.
    But $c_2 \notin C_7^*$, by orthogonality with $C_0$ and $C_4$.
    If $g_2 \in C_7^*$, then by orthogonality with $C_1$ and $C_4$, we have $C_7^* = \{d_1,g_1,g_2,d_2\}$; %, in which case $r^*(Y \cup \{g_1,g_2\}) \le 6$.
    otherwise, $x \in C_7^*$, and, by orthogonality with $C_0$ and $C_4$, we have $C_7^*=\{d_1,g_1,x,p\}$ for $p \in E(M)-(Y \cup \{g_1,g_2\})$.
    Now, in the case that $C_7^* = \{d_1,g_1,g_2,d_2\}$, the matroid $M \ba d_1 \ba d_2 / c_1 / c_2$ has an $N$-minor and $\{g_1,g_2\}$ is a series pair, so $\{c_2,g_1\}$ and $\{c_1,g_2\}$ are $N$-contractible in $M$.
    Since $M / c_1 / g_1$ has an $N$-minor and $\{x,d_2\}$ and $\{d_1,f\}$ are parallel pairs in this matroid (due to $C_1$ and $C_3$), the pair $\{x,f\}$ is $N$-deletable.
    So $\{x,f\}$ is contained in a $4$-element cocircuit~$C_8^*$.
    By orthogonality with $C_3$ and $C_4$, the cocircuit~$C_8^*$ meets the disjoint sets $\{c_1,d_1,g_1\}$ and $\{c_2,d_2,g_2\}$.
    Again by orthogonality, this time with $C_0$, $C_1$, and $C_2$, the cocircuit $C_8^*$ is one of $\{x,f,c_1,c_2\}$, $\{x,f,d_1,d_2\}$, or $\{x,f,g_1,g_2\}$.
    %But then $C_8^*$ intersects $C_7$ in a single element~$x$; a contradiction.

    Now we claim that there is a $4$-element circuit~$C_5$ containing $\{h_1,x,d_0\}$ and one of $g_1$, $g_2$, or some $g' \in E(M)-(Y \cup \{g_1,g_2,h_1\})$.
    Since $M \ba d_0 / c_1 / x$ has an $N$-minor, and $\{d_2,g_1\}$ is a parallel pair in this matroid, $M \ba d_0 \ba g_1 / x$ has an $N$-minor.
    As $\{h_1,c_1\}$ is a series pair in the latter matroid, $M / h_1 / x$ has an $N$-minor.
    So $\{h_1,x\}$ is contained in a $4$-element circuit~$C_5$.
    By orthogonality with $C_1^*$, the circuit~$C_5$ meets $\{d_1,c_1,d_0\}$.
    But, by orthogonality with $C_3^*$ and $C_5^*$, we have $d_1 \notin C_5$.
    If $c_1 \in C_5$, then, by orthogonality with $C_2^*$ and $C_4^*$, we have $C_5=\{h_1,x,c_1,d_2\}$, but then $C_1 \cup h_1$ is a $5$-element plane that intersects $C_1^*$ in two elements; a contradiction.
    So $d_0 \in C_5$.
    By orthogonality, $C_5$ does not meet $C_3^*$ or $C_4^*$, so the final element of $C_5$ is either $g_1$, $g_2$, or some $g' \in E(M)-(Y \cup \{g_1,g_2,h_1\})$.

    Suppose $C_5=\{h_1,x,d_0,g_1\}$. Then, %since $g \neq g_2$ and
    by orthogonality, $C_6^* = \{d_0,g_2,c_2,h_1\}$. % with $h_1=h_2$. %, and $C_7^* = \{d_1,g_1,x,p\}$.
    %% |E(M)| \le 12 argument:
    %Now $r^*(Y \cup \{g_1,g_2,h_1\}) \le 6$ and $r(Y \cup \{g_1,g_2,h_1\}) \le 6$, so $\lambda(Y \cup \{g_1,g_2,h_1\}) \le 1$, implying $|E(M)| \le 12$.
    %%The following gets a contradiction in one case... (to avoid |E(M)|=12) but there is still the other case.
    First let $C_7^* = \{d_1,g_1,g_2,d_2\}$, so there is a circuit $C_8^*$ containing $\{x,f\}$.
    Then $C_8^* = \{x,f,g_1,g_2\}$, by orthogonality with $C_5$.
    Since $M \ba d_0 \ba g_2 / x$ has an $N$-minor and $\{c_2,h_1\}$ is a series pair in this matroid, $M \ba g_2 / x / h_1$ has an $N$-minor.
    Due to the circuit~$C_5$, it follows that $M \ba g_1 \ba g_2 / h_1$ has an $N$-minor.
    Now, due to the cocircuit~$C_8^*$, the pair $\{f,h_1\}$ is $N$-contractible.
    So there is a $4$-element circuit~$C_6$ containing $\{f,h_1\}$.
    By orthogonality with $C_3^*$, $C_4^*$, $C_5^*$, and $C_6^*$, this circuit meets each of the sets $\{c_1,e,d_2\}$, $\{c_2,e,d_1\}$, $\{c_1,d_0,g_1\}$, and $\{c_2,d_0,g_2\}$.
    Thus either $C_6 = \{f,h_1,c_1,c_2\}$ or $C_6 = \{f,h_1,d_0,e\}$.
    But in either case $C_6$ intersects the cocircuit $C_1^*$ in a single element; a contradiction.

    Now let $C_7^* = \{d_1,g_1,x,p\}$ for some $p \in E(M)-(Y \cup \{g_1,g_2\})$.
    Since $M \ba d_0 \ba d_1 \ba g_1 / c_1$ has an $N$-minor, and $\{x,p\}$ is a series pair in this matroid, $\{c_1,p\}$ is $N$-contractible.
    So there is a $4$-element circuit containing $\{c_1,p\}$.
    There are two cases to consider, depending on whether or not $p=h_1$.
    In the case that $p=h_1$, the circuit contains $\{c_1,h_1\}$ and meets $\{d_0,x,d_1\}$ and $\{d_2,e,f\}$, by orthogonality with $C_1^*$ and $C_3^*$.
    By orthogonality with $C_4^*$, the circuit contains $d_2$; then, by orthogonality with $C_2^*$ and $C_6^*$, the final element is $d_0$.
    But then the circuit is $\{c_1,h_1,d_2,d_0\}$, and intersects $C_7^*$ in a single element; a contradiction.
    Now $p \neq h_1$, and the $4$-element circuit containing $\{c_1,p\}$ meets $\{d_1,g_1,x\}$ and $\{d_2,e,f\}$ by orthogonality with $C_7^*$ and $C_3^*$.
    By orthogonality with $C_4^*$, the circuit contains $d_2$; then by orthogonality with $C_1^*$ and $C_2^*$, the circuit is $\{c_1,p,x,d_2\}$.
    But then $C_1 \cup p$ is a $5$-element plane that intersects $C_1^*$ in two elements; a contradiction.

    \smallskip
    Suppose $C_5=\{h_1,x,d_0,g_2\}$. % is a circuit.
    Then, by orthogonality, $C_7^* = \{d_1,g_1,x,h_1\}$. % with $h_1=p$.
    Now, $M \ba d_0 \ba d_1 \ba g_1$ has an $N$-minor, and $\{c_1,x,h_1\}$ is contained in a series class in this matroid.
    So, in particular, $M \ba d_1 / x / h_1$ has an $N$-minor.
    As $\{d_0,g_2\}$ is a parallel pair in this matroid, the pair $\{d_1,g_2\}$ is $N$-deletable in $M$.
    Thus there is a $4$-element cocircuit~$C_9^*$ containing $\{d_1,g_2\}$.
    By orthogonality with $C_3$, $C_4$, and $C_5$, this cocircuit meets $\{c_1,f,g_1\}$, $\{c_2,d_2,f\}$, and $\{h_1,x,d_0\}$.
    Hence $f \in C_9^*$.
    By orthogonality with $C_0$ and $C_1$, we have $d_0 \notin C_9^*$ and $x \notin C_9^*$, so $C_9^* = \{d_1,g_2,f,h_1\}$.
    Recall that $\{h_1,c_1\}$ is $N$-contractible.
    Thus there is a $4$-element circuit~$C_7$ containing $\{h_1,c_1\}$.
    By orthogonality with $C_1^*$ and $C_9^*$, this circuit meets $\{d_1,x,d_0\}$ and $\{d_1,g_2,f\}$.
    It follows that $C_7$ does not meet $C_2^*$, so $d_1 \in C_7$.
    Then, by orthogonality with $C_3^*$, the final element in $C_7$ is either $e$ or $f$.
    But if $f \in C_7$, then $C_3 \cup h_1$ is a $5$-element plane that intersects the cocircuit $C_1^*$ in two elements; a contradiction.
    So $C_7=\{h_1,c_1,d_1,e\}$.

    Now let $Y' = C_3 \cup C_5 = C_1^* \cup C_5^* \cup C_9^*$, so $r^*(Y') \le 5$ and $r(Y') \le 6$.
    Then $d_2,c_2,e \in \cl(Y')$, due to $C_1$, $C_2$, and $C_7$ respectively, and $c_2,d_2 \in \cocl(Y' \cup e)$, due to $C_3^*$ and $C_4^*$.
    Hence $r^*(Y' \cup \{d_2,c_2,e\}) \le 6$ and $r(Y' \cup \{d_2,c_2,e\}) \le 6$, so $\lambda(Y' \cup \{d_2,c_2,e\}) \le 1$, implying $|E(M)| \le 12$.
    Recall that $C_6^* = \{d_0,g_2,c_2,e\}$ or $C_6^* = \{d_0,g_2,c_2,h_2\}$ for $h_2 \in E(M)-(Y \cup \{g_1,g_2\})$.
    By orthogonality with $C_7$, we have $C_6^* = \{d_0,g_2,c_2,h_2\}$ and $h_1 \neq h_2$.
    Now, as the elements $g_1,g_2,h_1,h_2 \in E(M)-Y$ are distinct, $|E(M)| = 12$.
    We work towards showing that $M$ is a \tcn.

    %Since $M \ba d_1 \ba g_1 / x / h_1$ has an $N$-minor, and $\{d_0,g_2\}$ is a parallel pair in this matroid due to $C_5$, the pair $\{g_1,g_2\}$ is $N$-deletable.
    %Thus $\{g_1,g_2\}$ is contained in a $4$-element cocircuit $C_{10}^*$.
    %By orthogonality with the disjoint circuits $C_3$ and $C_5$, this cocircuit contains an element in $\{c_1,d_1,f\}$ and an element in $\{d_0,x,h_1\}$.
    %Due to the circuit $C_4$, we see that $f \in C_{10}^*$.
    %Then, due to $C_1$ and $C_2$, we have $x \in C_{10}^*$.
    %So $C_{10}^* = \{g_1,g_2,f,x\}$.

    %Recall that $M \ba g_1 \ba g_2 / h_1$ has an $N$-minor, and observe now that $\{f,x\}$ is a series pair in this matroid.
    %So the pair $\{f,h_1\}$ is $N$-contractible in $M$; let $C_9$ be the $4$-element circuit containing $\{f,h_1\}$.
    %By orthogonality with $C_7^*$, this circuit meets $\{x,d_1,g_1\}$.
    %But if $C_9$ contains $x$ or $d_1$, then by orthogonality with $C_5^*$, the final element of the circuit is in $\{d_0,c_1\}$, and $C_9$ intersects $C_{10}^*$ in a single element; a contradiction.
    %So $g_1 \in C_9$, in which case, by orthogonality with $C_3^*$ and $C_4^*$, we have $C_9=\{f,h_1,g_1,e\}$.

    As $M \ba d_0 \ba g_2 / x$ has an $N$-minor, and $\{c_2,h_2\}$ is a series pair in this matroid, %due to $C_6^*$, the pair
    $\{h_2,x\}$ is $N$-contractible.
    So there is a $4$-element circuit~$C_8$ containing $\{h_2,x\}$.
    By orthogonality with $C_6^*$, this circuit contains an element in $\{d_0,g_2,c_2\}$.
    If $g_2 \in C_8$, then, by orthogonality with $C_1^*$ and $C_2^*$, we have $C_8=\{h_2,x,g_2,d_0\}$, in which case $|C_8 \cap C_5^*|=1$; a contradiction.
    If $c_2 \in C_8$, then, by orthogonality with $C_3^*$ and $C_4^*$, we have $C_8=\{h_2,x,c_2,c_1\}$, in which case, again, $|C_8 \cap C_5^*|=1$; a contradiction.
    So $d_0 \in C_8$, in which case the final element is in $\{g_1,c_1,h_1\}$, by orthogonality with $C_5^*$.
    But the final element cannot be $c_1$, by orthogonality with $C_3^*$, and cannot be $h_1$, by orthogonality with $C_9^*$.
    So $C_8=\{h_2,x,d_0,g_1\}$.

    We focus on uncovering the remaining circuits in the \tcn.
    By circuit elimination on $C_3$ and $C_7$, there is a circuit contained in $\{c_1,e,f,g_1,h_1\}$.
    By orthogonality with $C_1^*$, we have that $C_6 = \{e,f,g_1,h_1\}$ is a circuit.
    By circuit elimination on $C_5$ and $C_8$, the set $\{x,g_1,g_2,h_1,h_2\}$ contains a circuit, which is $C_9=\{g_1,g_2,h_1,h_2\}$, by orthogonality with $C_1^*$.
    Similarly, the set $\{c_1,f,x,d_1,d_2\}$ contains a circuit, by circuit elimination on $C_1$ and $C_3$, so $C_{10}=\{f,x,d_1,d_2\}$ is a circuit by orthogonality with $C_5^*$.
    By circuit elimination on $C_6$ and $C_9$, there is a circuit contained in $\{g_1,e,f,g_2,h_2\}$, which is $C_{11}=\{e,f,g_2,h_2\}$ by orthogonality with $C_5^*$.
    Now, by circuit elimination on $C_4$ and $C_{11}$, there is a circuit contained in $\{f,c_2,d_2,e,h_2\}$; by orthogonality with $C_9^*$, the circuit is $C_{12}=\{c_2,d_2,e,h_2\}$.
    The set $\{x,c_1,d_0,d_2,h_2\}$ also contains a circuit, by circuit elimination on $C_1$ and $C_8$; by orthogonality with $C_7^*$, the circuit is $C_{13}=\{c_1,d_0,d_2,h_2\}$.
    Finally, by circuit on elimination on $C_0$ and $C_7$, and orthogonality with $C_3^*$, we have that $C_{14}=\{c_2,d_0,d_1,h_1\}$ is a circuit.

    We now turn to the remaining cocircuits.
    By cocircuit elimination on $C_3^*$ and $C_4^*$, there is a cocircuit contained in $\{e,c_1,c_2,d_1,d_2\}$, which, by orthogonality with $C_6$, is $C_0^* = \{c_1,c_2,d_1,d_2\}$.
    By cocircuit elimination on $C_7^*$ and $C_9^*$, there is a cocircuit contained in $\{d_1,x,f,g_1,g_2\}$; by orthogonality with $C_7$, this cocircuit is $C_8^* = \{x,f,g_1,g_2\}$.
    By cocircuit elimination on $C_2^*$ and $C_6^*$, there is a cocircuit contained in $\{d_0,d_2,x,g_2,h_2\}$; by orthogonality with $C_0$, this cocircuit is $C_{10}^* = \{d_2,x,g_2,h_2\}$.
    By cocircuit elimination on $C_7^*$ and $C_{10}^*$, there is a cocircuit contained in $\{x,d_2,f,g_1,h_2\}$; by orthogonality with $C_2$, this cocircuit is $C_{11}^* = \{d_2,f,g_1,h_2\}$.
    By cocircuit elimination on $C_3^*$ and $C_{11}^*$, there is a cocircuit contained in $\{f,c_1,e,g_1,h_2\}$; by orthogonality with $C_4$, this cocircuit is $C_{12}^* = \{c_1,e,g_1,h_2\}$.
    By cocircuit elimination on $C_4^*$ and $C_9^*$, there is a cocircuit contained in $\{f,c_2,e,g_2,h_1\}$; by orthogonality with $C_3$, the cocircuit is $C_{13}^* = \{c_2,e,g_2,h_1\}$.
    Finally, by cocircuit elimination on $C_6^*$ and $C_{13}^*$, the set $\{c_2,d_0,e,h_1,h_2\}$ contains a cocircuit; by orthogonality with $C_2$, the cocircuit is $C_{14}^* = \{d_0,e,h_1,h_2\}$.
    Now, using the labelling $(e_1, e_2, e_3, e_4, e_5, e_6, e'_1, e'_2, e'_3, e'_4, e'_5, e'_6) = (d_1, d_0, g_1, d_2, e, g_2, h_2, f, c_2, h_1, x, c_1)$, $M$ is a \tcn.  So \cref{wtd2i4} holds.

    \smallskip
    Now suppose $C_5=\{h_1,x,d_0,g'\}$ for $g' \in E(M)-(Y \cup \{g_1,g_2,h_1\})$.
    %By orthogonality, $C_6^* = \{d_0,g_2,c_2,q'\}$ and $q' \in \{h_1,g'\}$.
    Note %also
    that either $C_7^* = \{d_1,g_1,x,p\}$ for $p \in \{h_1,g'\}$, or $C_7^* = \{d_1,g_1,g_2,d_2\}$ in which case there is another cocircuit $C_8^*$ containing $\{x,f\}$.
    In the latter case, the cocircuit $C_8^*$ is one of $\{x,f,c_1,c_2\}$, $\{x,f,d_1,d_2\}$, or $\{x,f,g_1,g_2\}$.
    But then $C_8^*$ intersects $C_5$ in a single element~$x$; a contradiction.
    So $C_7^* = \{d_1,g_1,x,p\}$ for $p \in \{h_1,g'\}$.
    Suppose that $p=g'$, so $C_7^* = \{d_1,g_1,x,g'\}$.
    Since $M \ba d_1 / c_1 /x$ has an $N$-minor, $M \ba d_1 \ba g_1 / c_1$ also has an $N$-minor (due to $C_1$), and hence, due to $C_7^*$, the pair $\{c_1,g'\}$ is $N$-contractible in $M$.
    Consider the $4$-element circuit containing $\{c_1,g'\}$.
    By orthogonality with $C_1^*$ and $C_3^*$, this circuit meets the disjoint sets $\{x,d_0,d_1\}$ and $\{e,f,d_2\}$.
    Due to $C_5^*$, the circuit contains $d_0$, and then $d_2$, due to $C_2^*$.
    But then it meets $C_7^*$ in a single element; a contradiction.

    Now suppose $p=h_1$, so $C_7^* = \{d_1,g_1,x,h_1\}$.
    %%Then this element is in the coclosure of $\{c_1,d_0,g_1\}$ and $\{d_1,g_1,x\}$.
    %Since $M \ba d_1 \ba g_1 / c_1$ has an $N$-minor, and $\{h_1,x\}$ is a series pair in this matroid, the pair $\{c_1,h_1\}$ is $N$-contractible.
    %So there is a $4$-element circuit~$C_8$ containing $\{c_1,h_1\}$.
    %By orthogonality with $C_1^*$ and $C_3^*$, this circuit meets the disjoint sets $\{d_0,d_1,x\}$ and $\{d_2,e,f\}$.
    %Hence, $h_1 \in \cl(X)$.
    %Now $r(Y \cup \{g_1,g_2,h_1,q'\} \le 6$.
    %Since $r^*(Y \cup \{g_1,g_2,h_1,q'\} \le 7$, we have $\lambda(Y \cup \{g_1,g_2,h_1,q'\}) \le 1$, so $|E(M)| \le 13$.
    Since $M \ba d_0 \ba d_1 \ba g_1 / x$ has an $N$-minor, and $\{h_1,c_1\}$ is a series pair in this matroid, $M \ba d_1 / x / h_1$ has an $N$-minor.
    Due to $C_5$, it follows that $\{g',d_1\}$ is $N$-deletable in $M$.
    So there is a $4$-element cocircuit~$C_9^*$ containing $\{g',d_1\}$.
    By orthogonality with $C_3$ and $C_5$, this cocircuit meets the disjoint sets $\{c_1,f,g_1\}$ and $\{x,d_0,h_1\}$.
    Then, by orthogonality with $C_2$, we have $x \in C_9^*$.
    Again by orthogonality, this time with $C_1=\{g_1,c_1,x,d_2\}$, we see that either $C_9^* = \{g',d_1,x,g_1\}$ or $C_9^* = \{g',d_1,x,c_1\}$.
    But in the first case, $\{g',d_1,x,g_1,h_1\}$ is a $5$-element coplane, while in the second case, $C_1^* \cup g'$ is a $5$-element coplane; in either case, the $5$-element coplane intersects the circuit $C_1$ in two elements, contradicting orthogonality.

    \medskip
    Finally we assume $g=g_1 = g_3$ where $g$, $g_2$, and $g_4$ are distinct.
    Recall that $C_5^* = \{d_0,g,c_1,h\}$ where either $h=e$ or $h \in E(M)-(Y \cup \{g,g_2,g_4\})$.
    Also, either $\{d_0,g_2,c_2\} \subseteq C_6^*$ or $C_6^* = \{d_0,g_2,e,d_2\}$.
    By orthogonality between $C_6^*$ and $C_2$, the latter is not possible; it follows, by orthogonality with $C_1$, $C_3$, and $C_4$, that $C_6^* = \{d_0,g_2,c_2,g_4\}$.
    Now, as $M \ba d_0 \ba g_2 / x$ has an $N$-minor, and $\{c_2,g_4\}$ is a series pair in this matroid, the pair $\{x,g_4\}$ is $N$-contractible.
    Thus there is a $4$-element circuit~$C_5$ containing $\{x,g_4\}$.
    If $d_0 \notin C_5$, then by orthogonality with $C_6^*$ and $C_1^*$, the circuit $C_5$ meets $\{c_2,g_2\}$ and $\{c_1,d_1\}$.
    In this case, by orthogonality with $C_3^*$ and $C_4^*$, we have $C_5 = \{x,g_4,c_2,d_1\}$, but then $C_2 \cup g_4$ is a $5$-element plane intersecting $C_1^*$ in two elements; a contradiction.
    So $d_0 \in C_5$, and thus, by orthogonality with $C_5^*$ and $C_3^*$, either $C_5 = \{x,g_4,d_0,g\}$ or $C_5 = \{x,g_4,d_0,h\}$ and $h \neq e$.

    Consider the latter case, where $C_5 = \{x,g_4,d_0,h\}$ for $h \in E(M) - (Y \cup \{g,g_2,g_4\})$.
    Then, as $M \ba g_2 / x / g_4$ has an $N$-minor and $\{d_0,h\}$ is a parallel pair in this matroid, the pair $\{h,g_2\}$ is $N$-deletable.
    So there is a $4$-element cocircuit containing $\{h,g_2\}$.
    By orthogonality with $C_5$ this cocircuit meets $\{x,g_4,d_0\}$.
    But if the cocircuit contains $d_0$, then by orthogonality with $C_0$ it also meets $\{c_1,c_2,e\}$, in which case it intersects $C_3^*$ or $C_4^*$ in a single element; a contradiction.
    If the cocircuit contains $g_4$, then, by orthogonality with $C_2$ and $C_4$, it contains $c_2$, and thus intersects $C_0$ in a single element; a contradiction.
    So the cocircuit contains $\{h,g_2,x\}$, in which case, by orthogonality with $C_1$, the final element is in $\{g,c_1,d_2\}$, but then it intersects $C_3$ or $C_4$ in a single element; a contradiction.

    %Former case
    So we may now assume that $C_5 = \{x,g_4,d_0,g\}$.
    Then, as $M \ba g_2 / x / g_4$ has an $N$-minor and $\{d_0,g\}$ is a parallel pair in this matroid, the pair $\{g,g_2\}$ is $N$-deletable.
    So there is a $4$-element cocircuit containing $\{g,g_2\}$.
    By orthogonality with $C_1$, $C_2$, and $C_3$, the cocircuit meets $\{c_1,x,d_2\}$, $\{c_2,x,d_1\}$, and $\{c_1,f,d_1\}$.
    Thus, if the cocircuit contains $d_2$, then it is $\{g,g_2,d_2,d_1\}$, but then it intersects $C_4$ in a single element; a contradiction.
    If it contains $c_1$, then, by orthogonality with $C_0$, it also contains $c_2$, and we again obtain the contradiction that the cocircuit intersects $C_4$ in a single element.
    So it contains $x$ and meets $\{c_1,f,d_1\}$.
    By orthogonality with $C_0$ and $C_4$, the cocircuit is $\{g,g_2,x,d_1\}$.
    %Now $r^*(Y \cup \{g,g_2,g_4\}) \le 6$ and $r(Y \cup \{g,g_2,g_4\}) \le 6$, so $\lambda(Y \cup \{g,g_2,g_4\}) \le 1$, implying that $|E(M)|=12$.
    By circuit elimination on $C_1^*$ and $C_5^*$, there is also a cocircuit contained in $\{c_1,g,h,x,d_1\}$; by orthogonality with $C_0$, this cocircuit is $\{g,h,x,d_1\}$.
    But then $\{g,g_2,h,x,d_1\}$ is a $5$-element coplane that intersects $C_1$ in two elements; a contradiction.
  \end{slproof}

  \begin{sublemma}
    \label{qqcc3}
    Suppose $|C_1^* \cap C_2^*| = 2$ and $d_1 \notin C_2^*$.
    Then the lemma holds.
  \end{sublemma}
  \begin{slproof}
    Let $X = C_1^* \cup C_2^*$. 
    If $X$ is $3$-separating, then \cref{wtd2i1} holds by \cref{qqcc2}.
    So we may assume that $X$ is not $3$-separating.
    Since $r^*(X)=4$, by \cref{qqcc1}, we have $r(X) \in \{5,6\}$.
    Let $C_1^* = \{d_0,d_1,c_1,x\}$ and $C_2^* = \{d_0,d_2,c_2,x\}$ for distinct $c_1,c_2,x \in E(M)-\{d_0,d_1,d_2\}$.
    Since $M \ba d_0 \ba d_1 \ba d_2$ has an $N$-minor, and $\{x,c_1,c_2\}$ is contained in a series class in this matroid, the pair $\{c_1,c_2\}$ is $N$-contractible.
    Hence, this pair is contained in a $4$-element circuit~$C_0$. 
    By orthogonality, $|C_0 \cap X| \ge 3$.
    If $C_0 \subseteq X$, then the lemma holds by \cref{qqcc29}.

    So we may assume that $C_0 \nsubseteq X$. 
    By orthogonality, $C_0 = \{c_1,c_2,x',e\}$ for some $x' \in C_1^* \cap C_2^*$ and $e \in E(M)-X$.
    If $x' = x$, then, as $M \ba d_0 \ba d_1 \ba d_2 / c_1 / c_2$ has an $N$-minor, and $\{x,e\}$ is a parallel pair in this matroid, $M \ba x \ba d_1 \ba d_2$ has an $N$-minor.
    So we may assume, up to swapping the labels on $x$ and $d_0$, that $x' = d_0$; that is, $C_0 = \{c_1,c_2,d_0,e\}$.
    Moreover, if $e \in \cocl(X)$, then the coindependent circuit $C_0$ cospans $X \cup e$, and $\{d_1,d_2\} \subseteq \cocl(C_0)-C_0$, so \cref{wtd2i2} holds by \cref{qqcc0}.
    So $e \notin \cocl(X)$.

    As $M \ba d_1 / c_1 /c_2$ has an $N$-minor, and $\{d_0,e\}$ is a parallel pair in this matroid, $\{d_1,e\}$ is $N$-deletable.
    Let $C_3^*$ be the $4$-element cocircuit containing $\{d_1,e\}$.
    Then $C_3^*$ meets $\{c_1,d_0,c_2\}$, by orthogonality with $C_0$.
    Since $e \notin \cocl(X)$, the cocircuit $C_3^*$ also contains an element $f_1 \in E(M)-(X \cup e)$.
    Similarly, we let $C_4^*$ be the cocircuit containing $\{d_2,e\}$, and observe that $C_4^*$ contains an element $f_2 \in E(M)-(X \cup e)$.

    Suppose that $C_3^* = \{d_1,e,c_1,f_1\}$.
    Let $C_1$ be the circuit containing $\{c_1,x\}$.  By orthogonality with $C_3^*$ and $C_2^*$, the circuit $C_1$ meets $\{d_1,e,f_1\}$ and $\{c_2,d_2,d_0\}$.
    We claim that $e \notin C_1$.
    Towards a contradiction, suppose $e \in C_1$.  Then $C_1$ does not meet $\{c_2,d_0\}$, as otherwise $r(C_0 \cup x)=3$, in which case $C_0 \cup x$ is a $5$-element plane that intersects the cocircuit $C_3^*$ in two elements, contradicting orthogonality.
    So $C_1 = \{c_1,x,e,d_2\}$.
    But in this case, by circuit elimination with $C_0$, the set $\{c_1,x,d_0,c_2,d_2\}$ contains a circuit.  This circuit cannot contain $c_1$, by orthogonality with $C_3^*$, so $\{x,d_0,c_2,d_2\} = C_2^*$ is a quad; a contradiction.

    Now $C_1$ meets $\{d_1,f_1\}$ and $\{c_2,d_2,d_0\}$.  Suppose $d_1 \in C_1$.
    If $C_1 = \{c_1,x,d_1,c_2\}$, then $\{d_0,d_2\} \subseteq \cocl(C_1)-C_1$, so \cref{wtd2i2} holds by \cref{qqcc0}.
    If $C_1 = \{c_1,x,d_1,d_0\}$, then $C_1$ is a quad; a contradiction.
    So $C_1 = \{c_1,x,d_1,d_2\}$.
    Recall the cocircuit $C_4^*$ containing $\{d_2,e\}$ and an element $f_2 \in E(M)-(X \cup e)$.
    By orthogonality with $C_0$ and $C_1$, this cocircuit meets $\{c_1,d_0,c_2\}$ and $\{c_1,x,d_1\}$.  So $C_4^* = \{d_2,e,c_1,f_2\}$.

    %%We claim that $f_1 \neq f_2$.
    %Towards a contradiction, suppose $f_1 = f_2$.  Then, by cocircuit elimination on $C_3^*$ and $C_4^*$, the set $\{d_1,d_2,e,c_1\}$ contains a cocircuit.  But then $e \in \cocl(X)$; a contradiction.
    %So $f_1 \neq f_2$.
    
    By cocircuit elimination on $C_3^*$ and $C_4^*$, the set $\{d_1,d_2,e,f_1,f_2\}$ contains a cocircuit.
    But this set intersects $C_0$ in a single element, $e$, so $C_0^* = \{d_1,d_2,f_1,f_2\}$ is a cocircuit, and, in particular, $f_1 \neq f_2$.

    We work towards a contradiction by showing $\{f_1,f_2\} \subseteq \cl(X \cup e)$.
    Since $M \ba d_0 \ba d_1 \ba d_2$ has an $N$-minor, and $\{f_1,f_2\}$ and $\{c_2,x\}$ are series pairs in this matroid, $\{f_1,c_2\}$ and $\{f_2,x\}$ are $N$-contractible.
    %First, let $C_3$ be the $4$-element circuit containing $\{f_1,c_2\}$.
    First, consider the $4$-element circuit containing $\{f_1,c_2\}$.
    By orthogonality, %$C_3$
    it
    meets $\{d_1,e,c_1\}$ and $\{x,d_0,d_2\}$.  So $f_1 \in \cl(X \cup e)$.
    Now consider the $4$-element circuit %~$C_5$
    containing $\{f_2,x\}$.
    By orthogonality with $C_1^*$ and $C_2^*$, this circuit meets $\{c_1,d_1,d_0\}$ and $\{c_2,d_2,d_0\}$.
    %By orthogonality with $C_4^*$ and $C_0^*$, it also meets $\{d_2,e,c_1\}$ and $\{d_1,d_2,f_1\}$.
    %Suppose $d \notin C_5$.
    %Then $C_5$ meets $\{c_1,d_1\}$ and $\{c_2,d_2\}$.  But since it meets $\{c_1,d_1\}$, it intersects $C_3^*$ in two elements; a contradiction.
    %So $d \in C_5$.
    Suppose this circuit contains $d_0$.
    By orthogonality with $C_0^*$ and $C_4^*$, the circuit %$C_5$
    also meets $\{d_1,d_2,f_1\}$ and $\{d_2,e,c_1\}$, so in this case the circuit is $\{f_2,x,d_0,d_2\}$.
    %Then $C_5=\{f_2,x,d,d_2\}$.  Now $f_2 \in \cl(X)$,
    Thus $f_2 \in \cl(X)$ in either case,
    and $r(X \cup \{e,f_1,f_2\}) = r(X)=5$.
    But $\{c_1,d_0,d_1,d_2,f_1\}$ (for example) cospans $X \cup \{e,f_1,f_2\}$, so $r^*(X\cup \{e,f_1,f_2\}) =5$, implying $\lambda(X\cup \{e,f_1,f_2\}) = 1$.
    Thus $|E(M)| = 10$; a contradiction.

    We deduce that $d_1 \notin C_1$, so $C_1=\{c_1,x,p,f_1\}$ for some $p \in \{c_2,d_2,d_0\}$.
    Recall the cocircuit $C_4^*$ that contains $\{d_2,e\}$ and an element $f_2 \in E(M)-(X \cup e)$.
    By orthogonality with $C_0$, we see $C_4^* = \{d_2,e,q,f_2\}$ for some $q \in \{c_1,c_2,d_0\}$.
    By orthogonality between $C_1$ and $C_4^*$, either $(p,q) \in \{(d_2,c_1),(c_2,d_0),(d_0,c_2)\}$ or $f_1 = f_2$.

    Before considering these subcases, suppose that $\{c_1,x,c_2,f_1\}$ is a circuit.
    Let $Y = C_0 \cup \{x,f_1\}$, so $\{c_1,x,c_2,f_1\} \subseteq Y$.
    Observe that $r(Y) \le 4$ and $r^*_{M \ba d_1}(Y) \le 4$.
    Hence $Y$ is $3$-separating in $M \ba d_1$, and $d_2 \in \cocl_{M \ba d_1}(Y)$.
    %Recall that the $N$-labelling $(C,D)$ has $\{d_1,d_2\} \subseteq D$.
    %Towards an application of \cref{3sepwin2}, it remains to show that $|Y - (C \cup D)| \le 2$, up to switching equivalence.
    Towards an application of \cref{3sepwin2}, it remains to show that $|Y - (C \cup D)| \le 2$ for an $N$-labelling $(C,D)$ with $\{d_1,d_2\} \subseteq D$.
    To this end, note that $M \ba d_0 \ba d_1 \ba d_2 / c_1 / x$ has an $N$-minor, and this matroid has the parallel pair $\{c_2,f_1\}$, so we may assume that $|Y-(C \cup D)| \le 2$ as required.
    Now \cref{wtd2i2} holds in this case, by \cref{3sepwin2}.
    We may now assume that $\{c_1,x,c_2,f_1\}$ is not a circuit; in particular, $p \neq c_2$.

    Suppose $q = c_1$, so $C_4^* = \{d_2,e,c_1,f_2\}$.
    By cocircuit elimination on $C_3^*$ and $C_4^*$, there is a cocircuit contained in $\{d_1,d_2,e,f_1,f_2\}$.  But this set intersects $C_0$ in a single element, $e$, so $C_0^* = \{d_1,d_2,f_1,f_2\}$ is a cocircuit.
    In particular, when $q=c_1$, we have $f_1 \neq f_2$.

    We now consider cases.  First,
    %On the other hand, when $(p,q) = (d,c_2)$, %the matroid $M \ba d \ba d_1 \ba d_2 /c_1 /c_2$ has an $N$-minor, so $M \ba e \ba d_1 \ba d_2 /c_1 /c_2$ has an $N$-minor
    %Would just be an $|E(N)|=4$ case, but still, we can instead do the following:
    suppose $(p,q) = (d_0,c_2)$; we will show this leads to a contradiction.
    Recall that $C_1 = \{c_1,x,d_0,f_1\}$ and $C_4^* = \{d_2,e,c_2,f_2\}$, so $f_1 \neq f_2$, by orthogonality.
    Observe that $\{e,f_1\} \subseteq \cl(X-d_2)$.
    As the pair $\{c_2,x\}$ is $N$-contractible, it is contained in a $4$-element circuit~$C_2$ that, by orthogonality with $C_1^*$ and $C_4^*$, meets $\{c_1,d_1,d_0\}$ and $\{d_2,e,f_2\}$.

    We claim that either $C_2 = \{c_2,x,d_0,f_2\}$ or $C_2 = \{c_2,x,d_1,e\}$.
    First, observe that if $c_1 \in C_2$, then, by orthogonality with $C_3^*$, we have $C_2 = \{c_2,x,c_1,e\}$.
    But then $C_0 \cup C_2$ is a $5$-element plane that intersects the cocircuit $C_4^*$ in two elements; a contradiction.
    So $C_2$ meets $\{d_1,d_0\}$.
    If $d_0 \in C_2$, then $C_2$ intersects $C_3^*$ in at most one element, so $C_2 \cap C_3^* = \emptyset$.  As $C_2$ meets $\{d_2,e,f_2\}$, and $C_2 \neq C_2^*$, we deduce that $C_2 = \{c_2,x,d_0,f_2\}$ when $d_0 \in C_2$.
    Finally, if $d_1 \in C_2$, then, by orthogonality with $C_3^*$, the final element is in $\{c_1,e,f_1\}$.  As the only element of this set that is in $\{d_2,e,f_2\}$ is $e$, we have $C_2 = \{c_2,x,d_1,e\}$.
    This proves the claim.

    Next we claim that $d_2 \in \cl(X-d_2)$, so $r(X) = 5$.
    Observe that $M \ba d_1 /c_1 /c_2$ has an $N$-minor, and $\{d_0,e\}$ is a parallel pair in this matroid, so $M \ba e \ba d_1 /c_2$ also has an $N$-minor.  As $\{c_1,f_1\}$ is a series pair in the latter matroid, $\{f_1,c_2\}$ is $N$-contractible.
    So there is a $4$-element circuit~$C_3$ containing $\{f_1,c_2\}$.
    By orthogonality with $C_2^*$ and $C_3^*$, this circuit contains an element in $\{x,d_0,d_2\}$, and an element in $\{d_1,c_1,e\}$.
    If $d_2 \notin C_3$, then $C_3$ meets $C_1^*$, so by orthogonality $|C_3 \cap C_1^*|=2$.
    But then $C_3 \cap C_4^* = \{c_2\}$; a contradiction.
    So $d_2 \in C_3$, in which case $C_3$ intersects $C_1^*$ in at most one element; thus $C_3 \cap C_1^* = \emptyset$, by orthogonality.
    Hence $C_3 = \{f_1,c_2,d_2,e\}$, so $d_2 \in \cl((X-d_2) \cup \{e,f_1\}) = \cl(X-d_2)$, as required.

    Now, if $C_2 = \{c_2,x,d_0,f_2\}$, then $\{d_2,e,f_1,f_2\} \subseteq \cl(X-d_2)$, so $r(X \cup \{f_1,f_2,e\}) = 5$.  As $r^*(X \cup \{f_1,f_2,e\}) \le 5$, we have $\lambda(X \cup \{f_1,f_2,e\}) \le 1$, so $|E(M)| \le 10$; a contradiction.
    So we may assume that $C_2 = \{c_2,x,d_1,e\}$.
    By circuit elimination with $C_0$, there is a circuit contained in $\{c_1,c_2,x,d_0,d_1\}$.
    By orthogonality with $C_4^*$, this circuit does not contain $c_2$.
    Thus $C_1^*=\{c_1,x,d_0,d_1\}$ is a quad; a contradiction.

    Next, suppose $(p,q) = (d_2,c_1)$; that is, $C_1=\{c_1,x,d_2,f_1\}$ and $C_4^* = \{d_2,e,c_1,f_2\}$.
    Recall that when $q =c_1$, we have that $C_0^* = \{d_1,d_2,f_1,f_2\}$ is a cocircuit, and $f_1 \neq f_2$.
    We work towards a contradiction by showing that $r(X \cup \{e,f_1,f_2\}) = 5$.
    To start with, note that $\{e,f_1\} \subseteq \cl(X-d_1)$.
    As $\{x,c_2\}$ and $\{f_1,f_2\}$ are series pairs in $M \ba d_0 \ba d_1 \ba d_2$, the pairs $\{x,f_2\}$ and $\{c_2,f_1\}$ are $N$-contractible.
    Let $C_3$ be the $4$-element circuit containing $\{x,f_2\}$.  This circuit meets $\{d_0,c_1,d_1\}$ by orthogonality.
    This circuit can only intersect $C_3^* = \{c_1,d_1,e,f_1\}$ in at most one element, so it does not meet $C_3^*$.
    Thus $d_0 \in C_3$.  By orthogonality with $C_4^*$, the final element is in $\{d_2,e,c_1\}$.  But as $\{e,c_1\} \subseteq C_3^*$, we have $C_3=\{x,f_2,d_0,d_2\}$.
    In particular, $f_2 \in \cl(X-d_1)$.

    %(We have lots of potential $3$-separating sets, but can't quite find quite the right one with enough $N$-removable stuff in it.  Need another circuit.)

    We claim that $d_1 \in \cl(X-d_1)$.
    Let $C_4$ be the $4$-element circuit containing $\{c_2,f_1\}$.
    By orthogonality with $C_2^*$ and $C_3^*$, the circuit~$C_4$ meets the disjoint sets $\{d_0,x,d_2\}$ and $\{d_1,e,c_1\}$.
    Moreover, due to the cocircuit $C_0^*$, either $d_1 \in C_4$ or $d_2 \in C_4$.
    Suppose $d_2 \in C_4$.
    Then $C_4$ intersects $C_1^*$ in at most one element, so $C_4 \cap C_1^* = \emptyset$, implying $C_4=\{c_2,f_1,d_2,e\}$.
    By circuit elimination on $C_1$ and $C_4$, there is a circuit contained in $\{c_1,c_2,e,x,f_1\}$.
    But this set intersects $C_0^*$ in a single element, $f_1$, so $\{c_1,c_2,e,x\}$ is a circuit.
    Now $C_0 \cup x$ is a $5$-element plane that intersects the cocircuit $C_3^*$ in two elements, contradicting orthogonality.
    So $d_1 \in C_4$.
    Thus $d_1 \in \cl(Y-d_1) = \cl(X-d_1)$, as claimed.
    Finally, as $r(X \cup \{e,f_1,f_2\})=5$, we see that $\lambda(X \cup \{e,f_1,f_2\}) \le 5 + 5 - 9 = 1$, so $|E(M)| \le 10$; a contradiction.
    %..???

    Now we assume that $f_1 = f_2$.
    By orthogonality between $C_1$ and $C_4^*$, we have either $q =c_1$, $p=d_2$, or $p=q$ and $p \in \{c_2,d_0\}$.
    However, we have seen that if $q = c_1$, then $f_1 \neq f_2$; so $q \neq c_1$.

    Suppose $p=d_2$, so $C_1=\{c_1,x,d_2,f_1\}$, and recall that $C_4^*=\{d_2,e,q,f_1\}$ with $q \in \{c_1,c_2,d_0\}$.
    If $q=d_0$, then by cocircuit elimination on $C_3^*$ and $C_4^*$, there is a cocircuit contained in $\{d_0,d_1,d_2,c_1,e\}$. Then, as $e \notin \cocl(X)$, the set $\{d_0,d_1,d_2,c_1\}$ is a cocircuit, implying $r^*(X)<4$; a contradiction.
    Since $q \neq c_1$, we have $C_4^* = \{d_2,e,c_2,f_1\}$.
    As the pair $\{c_2,x\}$ is $N$-contractible, it is contained in a $4$-element circuit~$C_2$ that, by orthogonality with $C_1^*$ and $C_4^*$, meets $\{c_1,d_1,d_0\}$ and $\{d_2,e,f_1\}$.
    By orthogonality with $C_3^*$, either $\{d_0,d_2\} \subseteq C_2$ or $\{d_0,d_2\} \cap C_2 = \emptyset$.  But in the former case $C_2 = C_2^*$; a contradiction.
    Now, if $c_1 \in C_2$, then either $C_2 = \{c_2,x,c_1,e\}$ or $C_2 = \{c_2,x,c_1,f_1\}$, so $|C_2 \cap C_0|=3$ or $|C_2 \cap C_1|=3$.
    In either case, it follows that there is a $5$-element plane intersecting a cocircuit in two elements; a contradiction.
    So either $C_2 = \{c_2,x,d_1,e\}$ or $C_2 = \{c_2,x,d_1,f_1\}$.
    In the former case, by circuit elimination with $C_0$, there is a circuit contained in $\{c_1,c_2,x,d_0,d_1\}$.  By orthogonality with $C_4^*$, the set $\{c_1,x,d_0,d_1\}$ is a circuit, so $C_1^*$ is a quad; a contradiction.
    So $C_2 = \{c_2,x,d_1,f_1\}$.
    Now $M \ba d_2 /c_2 / x$ has an $N$-minor, and $\{d_1,f_1\}$ is a parallel pair in this matroid, so $M \ba d_2 \ba f_1 /x$ has an $N$-minor.
    The latter matroid has $\{c_2,e\}$ as a series pair, so $\{e,x\}$ is $N$-contractible.
    Thus $\{e,x\}$ is contained in a $4$-element circuit $C_3$.
    By orthogonality with $C_2^*$ and $C_3^*$, the circuit $C_3$ meets $\{d_0,d_2,c_2\}$ and $\{d_1,c_1,f_1\}$.
    By orthogonality with $C_1^*$ and $C_4^*$, either $\{d_0,f_1\} \subseteq C_3$, or $\{d_0,f_1\} \cap C_3 = \emptyset$.
    But if $C_3 = \{e,x,d_0,f_1\}$, then $C_3 = C_1^* \triangle C_3^*$. As $M \ba d_0 \ba d_1 \ba f_1 / c_1 / x$ has an $N$-minor, the lemma holds, by \cref{qqcc25}, in this case.
    So we may assume that $C_3$ meets $\{c_1,d_1\}$ and $\{c_2,d_2\}$.
    If $c_1 \in C_3$, then $C_0 \cup C_3$ or $C_1 \cup C_3$ is a $5$-element plane that intersects a cocircuit in two elements, contradicting orthogonality.
    Similarly if $C_3=\{e,x,d_1,c_2\}$, then $C_3 \cup C_2$ is a $5$-element plane that intersects $C_1^*$ in two elements; a contradiction.
    So $C_3 = \{e,x,d_1,d_2\}$.
    By circuit elimination with $C_2$, there is a circuit contained in $\{c_2,d_1,d_2,e,f\}$.  By orthogonality with $C_1^*$, we see that $C_3^*=\{c_2,d_2,e,f\}$ is a quad; a contradiction.
    This completes the subcase where $p=d_2$.

    Suppose $p=q$ and $p \in \{c_2,d_0\}$.
    We have seen that $p \neq c_2$, so $p=q=d_0$.
    In particular, $C_4^* = \{d_2,e,d_0,f_1\}$.
    As the pair $\{c_2,x\}$ is $N$-contractible, it is contained in a $4$-element circuit~$C_2$ that, by orthogonality with $C_1^*$, meets $\{c_1,d_1,d_0\}$.
    If $d_0 \in C_2$, then, by orthogonality with $C_4^*$, the final element of $C_2$ is in $\{d_2,e,f_1\}$.
    But $\{c_2,x,d_0,d_2\}$ is not a cocircuit, for otherwise it would be a quad; and $e,f_1 \notin \cocl(X)$.  So $d_0 \notin C_2$.
    Again using that $e,f_1 \notin \cocl(X)$, it now follows that $C_2 = \{c_2,x,c_1,d_1\}$.
    Thus $C_2$ is coindependent, and hence $\{d_0,d_2\} \subseteq \cocl(C_2)-C_2$.
    So \cref{wtd2i2} holds by \cref{qqcc0}.
    This completes the case where $C_3^* = \{d_1,e,c_1,f_1\}$.

    \medbreak

    Now we may assume that $C_3^* = \{d_1,e,s_1,f_1\}$ for $s_1 \in \{d_0,c_2\}$.
    By symmetry, $C_4^* = \{d_2,e,s_2,f_2\}$ for some $s_2 \in \{d_0,c_1\}$ and $f_2 \in E(M)-(X \cup e)$.

    Suppose that $s_1=s_2=d_0$, so $C_3^* = \{d_1,e,d_0,f_1\}$ and $C_4^* = \{d_2,e,d_0,f_2\}$.
    Then, by cocircuit elimination, $\{d_1,d_2,e,f_1,f_2\}$ contains a cocircuit.
    But this set intersects the circuit $C_0$ in a single element, $e$, so $C_0^*=\{d_1,d_2,f_1,f_2\}$ is a cocircuit.
    In particular, $f_1 \neq f_2$.

    Let $Y = X \cup \{e,f_1,f_2\}$, and note that $r^*(Y) \le 5$.  We claim that $r(Y) =r(X)$.
    As $\{d_0,d_1\} \subseteq C_3^*$ and $\{d_0,d_2\} \subseteq C_4^*$, we have $r^*(C_3^* \cup C_4^*) = 4$ by \cref{qqcc1}.
    %By \cref{qqcc1}, $r^*(C_3^* \cup C_4^*) = 4$.
    %As $d_1 \in C_3^* - C_4^*$, $d \in C_3^* \cap C_4^*$, and $d_2 \in C_4^* - C_3^*$, we have $r^*(C_3^* \cup C_4^*) = 4$
    Thus, in $M \ba d_0 \ba d_1 \ba d_2$, both $\{c_1,x,c_2\}$ and $\{f_1,e,f_2\}$ are contained in series classes.
    So a matroid obtained by contracting a pair of elements in $\{c_1,x,c_2\}$, and a pair in $\{f_1,e,f_2\}$, from $M \ba d_0 \ba d_1 \ba d_2$, has an $N$-minor.
    In particular, $\{c,f\}$ is $N$-contractible for distinct $c,f \in \{c_1,x,c_2,f_1,e,f_2\}$.
    We first consider the $4$-element circuits $C_3$ and $C_4$ containing $\{c_1,f_2\}$ and $\{c_2,f_1\}$, respectively.
    If $d_0$ is in either of these circuits, then, by orthogonality, the circuit also meets the disjoint sets $C_2^*-d_0$ and $C_3^*-d_0$; a contradiction.
    So $C_3$ meets $\{x,d_1\}$ and $\{e,d_2\}$.  By orthogonality with $C_2^*$, either $C_3 = \{c_1,f_2,x,d_2\}$ or $C_3 = \{c_1,f_2,d_1,e\}$.
    Similarly, $C_4 = \{c_2,f_1,x,d_1\}$ or $C_4 = \{c_2,f_1,d_2,e\}$.
    As $e \in \cl(X)$, we now also have $\{f_1,f_2\} \subseteq \cl(X \cup e) = \cl(X)$, so $r(Y) = r(X)$.

    %%%Need one more circuit!
    Now, if $r(X)=5$, then, as $r^*(Y) \le 5$ and $|Y| = 9$, we have $\lambda(Y) \le 1$, so $|E(M)| \le 10$;
    %But $M \ba d_0 \ba d_1 \ba d_2 / c_1 / x / e / f_1$ has an $N$-minor, implying $|E(N)| \le 3$;
    a contradiction.
    So $r(X)=6$.

    We will show that \cref{wtd2i2} holds.
    %It remains to show that $r(X)=5$.
    Suppose that $C_3 = \{c_1,f_2,d_1,e\}$.
    There is also a $4$-element circuit~$C_5$ containing $\{f_2,x\}$.
    By orthogonality with $C_4^*$, this circuit meets $\{d_2,d_0,e\}$.
    If $d_2 \in C_5$, then $C_5$ meets $C_1^*-x$ but is disjoint from $C_3^*$; so $C_5 = \{f_2,x,d_2,c_1\}$.
    Now $\{e,f_2\} \subseteq \cl(X-d_1)$, so $d_1 \in \cl(X-d_1)$ due to $C_3$.
    Thus $r(X)=5$; a contradiction.
    So $d_2 \notin C_5$. Then $C_5$ meets $\{d_0,e\}$, and, by orthogonality with $C_0^*$ and $C_3^*$, the final element is in $\{d_1,f_1\}$.
    By orthogonality with $C_2^*$ it follows that $C_5 = \{f_2,x,d_0,q\}$ for some $q \in \{d_1,f_1\}$.
    Now due to $C_0$ and $C_3$, we have $\{e,f_2\} \subseteq \cl(\{c_1,c_2,d_0,d_1\})$, so if $q=d_1$ then $x \in \cl(X-x)$ and $r(X)=5$; a contradiction.
    So $C_5 = \{f_2,x,d_0,f_1\}$.
    If $C_4 = \{c_2,f_1,d_2,e\}$, then $\{f_1,f_2\} \subseteq \cl(X-x)$, so $x \in \cl(X-x)$ due to $C_5$, implying $r(X)=5$; a contradiction.
    So $C_4 = \{c_2,f_1,x,d_1\}$.
    Let $X' = C_1^* \cup C_3^*$, and observe that $r^*(X') = 4$, by \cref{qqcc1}.
    Due to the circuits $C_0$ and $C_4$ contained in $X' \cup c_2$, we have $r_{M/c_2}(X') = 4$, so $\lambda_{M/c_2}(X') =2$, with $c_2 \in \cl(X')$ and $f_2 \in \cl_{M/c_2}(X')$.
    As $\{c_2,f_2\}$ is an $N$-contractible pair, and $M \ba d_0 \ba d_1 /c_2/f_2/x/e$ has an $N$-minor, \cref{wtd2i2} holds by the dual of \cref{3sepwin2}.

    Now $C_3 = \{c_1,f_2,x,d_2\}$ and, by symmetry, $C_4 = \{c_2,f_1,x,d_1\}$.
    Let $X' = (X-d_0) \cup \{f_1,f_2\}$ and observe that $r(X') = 5$ and $r^*_{M \ba d_0}(X') = r^*(X' \cup d_0) - 1 = 4$.
    Thus $\lambda_{M \ba d_0}(X') = 2$.
    Also, $e \in \cocl_{M \ba d_0}(X')$.
    Towards an application of \cref{3sepwin2}, it remains to show that the pair $\{d_0,e\}$ is $N$-deletable, and $|X' - (C \cup D)| \le 2$ for an $N$-labelling $(C,D)$ with $\{d_0,e\} \subseteq D$.
    To this end, observe that $\{f_1,f_2\}$ is contained in a $4$-element circuit~$C_5$ that, by orthogonality with $C_3^*$ and $C_4^*$, meets $\{d_1,d_0,e\}$ and $\{d_2,d_0,e\}$.  Since $C_0^*=\{f_1,f_2,d_1,d_2\}$ is independent, $C_5$ meets $\{d_0,e\}$.
    By orthogonality with $C_1^*$ and $C_2^*$, there are two cases:
    if $d_0 \in C_5$, then $C_5 = \{f_1,f_2,d_0,x\}$; otherwise, $C_5 = \{f_1,f_2,e,g\}$ for some $g \in E(M)-Y$.
    In the first case, $M \ba d_1 \ba d_2 \ba e / c_1 /c_2/f_1/f_2$ has an $N$-minor and $\{d_0,x\}$ is a parallel pair in this matroid, so $d_0$ is $N$-deletable.
    In the second case, $M \ba d_0 \ba d_1 \ba d_2 / c_1 / c_2 /f_1 /f_2$ has an $N$-minor, and $\{e,g\}$ is a parallel pair in this matroid, so $e$ is $N$-deletable. 
    In either case, \cref{wtd2i2} holds by \cref{3sepwin2}.

    \smallbreak

    Now we may assume, by symmetry, that $s_1 = c_2$ and $s_2 \in \{d_0,c_1\}$.
    That is, $C_3^* = \{d_1,e,c_2,f_1\}$ and either $C_4^* = \{d_2,e,d_0,f_2\}$ or $C_4^* = \{d_2,e,c_1,f_2\}$.
    In the latter case, the lemma holds by \cref{awkwardmissedcase}.
    So we may assume that $C_4^* = \{d_2,e,d_0,f_2\}$.

    If $f_1 = f_2$, then, by cocircuit elimination, $\{d_1,d_2,e,c_2,d_0\}$ contains a cocircuit. Since $r^*(X) = 4$, this cocircuit must contain $e$, in which case $e \in \cocl(X)$; a contradiction.
    So $f_1 \neq f_2$.

    Now $M \ba d_0 \ba d_1 \ba d_2$ has an $N$-minor, where in this matroid $\{c_1,c_2,x\}$ is contained in a series class and $\{e,f_2\}$ is a series pair.
    Let $C_1$ be the $4$-element circuit containing $\{c_1,x\}$.
    By orthogonality with $C_2^*$, this circuit meets $\{c_2,d_2,d_0\}$.
    If $c_2 \in C_1$, then $C_1$ intersects $C_4^*$ in at most one element, so $C_1 \cap C_4^* = \emptyset$.
    By orthogonality with $C_3^*$, the final element is either $d_1$ or $f_1$.
    But if $C_1 = \{c_1,x,c_2,d_1\}$, then the lemma holds by \cref{qqcc29}.
    So $C_1 = \{c_1,x,c_2,f_1\}$.
    Let $X' = C_0 \cup C_1 = (X-\{d_1,d_2\}) \cup \{e,f_1\}$.
    Now $r(X') = 4$ and $r^*_{M \ba d_1}(X') \le r^*(X) = 4$, so $\lambda_{M \ba d_1}(X')=2$.
    As $d_2 \in \cocl_{M \ba d_1}(X')$ and $M \ba d_1 \ba d_2 \ba d_0 / c_1 /c_2 / e$ has an $N$-minor, \cref{wtd2i2} holds by \cref{3sepwin2}.

    So we may assume that $C_1$ meets $\{d_0,d_2\}$.
    Then $|C_1 \cap C_4^*|=2$, by orthogonality.
    Thus $C_1$ intersects $C_3^*$ in at most one element, so $C_1 \cap C_3^* = \emptyset$; in particular, $e \notin C_1$.
    %So $C_1-\{c_1,x\} \subseteq \{d,d_2,f_2\}$.
    So $C_1$ is one of $\{c_1,x,d_0,d_2\}$, $\{c_1,x,d_0,f_2\}$ or $\{c_1,x,d_2,f_2\}$.

    If $C_1 = \{c_1,x,d_0,d_2\}$, then as $C_1$ is coindependent, $\{d_1,c_2\} \subseteq \cocl(C_1)$.
    Recall that $M \ba d_1 \ba d_2 /c_1/c_2$ has an $N$-minor, where $\{d_0,e\}$ is a parallel pair in this matroid, so $M \ba d_1 \ba d_2 \ba e/ c_1$ has an $N$-minor.
    %Since $\{d,f_2\}$ is a series pair in this matroid, $M \ba d_1 /c_1/d$ has an $N$-minor; in turn, $\{c_2,e\}$ is a parallel pair, so, up to switching $N$-labels, we may assume $\{d_1,c_2\}$ is $N$-labelled for deletion.
    In turn, $\{d_0,f_2\}$ is a series pair, so $M \ba d_1 /c_1/d_0$ has an $N$-minor; then $\{c_2,e\}$ is a parallel pair, so $M \ba d_1 \ba c_2$ has an $N$-minor.  %Now, up to switching $N$-labels, $\{d_1,c_2\}$ is $N$-labelled for deletion,
    Thus \cref{wtd2i2} holds, by \cref{3sepwin2}, when $C_1 = \{c_1,x,d_0,d_2\}$.
    So $C_1 = \{c_1,x,d',f_2\}$ for some $d' \in \{d_0,d_2\}$.
    In particular, $f_2 \in \cl(X-\{c_2,d_1\})$.

    Let $Y = X \cup \{e,f_1,f_2\}$.
    Our goal is to bound $r(Y)$.
    Note that if $r(Y) \le 5$, then, as $r^*(Y) \le 5$, it follows that $\lambda(Y) \le 1$, so $|E(M)| \le 10$; a contradiction.

    Recall that $\{x,e\}$ is $N$-contractible, and consider the circuit~$C_3$ containing this pair.
    By orthogonality with $C_1^*$ and $C_2^*$, either $d_0 \in C_3$, or $C_3$ has an element in $\{c_1,d_1\}$ and an element in $\{c_2,d_2\}$.
    In the latter case, by orthogonality with $C_4^*$ we see that $d_2 \in C_3$.
    Then, by orthogonality with $C_3^*$, we see that $d_1 \in C_3$.
    On the other hand, if $d_0 \in C_3$, then the final element is in $\{d_1,c_2,f_1\}$, due to the cocircuit $C_3^*$.
    But if $C_3 = \{x,e,d_0,d_1\}$, then by circuit elimination with $C_0$, there is a circuit contained in $\{x,d_0,d_1,c_1,c_2\}$, which by orthogonality with $C_4^*$ is $\{x,d_1,c_1,c_2\}$, so the lemma holds by \cref{qqcc29}.
    If $C_3 = \{x,e,d_0,c_2\}$, then $C_0 \cup C_3$ is a $5$-element plane that intersects $C_3^*$ in two elements; a contradiction.
    Therefore we may assume that $C_3$ is $\{x,e,d_1,d_2\}$ or $\{x,e,d_0,f_1\}$.

    %In the first two cases, $d_1 \in \cl(X-d_1)$ (or $r(X) \le 5$), while in the last, $f_1 \in \cl(X-d_1)$.

    Suppose that $C_3 = \{x,e,d_1,d_2\}$.
    Then $d_1 \in \cl(X-d_1)$. % (or $r(X) \le 5$). Recall
    %Recall also that $\{e,f_2\} \subseteq \cl(X-d_1)$.
%
    Recall that $M \ba d_1 \ba d_2 \ba e$ has an $N$-minor, and observe that $\{d_0,f_2\}$ and $\{f_1,c_2\}$ are series pairs in this matroid.
    Thus $\{d_0,f_1\}$ is $N$-contractible.
    The $4$-element circuit~$C_4$ containing $\{d_0,f_1\}$ meets $\{e,d_1,c_2\}$, $\{e,d_2,f_2\}$, $\{x,c_1,d_1\}$ and $\{x,c_2,d_2\}$, by orthogonality with $C_3^*$, $C_4^*$, $C_1^*$, and $C_2^*$.
    Hence $C_4$ is either $\{d_0,f_1,e,x\}$ or $\{d_0,f_1,d_1,d_2\}$.
    In either case, $f_1 \in \cl(X \cup e)$. %=\cl(X-d_1)$,
    Recall that $\{d_1,e,f_2\} \subseteq \cl(X-d_1)$, so $r(Y) \le 5$. %; a contradiction.
    Now $\lambda(Y) \le 1$, so $|E(M)| \le 10$; a contradiction.

    So we may assume that $C_3=\{x,e,d_0,f_1\}$; in particular, $f_1 \in \cl(X-\{d_1,d_2\})$.
    %Recall $e,f_1,f_2 \in \cl(X-d_1)$, so $r(X)=6$.
    Now $f_2$ and $c_2$ are in distinct series pairs of $M \ba d_0 \ba d_1 \ba d_2$, so $\{f_2,c_2\}$ is $N$-contractible.
    Consider the circuit~$C_5$ containing $\{f_2,c_2\}$.
    By orthogonality with $C_3^*$, $C_2^*$ and $C_4^*$, this circuit meets $\{d_1,e,f_1\}$, $\{x,d_0,d_2\}$ and $\{e,d_0,d_2\}$.
    But if $C_5$ meets $C_1^*$, it does so in two elements; so $C_5$ is either $\{f_2,c_2,d_1,d_0\}$, $\{f_2,c_2,e,d_2\}$ or $\{f_2,c_2,f_1,d_2\}$.
%
    %In the first two cases, $c_2 \in \cl(X-c_2)$.
    In the first case, $c_2 \in \cl((X \cup f_2)-c_2)= \cl(X-c_2)$, so $X-c_2$ spans $Y$, implying $\lambda(Y) \le 1$, so $|E(M)| \le 10$; a contradiction.
    In the second case, $f_2 \in \cl((X-\{x,d_1\})\cup e) = \cl(X-\{x,d_1\})$.
    Due to the circuit $C_1$, it follows that $x \in \cl(X-\{x,d_1\})$, so $X-\{x,d_1\}$ spans $Y-d_1$, implying $r(Y) \le 5$, so $\lambda(Y) \le 1$, and $|E(M)| \le 10$; a contradiction.
    So we may assume that $C_5 = \{f_2,c_2,d_2,f_1\}$.

    Recall that $C_1 = \{c_1,x,d',f_2\}$ for some $d' \in \{d_0,d_2\}$.
    Now, if $d' = d_0$,
    %Now, if $d \in C_1$,
    then $X-\{d_1,d_2\} = \{c_1,d_0,x,c_2\}$ spans $Y-d_1$, in which case $r(Y) \le 5$, so $\lambda(Y) \le 1$, and $|E(M)| \le 10$; a contradiction.
    So we may also assume that $C_1 = \{c_1,x,d_2,f_2\}$.

    %C_0 = {c_1, c_2,   d, e  }
    %C_1 = {c_1,   x, d_2, f_2}
    %C_3 = {  x,   e,   d, f_1}
    %C_5 = {f_2, c_2, d_2, f_1}

    Let $Y' = Y-\{d_1,d_2\}$.
    Now $%(X \cup f_2)-\{d_1,d_2\}=
    \{c_1,c_2,d_0,x,f_2\}$ spans $Y'$, so $r(Y') \le 5$; and $r^*_{M \ba d_2}(Y')= r^*(Y' \cup d_2)-1 = 4$, so $\lambda_{M \ba d_2}(Y') = 2$.
    Moreover, $d_1 \in \cocl_{M \ba d_2}(Y')$.
    We work towards an application of \cref{3sepwin2}.
    By circuit elimination on $C_1$ and $C_5$, there is a circuit contained in $\{c_1,x,c_2,f_1,d_2\}$.  But this set intersects $C_4^*$ in a single element, $d_2$, so $C_6=\{c_1,x,c_2,f_1\}$ is a circuit.
    Recall that $M \ba d_1 \ba d_2 \ba e / c_1$ has an $N$-minor, where $\{c_2,f_1\}$ and $\{d_0,f_2\}$ are series pairs in this matroid, so $M \ba d_1 \ba d_2 \ba e / c_1 /c_2 /f_2$ has an $N$-minor.
    Due to the circuit~$C_6$, the pair $\{x,f_1\}$ is a parallel pair in this matroid.
    It follows that \cref{wtd2i2} holds by \cref{3sepwin2}.
  \end{slproof}

  \begin{sublemma}
    \label{qqcc4}
    Suppose $|C_1^* \cap C_2^*| = 2$.
    Then the lemma holds.
  \end{sublemma}
  \begin{slproof}
    If $d_1 \notin C_2^*$, then the lemma holds by \cref{qqcc3}.
    So we may assume that $d_1 \in C_2^*$.
    Let $C_1^* = \{d_0,d_1,c_1,x\}$ and $C_2^* = \{d_0,d_1,d_2,c_2\}$, and recall that $X = C_1^* \cup C_2^*$.
    As $M \ba d_0 \ba d_1 \ba d_2$ has an $N$-minor, it follows that there is a $4$-element circuit~$C_0$ containing $\{c_1,c_2\}$, and there is a $4$-element circuit~$C_1$ containing $\{x,c_2\}$.
    If $\{c_1,c_2,x,d_2\}$ is a circuit, then the lemma holds by \cref{qqcc25}.
    If $\{c_1,c_2,x,d_1\}$ is a circuit, then we can swap the $N$-labels on $x$ and $d_1$, and with $x \in D \cap (C_1^*-C_2^*)$ playing the role of $d_1$, the lemma holds by \cref{qqcc3}.
    By symmetry, the lemma also holds if $\{c_1,c_2,x,d_0\}$ is a circuit.
    Now, if $C_0$ and $C_1$ are both contained in $X$, then $C_0 \neq C_1$, so $r(X) \le 4$, implying $\lambda(X)=2$, and \cref{wtd2i1} holds by \cref{qqcc2}.
    Without loss of generality, $C_0$ is not contained in $X$.
    By orthogonality and symmetry, we may assume that $C_0=\{c_1,c_2,d_1,e\}$ for some $e \in E(M)-X$.

    Now $M \ba d_0 \ba d_2 / c_1 /c_2$ has an $N$-minor, and $\{d_1,e\}$ is a parallel pair in this matroid, so by swapping $N$-labels we may assume that $\{d_0,e,d_2\} \subseteq D$.
    In particular, there exists a $4$-element cocircuit $C_3^*$ containing $\{d_2,e\}$, and a $4$-element cocircuit $C_4^*$ containing $\{d_0,e\}$.
    By orthogonality, each of $C_3^*$ and $C_4^*$ meets $\{c_1,d_1,c_2\}$.
    We claim that $c_1 \in C_3^* \cap C_4^*$.
    First suppose $d_1 \in C_3^*$.
    We may assume that $C_3^* \nsubseteq C_2^* \cup e$, for otherwise $C_2^* \cup C_3^* = C_2^* \cup e$ is a corank-$3$ set, contradicting \cref{qqcc1}.
    So $C_3^* = \{d_2,e,d_1,f\}$ for some $f \in E(M)-(C_2^* \cup e)$.
    Now $|C_2^* \cap C_3^*| = 2$, with $d_2 \in C_2^* \cap C_3^*$, $d_0 \in C_2^* - C_3^*$, and $e \in C_3^* - C_2^*$, so we can apply \cref{qqcc3}, with $C_3^*$ in the role of $C_1^*$, in which case the lemma holds.
    Similarly, if $c_2 \in C_3^*$, then $C_3^* \nsubseteq C_2^* \cup e$, by \cref{qqcc1}, so $C_3^* = \{d_2,e,c_2,f\}$ for some $f \in E(M)-(C_2^* \cup e)$.
    Then, with $C_3^*$ in the role of $C_1^*$, the lemma holds by \cref{qqcc3}.
    So $c_1 \in C_3^*$.
    By a similar argument, $c_1 \in C_4^*$.

    Now let $C_3^* = \{d_2,e,c_1,f\}$ and $C_4^* = \{d_0,e,c_1,g\}$, for some $f \in E(M)- \{d_2,e,c_1\}$ and $g \in E(M)-\{d_0,e,c_1\}$.
    Note that $|C_3^* \cap C_4^*| \neq 3$, by \cref{qqcc1}.
    So $C_3^* \cap C_4^* = \{e,c_1\}$, with $d_2 \in C_3^* - C_4^*$ and $d_0 \in C_4^* - C_3^*$.
    Using \cref{qqcc3}, with $C_3^*$ and $C_4^*$ in the roles of $C_1^*$ and $C_2^*$, the lemma holds.
  \end{slproof}

  Now, by \cref{qqcc1,qqcc4}, we may assume that for any triple of elements $d,d',d'' \in D$, the $4$-element cocircuits containing $\{d,d'\}$, $\{d,d''\}$ and $\{d',d''\}$ pairwise intersect in a single element.
  Dually, for a triple of elements in $C$, two $4$-element circuits each containing a pair of this triple meet in a single element.

  Let $C_0^*$, $C_1^*$ and $C_2^*$ be the $4$-element cocircuits containing $\{d_1,d_2\}$, $\{d_0,d_2\}$, and $\{d_0,d_1\}$ respectively, where $\{d_0,d_1,d_2\} \subseteq D$. Then $|C_i^* \cap C_j^*| = 1$ for distinct $i,j \in \{0,1,2\}$.
  Let $\{c_i,e_i\} \subseteq C_i^*$ for each $i \in \{0,1,2\}$, where $c_0,c_1,c_2,e_0,e_1,e_2 \in E(M)-\{d_0,d_1,d_2\}$ are distinct.
  Since $M \ba d_0 \ba d_1 \ba d_2$ has an $N$-minor, $M /c_0/c_1/c_2$ has an $N$-minor, up to switching the $N$-labels on $c_i$ and $e_i$ for each $i$.
  Thus, for distinct $i,j \in \{0,1,2\}$, the pair $\{c_i,c_j\}$ is contained in a $4$-element circuit, and these circuits meet in a single element.
  Let $X = C_0^* \cup C_1^* \cup C_2^*$.

  Suppose there is such a circuit that is not contained in $X$.
  Without loss of generality, let $\{c_1,c_2,g\}$ be contained in a $4$-element circuit~$C_0$, for $g \in E(M)-X$.
  By orthogonality, $C_0 = \{c_1,c_2,d_0,g\}$.
  It now follows that $\{g,d_1\}$ and $\{g,d_2\}$ are $N$-deletable, so they are contained in $4$-element cocircuits $C_3^*$ and $C_4^*$ respectively.
  Since $|C_3^* \cap C_2^*|=1$, and by orthogonality, $c_1 \in C_3^*$.
  Similarly, $c_2 \in C_4^*$.
  Now $C_3^*$ and $C_4^*$ intersect each of $C_0^*$, $C_1^*$, and $C_2^*$ in a single element, so $C_3^* = \{g,d_1,c_1,h_1\}$ and $C_4^* = \{g,d_2,c_2,h_2\}$ for some distinct $h_1,h_2 \in E(M)-(X \cup g)$.

  Now consider the $4$-element circuit~$C_1$ containing $\{c_0,c_1\}$. 
  By orthogonality with $C_1^*$, this circuit meets $\{d_0,d_2,e_1\}$.
  If $d_0 \in C_1$, then, by orthogonality with $C_0^*$ and $C_2^*$, we have $d_1 \in C_1$.
  If $d_2 \in C_1$, then, by orthogonality with $C_3^*$ and $C_4^*$, we have $g \in C_1$.
  If $e_1 \in C_1$, then, by orthogonality with $C_0^*$ and $C_3^*$, we have $d_1 \in C_1$, but then $|C_1 \cap C_2^*| = 1$; a contradiction.
  So $C_1 = \{c_0,c_1,d_0,d_1\}$ or $C_1 = \{c_0,c_1,d_2,g\}$,
  but in either case, $|C_0 \cap C_1| = 2$; a contradiction.

  So, for distinct $i,j \in \{0,1,2\}$, the $4$-element circuit~$C_{i,j}$ containing $\{c_i,c_j\}$ is contained in $X$.
  Let $\{i,j,k\} = \{0,1,2\}$. 
  Now, $C_{i,j}$ cannot meet $\{c_k,e_k\}$, by orthogonality, as there is no element common to $C_i^*$, $C_j^*$ and $C_k^*$.
  Then, by orthogonality with $C_k^*$, either $C_{i,j}$ contains $\{d_i,d_j\}$, or it does not meet this pair, for any $\{i,j,k\} = \{0,1,2\}$.
  But now if $C_{i,j}$ meets $\{d_0,d_1,d_2\}$, then neither $C_{i,k}$ nor $C_{j,k}$ meets $\{d_0,d_1,d_2\}$, since the circuits pairwise intersect in a single element,
  in which case $C_{i,k}$ and $C_{j,k}$ intersect in the pair $\{c_k,e_k\}$; a contradiction.
  So $C_{i,j}=\{c_i,e_i,c_j,e_j\}$ for all distinct $i,j \in \{0,1,2\}$, which again contradicts that these circuits pairwise intersect in one element.
  This completes the proof.
\end{proof}

\begin{proof}[Proof of \cref{weaktheoremdetailed}]
  If, up to switching $N$-labels, there exists a pair $\{c_1,c_2\} \subseteq C$ or $\{d_1,d_2\} \subseteq D$ that is contained in a quad, then the \lcnamecref{weaktheoremdetailed} holds by \cref{wtdp1}.
  Otherwise, the \lcnamecref{weaktheoremdetailed} holds by \cref{weaktheoremdetailed2}.
\end{proof}

\section{\Psep s}
\label{secpseps}

Now we show that when $M$ has a \psep~$P$, most of the elements that are $N$-labelled for removal must be in $P$, otherwise $M$ has an $N$-detachable pair.
%(In particular, can't have two distinct \psep s.)

As a warm-up, we first consider the case where $M$ has an \planespider~$Q \cup z$. %, and $M$ has no $N$-detachable pairs.
%We show, in this case, that if $z \in \cocl(Q)$, then $z$ is not $N$-contractible, and there are no other $N$-removable elements in $E(M)-(Q \cup z)$.
We will later prove a similar result, \cref{problematiclemma}, that handles the case where $M$ has one of the other \psep s.

\begin{lemma}
  \label{handlequads}
  Let $M$ be a $3$-connected matroid, and let $N$ be a $3$-connected minor of $M$ such that $|E(N)| \ge 4$, and every triangle or triad of $M$ is \unfortunate. %, and $|E(M)|-|E(N)| \ge 5$.
  Suppose that $(Q,\{z\},S)$ is a cyclic $3$-separation of $M$ such that
  $Q \cup z$ is an \planespider, 
  $M \ba z$ has an $N$-minor with $|Q \cap E(N)| \le 1$,
  and $M \ba s$ is not $3$-connected for each $s \in S$ that is $N$-deletable in $M \ba z$.
  If $M$ has no $N$-detachable pairs, then 
  %\begin{enumerate}
    %\item $z$ is not $N$-contractible, and
    %\item
      $S \subseteq E(N)$.
  %\end{enumerate}
\end{lemma}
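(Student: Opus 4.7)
The plan is to establish (i) and (ii) by contradiction, exhibiting in each case an $N$-detachable pair of $M$.

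For (i), suppose $z$ is $N$-contractible. Because $z$ is a coguts element of the cyclic $3$-separation $(Q,\{z\},S)$, we have $z\in\cocl(Q)-\cl(Q)$, so $Q$ is a quad in each of $M\ba z$ and $M/z$. Using that $|Q\cap E(N)|\le 1$ and that $Q$ is both a circuit and a cocircuit in these minors, label-switches within the quad let me choose $q\in Q-E(N)$ with $q$ $N$-contractible in $M$. By \cref{freegrounded}, $q$ lies in no triangle of $M$ (any such triangle would be \unfortunate, forbidding $q$ from being $N$-contractible), so $M/q=\si(M/q)$, and \cref{r3cocircsi3} applied to the rank-$3$ cocircuit $Q$ gives that $M/q$ is $3$-connected. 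Since $z$ is $N$-contractible as well, there is an $N$-labelling of $M$ with $\{q,z\}\subseteq C$, and $M/q/z$ has an $N$-minor. The main verification is that $M/q/z$ is $3$-connected; any obstruction would be a parallel pair in $M/q$ through $z$, coming either from a triangle of $M$ through $z$ (excluded because every triangle/triad is \unfortunate\ and $z$ is $N$-deletable, so the dual of \cref{freegrounded} excludes $z$ from a triad, while the triangle case is handled similarly via an $N$-label switch) or from a $4$-circuit of $M$ containing $\{q,z\}$, which I eliminate by orthogonality with a cocircuit through $z$ contained in $Q\cup z$ (such a cocircuit exists because $z\in\cocl(Q)$). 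This produces the $N$-detachable pair $\{q,z\}$, contradicting the hypothesis.

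For (ii), suppose $s\in S-E(N)$. By (i), every $N$-labelling $(C,D)$ of $M$ has $z\in D$, and $s\in C\cup D$. If $s\in D$, then $M\ba z\ba s$ has an $N$-minor; since $M$ has no detachable pair, $M\ba z\ba s$ is not $3$-connected, while by hypothesis $M\ba s$ is not $3$-connected either. Using label-switches within the quad $Q$ of $M\ba z$, I arrange $q\in D$ for some $q\in Q-E(N)$; then $M\ba s\ba q$ has an $N$-minor and, by absence of detachable pairs, is not $3$-connected. Applying \cref{m2.73} and \cref{3sepwin2} to the resulting $2$- or $3$-separations, together with orthogonality against the cocircuit $Q$, forces the separation to place $Q-q$ on one side, and unwinding gives $s\in\cl(Q)\cup\cocl(Q)$, contradicting that $z$ is the unique coguts element of $(Q,\{z\},S)$. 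If instead $s\in C$ for every such labelling, a symmetric contraction argument applies: choose $q\in Q-E(N)$ with $q\in C$, use \cref{r3cocircsi3} to conclude $M/q$ is $3$-connected, and verify $M/q/s$ is $3$-connected by the same orthogonality analysis as in (i), yielding an $N$-detachable pair $\{q,s\}$.

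The main obstacle is the structural analysis common to both parts: ruling out the $4$-circuits of $M$ that would destroy $3$-connectivity after contracting $\{q,z\}$ or $\{q,s\}$, and tracing how any rogue $2$-separation of a pair-removed minor must collapse the cyclic $3$-separation $(Q,\{z\},S)$. The delicate interplay between label-switches within the quad, the \unfortunate\ hypothesis via \cref{freegrounded}, \cref{r3cocircsi3}, and orthogonality with cocircuits through $z$ inside $Q\cup z$ is where the technical work concentrates.
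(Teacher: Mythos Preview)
Your plan has two genuine gaps in part (i), and part (ii) inherits them.

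First, the claim that ``there is an $N$-labelling of $M$ with $\{q,z\}\subseteq C$'' is not justified. You know $M\ba z$ has an $N$-minor with $|Q\cap E(N)|\le 1$, so label-switches inside $Q$ let you put any $q\in Q$ into $C$---but only for a labelling with $z\in D$. Assuming $z$ is $N$-contractible gives you a \emph{different} $N$-labelling with $z\in C$, and you have no control over how $Q$ meets that labelling. You have not shown that $M/q/z$ has an $N$-minor for any $q\in Q$.

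Second, your orthogonality argument does not rule out the obstruction to $3$-connectivity of $M/q/z$. A $4$-circuit $\{q,q',z,s\}$ with $q,q'\in Q$ and $s\in S$ satisfies orthogonality with $Q$ (it meets $Q$ twice) and with every cocircuit $(Q-q'')\cup z$ inside the $5$-element coplane $Q\cup z$ (it meets each in at least two elements). Such a circuit makes $\{q',z,s\}$ a triangle of $M/q$, so $M/q/z$ has a parallel pair. With two such circuits, say $\{q_1,q_2,z,s\}$ and $\{q_3,q_4,z,s'\}$, every choice of $q$ fails.

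The paper avoids both issues by never trying to contract $z$. It works entirely inside labellings with $z\in D$ and instead analyses an $N$-contractible $e\in S$. If $M/e$ is $3$-connected, \cref{r3cocircsi3} in $M/e$ forces $4$-circuits $\{q_i,q_j,e,h\}$; these make $\{q_k,h\}$ $N$-deletable, and now the hypothesis that $M\ba h$ is not $3$-connected is the lever---a cyclic $3$-separation through $h$ is uncrossed with $Q$ to produce a vertical $3$-separation with $e$ in the guts, contradicting $M/e$ being $3$-connected. If $M/e$ is not $3$-connected, a vertical $3$-separation together with \cref{doublylabel3} forces $e\in\cl(Q)$. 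That hypothesis on $M\ba s$ is essential, and your plan never invokes it.

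For (ii), the reference to \cref{3sepwin2} is misplaced: that lemma is stated under the standing hypotheses of \cref{weaktheoremdetailed} and does not apply here. And the asserted contradiction---that $s\in\cl(Q)\cup\cocl(Q)$ conflicts with $z$ being ``the unique coguts element''---is not a contradiction: nothing in the hypotheses forbids further elements of $S$ from lying in $\cl(Q)$ or $\cocl(Q)$.
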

\begin{proof}
  Since $M \ba z$ has an $N$-minor with $|Q \cap E(N)| \le 1$, it follows from \cref{m2.73} that, for any distinct $q,q' \in Q$, the matroid $M \ba z \ba q / q'$ has an $N$-minor.
  In particular, each element of $Q$ is $N$-flexible in $M \ba z$.
  Since every triangle or triad of $M$ is \unfortunate, observe that no triangle or triad meets $Q$.
  Let $(C,D)$ be an $N$-labelling such that $z \in D$ and at most one element of $Q$ is not $N$-labelled for removal.
  %By the foregoing, we may assume any element $q \in Q$ is $N$-labelled for deletion, or any pair $\{q',q''\} \subseteq Q$ is $N$-labelled for contraction.
  Let $Q= \{q_1,q_2,q_3,q_4\}$.

  First we claim that if $e \in S$ is $N$-labelled for contraction, then $M/e$ is not $3$-connected.
  Towards a contradiction, suppose that $M/e$ is $3$-connected for some $e \in S$ that is $N$-labelled for contraction.
  If $e \in \cl(Q)$, then $\si(M/e)$ is not $3$-connected, so we may assume that $e \notin \cl(Q)$.
  As $Q$ is a quad in $M/e$, the matroid $\si(M/e/q_i)$ is $3$-connected for each $i \in [4]$, by \cref{r3cocircsi3}.
  It follows from \cref{m2.73} that any such pair $\{e,q_i\}$ is $N$-contractible in $M \ba z$.
  Thus, as $M$ has no $N$-detachable pairs, for each $i \in [4]$ there is a $4$-element circuit containing $\{e,q_i\}$. % and avoiding $z$.
  By orthogonality, and since $e \notin \cl(Q)$, each of these circuits intersects $Q$ in precisely two elements.
%
  %Let $C_i$ be the $4$-element circuit containing $\{q_i,e\}$, and let $C_i-(Q \cup e) = \{h_i\}$.
  We may assume that $\{q_1,q_2,e,h\}$ is a circuit, for some $h \in E(M)-(Q \cup e)$.
  %We may assume that $C_1 = \{q_1,q_2,e,h\}$ (where $C_1 = C_2$ and $h = h_1=h_2$).
  %By circuit elimination with $Q$, there is a circuit contained in $\{q_2,q_3,q_4,e,h\}
  Now, for $i \in \{1,2\}$ and $j \in \{3,4\}$, the matroid $M \ba z \ba q_j / q_i / e$ has an $N$-minor.
  Note that if $h=z$, then $(Q \cup z, e, S-e)$ is a vertical $3$-separation, %?
  so $\si(M/e)$ is not $3$-connected; a contradiction.
  %Thus $\{q_j,h\}$ is $N$-deletable in $M \ba z$ for each $j \in \{3,4\}$.
  Thus $h$ is $N$-deletable in $M \ba z$.
  By hypothesis, $M \ba h$ is not $3$-connected. %, even when $h=z$.
  Since each triad of $M$ is \unfortunate, $h$ is not in a triad, so $\co(M \ba h)$ is not $3$-connected, and $M$ has a cyclic $3$-separation $(U,\{h\},V)$ %.
  with
  %If $|U \cap Q| = 1$, then $(U-Q,\{h\},V\cup Q)$ is a cyclic $3$-separation, so we may assume that
  $|U \cap Q| \ge 2$.
  Then, by uncrossing, $(U \cup Q, (V-Q) \cup h)$ is $3$-separating.
  If $e \in U$, then $h \in \cl(U \cup Q) \cap \cocl(U)$, so $\lambda(U \cup Q \cup h) \le 1$.
  Hence $|V-Q| \le 1$, so $|V|= 3$.  But then $V$ is a triangle containing an $N$-contractible element; a contradiction.
  So $e \in V$.
  Since $e$ is $N$-contractible, it follows that $|V| \ge 4$.
  Now $U \cup Q$ and $U \cup (Q \cup e)$ are exactly $3$-separating, so
  $(U \cup Q, \{e\}, (V \cup h)-(Q\cup e))$ is a path of $3$-separations where $e$ is a guts element.
  As $e$ is not in a triangle, $r((V \cup h)-(Q\cup e)) \ge 3$, so the path of $3$-separations is a vertical $3$-separation, implying $\si(M/e)$ is not $3$-connected; a contradiction.
  %We deduce that $\co(M \ba h_i)$ is $3$-connected.

  Now, if $e \in S$ is $N$-labelled for removal, then either $M/e$ is not $3$-connected and $e$ is $N$-labelled for contraction, or $M \ba e$ is not $3$-connected and $e$ is $N$-labelled for deletion.
  %Now, we may assume that $M/e$ is not $3$-connected for every $e \in S$ that is $N$-labelled for contraction.
  %By hypothesis,
  %%By a dual argument,
  %$M \ba e$ is not $3$-connected for every element $e \in S$ that is $N$-labelled for deletion.

  Suppose there is some $e\in S-\cl(Q)$ that is $N$-labelled for contraction.
  By the foregoing, $M/e$ is not $3$-connected.  Moreover, $e$ is not in a triangle, so $\si(M/e)$ is not $3$-connected, and hence $M$ has a vertical $3$-separation $(U,\{e\},V)$ with $|U \cap E(N)| \le 1$.  We may assume, without loss of generality, that $V \cup e$ is closed.
  By the dual of \cref{doublylabel3}, at most one element in $U$ is not $N$-flexible in $M/e$, and if such an element~$x$ exists, then $x \in U \cap \cocl(V)$ and $e \in \cl(U - x)$.
  But if $u \in S \cup z$ is $N$-flexible, then, as either $\si(M/u)$ or $\co(M\ba u)$ is $3$-connected by Bixby's Lemma, and $u$ is not in an \unfortunate\ triangle or triad, either $M/u$ or $M \ba u$ is $3$-connected; a contradiction.
%
  %By \cref{nodoubly}, we deduce that $U -x \subseteq Q$ if such an $x$ exists; and $U \subseteq Q$ otherwise.
  So every $N$-flexible element is contained in $Q$, implying $U \subseteq Q$ or $U-x \subseteq Q$ for some $x \in U$ such that $e \in \cl(U-x)$.
  In either case, $e \in \cl(Q)$; a contradiction.

  By a similar argument, if $e \in S$ is $N$-labelled for deletion, then $e \in \cocl(Q)$.
  Suppose there is some $e \in S \cap \cocl(Q)$.
  Then $(Q \cup e, z, S-e)$ is a cyclic $3$-separation, and $e$ is $N$-contractible, by \cref{doublylabel3}(ii), since $e \in \cocl(Q)$.
  But $(Q, e, (S-e) \cup z)$ is also a cyclic $3$-separation, and it follows that $M/e$ is $3$-connected; a contradiction.
  Similarly, if there is an element $s \in S \cap \cl(Q)$, then $(Q \cup s, z, S-s)$ is a cyclic $3$-separation, so $M \ba z$ is $3$-connected, and $s$ is $N$-deletable in $M \ba z$ by \cref{doublylabel3}; a contradiction. %to the foregoing
  We deduce that no elements of $S$ are $N$-labelled for removal, thus completing the proof.
\end{proof}

We now shift our attention to \psep s other than the quad.
The next lemma describes how triangles (or, dually, triads) can meet such a \psep.

\begin{lemma}
  \label{probcondel}
  Let $M$ be a $3$-connected matroid with $P \subseteq E(M)$.
  Suppose that $P$ contains no triangles, and $P$ has a partition $(L_1,L_2,\dotsc,L_t)$ into pairs, for some $t \ge 3$, such that $L_i \cup L_j$ is a cocircuit for all distinct $i,j \in [t]$ except perhaps $\{i,j\} = \{1,2\}$.
  If a triangle $T$ meets $L_i$, for some $i \in [t]$, then $L_i \subseteq T$.
\end{lemma}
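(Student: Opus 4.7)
The plan is to argue by contradiction. Suppose $T=\{x,y,z\}$ is a triangle meeting $L_i$ in $x$ but $L_i \not\subseteq T$. Writing $L_i = \{x,x'\}$, we then have $x' \notin T$, so $|T \cap L_i| = 1$.

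The driver of the argument is orthogonality: for each $j \neq i$ such that $L_i \cup L_j$ is a cocircuit, $|T \cap (L_i \cup L_j)|$ must be $0$ or at least $2$; since $x$ lies in this intersection and $x'$ does not, we get $|T \cap L_j| \geq 1$. Thus $\{y,z\}$ must meet each such $L_j$. Since the $L_j$ are pairwise disjoint and $|\{y,z\}|=2$, it can meet at most two of them.

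I would then split into cases on $i$ and $t$. If $i \notin \{1,2\}$, then $L_i \cup L_j$ is a cocircuit for \emph{every} $j \neq i$; if $t \geq 4$, the set $\{y,z\}$ must meet at least three distinct $L_j$'s, an immediate contradiction. If $i \in \{1,2\}$, say $i=1$, the only possible exception is $\{i,j\}=\{1,2\}$, so $L_1 \cup L_j$ is a cocircuit for $j \in \{3,\dotsc,t\}$; if $t \geq 5$, there are again at least three such $L_j$ to cover, contradicting $|\{y,z\}|=2$. The remaining cases are therefore $t=3$ (any $i$) and $t=4$ with $i \in \{1,2\}$.

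For $t=4$ with $i=1$ (the $i=2$ subcase is symmetric), $\{y,z\}$ must meet $L_3$ and $L_4$, forcing (up to relabelling) $y \in L_3$ and $z \in L_4$; then $|T \cap (L_2 \cup L_3)| = |\{y\}| = 1$, violating orthogonality with the cocircuit $L_2 \cup L_3$. For $t=3$, the conclusion in every subcase is that both $y$ and $z$ must lie in $L_1 \cup L_2 \cup L_3 = P$ (for instance, when $i=1$, we force $y \in L_3$ and then orthogonality with $L_2 \cup L_3$ forces $z \in L_2 \cup L_3$; when $i=3$, orthogonality with $L_1 \cup L_3$ and $L_2 \cup L_3$ forces $y \in L_1$ and $z \in L_2$). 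Then $T \subseteq P$, contradicting the hypothesis that $P$ contains no triangles. The only subtlety is bookkeeping the small cases consistently and remembering to invoke the ``$P$ has no triangles'' hypothesis precisely where orthogonality alone is not strong enough (i.e., whenever $T$ could a priori fit inside $P$).
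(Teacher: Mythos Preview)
Your proof is correct and follows the same orthogonality approach as the paper. The paper's version is slightly more streamlined: rather than splitting on the value of $t$, it observes that when $i\in\{1,2\}$ the triangle must meet $L_t$, and then either $T\subseteq L_i\cup L_t\subseteq P$ or $|T\cap L_t|=1$, which reduces to the case $i\notin\{1,2\}$ and gives $T\subseteq P$ without the further case analysis on $t$.
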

\begin{proof}
  Suppose $T$ is a triangle such that $|T \cap L_i|=1$ for some $i \in [t]$.
  If $i \notin \{1,2\}$, then, by orthogonality, $T$ meets $L_j$ for each $j \in [t]-i$.
  On the other hand, if $i \in \{1,2\}$, then $T$ meets $L_t$.  If $T \nsubseteq L_i \cup L_t$, then $|T \cap L_t|=1$ and, in turn, $T$ meets $L_j$ for each $j \in [t-1]$.
  Since $t \ge 3$, we deduce in either case that the triangle $T$ is contained in $P$; a contradiction.
\end{proof}

The next lemma guarantees that when a single element in a \psep\ is removed, $3$-connectivity is preserved up to series or parallel classes.
We require the following in the proof of this lemma.

A set $X$ in a matroid $M$ is {\em fully closed} if it is closed and coclosed; that is, $\cl(X)=X=\cl^*(X)$.
The {\em full closure} of a set $X$, denoted $\fcl(X)$, is the intersection of all fully closed sets that contain $X$.
We say that a $2$-separation $(U,V)$ is \emph{trivial} if $U$ or $V$ is contained in a series or parallel class.
It is easily seen that if $(U,V)$ is a non-trivial $2$-separation of a connected matroid $M$, then $(\fcl(U),V-\fcl(U))$ is also a $2$-separation of $M$.

\begin{lemma}
  \label{probconnmain}
  Let $M$ be a $3$-connected matroid, and 
  let $P$ be either 
  \begin{enumerate}[label=\rm(\alph*)]
    \item a \spikelike\ or a \spider; or 
    \item an \pspider, a \twisted, or a \tvamoslike\ of $M$ or $M^*$, with $|E(M)-P| \ge 3$.
  \end{enumerate}
  Then $\si(M/p)$ and $\co(M \ba p)$ are $3$-connected for each $p \in P$.
\end{lemma}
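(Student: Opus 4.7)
The plan is to fix $p \in P$ and show $\co(M \ba p)$ is $3$-connected; the corresponding claim for $\si(M/p)$ will follow by taking duals, using that \spikelike\ and \spider\ are self-dual types and that the hypothesis in~(b) permits $P$ to be of the given type in either $M$ or $M^*$.

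I would argue by contradiction. Suppose $\co(M \ba p)$ is not $3$-connected. A short argument using $|E(M)| \ge 6$ shows $M \ba p$ is connected, so $M \ba p$ must have a non-trivial $2$-separation $(U,V)$; replacing $U,V$ by their full closures in $M \ba p$, I may assume both are fully closed with $|U|, |V| \ge 3$. A brief rank computation (using $r_{M \ba p}(X) = r_M(X)$ for $X \subseteq E - p$, and Lemma~\ref{swapSepSides3}) gives
\[
  \lambda_M(U) \;=\; \lambda_{M \ba p}(U) \;+\; [\,p \in \cocl_M(U)\,],
\]
so $3$-connectivity of $M$ forces $\lambda_M(U) = 2$ and $p \in \cocl_M(U)$. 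Thus $(U, V \cup p)$ is an exact $3$-separation of $M$ with $U \cup p$ also $3$-separating.

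The bulk of the proof is then a case analysis driven by the circuit/cocircuit structure of $P$. Without loss of generality, $|U \cap P| \ge |V \cap P|$. Lemma~\ref{probcondel} (and its dual) applied to the pair-partition structure intrinsic to each \psep\ shows that no triangle or triad of $M$ can meet a distinguished pair of $P$ in a single element. Combined with uncrossing (Lemma~\ref{onetrick3}) of $U$, $U \cup p$, and $P$, this forces each distinguished pair of $P$ to lie entirely on one side of the $3$-separation. Orthogonality with the explicit circuits and cocircuits listed in Definitions~\ref{def-spike-like3}, \ref{def-twisted3}, \ref{def-pspider3}, \ref{def-spider-like3}, and the \tvamoslike\ definition then propagates this to all of $P$, pushing $P$ onto one side of $(U, V \cup p)$ and collapsing the other side to a small complement inside $E(M) - P$. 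The hypothesis $|E(M) - P| \ge 3$ in~(b), together with Lemma~\ref{gutsstayguts3} to rule out elements lying simultaneously in $\cl$ and $\cocl$, then contradicts $|U|, |V| \ge 3$.

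The main obstacle will be the bookkeeping for the asymmetric types (\pspider, \twisted, \tvamoslike), where elements of $P$ play distinct roles: each choice of $p \in P$, and each possible split of $P$ between $U$ and $V$, must be checked separately against the listed intra-$P$ circuits and cocircuits. The hypothesis $|E(M) - P| \ge 3$ enters crucially in each such subcase to guarantee that the uncrossed sets $U \cap P$, $V \cap P$ are large enough to produce bona fide $3$-separations rather than degenerate configurations forcing $|E(M)| \le 8$. The symmetric, self-dual cases (\spikelike, \spider) by contrast should yield to a single uniform leg-by-leg argument based on the quad cocircuits connecting distinct legs.
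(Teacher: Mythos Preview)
Your overall strategy for case~(b)---assume a non-trivial $2$-separation of $M\ba p$, push (most of) $P$ to one side by uncrossing, and then reabsorb $p$ via closure/coclosure to contradict $3$-connectivity of $M$---is exactly what the paper does for the \pspider, \twisted, and \tvamoslike. So the skeleton is right. Two points deserve correction, and one simplification is worth knowing.

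First, a technical slip: you cannot assume \emph{both} $U$ and $V$ are fully closed in $M\ba p$. What is true is that if $(U,V)$ is a non-trivial $2$-separation, then so is $(\fcl(U),\,V-\fcl(U))$; you may take one side fully closed, not both. The paper does exactly this (e.g.\ in \cref{connpspider} it assumes only $U$ is fully closed). Your later steps don't actually need both sides fully closed, so this is easy to repair, but as written it is false.

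Second, your appeal to \cref{probcondel} is misplaced. That lemma controls how \emph{triangles of $M$} meet the pairs $L_i$; it says nothing directly about how a $2$- or $3$-separation can split $P$. The paper does not use \cref{probcondel} here at all. Instead, for each type it picks a specific $3$-separating subset of $P$ (the quad $Q$ for the \pspider, the circuit $\{u_2,c,v_1,v_2\}$ for the \twisted, etc.), applies submodularity of $\lambda$ to uncross it with $U$, and then checks that $p$ lies in the closure or coclosure of the resulting set to manufacture a $2$-separation of $M$. That is the mechanism you should be describing, not an orthogonality-propagation via \cref{probcondel}.

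Finally, for case~(a) you are working much harder than necessary. In a \spikelike\ or \spider, \emph{every} element lies in a quad, so \cref{r3cocircsi3} and its dual give $\si(M/p)$ and $\co(M\ba p)$ $3$-connected in one line. The paper dispatches case~(a) this way and reserves the $2$-separation argument for case~(b), where the quad shortcut is unavailable.
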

\begin{proof}
  In case (a), every element of $P$ is in a quad, and the result follows from \cref{r3cocircsi3} and its dual.
  For (b), we consider each of the \psep s in turn.

  \begin{sublemma}
    \label{connpspider}
    %The lemma holds when $P$ is an \pspider.
    The lemma holds when $P$ is a \twisted\ or an \pspider.
  \end{sublemma}
  \begin{slproof}
  %%Suppose that $P$ is an \pspider.
  %Let $Q=\{q_1,q_2,q_3,q_4\}$ be the quad contained in $P$, and let $P-Q = \{c,d\}$, where $\{c,d,q_1,q_2\}$ and $\{c,d,q_3,q_4\}$ are circuits.
  %By duality, it suffices to show that $\si(M/p)$ is $3$-connected for all $p \in P$.
  %By \cref{r3cocircsi3}, $\si(M/p)$ is $3$-connected for $p \in Q$; it remains only to consider $p \in \{c,d\}$.
%
  %Towards a contradiction,
  %let $(U,V)$ be a non-trivial $2$-separation of $M / c$.
  %Without loss of generality, $|U \cap \{d,q_1,q_2\}| \ge 2$, and $U$ is fully closed.
  %So %we may assume that
  %$\{d,q_1,q_2\} \subseteq U$.
  %If $U$ meets $\{q_3,q_4\}$, then $\{q_3,q_4\} \subseteq U$, and $c \in \cocl(U)$, implying $(U,V)$ is a $2$-separation of $M$; a contradiction.
  %So $\{q_3,q_4\} \subseteq V$.
  %Note that $(U-d,V\cup d)$ is also a $2$-separation of $M/c$, since $d \in \cl_{M/c}(V)$.
%
  %Since $|E(M)-P| \ge 3$,
  %either $|U-P| \ge 2$ or $|V-P| \ge 2$.
  %We may assume that $|V-P| \ge 2$, since $(U-d,V \cup d)$ is also a $2$-separation of $M/c$.
%%
  %Now, by submodularity of the connectivity function, 
  %\begin{align*}
    %\lambda_{M/c}(Q \cup U) &\le \lambda_{M/c}(Q) + \lambda_{M/c}(U) - \lambda_{M/c}(Q \cap U) \\
    %&=2+1-2 = 1,
  %\end{align*}
  %so $Q \cup U$ is $2$-separating in $M/c$.
  %But $c \in \cocl(Q \cup U)$, so $(Q \cup U \cup c,V-P)$ is a $2$-separation in $M$; a contradiction.
  %Therefore $M/c$ has no non-trivial $2$-separations, implying $\si(M/c)$ is $3$-connected, as required.
  %\end{slproof}
%
  %\begin{sublemma}
    %\label{conntwisted}
    %The lemma holds when $P$ is a \twisted.
  %\end{sublemma}
  %\begin{slproof}
  Suppose that $P$ is a \twisted.
    By symmetry and duality, it suffices to show that $\co(M\ba p)$ is $3$-connected for some $p \in P$.
    Let $P = \{u_1,u_2,c,d,v_1,v_2\}$ where
    $\{u_1,u_2,d,v_1\}$, $\{u_1,c,d,v_2\}$, and $\{u_2,c,v_1,v_2\}$ are circuits,
    and $\{u_1,u_2,c,d\}$, $\{u_1,u_2,v_1,v_2\}$, and $\{c,d,v_1,v_2\}$ are cocircuits.

    Towards a contradiction, suppose that $M \ba d$ has a non-trivial $2$-separation $(U,V)$.
    Without loss of generality, $|\{u_1,u_2,c\} \cap U| \ge 2$, and $U$ is fully closed.
    So $\{u_1,u_2,c\} \subseteq U$.
    If $U$ meets $\{v_1,v_2\}$, then $d \in \cl(U)$, so $(U \cup d,V)$ is a $2$-separation of $M$; a contradiction.
    So $\{v_1,v_2\} \subseteq V$.
    Now $c \in \cocl_{M \ba d}(V)$, so $(U-c,V \cup c)$ is also a $2$-separation of $M\ba d$.
    Recall that $|E(M)-P| \ge 3$.
    If $|V-P| \le 1$, then $|U-P| \ge 2$, so we may assume, up to relabelling, that $|V-P| \ge 2$.

    Let $C_1$ be the circuit $\{u_2,c,v_1,v_2\}$.
    Observe that $\lambda_{M \ba d}(C_1) = 2$, since $d \in \cocl(C_1)$.
    By submodularity of the connectivity function,
    \begin{align*}
      \lambda_{M\ba d}(U \cup C_1) &\le \lambda_{M\ba d}(U) + \lambda_{M\ba d}(C_1) - \lambda_{M\ba d}(U \cap C_1) \\
      &= 1 + 2 - 2 = 1,
    \end{align*}
    so $(U \cup \{v_1,v_2\}, V-\{v_1,v_2\})$ is a $2$-separation in $M \ba d$.
    But $c \in \cocl_{M \ba d}(U \cup \{v_1,v_2\})$, so $(U\cup \{v_1,v_2,c\},V-\{v_1,v_2\})$ is also a $2$-separation of $M$; a contradiction.
    So $M \ba d$ has no non-trivial $2$-separations, implying $\co(M \ba d)$ is $3$-connected, as required.

  A similar argument applies in the case that $P$ is an \pspider; we omit the details.
  \end{slproof}

  \begin{sublemma}
    The lemma holds when $P$ is a \tvamoslike\ of $M$ or $M^*$.
  \end{sublemma}
  \begin{slproof}
    It suffices to show that both $\si(M/p)$ and $\co(M\ba p)$ are $3$-connected for each $p \in P$ when $P$ is a \tvamoslike\ of $M$.
    Let $P = \{s_1,s_2,t_1,t_2,q_1,q_2\}$ where $\{s_1,s_2,q_1,q_2\}$, $\{t_1,t_2,q_1,q_2\}$, and $\{s_1,s_2,t_1,t_2\}$ are circuits, and $\{s_1,t_1,q_1,q_2\}$ and $\{s_2,t_2,q_1,q_2\}$ are the non-spanning cocircuits contained in $P$.
    A similar approach works here as in \cref{connpspider}; we outline the proof, omitting some of the details.

    Suppose $M \ba p$ has a non-trivial $2$-separation $(U,V)$, for some $p \in \{q_1,q_2\}$.
    By symmetry, we may assume $p = q_2$.
    Then $\{s_1,t_1,u\} \subseteq U$ and $\{s_2,t_2\} \subseteq V$, and $|V-P| \ge 2$. 
    By an uncrossing argument with the set $\{s_1,s_2,t_1,t_2\}$, which is $3$-separating in $M \ba q_2$, we deduce that $(U \cup \{s_2,t_2\},V-\{s_2,t_2\})$ is a $2$-separation in $M \ba q_2$, with $q_2 \in \cl_M(U \cup \{s_2,t_2\})$; a contradiction.
    So $\co(M \ba p)$ is $3$-connected for $p \in \{q_1,q_2\}$.

    Suppose $M / p$ has a non-trivial $2$-separation $(U,V)$, for some $p \in \{s_1,s_2,t_1,t_2\}$.
    By symmetry, we may assume $p = t_2$.
    Then, we may assume that $|\{q_1,q_2,t_1\} \cap U| \ge 2$ and $U$ is fully closed, but $P-t_2 \subseteq \fcl_{M/t_2}(\{q_1,q_2,t_1\})$, so $P-t_2 \subseteq U$, and thus $t_2 \in \cocl_M(U)$; a contradiction.
    So $\si(M / p)$ is $3$-connected for $p \in \{s_1,s_2,t_1,t_2\}$.

    Suppose $M / p$ has a non-trivial $2$-separation $(U,V)$, for some $p \in \{q_1,q_2\}$.
    By symmetry, we may assume $p = q_2$.
    Then, we may assume that $\{s_1,s_2,q_1\} \subseteq U$ and $\{t_1,t_2\} \subseteq V$, and $|V-P| \ge 2$. 
    By an uncrossing argument with $P-q_2$, which is $3$-separating in $M/q_2$,  we deduce that $(U \cup \{t_1,t_2\},V-\{t_1,t_2\})$ is a $2$-separation in $M/q_2$, with $q_2 \in \cocl(U \cup \{t_1,t_2\})$; a contradiction.
    So $\si(M / p)$ is $3$-connected for $p \in \{q_1,q_2\}$.

    Suppose $M \ba p$ has a non-trivial $2$-separation $(U,V)$, for some $p \in \{s_1,s_2,t_1,t_2\}$.
    By symmetry, we may assume $p = t_2$.
    Then, we may assume that $\{s_2,q_1,q_2\} \subseteq U$ and $\{s_1,t_1\} \subseteq V$. 
    The set $P-t_2$ is $3$-separating in $M/t_2$.
    By an uncrossing argument with $U$, when $|V-P| \ge 2$, or with $V$, when $|U-P| \ge 2$, we obtain a $2$-separation of $M/t_2$ where $t_2$ is in the closure of one side; a contradiction.
    So $\si(M \ba p)$ is $3$-connected for $p \in \{s_1,s_2,t_1,t_2\}$.
  \end{slproof}
\end{proof}

Recall that when $P$ is a \psep\ with $|E(M)-P| \ge 3$, and $z \in \cl(P)-P$ or $z \in \cocl(P)-P$, we say $P \cup z$ is an \emph{\auging\ of $P$}.
The next lemma shows that when a matroid~$M$ with no $N$-detachable pairs has a \psep~$P$, either $E(M)-E(N) \subseteq P$, or there is an \auging~$P \cup z$ of $P$ such that $E(M)-E(N) \subseteq P \cup z$.
The proof follows a similar approach to \cref{handlequads}, but there are some extra subtleties to handle.

Let $N$ be a $3$-connected minor of $M$.
An \augsep~$P \cup z$ is \emph{\prob\ with respect to $N$} if
\begin{enumerate}[label=\rm(\alph*)]
  \item $z \in \cl(P)$ and $z$ is $N$-contractible but not $N$-deletable, or
      $z \in \cocl(P)$ and $z$ is $N$-deletable but not $N$-contractible; and
  \item $E(M)-E(N) \subseteq P \cup z$.
\end{enumerate}

\begin{lemma}
  \label{problematiclemma}
  Let $M$ be a $3$-connected matroid, and let $N$ be a $3$-connected minor of $M$ such that $|E(N)| \ge 4$, and every triangle or triad of $M$ is \unfortunate. 
  Suppose that there exists $d \in E(M)$ such that $M \ba d$ is $3$-connected and has a cyclic $3$-separation $(Y, \{d'\}, Z)$ where $M \ba d \ba d'$ has an $N$-minor with $|Y \cap E(N)| \le 1$, and there is a subset $X$ of $Y$ such that for some $c \in \cocl_{M \ba d}(X)-X$, one of the following holds:
  \begin{enumerate}[label=\rm(\alph*)]
    \item $X \cup \{c,d\}$ is contained in a \spikelike~$P$ of $M$, where $P$ is maximal subject to $|P \cap E(N)| \le 1$,
    \item $P = X \cup \{c,d\}$ is a \twisted\ of $M$,
    \item $P = X \cup \{c,d\}$ is a \tvamoslike\ of $M$ or $M^*$,
    \item $P = X \cup \{c,d\}$ is an \pspider\ of $M$, or 
    \item $P = X \cup \{a,b,c,d\}$ is a \spider\ of $M$ with associated partition $\{X,\{a,b,c,d\}\}$ for some distinct $a,b \in E(M) - (X \cup \{c,d\})$.
  \end{enumerate}
  If $M$ has no $N$-detachable pairs, then either 
  \begin{enumerate}[label=\rm(\roman*)]
    \item $|E(M)| \le 10$ or $M$ is a \quadflower,
    \item $E(M)-E(N) \subseteq P$, or
    \item there is some $z \in E(M)-P$ such that $P \cup z$ is an \auging\ of $P$ that is problematic with respect to $N$.
  \end{enumerate}
\end{lemma}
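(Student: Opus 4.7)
The plan is to show that, with the exception of at most one problematic augmenting element, every element of $E(M)-P$ lies in $E(N)$, unless we fall into the small-matroid case (i).

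First, I would produce an $N$-labelling $(C,D)$ of $M$ in which many elements of $P$ are doubly $N$-labelled. Starting from the cyclic $3$-separation $(Y,\{d'\},Z)$ of $M\ba d$ and the hypothesis that $M\ba d\ba d'$ has an $N$-minor with $|Y\cap E(N)|\le 1$, repeated applications of \cref{doublylabel3} and its dual, combined with the internal circuit and cocircuit structure prescribed in each of cases (a)--(e), force most elements of $P$ to be simultaneously $N$-contractible and $N$-deletable, up to $N$-label switching. In particular, there is some doubly $N$-labelled $p\in P$ distinct from $d$, and at least one $N$-deletable $p'\in P\cap D$ distinct from $d$.

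Next, I would assume $|E(M)-P|\ge 3$, the boundary being handled directly and shown to land in either (ii) or (i), and fix $s\in E(M)-P$ that is $N$-removable. By duality, suppose $s\in D$. If $s$ is also $N$-contractible, then by Bixby's Lemma either $\si(M/s)$ or $\co(M\ba s)$ is $3$-connected, and as no triangle or triad of $M$ fails to be \unfortunate, either $M/s$ or $M\ba s$ is actually $3$-connected; combined with \cref{probconnmain} and \cref{probcondel}, which upgrade $\si(M/p)$ and $\co(M\ba p)$ to $M/p$ and $M\ba p$ themselves being $3$-connected for a doubly-labelled $p\in P$, this produces an $N$-detachable pair $\{s,p\}$, a contradiction. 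Hence $s$ is exclusively $N$-deletable, establishing the first half of the \prob\ condition. A parallel argument, pairing $s$ with a doubly-labelled $p\in P\cap D$, rules out $M\ba s$ being $3$-connected. Since $s$ is not in a triad, $\co(M\ba s)$ is also not $3$-connected, so $M$ has a cyclic $3$-separation $(U,\{s\},V)$; uncrossing with the exactly $3$-separating set $P$ via \cref{onetrick3}, and using that $|P|\ge 6$, forces $P\subseteq \fcl(U)$ up to symmetry, whence $s\in\cocl(P)-P$ and $P\cup s$ is an \augsep.

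The remaining task is to verify the second half of problematicness, namely that at most one such $s$ can exist outside $E(N)$. In case (a), the maximality of the \spikelike\ $P$ directly forbids a second such element, since two candidate elements $s,s'$ would together with their partners supply an additional leg and extend $P$. In cases (b), (c), (d), the rigid labelled circuit and cocircuit structure combined with orthogonality against the specific cocircuit of $P$ containing $d$ pins down the admissible positions of $s$, and a second such element would create a cocircuit or triad contradicting the \unfortunate\ hypothesis or violating rank constraints. The main obstacle lies in case (e), the \spider: there, two putative augmentations $s,s'$ in $\cocl(P)-P$ can a priori coexist, and ruling this out requires a delicate orthogonality and circuit-elimination analysis that traces the $4$-element circuits and cocircuits containing $\{s,p\}$ and $\{s',p\}$ for doubly-labelled $p\in P$, and tracks $r(P\cup\{s,s'\})$ and $r^*(P\cup\{s,s'\})$; in the residual configuration these ranks are forced to be small enough that $\lambda(P\cup\{s,s'\})\le 1$, yielding $|E(M)|\le 12$ and placing us in case (i).
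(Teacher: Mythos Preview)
Your high-level plan (show no element of $E(M)-P$ is doubly $N$-labelled, then show any $N$-removable element outside $P$ must lie in $\cl(P)$ or $\cocl(P)$, then bound how many there can be) matches the paper's, but the key step is not as easy as you suggest.

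The critical gap is in your argument that a doubly-labelled $s\in E(M)-P$ yields an $N$-detachable pair $\{s,p\}$.  You say that once Bixby gives, say, $M/s$ is $3$-connected, then \cref{probconnmain} applied (presumably to $M/s$) together with \cref{probcondel} makes $M/s/p$ itself $3$-connected.  But \cref{probconnmain} only gives $\si((M/s)/p)$ $3$-connected, and although $p$ is not in a triangle of $M$, it can certainly be in a triangle of $M/s$: any $4$-element circuit of $M$ containing $\{s,p\}$ becomes such a triangle.  So you cannot upgrade to $M/s/p$ being $3$-connected this way.  This is precisely the obstruction the paper works hard to overcome.  Its sublemma ``con'' shows that when $M/e$ is $3$-connected for some $N$-contractible $e\notin\cl(P)$, each $p\in L_i$ in a triangle of $M/e$ forces a $4$-circuit $L_i\cup\{e,g_i\}$ of $M$ with $g_i$ $N$-deletable; the companion ``conndcon'' then shows these $g_i$ satisfy $\co(M\ba g_i)$ $3$-connected, applies the dual statement ``del'' to produce cocircuits $K_j\cup\{g_i,h_j\}$, and uses orthogonality to force $e=h_j$, whence $g_i\in\cl(P\cup e)\cap\cocl(P\cup e)$.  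A rank count then either yields $|E(M)|\le 12$ or, in the spike-like case, contradicts maximality of $P$.  This circuit-chasing is the real content of the lemma, and your plan does not account for it.

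Your uncrossing argument for $s\in\cocl(P)$ is also incomplete: from a cyclic $3$-separation $(U,\{s\},V)$ with $P\subseteq\fcl(U)$ you get $s\in\cocl(U)$, not $s\in\cocl(P)$.  The paper instead argues via \cref{doublylabel3}: in a vertical $3$-separation $(U,\{e\},V)$ with $|U\cap E(N)|\le 1$, almost every element of $U$ is doubly $N$-labelled, hence (by what was just proved) lies in $P$, so $e\in\cl(U)\subseteq\cl(P)$.  Finally, the ``at most one'' step is handled uniformly (not case-by-case): two contractible guts elements $e,f\in\cl(P)-P$ give a vertical $3$-separation $(P\cup f,\{e\},\dots)$, and \cref{doublylabel3} makes $f$ doubly labelled, contradicting the earlier claim.
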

\begin{proof}
  The set $P$ is a \psep\ of $M$ that is either a \spikelike, a \twisted, an \pspider, a \spider, or a \tvamoslike\ of $M$ or $M^*$.
  By definition, each of these \psep s has a partition $(L_1,L_2,\dotsc,L_t)$, for some $t \ge 3$, such that $|L_i|=2$ for each $i \in [t]$, and $L_i \cup L_j$ is a cocircuit for all distinct $i,j \in [t]$, except when $P$ is a \tvamoslike\ of $M$ in which case $L_i \cup L_j$ is a cocircuit for all distinct $i,j \in [t]$ except $\{i,j\}=\{1,2\}$.
  Dually, $P$ has a partition $(K_1,K_2,\dotsc,K_t)$ such that $|K_i|=2$ for each $i \in [t]$, and $K_i \cup K_j$ is a circuit for all distinct $i,j \in [t]$, except when $P$ is a \tvamoslike\ of $M^*$ in which case $K_i \cup K_j$ is a circuit for all distinct $i,j \in [t]$ except $\{i,j\}=\{1,2\}$.

  Let $(L_1,\dotsc,L_t)$ and $(K_1,\dotsc,K_t)$ be the partitions of $P$ as described.
  For the arguments that follow, we require a particularly convenient $N$-labelling, as described in \cref{preswitch}.  We start with the following:

  \begin{sublemma}
    \label{preswitching}
    There is an $N$-labelling such that $|P \cap E(N)| \le 1$ when $P$ is not a \spider, and $|P \cap E(N)| \le 2$ otherwise.
  \end{sublemma}
  \begin{slproof}
    Suppose $P$ is not a \spider.
    Clearly $|P \cap E(N)| \le 1$ when $c \in Y \cup d'$, since $|Y \cap E(N)| \le 1$ in $M \ba d \ba d'$. So assume that $c \in Z$.
    As $(Y,Z)$ is a $2$-separation of $M \ba d \ba d'$, and $c \in \cocl_{M \ba d}(X)$, it follows that $(Y \cup c, Z-c)$ is also a $2$-separation of $M \ba d \ba d'$.
    Note that $|(Z-c) \cap E(N)| >1$, since $|Y \cap E(N)| \le 1$ and $|E(N)| \ge 4$.
    Hence, by \cref{m2.73}, $|(Y \cup c) \cap E(N)| \le 1$ in $M \ba d$, and $|P \cap E(N)| \le 1$ as required.

    Now suppose $P$ is a \spider.
    If $d' \notin \{a,b,c\}$, then $(Y \cup c, Z-c)$ is a $2$-separation of $M \ba d \ba d'$, and, in the same manner, we deduce that $|(Y \cup c) \cap E(N)| \le 1$ in $M \ba d \ba d'$.
    Moreover, up to swapping $a$ and $b$, the partition $(Y, \{c\}, \{b\}, \{a\}, Z-\{a,b,c\})$ is a path of $2$-separations in $M \ba d \ba d'$ and, iteratively repeating this process, we deduce that $|(Y \cup \{a,b,c\}) \cap E(N)| \le 1$ in $M \ba d$, as required.
    On the other hand, if $d' \in \{a,b,c\}$, then $|X \cap E(N)| \le 1$ in $M \ba d \ba d'$, and $\{a,b,c\}-d'$ is a series pair in this matroid, so $|P \cap E(N)| \le 2$, as required.
  \end{slproof}

  \begin{sublemma}
    \label{preswitch}
    There is an $N$-labelling $(C,D)$ such that, up to $N$-label switches on elements in $P$, 
    \begin{enumerate}[label=\rm(\roman*)]
      \item for every $i \in [t]$ except perhaps $s \in [t]$, there is an element in $L_i$ that is $N$-labelled for contraction; and
      \item for every $i \in [t]$ except perhaps $s' \in [t]$, there is an element in $K_i$ that is $N$-labelled for deletion.
    \end{enumerate}
    Moreover, if such an $s$ exists, then $P$ is an \pspider\ or a \tvamoslike\ of $M$ or $M^*$, and $P-L_s$ is independent;
    while if such an $s'$ exists, then $P$ is an \pspider\ or a \tvamoslike\ of $M$ or $M^*$, and $P-K_{s'}$ is coindependent.
  \end{sublemma}
  \begin{slproof}
    Let $(C,D)$ be an $N$-labelling such that at most two elements of $P$ are $N$-labelled for removal.
    Suppose $(C,D)$ does not satisfy \cref{preswitch}(i).
    Then there exist distinct $i,s \in [t]$ such that $L_i \cup L_s$ does not contain an element that is $N$-labelled for contraction.
    Then $L_i \cup L_s$ contains at least two elements that are $N$-labelled for deletion.
    Apart from when $P$ is a \tvamoslike\ of $M$ and $\{i,s\}=\{1,2\}$, the set $L_i \cup L_s$ is a $4$-element cocircuit, so there is, up to an $N$-label switch, an element in $L_i \cup L_s$ that is $N$-labelled for contraction, as required.
    In the exceptional case, $|P \cap E(N)| \le 1$, so $L_1\cup L_2$ contains at least three elements that are $N$-labelled for deletion, and $L_1\cup L_2$ is contained in a $5$-element cocircuit; so, again, an element in $L_i \cup L_s$ is $N$-labelled for contraction after an $N$-label switch.
    This proves \cref{preswitch}(i).
    We obtain \cref{preswitch}(ii) by a dual argument.

    Suppose that $P$ is an \pspider\ or \tvamoslike\ of $M$, so $|P \cap E(N)| \le 1$.
    For ease of notation, let $s=t=3$; that is,
    for each $i \in \{1,2\}$ there is an element in $L_i$ that is $N$-labelled for contraction.
    Towards a contradiction, suppose that $P-L_3$ is a circuit (when $P$ is an \pspider, $P-L_3$ is a quad; whereas when $P$ is a \tvamoslike, $P-L_3$ is a coindependent circuit).
    As $|P \cap E(N)| \le 1$, there is an element $d_3 \in L_3$ that is $N$-labelled for deletion.
    Let $L_3 = \{d_3,x\}$.
    There are elements $c_1 \in L_1$ and $c_2 \in L_2$ that are $N$-labelled for contraction.
    Since $P-L_3$ is a circuit, up to an $N$-label switch we can locate an element $d_1 \in P-L_3$ that is $N$-labelled for deletion.
    Since $M\ba d_3\ba d_1$ has an $N$-minor, and $x$ is in a series pair in this matroid, we can perform an $N$-label switch so that $x \in L_3$ is $N$-labelled for contraction, as required.
    A similar argument applies in the dual.

    In the case that $P$ is a \spider\ or \spikelike, then $P-L_i$ contains a circuit for any $i \in [t]$, and a similar argument applies.
    Finally, if $P$ is a \twisted, then it is readily checked that starting from any $N$-labelling satisfying \cref{preswitch}, we can, by performing $N$-label switches, obtain an $N$-labelling where, for each $i \in [3]$, the pair $L_i$ contains an element that is $N$-labelled for contraction.
  \end{slproof}

  The next claim follows from \cref{preswitching}, \cref{probcondel} and its dual, and the fact that every triangle or triad of $M$ is \unfortunate.
  We omit the details.

  \begin{sublemma}
    \label{probtris}
    No element of $P$ is in a triangle or triad of $M$.
  \end{sublemma}

  Let $(C,D)$ be the $N$-labelling described in \cref{preswitch}. 
  Assume that $M$ has no $N$-detachable pairs.
  If $r(E(M)-P) \le 2$, then $Z$ contains a triangle, which, by \cref{probtris}, is disjoint from $P$.  So $|E(M)-P| \ge 3$.
  %
  %Next we prove the following:
  \begin{sublemma}
    \label{con}
    Suppose that there exists some $e \in E(M)-\cl(P)$ that is $N$-labelled for contraction.
    If $M/e$ is $3$-connected, then, for each $i \in [t]-s$, there exists an element~$g_i$ such that $L_i \cup \{e,g_i\}$ is a circuit, where $g_i$ is $N$-deletable.
  \end{sublemma}
  \begin{slproof}
    Since $e \notin \cl(P)$, we have that $P$ is a \psep\ of $M/e$.
    For each $i \in [t]-s$, as $L_i$ contains an element~$x$ that is $N$-labelled for contraction, \cref{probconnmain} implies that either $M/e/x$ is $3$-connected, in which case $\{e,x\}$ is an $N$-detachable pair, or $x$ is in a triangle of $M/e$.  As we are under the assumption that $M$ has no $N$-detachable pairs, $L_i$ is contained in a triangle of $M/e$ for each $i \in [t]-s$, by \cref{probcondel}. 
    As each $L_i$ is not contained in a triangle in $M$, by \cref{probtris}, there exists an element~$g_i$ such that $L_i \cup \{e,g_i\}$ is a circuit of $M$.
    Now, as $M/e/x$ has an $N$-minor and $g_i$ is in a parallel pair in this matroid, $g_i$ is $N$-deletable. 
  \end{slproof}

  We obtain the following by a dual argument:

  \begin{sublemma}
    \label{del}
    Suppose that there exists some $e \in E(M)-\cocl(P)$ that is $N$-labelled for deletion.
    If $M\ba e$ is $3$-connected, then, for each $i \in [t]-s'$, there exists an element~$h_i$ such that $K_i \cup \{e,h_i\}$ is a cocircuit, where $h_i$ is $N$-labelled for contraction up to an $N$-label switch.
  \end{sublemma}

  Next we prove the following:
  \begin{sublemma}
    \label{conndcon}
    If there exists an element $e \in E(M)-P$ that is $N$-labelled for contraction, then either $\si(M/e)$ is not $3$-connected,
    $|E(M)| \le 10$, or $M$ is a \quadflower.
  \end{sublemma}
  \begin{slproof}
    Suppose there exists an element $e \in \cl(P)-P$ that is $N$-labelled for contraction.
    Let $Q = E(M)-(P \cup e)$, and observe that $(P, \{e\}, Q)$ is a path of $3$-separations with $|Q| \ge 2$.
    If $r(Q) \le 2$, then $e$ is an $N$-contractible element in a triangle; a contradiction.
    So $r(Q) \ge 3$, in which case the path of $3$-separations is a vertical $3$-separation, implying $\si(M/e)$ is not $3$-connected, as required.

    Suppose $e \in E(M)-\cl(P)$ and $\si(M/e)$ is $3$-connected.
    As $e$ is $N$-contractible, it is not in an \unfortunate\ triangle, so $M/e$ is $3$-connected.
    By \cref{con}, for each $i \in [t]-s$ there exists an element $g_i$ such that $L_i \cup \{e,g_i\}$ is a circuit, and $g_i$ is $N$-labelled for deletion up to an $N$-label switch.

    Suppose $\co(M \ba g_i)$ is not $3$-connected for some $i \in [t]-s$.
    We claim that there is a cyclic $3$-separation $(U,\{g_i\},V)$ of $M$ such that $L_i \subseteq U$. 
    There certainly exists some cyclic $3$-separation $(U',\{g_i\},V')$ of $M$.
    Suppose $L_i$ meets both $U'$ and $V'$; let $L_i = \{u,v\}$ with $u \in U'$ and $v \in V'$.
    If $|P \cap U'| = 1$, then $u \in \cl(V')$, so $(V'\cup u,\{g_i\},U' -u)$ is a cyclic $3$-separation of $M$ with $L_i \subseteq V' \cup u$.
    Similarly, if $|P \cap V'| = 1$, then $(U' \cup v,\{g_i\},V'-v)$ is a cyclic $3$-separation of $M$ with $L_i \subseteq U' \cup v$.
    Now assume that $|P \cap U'| \ge 2$ and $|P \cap V'| \ge 2$.
    As $|E(M)-P| \ge 3$, we may assume, without loss of generality, that $|V'-P| \ge 2$.
    By uncrossing, $(U'\cup P, \{g_i\}, V'-P)$ is a path of $3$-separations.
    If $r^*(V'-P) \le 2$, then $g_i$ is in a triad; a contradiction.
    We deduce that there exists a cyclic $3$-separation $(U, \{g_i\}, V)$ with $L_i \subseteq U$, as claimed.

    Now, if $e \in U$, then $g_i \in \cl(U)$, so $(U,V)$ is a $2$-separation of $M$; a contradiction.
    So $e \in V$. 
    As $e \in \cl(U \cup g_i)$ and $|V| \ge 3$, the sets $U \cup g_i$ and $U \cup \{g_i,e\}$ are exact $3$-separations.
    In particular, $e \in \cl(V-e)$, so $r(V-e)>2$, since $e$ is not in a triangle.
    So $(U\cup g_i, \{e\}, V-e)$ is a vertical $3$-separation, and hence
    $\si(M/e)$ is not $3$-connected; a contradiction.

    We deduce that $\co(M \ba g_i)$ is $3$-connected for each $i \in [t]-s$.
    As each $g_i$ is $N$-deletable, it is not contained in an \unfortunate\ triad, so $M \ba g_i$ is $3$-connected.
    We claim that there exists some $j \in [t]-s'$ such that $L_i \cap K_j=\emptyset$.
    The claim follows immediately from \cref{preswitch}, except when $P$ is an \pspider.
    In such a case, suppose, without loss of generality, that $L_1 \cup L_2 = K_1 \cup K_2$ is a quad; then $s,s' \in \{1,2\}$, since $P-L_s$ is independent and $P - K_{s'}$ is coindependent.
    Now $L_3=K_3$ is disjoint from $K_j$ for $j \in \{1,2\}-s'$, or $L_i$ for $i \in \{1,2\}-s$, satisfying the claim.
    After choosing such a $j$, \cref{del} implies that there is an element $h_j$ such that $K_j \cup \{g_i,h_j\}$ is a cocircuit.
    Since the circuit $L_i \cup \{e,g_i\}$ meets the cocircuit $K_j \cup \{g_i,h_j\}$, it follows, by orthogonality, that $e=h_j$.
    Then $g_i \in \cl(P \cup e) \cap \cocl(P \cup e)$ for each $i \in [t]-s$. 
    If, say, $s \notin \{1,2\}$ and $g_1 \neq g_2$, then $\lambda(P \cup \{e,g_1,g_2\}) \le 1$, so $|E(M)-P| \le 4$.
    Since the $N$-deletable element $g_i$ is not in a triad, and the $N$-contractible element $e$ is not in a triangle, $|E(M)-P| = 4$, and $E(M)-P$ is a quad.
    Thus, if $P$ is not a \spider\ or a \spikelike, then $|E(M)| \le 10$.
    In the case that $P$ is a \spider, $|E(M)| = 12$, and it is readily checked that $M$ is a \quadflower.
    In the case that $P$ is a \spikelike, then $L_i=K_i$ for all $i \in [t]$, the set $L_1 \cup \{e,g_1\}$ is a circuit, and $K_3 \cup \{e,g_1\}$ and $K_3 \cup \{e,g_2\}$ are cocircuits.
    But the latter two cocircuits imply $L_3 \cup \{e,g_1,g_2\}$ is a $5$-element coplane that intersects the circuit $L_1 \cup \{e,g_1\}$ in two elements; a contradiction.

    Now let $g_i = g_j = g$ for all $i,j \in [t]-s$.
    Suppose $P$ is a \spikelike; then we may assume that $L_i = K_i$ for all $i \in [t]$, and there is no $s \in [t]$ for which $L_s$ has no element $N$-labelled for contraction.
    Then $L_i \cup \{e,g\}$ is a quad for all $i \in [t]$.
    Moreover, $e$ is $N$-labelled for contraction and, up to an $N$-label switch, $g$ is $N$-labelled for deletion.
    Therefore $P$ is not maximal; a contradiction.

    Now $P$ is not a \spikelike. 
    We claim that $P \cup \{e,g\}$ is spanned by a $(t+1)$-element set.
    If $P$ is an \pspider\ or a \tvamoslike\ of $M$, then $L_s \subseteq \cl(P-L_s)$, so by choosing an element from $L_i$ for each $i \in [t]-s$, together with $\{e,g\}$, we have such a set.
    Otherwise, $L_i \subseteq \cl(P-L_i)$ for any $i \in [t]$, so we take $\{e,g\}$ together with an element from $L_i$ for each $i \in [t-1]$.
    So $r(P \cup \{e,g\}) \le t+1$.
    As $e \notin \cl(P)$, it now follows that $r(P) \le t$; a contradiction.
  \end{slproof}

  \begin{sublemma}
    \label{nodoubly}
    No element in $E(M)-P$ is $N$-flexible.
  \end{sublemma}
  \begin{slproof}
    Suppose $x \in E(M)-P$ is $N$-flexible.
    Either $\si(M/x)$ or $\co(M\ba x)$ is $3$-connected, by Bixby's Lemma, which contradicts \cref{conndcon} or its dual.
  \end{slproof}

  \begin{sublemma}
    \label{confar}
    No element in $E(M)-\cl(P)$ is $N$-labelled for contraction.
  \end{sublemma}
  \begin{slproof}
    Suppose there is an element $e \in E(M)-\cl(P)$ that is $N$-labelled for contraction.
    By \cref{conndcon}, $\si(M/e)$ is not $3$-connected.
    So $M$ has a vertical $3$-separation $(U,\{e\},V)$, with $|U \cap E(N)| \le 1$, by \cref{m2.73}.
    Without loss of generality, $V \cup e$ is closed.
    By the dual of \cref{doublylabel3}, at most one element in $U$ is not $N$-flexible, and if such an element~$x$ exists, then $x \in U \cap \cocl(V)$ and $e \in \cl(U - x)$.
    By \cref{nodoubly}, we deduce that $U -x \subseteq P$ if such an $x$ exists, and $U \subseteq P$ otherwise.
    In either case, $e \in \cl(P)$; a contradiction.
  \end{slproof}

  \begin{sublemma}
    \label{conclose}
    There is at most one element in $\cl(P)-P$ that is $N$-labelled for contraction.
  \end{sublemma}
  \begin{slproof}
    Suppose $e,f \in \cl(P)-P$ are both $N$-labelled for contraction.
    If $r(E(M)-P) \ge 3$, then it follows that $(P \cup f, \{e\}, E(M)-(P \cup \{e,f\}))$ is a vertical $3$-separation.
    By the dual of \cref{doublylabel3}(ii), and as $f \in \cl(E(M)-(P \cup \{e,f\})$, $f$ is also $N$-deletable, so $f$ is $N$-flexible.
    But this contradicts \cref{nodoubly}.
    So $r(E(M)-P) \le 2$.
    Since $|E(M)-P| \ge 3$, the pair $\{e,f\}$ is contained in a triangle.  As $M/e$ has an $N$-minor, it follows that $f$ is $N$-deletable, and hence $N$-flexible, contradicting \cref{nodoubly}.
  \end{slproof}

  By duality, \cref{confar,conclose} also imply the following:
  \begin{sublemma}
    No element in $E(M)-\cocl(P)$ is $N$-labelled for deletion, and 
    there is at most one element in $\cocl(P)-P$ that is $N$-labelled for deletion.
  \end{sublemma}

  Finally, suppose that there exist distinct elements $g \in \cl(P)-P$ and $h \in \cocl(P)-P$ such that $g$ is $N$-labelled for contraction and $h$ is $N$-labelled for deletion.
  Now $(P \cup g, \{h\}, E(M)-\{g,h\})$ is a cyclic $3$-separation, with $g \notin \cocl(E(M)-g)$, so $g$ is $N$-deletable by \cref{doublylabel3}(i), contradicting \cref{nodoubly}.
  That completes the proof of the lemma.
\end{proof}

\section{Proof of the theorem}
\label{finalendgamesec}

We first recall the main result from the first two papers in the series \cite[Theorem~6.1]{paper2}.

\begin{theorem}
  \label{usefulprecursor}
  Let $M$ be a $3$-connected matroid and let $N$ be a $3$-connected minor of $M$ where $|E(N)| \ge 4$, and every triangle or triad of $M$ is \unfortunate.
  Suppose, for some $d \in E(M)$, that $M \ba d$ is $3$-connected and has a cyclic $3$-separation $(Y, \{d'\}, Z)$ with $|Y| \ge 4$, where $M \ba d \ba d'$ has an $N$-minor with $|Y \cap E(N)| \le 1$.
  Then either
  \begin{enumerate}
    \item $M$ has an $N$-detachable pair; or
    \item there is a subset $X$ of $Y$ such that for some $c \in \cocl_{M \ba d}(X)-X$, one of the following holds:
      \begin{enumerate}[label=\rm(\alph*)]
        \item $X \cup \{c,d\}$ is a \spikelike\ of $M$,
        \item $X \cup \{c,d\}$ is a \twisted\ of $M$,
        \item $X \cup \{c,d\}$ is a \tvamoslike\ of $M$ or $M^*$,
        \item $X \cup \{c,d\}$ is an \pspider\ of $M$, or % with associated partition $(X, \{c,d\})$, or
        \item $X \cup \{a,b,c,d\}$ is a \spider\ of $M$ with associated partition $\{X,\{a,b,c,d\}\}$ for some distinct $a,b \in E(M) - (X \cup \{c,d\})$.
      \end{enumerate}
  \end{enumerate}
\end{theorem}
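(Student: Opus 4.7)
The plan is to work inside $M \ba d$, exploiting the cyclic 3-separation $(Y, \{d'\}, Z)$ together with the $N$-minor condition $|Y \cap E(N)| \le 1$ to generate a large pool of $N$-removable elements in $Y \cup d'$, and then either extract an $N$-detachable pair or force one of the listed \psep s.

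First I would strengthen the hypothesis by choosing $(d, d', Y)$ so that $Y$ is coclosed in $M \ba d$ and $|Y|$ is maximum subject to the cyclic $3$-separation hypothesis being preserved; this is legitimate because any $y \in Z \cap \cocl_{M \ba d}(Y)$ can be absorbed into $Y$ while keeping the $N$-minor. Applying \cref{doublylabel3} in $M \ba d$, every element of $Y' = Y - \cocl_{M\ba d}(Z)$ is $N$-deletable, and all but at most one element of $\cocl_{M \ba d}(Z) - d'$ is $N$-contractible. Combined with the hypothesis that every triangle and triad of $M$ is \unfortunate, \cref{freegrounded} and its dual sharply limit how $Y$ can contain triangles or triads. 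In particular, circuits contained in $Y$ and small rank cocircuits of $M \ba d$ meeting $Y$ become the main structural hooks.

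Next I would try to locate an $N$-detachable pair inside $Y \cup d'$. Since $|Y| \ge 4$ and $Y$ contains a circuit, $r(M \ba d | Y) \ge 3$. If $Y$ contains a rank-$3$ cocircuit of $M \ba d$, then, using \cref{r3cocircsi3}, there is some $x \in Y$ for which $\si((M\ba d)/x)$ is $3$-connected; combined with the fact that $x$ is $N$-contractible via the pool above, this delivers a detachable contraction pair in $M$ unless certain small obstructions occur. If instead $Y$ contains a $4$-element independent cocircuit of $M \ba d$, apply \cref{r4cocirc3b} to promote a contraction to a full $3$-connected contraction; if $M|Y$ contains a $U_{3,6}$-restriction, apply \cref{6pointplane2} to extract a detachable deletion pair. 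In each case, if the pair one finds survives also as a detachable pair in $M$ (which is where $d$ being the blocker of various $3$-separating sets has to be monitored carefully), we are done.

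The remaining task, and this is the main obstacle and the reason this result occupies two prior papers, is to handle all situations in which every such attempt to produce an $N$-detachable pair fails. Orthogonality with $Y$'s circuits and cocircuits, together with the prohibition on non-\unfortunate\ triangles and triads, then forces every $N$-removable pair inside $Y$ to sit in a quad or in a closely related $4$-element circuit/cocircuit configuration; iteratively uncovering these circuits and cocircuits builds up, within $Y$, the cocircuit-pair partition characteristic of a \spikelike, \twisted, \pspider, \spider\ or \tvamoslike\ structure. Finally, the element $c \in \cocl_{M \ba d}(X) - X$ (most naturally taken to be $d'$ itself, or an element of $\cocl_{M \ba d}(X) - \{d'\}$ when $X \subsetneq Y$) is what lifts the $3$-separating structure from $M \ba d$ back up to $M$, producing the required \psep\ on $X \cup \{c,d\}$ (or $X \cup \{a,b,c,d\}$ in the \spider\ case, where $a, b$ come from $\cocl_{M \ba d}(X)$ as well). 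The hard part is controlling the combinatorial explosion of subcases as one tracks how legs of the partition interact with $d$, $d'$, and the occasional exceptional non-doubly-labelled element permitted by \cref{doublylabel3}.
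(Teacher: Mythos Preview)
This theorem is not proved in the present paper at all: it is explicitly introduced as ``the main result from the first two papers in the series \cite[Theorem~6.1]{paper2}'' and is simply quoted here as a black box to be used in the proof of \cref{usefulone}. So there is no proof in this paper to compare your proposal against.

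As a sketch of how the earlier papers proceed, your outline is broadly on target: one does begin by applying \cref{doublylabel3} to the cyclic $3$-separation in $M\ba d$ to build a supply of $N$-deletable and $N$-contractible elements in $Y$, one does use \cref{r3cocircsi3}, \cref{r4cocirc3b}, \cref{6pointplane2} and orthogonality to hunt for $N$-detachable pairs, and when all such attempts fail the obstructions are assembled into one of the listed \psep s. But your proposal is not a proof: the sentence ``this is the main obstacle and the reason this result occupies two prior papers'' is an honest admission that you have not carried out the work. The substantive content of \cite{paper1,paper2} is precisely the case analysis you wave at in your final paragraph, and it is long and delicate---tracking which specific circuits and cocircuits arise, how the element $d$ blocks or fails to block $3$-separating sets, and which configurations collapse to each of the five named \psep s. Your remark that $c$ is ``most naturally taken to be $d'$ itself'' is also not quite right in general; the element $c$ is produced inside the case analysis and need not be $d'$.

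In short: your high-level strategy matches the actual one, but the proposal stops exactly where the real difficulty begins, so it does not constitute a proof.
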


%We now prove \cref{maintheoremdetailed}, restated below as \cref{usefulone}.
We now prove the main result.

\begin{theorem}
  \label{usefulone}
  Let $M$ be a $3$-connected matroid with $|E(M)| \ge 11$, and let $N$ be a $3$-connected minor of $M$ such that $|E(N)| \ge 4$, every triangle or triad of $M$ is \unfortunate, and $|E(M)|-|E(N)| \ge 5$.
  Then either
  \begin{enumerate}
    \item $M$ has an $N$-detachable pair;\label{pc1}
    \item there is some $P \subseteq E(M)$ such that $E(M)-E(N) \subseteq P$, and $P$ is 
      \begin{enumerate}[label=\rm(\alph*)]
        \item a \spikelike,
        \item a \twisted, 
        \item a \tvamoslike\ of $M$ or $M^*$, %shouldn't be able to be augmented
        \item an \pspider, 
        \item a \spider,
        \item a quad $3$-separator,
      \end{enumerate}
      or an \auging\ of one of these \psep s; or\label{pc2}
    \item $|E(M)| =12$, and $M$ is either a \quadflower\ or a \tcn.\label{pc3}
    %\item there is some $P \subseteq E(M)$ such that $P$ is a \psep, $|E(M)| \le 12$, and $|E(N)| < |P|$.\label{extraone}
  \end{enumerate}
\end{theorem}
\begin{proof}
  By \cref{step}, if \cref{pc1} does not hold, then, up to replacing $(M,N)$ by $(M^*,N^*)$, either
  \begin{enumerate}[label=\rm(\Roman*)]
    \item there exists $d \in E(M)$ such that $M \ba d$ is $3$-connected and has a cyclic $3$-separation $(Y, \{d'\}, Z)$ with $|Y| \ge 4$, where $M \ba d \ba d'$ has an $N$-minor with $|Y \cap E(N)| \le 1$; or\label{uo1}
    \item $M$ has an \planespider~$Q \cup z$ such that $z \in \cocl(Q)$ and $M \ba z$ has an $N$-minor with $|Q \cap E(N)| \le 1$; or\label{uo3}
    \item there is an $N$-labelling $(C,D)$ of $M$ such that, for every switching-equivalent $N$-labelling $(C',D')$,
      \begin{enumerate}[label=\rm(\alph*)]
        \item $M / c$ and $M \ba d$ are $3$-connected for every $c \in C'$ and $d \in D'$,
        \item each pair $\{c_1,c_2\} \subseteq C'$ is contained in a $4$-element circuit, and
        \item each pair $\{d_1,d_2\} \subseteq D'$ is contained in a $4$-element cocircuit.
      \end{enumerate}\label{uo2}
  \end{enumerate}

  In case \cref{uo2}, if neither \cref{pc2} nor \cref{pc3} holds, then \cref{uo1} or \cref{uo3} holds by \cref{weaktheoremdetailed}.
  Suppose \cref{uo1} does not hold.  Then \cref{uo3} holds and, moreover, there is no element $e \in E(M)-(Q \cup z)$ such that $\{e,z\}$ is $N$-deletable and $M \ba e$ is $3$-connected, otherwise $(Q,\{z\},E(M)-(Q \cup z))$ is a cyclic $3$-separation satisfying \cref{uo1}.
  So, by an application of \cref{handlequads}, \cref{pc2} holds.

  Now we may assume that \cref{uo1} holds.
  By \cref{usefulprecursor}, either \cref{pc1} holds, or $Y$ contains a set $X$ such that, for some $c \in \cocl_{M \ba d}(X)-X$, either $X \cup \{c,d\}$ is a \psep~$P$ that is a \twisted, a \tvamoslike, or an \pspider, or $X \cup \{c,d\}$ is contained in a \spider~$P$, or a maximal \spikelike~$P$.
  %Then either \cref{pc1} or \cref{pc3} holds by \cref{usefulprecursor}.
  Thus, we can apply \cref{problematiclemma}, and deduce that
  in the case that \cref{pc1} does not hold,
  either %$|E(M)| \le 10$, the matroid
  $M$ is a \quadflower, so \cref{pc3} holds,
  or $E(M)-E(N) \subseteq P'$ where $P'= P$ or $P'$ is an \auging\ of $P$, so \cref{pc2} holds. 
\end{proof}

Using \cref{unfortunatetri}, we can relax the condition that every triangle or triad is \unfortunate\ if we allow at most one $\Delta$-$Y$ or $Y$-$\Delta$ exchange. %obtain the following corollary, that implies:

\begin{corollary}
  \label{maincorollary}
  Let $M$ be a $3$-connected matroid with $|E(M)| \ge 11$, and let $N$ be a $3$-connected minor of $M$ such that $|E(N)| \ge 4$ and $|E(M)|-|E(N)| \ge 5$.
  Then either
  \begin{enumerate}
    \item $M$ has an $N$-detachable pair;
    \item there is a matroid $M'$ obtained by performing a single $\Delta$-$Y$ or $Y$-$\Delta$ exchange on $M$ such that $M'$ has %an $N$-minor and
      an $N$-detachable pair; or
    \item there is some $P \subseteq E(M)$ such that $E(M)-E(N) \subseteq P$, and $P$ is 
      \begin{enumerate}[label=\rm(\alph*)]
        \item a \spikelike,
        \item a \twisted, 
        \item a \tvamoslike\ of $M$ or $M^*$, %shouldn't be able to be augmented
        \item an \pspider, 
        \item a \spider,
        \item a quad $3$-separator,
      \end{enumerate}
      or an \auging\ of one of these \psep s; or
    \item $|E(M)| =12$, and $M$ is either a \quadflower\ or a \tcn.
  \end{enumerate}
\end{corollary}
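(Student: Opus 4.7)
The plan is short because this corollary is a direct combination of two theorems proved earlier. The strategy is to apply \cref{unfortunatetri} as a preliminary dichotomy, and then, in the remaining case, invoke \cref{usefulone}.

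First I would apply \cref{unfortunatetri} to $M$ and $N$; its hypotheses are exactly $|E(N)| \ge 4$ and $|E(M)|-|E(N)| \ge 5$, which are given. This yields three cases: (1) $M$ has an $N$-detachable pair, in which case conclusion~(i) of the corollary holds; (2) some matroid $M'$ obtained from $M$ by a single $\Delta$-$Y$ or $Y$-$\Delta$ exchange has an $N$-detachable pair, in which case conclusion~(ii) holds; or (3) every triangle or triad of $M$ is \unfortunate.

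In the remaining case (3), all the hypotheses of \cref{usefulone} are now met: $M$ is $3$-connected, $N$ is a $3$-connected minor with $|E(N)| \ge 4$, every triangle or triad is \unfortunate, and $|E(M)|-|E(N)| \ge 5$. Applying \cref{usefulone} gives either an $N$-detachable pair in $M$ (conclusion~(i) of the corollary) or a subset $P$ of $E(M)$ that is one of the six types of \psep\ listed, or an \auging\ thereof, satisfying the required element-count conditions; this is conclusion~(iii) of the corollary. Since the types enumerated in \cref{usefulone}(ii) match verbatim those in \cref{maincorollary}(iii), the conclusion transfers directly.

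There is no genuine obstacle here: the work has been done by \cref{unfortunatetri} (to dispose of triangles and triads via $\Delta$-$Y$/$Y$-$\Delta$ exchange) and by \cref{usefulone} (to handle the \unfortunate\ case). The only thing to verify is that the list of structural outcomes and the element-count thresholds ($|E(M)| \ge 13$ versus $|E(N)| < |P|$) in the two statements agree, which they do by construction. Thus the proof is essentially a two-line deduction.
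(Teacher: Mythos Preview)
Your proposal is correct and is exactly the approach the paper takes: the paper presents \cref{maincorollary} as an immediate consequence of \cref{unfortunatetri} and \cref{usefulone}, without even writing out an explicit proof. Your two-step deduction---first apply \cref{unfortunatetri} to reduce to the \unfortunate\ case, then invoke \cref{usefulone}---is precisely what is intended.
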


In particular, \cref{maincorollary} implies \cref{maintheoremdetailed}.

\section{Epilogue}
\label{secend}

\subsection*{Detachable pairs: the chain theorem}

A \emph{spike} is a matroid on $2r$ elements, for some $r \ge 3$, with a partition $(L_1,L_2,\dotsc,L_r)$ of the ground set into pairs such that $L_i \cup L_j$ is a quad for all distinct $i,j\in \seq{r}$.
Note that what we refer to as a spike is sometimes called a ``tipless spike''.
A pair of elements $\{x_1,x_2\}$ in a matroid $M$ are \emph{detachable} if either $M/x_1/x_2$ or $M \backslash x_1 \backslash x_2$ is $3$-connected.
Alan Williams proved the following chain theorem for detachable pairs in his Ph.D.\ thesis~\cite{Williams2015}.

\begin{theorem}
  \label{chain}
  Let $M$ be a $3$-connected matroid with $|E(M)| \ge 13$.
  Then either
  \begin{enumerate}
    \item $M$ has a detachable pair,
    \item there is a matroid $M'$ obtained by performing a single $\Delta$-$Y$ or $Y$-$\Delta$ exchange on $M$ such that $M'$ has a detachable pair, or
    \item $M$ is a spike.
  \end{enumerate}
\end{theorem}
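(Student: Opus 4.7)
The plan is to apply \cref{maincorollary} to a small $3$-connected minor of $M$. Since $|E(M)| \ge 13 > 4$ and $M$ is $3$-connected, $M$ has a $3$-connected minor $N$ with $|E(N)| \le 6$: take $N = U_{2,4}$ when $M$ is non-binary and $N = M(K_4)$ when $M$ is binary. Assume, for contradiction, that neither $M$ nor any matroid obtained from $M$ by a single $\Delta$-$Y$ or $Y$-$\Delta$ exchange has a detachable pair; in particular, none has an $N$-detachable pair. Since $|E(M)|-|E(N)|\ge 7$, \cref{maincorollary} then places $M$ in conclusion~(3): there exists $P\subseteq E(M)$ that is one of the listed \psep s or an \auging\ thereof, with $E(M)-E(N)\subseteq P$, and hence $|P|\ge |E(M)|-|E(N)|\ge 7$.

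Next we narrow the possibilities for $P$. The (augmented) quad has $|P|\le 5$, the \tvamoslike\ has $|P|=6$, the (augmented) \twisted\ and (augmented) \pspider\ have $|P|\le 7$, and the (augmented) \spider\ has $|P|\le 9$. Combined with the parity restriction that a \spikelike\ has an even number of elements, the bound $|P|\ge 10$ (which holds whenever $|E(M)|\ge 16$, or whenever $|E(M)|\ge 14$ and $N = U_{2,4}$) immediately forces $P$ to be a \spikelike. For each of the finitely many borderline $(|E(M)|,|E(N)|)$ combinations in which another option remains available, the non-\spikelike\ possibility must be ruled out by an explicit argument: using the detailed circuit--cocircuit structure of the specific \psep, the bound $|E(M)-P|\le 6$, and orthogonality together with $3$-connectivity, one exhibits a detachable pair in $M$ (or after some $\Delta$-$Y$ or $Y$-$\Delta$ exchange), contradicting our assumption. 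I expect this case-by-case ruling-out to be the main technical obstacle.

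Thus $P$ is a \spikelike\ with leg partition $\{L_1,\dotsc,L_t\}$ and $|E(M)-P|\le 6$. To conclude that $M$ is a spike, the remaining step is to extend the leg partition to all of $E(M)$. The approach is to apply \cref{maincorollary} a second time to a different $3$-connected minor $N'$ of $M$ chosen so that $E(N')\subseteq P$; then $E(M)-P\subseteq E(M)-E(N')$ is contained in the resulting \psep~$P'$, which by the preceding paragraph is again a \spikelike. The large overlap $|P\cap P'|\ge 2t-6$ then forces the two leg partitions to coincide on their intersection (since in a \spikelike\ the leg containing a given element is uniquely determined by the quads $L_i\cup L_j$), and the two partitions combine into a single leg partition of $E(M)$ whose pairwise unions of legs are all quads. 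Hence $M$ is a spike. The case $|E(M)|$ odd cannot arise in this branch, since no spike has odd cardinality; there the extension fails with an unpaired leftover element, and the case analysis of the previous paragraph must instead recover a detachable pair, making the odd-sized branch vacuous.
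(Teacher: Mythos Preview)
Your opening strategy matches the paper's own sketch exactly: choose $N\in\{U_{2,4},M(K_4)\}$ via Tutte's Wheels-and-Whirls Theorem, apply the main result, and then ``analyse the remaining structures''. The paper explicitly omits the details of that last step, so there is no fully written argument to compare your proposal against; your size-counting to eliminate most of the non-\spikelike\ options is in the intended spirit.

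Your proposed mechanism for the final step, however, has a genuine gap. You want to apply the main result a second time with a $3$-connected minor $N'$ satisfying $E(N')\subseteq P$, but you give no reason such an $N'$ exists. A minor of $M$ need not have its ground set contained in a prescribed $3$-separating subset, and the structure of a \spikelike\ does not obviously hand you one. Even if such an $N'$ were found, the argument that the second application again produces a \spikelike, and that the two leg partitions agree on $P\cap P'$ and glue to a leg partition of all of $E(M)$, would require real work (for instance, nothing rules out $P'$ being an \auging, or the complements $E(M)-P$ and $E(M)-P'$ overlapping awkwardly).

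Your treatment of the odd-$|E(M)|$ case is also circular: you say that when the extension argument fails with a leftover element, ``the case analysis of the previous paragraph must instead recover a detachable pair''. But you are arguing by contradiction from the assumption that no detachable pair exists, so the failure of one branch does not retroactively validate another. A cleaner route, and likely what the paper intends by ``analyse the remaining structures'', is to work directly with the small complementary $3$-separating set $E(M)-P$ (which has at most $|E(N)|\le 6$ elements): use orthogonality against the quads $L_i\cup L_j$ of the \spikelike, together with $3$-connectivity and the assumed absence of detachable pairs, to force $E(M)-P$ to decompose into further legs (and to derive a contradiction when $|E(M)|$ is odd).
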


His proof of this theorem takes a similar approach as the first two papers of this series~\cite{paper1,paper2}; however, a number of the results in these two papers have simpler analogues %that only ensure the existence of elements whose deletion preserves $3$-connectivity, without retaining an $N$-minor.
when there is no $N$-minor to worry about.

Alternatively, \cref{chain} can be obtained as a consequence of \cref{maintheoremdetailed}.  Tutte's Wheels-and-Whirls Theorem~\cite{tutte1966} implies that every $3$-connected matroid with at least four elements has either $U_{2,4}$ or $M(K_4)$ as a minor.  Thus, we can choose $N$ to be either $U_{2,4}$ or $M(K_4)$, in which case $|E(M)|-|E(N)| \ge 9$ or $|E(M)-|E(N)| \ge 7$ respectively, and then apply \cref{maintheoremdetailed} and analyse the remaining structures. 
We omit the details.

\subsection*{$N$-detachable pairs in graphic matroids}

We now consider the implications of \cref{maintheoremdetailed} for graphic matroids.
%Let $G$ be a graph on at least four edges with no isolated vertices.
%It is well known that the cycle matroid $M(G)$ is $3$-connected if and only if $G$ is $3$-connected and simple.
%Thus \cref{maintheoremdetailed,chain} have implications for simple $3$-connected graphs.
It is easy to check that the \psep s in case (iii) of \cref{maintheoremdetailed} cannot appear in graphic matroids.  %Thus \cref{maintheoremdetailed} has the following corollary for simple $3$-connected graphs.
Hence, \cref{maintheoremdetailed} has the following corollary:

\begin{theorem}
  \label{splittergraphic}
  Let $M$ be a $3$-connected graphic matroid, and let $N$ be a $3$-connected minor of $M$ such that $|E(M)|-|E(N)| \ge 5$.
  Then either
  \begin{enumerate}
    \item $M$ has an $N$-detachable pair, or
    \item there is a matroid $M'$ obtained by performing a single $\Delta$-$Y$ or $Y$-$\Delta$ exchange on $M$ such that $M'$ has %an $N$-minor and
      an $N$-detachable pair.
  \end{enumerate}
\end{theorem}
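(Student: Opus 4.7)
The plan is to invoke Theorem~\ref{maintheoremdetailed} directly and to rule out its case (iii) under the graphic assumption. Since cases (i) and (ii) of Theorem~\ref{maintheoremdetailed} coincide with cases (1) and (2) of Theorem~\ref{splittergraphic}---and the statement places no graphicness requirement on the matroid $M'$ in (2), as $\Delta$-$Y$ and $Y$-$\Delta$ exchange do not preserve graphicness---the whole theorem reduces to showing that none of the six \psep s listed in Theorem~\ref{maintheoremdetailed}(iii) can appear in a $3$-connected graphic matroid $M$. The hypothesis $|E(M)|-|E(N)| \ge 5$ combined with $|E(N)| \ge 4$ (from the $3$-connectedness of $N$) gives $|E(M)| \ge 9$; writing $M = M(G)$ for a simple $3$-connected graph $G$, we obtain $|V(G)| \ge 5$, since a simple $3$-connected graph on four vertices is $K_4$ with only six edges.

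The first step is to handle the four \psep s that contain a quad---the augmented quad, the \pspider, the \spider, and the \spikelike---simultaneously, by showing that $M(G)$ cannot contain a quad when $|V(G)| \ge 5$. A quad is a four-element set that is both a $4$-cycle $v_1v_2v_3v_4v_1$ and a bond of $G$, so removing it partitions $V(G)$ into connected non-empty sides $V_1 \supseteq \{v_1,v_3\}$ and $V_2 \supseteq \{v_2,v_4\}$. Without loss of generality $V_1$ contains a third vertex $u$, and then every path from $u$ into $V_2$ must pass through $v_1$ or $v_3$; hence $G$ does not admit three internally disjoint $u$-to-$v_2$ paths, contradicting Menger's theorem and the $3$-connectivity of $G$. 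This rules out each of the four quad-containing \psep s.

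The second step rules out the remaining two, the \twisted\ and the \tvamoslike. In both cases the three $4$-element circuits of $M$ contained in $P$ have pairwise intersections consisting of three disjoint $2$-element blocks, which determines $M|P$ up to isomorphism and identifies it with $M(K_{2,3})$. Any graphic realisation therefore embeds $P$ as the edge set of a $K_{2,3}$-subgraph of $G$, with each intersection block corresponding to the two edges at a common degree-$2$ vertex of $K_{2,3}$. But then each of the $4$-element cocircuits of $M$ contained in $P$ (e.g., $\{s_1,s_2,t_1,t_2\}$ for the \twisted\ or $\{p_1,q_1,s_1,s_2\}$ for the \tvamoslike) would have to be a bond of $G$; tracing the vertex bipartition forced by such a bond through the $K_{2,3}$-subgraph, one finds that at least one of the remaining two edges of $K_{2,3}$ in $P$ joins vertices on opposite sides of the bipartition yet lies outside the bond, a contradiction.

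The main obstacle will be this second step. The quad-based argument is essentially a one-line application of Menger's theorem, but establishing non-graphicness for the \twisted\ and the \tvamoslike\ requires first recognising that $M|P \cong M(K_{2,3})$ from the three prescribed circuits, and then performing a short case analysis to check that no bond of any extension of $K_{2,3}$ matches any of the prescribed $4$-element cocircuits. Once both steps are complete, Theorem~\ref{splittergraphic} follows immediately from Theorem~\ref{maintheoremdetailed}.
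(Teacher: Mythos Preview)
Your approach matches the paper's: invoke \cref{maintheoremdetailed} and eliminate case~(iii). The quad argument in step~1 is correct. But step~2 has a gap. When $P$ is a \tvamoslike\ of $M^*$ rather than of $M$, the roles of circuits and cocircuits are swapped: the circuits of $M$ contained in $P$ are then $\{p_1,q_1,s_1,s_2\}$, $\{p_2,q_2,s_1,s_2\}$, and two $5$-element sets---not three $4$-circuits meeting pairwise in disjoint pairs---so $M|P \not\cong M(K_{2,3})$ and your $K_{2,3}$-subgraph analysis does not apply to that subcase.

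A cleaner uniform argument replaces step~2 entirely: in any binary (hence graphic) matroid, every circuit and cocircuit meet in an even number of elements, yet each of the \twisted, the \tvamoslike\ of $M$, and the \tvamoslike\ of $M^*$ exhibits a $4$-circuit and a $4$-cocircuit inside $P$ whose intersection has size~$3$ (for instance $\{s_1,s_2,t_2,u_1\}$ and $\{s_1,s_2,t_1,t_2\}$ for the \twisted, or $\{p_1,p_2,s_1,s_2\}$ and $\{p_1,q_1,s_1,s_2\}$ for either variant of the \tvamoslike). One smaller point: $3$-connectedness alone does not force $|E(N)|\ge 4$ under the paper's conventions (matroids on at most three elements are vacuously $3$-connected), so your derivation of $|V(G)|\ge 5$ needs a word when $|E(N)|\le 3$; the paper's own statement of \cref{splittergraphic} has the same loose end.
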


Let $G$ be a graph on at least four edges with no isolated vertices.
It is well known that the cycle matroid $M(G)$ is $3$-connected if and only if $G$ is $3$-connected and simple.
Moreover, $\Delta$-$Y$ exchange in matroids generalises the corresponding operation in graphs,
%and graphic matroids are closed under $\Delta$-$Y$ exchange,
and a matroid obtained by performing a $\Delta$-$Y$ exchange on a triangle of $M(G)$ is graphic.
However, graphic matroids are not closed under $Y$-$\Delta$ exchange;
%In particular, it is well known that
if $M(G)$ is a graphic matroid, and $M'$ is obtained from $M(G)$ by a $Y$-$\Delta$ exchange on a triad $T^*$, then $M'$ is graphic if and only if $T^*$ is not a separating triad.
Nevertheless, we expect that a careful analysis of \cref{unfortunatetri} for graphic matroids would confirm the following conjecture:

\begin{conjecture}
  \label{splittergraph}
  Let $G$ be a simple $3$-connected graph, and let $H$ be a simple $3$-connected minor of $G$ such that $|E(G)|-|E(H)| \ge 5$.
  Then either
  \begin{enumerate}
    \item $G$ has a pair of edges $\{x,y\}$ such that either $G/x/y$ or $G \ba x \ba y$ is simple, $3$-connected, and has an isomorphic copy of $H$ as a minor, or
    \item there is a graph $G'$ obtained by performing a single $\Delta$-$Y$ or $Y$-$\Delta$ exchange on $G$ such that $G'$ has %an $H$-minor and
      a pair of edges $\{x,y\}$ for which either $G'/x/y$ or $G' \ba x \ba y$ is simple, $3$-connected, and has an isomorphic copy of $H$ as a minor.
  \end{enumerate}
\end{conjecture}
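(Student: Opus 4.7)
The plan is to derive the conjecture from \cref{splittergraphic} applied to $M := M(G)$ and $N := M(H)$. The underlying dictionary is standard: for a simple graph with at least four edges, $M(G)$ is $3$-connected if and only if $G$ is simple and $3$-connected; edge contraction and deletion in $G$ correspond to matroid contraction and deletion in $M(G)$; both operations preserve graphicness; and a $3$-connected graphic matroid on at least four elements has no parallel or series pairs, so the underlying graph is automatically simple and $3$-connected.

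First, I would handle the translation of outcome (i) of \cref{splittergraphic}. If $M(G)/x/y$ or $M(G) \ba x \ba y$ is $3$-connected with an $M(H)$-minor, then the matroid is graphic, and the corresponding graph operation $G/x/y$ or $G \ba x \ba y$ yields a simple $3$-connected graph containing an isomorphic copy of $H$ as a minor (by Whitney's $2$-isomorphism theorem, since $3$-connected graphic matroids are uniquely determined by their underlying graph).

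Second, I would handle the $\Delta$-$Y$ exchange in outcome (ii) of \cref{splittergraphic}. A $\Delta$-$Y$ exchange on a triangle of $M(G)$ poses no difficulty: every triangle of $M(G)$ is a $3$-cycle in $G$, so the matroid operation coincides with the graph $\Delta$-$Y$ operation, producing the cycle matroid of a new graph $G'$, and the argument then reduces to outcome (i) applied to $M(G')$.

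The main obstacle is the $Y$-$\Delta$ exchange case. A $Y$-$\Delta$ exchange on a triad $T^*$ in $M(G)$ yields a graphic matroid exactly when $T^*$ is the edge-star of a degree-$3$ vertex of $G$; otherwise $T^*$ is a nontrivial $3$-edge-cut, and the resulting matroid is generally non-graphic, so outcome (ii) of the graph conjecture would fail. The hard part will therefore be revisiting the proof of \cref{unfortunatetri} in the graphic setting and showing that any $Y$-$\Delta$ exchange it invokes can be taken on a vertex-star triad. The plan is to track precisely where the triad is chosen in that argument and show that, if it is a non-star $3$-edge-cut, then the extra structural information --- two substantial components separated by three edges --- can be exploited, together with the $N$-labelling tools of \cref{secprelims,secsetup}, to produce an $H$-detachable pair directly in $G$, possibly after a single $\Delta$-$Y$ exchange on a nearby triangle. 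I anticipate this reducing to a finite case analysis over the local structure of the cut and the placement of $E(H)$ relative to it.
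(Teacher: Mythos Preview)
The paper does not prove this statement: it is explicitly stated as a \emph{conjecture}. Immediately before the statement, the authors write that they ``expect that a careful analysis of \cref{unfortunatetri} for graphic matroids would confirm the following conjecture,'' and they offer no further argument. So there is no paper proof to compare against.

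Your diagnosis of the obstruction is exactly the one the paper identifies: the authors note that graphic matroids are not closed under $Y$-$\Delta$ exchange, that $M'$ obtained from $M(G)$ by a $Y$-$\Delta$ exchange on a triad $T^*$ is graphic if and only if $T^*$ is not a separating triad, and that this is why \cref{splittergraphic} does not immediately yield \cref{splittergraph}. Your plan to revisit the proof of \cref{unfortunatetri} and show that, in the graphic case, any $Y$-$\Delta$ exchange invoked can be taken on a vertex-star triad (or else a detachable pair can be found directly) is precisely the ``careful analysis'' the authors allude to but do not carry out. Note, however, that the proof of \cref{unfortunatetri} lives in the first paper of the series, not this one, so the case analysis you anticipate would require going back to that source; the present paper gives you no further leverage on this point.
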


Similarly, we conjecture the following analogue of \cref{chain}.

\begin{conjecture}
  \label{chaingraph}
  Let $G$ be a simple $3$-connected graph such that $|E(G)| \ge 10$.
  Then either
  \begin{enumerate}
    \item $G$ has a pair of edges $\{x,y\}$ such that either $G/x/y$ or $G \ba x \ba y$ is a simple $3$-connected graph, or
    \item there is a graph $G'$ obtained by performing a single $\Delta$-$Y$ or $Y$-$\Delta$ exchange on $G$ such that $G'$ has a pair of edges $\{x,y\}$ for which either $G'/x/y$ or $G' \ba x \ba y$ is a simple $3$-connected graph.
  \end{enumerate}
\end{conjecture}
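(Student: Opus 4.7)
The plan is to derive \cref{chaingraph} as a graph-theoretic consequence of \cref{chain}, by carefully translating the matroid statement through the well-known correspondence between simple $3$-connected graphs and $3$-connected graphic matroids.

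First, for $|E(G)| \ge 13$, I would apply \cref{chain} to $M(G)$. The three outcomes are that $M(G)$ has a detachable pair, that a single $\Delta$-$Y$ or $Y$-$\Delta$ exchange produces a matroid with a detachable pair, or that $M(G)$ is a spike. The third outcome can be ruled out: every tipless spike of rank $r \ge 4$ has a $U_{2,4}$-minor and is therefore not graphic, and the only simple $3$-connected graph on six edges is $K_4$, whose cycle matroid contains triangles and hence is not a spike. Hence for a graphic matroid of this size only the first two outcomes can occur.

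Second, I would convert matroid-detachability into graph-detachability. Since a $3$-connected matroid is automatically simple and cosimple, if $M(G)/x/y$ or $M(G) \ba x \ba y$ is $3$-connected then the corresponding contraction or deletion in the graph $G$ contains no loops, parallel edges, or isthmuses, and $3$-connectivity of a graphic matroid on at least four vertices is equivalent to $3$-vertex-connectivity of the underlying simple graph. So a matroid-detachable pair of $M(G)$ is already a detachable pair of $G$ in the sense of \cref{chaingraph}.

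Third, and this is where the main obstacle lies, one has to handle the exchanges. A matroid $\Delta$-$Y$ exchange on a triangle of $M(G)$ is always realisable as a graph operation and yields a graphic matroid. A matroid $Y$-$\Delta$ exchange, however, corresponds to a graph $Y$-$\Delta$ only when the triad in question is the star of a degree-$3$ vertex; a triad coming from a non-star $3$-edge cut (such as the perfect matching joining the two triangles of the triangular prism) produces a non-graphic matroid. Thus the principal task is to show that whenever \cref{unfortunatetri} forces a matroid $Y$-$\Delta$ exchange to find a detachable pair in a graphic matroid $M(G)$, either a graph-detachable pair already exists in $G$, or the exchange can be redirected to a vertex-star triad. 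This amounts to revisiting the proof of \cref{unfortunatetri} in~\cite{paper1} in the graphic setting, exploiting the extra structure of graphs: in particular, non-star $3$-element cocircuits of $M(G)$ separate $G$ into two sides each containing a cycle, and this additional combinatorial rigidity should allow one to find alternative pairs or an alternative triad on which to perform the exchange.

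Finally, the small cases $|E(G)| \in \{10,11,12\}$ fall outside the reach of \cref{chain} and require a separate argument. Since a simple $3$-connected graph with $m$ edges satisfies $|V(G)| \le 2m/3$, graphs of these sizes have at most eight vertices and form a finite, tractable family, so direct casework (or a computer verification) is reasonable; alternatively, one could attempt to sharpen \cref{chain} in the graphic setting, where the absence of the spike obstruction suggests that the threshold $13$ is not tight.
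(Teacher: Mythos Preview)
The paper does not prove this statement: it is explicitly stated as a conjecture, with no proof given. The paper itself identifies precisely the obstacle you flag---that graphic matroids are not closed under $Y$-$\Delta$ exchange, since a $Y$-$\Delta$ on a separating triad (a $3$-edge cut not arising as a vertex star) yields a non-graphic matroid---and for this reason states the result as a conjecture rather than a theorem. Your proposal correctly reduces the problem to this single obstacle, but it does not resolve it: the sentence ``this additional combinatorial rigidity should allow one to find alternative pairs or an alternative triad on which to perform the exchange'' is a hope, not an argument, and the phrase ``revisiting the proof of \cref{unfortunatetri} in~\cite{paper1} in the graphic setting'' defers the entire difficulty to unspecified future work.

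To turn this into a proof you would need, for a graphic matroid $M(G)$ in which every triangle and triad is $N$-grounded fails (so that \cref{unfortunatetri} produces an exchange), to show either that the exchange can always be taken on a vertex-star triad, or that a detachable pair already exists whenever a separating triad is involved. Neither follows from anything in the present paper, and the authors' own phrasing (``we expect that a careful analysis of \cref{unfortunatetri} for graphic matroids would confirm the following conjecture'') makes clear that they regard this as genuine additional work. Your handling of the small cases $|E(G)|\in\{10,11,12\}$ by finite checking is reasonable in principle, but it too is not carried out.
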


\subsection*{Eliminating $\Delta$-$Y$s}

A natural question is what additional structures arise if no $\Delta$-$Y$ or $Y$-$\Delta$ exchange is permitted.

As a starting point, we consider here an analogue of Tutte's Wheels-and-Whirls Theorem; that is, an analogue of \cref{chain} where case (ii) is not allowed.
%Here we consider an analogue of \cref{chain} where case (ii) is not allowed.
Certainly, if $M$ is a wheel or a whirl, then it follows from Tutte's Wheels-and-Whirls Theorem that $M$ has no detachable pairs.
It is not hard to see that $M(K_{3,k})$ and, dually, $M^*(K_{3,k})$ also have no detachable pairs, for any $k \ge 3$.

A spike has a unique single-element extension by an element that is in the closure of each of the legs, and a unique single-element coextension by an element that is in the coclosure of each of the legs; we call a matroid with both a \emph{spike with tip and cotip}.  %Dually, there is a unique single-element coextension by an element that is in the coclosure of each of the legs; we call such a matroid a \emph{spike with cotip}.
If $M$ is a spike with tip and cotip, then $M$ has no detachable pairs.
In fact, these matroids are examples of a more general family of matroids with no detachable pairs, which we now describe.

Let $M$ be a spike with tip $t$ and cotip $s$, whose $q$ legs are the pairs $\{x_i,y_i\}$ for $i \in \seq{q}$, where $q \ge 2$.  Note that $M$ has rank $q+1$, and we consider the rank-$3$ matroid $Q_6$ to be the unique tipped and cotipped spike with two legs.
%The elements $E(M)-t$ can be partitioned into pairs $\{x_i,y_i\}$, with $i \in \seq{q+1}$ such that $\{x_i,y_i,x_j,y_j\}$ is a circuit and a cocircuit for any distinct $i,j \in \seq{q+1}$.
Let $0 \le p \le q$.
For each $i \in \seq{p}$, attach a wheel of rank at least three along the triangle $\{x_i,y_i,t\}$, where $t$ and $y_i$ are spokes of the wheel, by generalised parallel connection.
Finally, delete $x_i$ for each $i \in \seq{p}$.
We say that such a matroid is constructed by \emph{attaching wheels to a spike with tip and cotip}.
%[describe construction in \cite{Oxley2000b}: \emph{gluing at most $n-1$ wheels onto an $n$-spike}.
Observe in particular that $M$ consists of $q$ maximal fans, each of even size and with ends $t$ and $s$ (see \cite[Section~2]{paper1} for the definition, and connectivity properties, of fans).
It follows that any such matroid $M$ has no detachable pairs.

Finally, let $M$ be a paddle $(P_1,\dotsc,P_n)$, for $n \ge 3$, with spine $\{s,t\}$ (that is, $\cl(P_1) \cap \dotsb \cap \cl(P_n) = \{s,t\}$, see \cite{osw04}) where
for each $i\in[n]$, the set $P_i \cup \{s,t\}$ is an odd maximal fan with ends $s$ and $t$.
Such a matroid can be constructed as follows.
Let $L$ be a line containing the pair $\{s,t\}$, and consisting of at least three elements.
Attach at least three wheels, each of rank at least three, and each along some triangle containing $\{s,t\}$, by generalized parallel connection.  Finally, delete $L-\{s,t\}$.
%These matroids have precisely two non-essential elements, both of which must be deleted \cite[Theorem~1.3]{Oxley2000b}.
We say that such a matroid can be constructed by \emph{attaching wheels with common spokes~$s$ and $t$}.
If we also delete $s$, we obtain a matroid with no detachable pairs.
The simplest example of such a matroid is $M(K_{3,n}')$, where $K_{3,n}'$ is obtained from $K_{3,n}$ by adding a single edge between vertices in the part of size three.

We conjecture the following:
%We conjecture that these are the only $3$-connected matroids with detachable pairs.

\begin{conjecture}
  \label{conj1}
  Let $M$ be a $3$-connected matroid with $|E(M)| \ge 13$.
  Then either
  \begin{enumerate}
    \item $M$ has a detachable pair,
    \item $M$ is a spike,
    \item $M \cong M(K_{3,k})$ or $M \cong M^*(K_{3,k})$ for some $k \ge 5$,
    \item $M$ is the cycle matroid of a wheel, or a whirl,
    \item $M$ can be constructed by attaching wheels to a spike with tip and cotip, or
    %\item $M$ or $M^*$ is a %binary
      %paddle with at least three petals, and each petal is either a triad or a maximal $k$-element fan, where $k$ is even.
    \item $M$ or $M^*$ can be constructed by attaching wheels with common spokes~$s$ and $t$, and then deleting $s$.
  \end{enumerate}
\end{conjecture}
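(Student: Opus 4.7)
The plan is to apply \cref{chain} to $M$, which reduces us to the case where $M$ has no detachable pair, $M$ is not a spike, and there exists a matroid $M'$ obtained from $M$ by a single $\Delta$-$Y$ or $Y$-$\Delta$ exchange such that $M'$ has a detachable pair $\{x,y\}$. By duality, I may assume that $M'$ is obtained by a $\Delta$-$Y$ exchange on some triangle $T = \{a,b,c\}$ of $M$, so $T$ becomes a triad of $M'$. The task is then to show that the hypothesis that $\{x,y\}$ is detachable in $M'$ while no pair of $M$ is detachable forces $M$ to belong to one of the families (3)--(6).

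The first step is to prove that $T$ lies in a fan of $M$. The argument splits into subcases depending on $|\{x,y\} \cap T|$. If $\{x,y\} \cap T = \emptyset$, then comparing $M'/x/y$ and $M' \ba x \ba y$ to $M/x/y$ and $M \ba x \ba y$ via the definition of $\Delta$-$Y$ exchange, the failure of $\{x,y\}$ to be detachable in $M$ forces a $2$-separation close to $T$ which, by uncrossing with $T$ and the triangles of $M(K_4)$ that define the exchange, exposes a triad adjacent to $T$. When $\{x,y\}$ meets $T$, a similar but more intricate comparison of circuits and cocircuits of $M$ and $M'$ containing $\{x,y\} \cap T$ yields the same conclusion. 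Iterating this observation using Tutte's Wheels-and-Whirls theorem (applied to $M$ minus various elements) together with its fan-theoretic refinements, every triangle and every triad of $M$ would be shown to extend to a maximal fan of $M$, and these fans would be shown to cover all but a small controlled part of $E(M)$.

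The heart of the proof is the global classification step: showing that the maximal fans of $M$ must fit together in precisely one of the six listed configurations. If the fans close up into a single cycle of alternating triangles and triads, one obtains case~(4) (wheel or whirl). If two or more fans share a pair of ``apex'' elements that themselves form part of a spike-like configuration, one recovers case~(5); dually, if the fans share common spokes $s,t$ one obtains case~(6). The exceptional bipartite family in case~(3) should arise when the fan propagation is driven entirely by triads meeting at a common vertex of degree three, producing the $M(K_{3,k})$ structure. The main obstacle is this gluing step: although each local application produces a fan, one must rule out intermediate hybrid structures and confirm that the proposed families exhaust all possibilities. This would require careful uncrossing of $3$-separations controlling the closures and coclosures of the fan-endpoints, together with ad~hoc verification for small $|E(M)|$ near the threshold $13$, in the same spirit as the endgame arguments of \cref{secpseps}. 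Separating the $M(K_{3,k})$ case from the spike-with-attached-wheels and double-spoked-wheel cases is likely to be especially delicate, since these families share local features but differ in how their apex or spine elements interact with the bulk of the matroid.
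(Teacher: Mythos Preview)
The statement you are trying to prove is labelled a \emph{Conjecture} in the paper, and the paper contains no proof of it. It appears in the Epilogue as an open problem, motivated by the observation that wheels, whirls, spikes, $M(K_{3,k})$, and the two fan-based constructions all fail to have detachable pairs; the authors then conjecture that these are the only such matroids on at least thirteen elements. So there is no ``paper's own proof'' to compare against.

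As for your proposal itself: it is not a proof but an outline of a strategy, and you acknowledge as much with phrases like ``would be shown'', ``should arise'', and ``is likely to be especially delicate''. The reduction via \cref{chain} to the case where a single $\Delta$-$Y$ or $Y$-$\Delta$ exchange produces a detachable pair is reasonable as a starting point, but the heart of the argument---showing that the local fan structure propagates globally into exactly one of the listed families, and that no hybrids occur---is entirely unsubstantiated. In particular, your claim that ``every triangle and every triad of $M$ would be shown to extend to a maximal fan'' is asserted without any mechanism, and the final classification of how these fans glue together is left as a list of desired outcomes rather than an argument. A genuine proof of this conjecture would be a substantial new result, and the work required for the gluing step is precisely what makes it a conjecture rather than a corollary of \cref{chain}.
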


We note that the matroids in (v), and matroids similar to those in (vi), appear in the work of Oxley and Wu on matroids with exactly two non-essential elements~\cite{Oxley2000b}.
An element $e \in E(M)$ is \emph{essential} if neither $M \ba e$ nor $M/e$ is $3$-connected.
Oxley and Wu showed that the class of $3$-connected matroids with precisely two non-essential elements, one of which must be deleted and one of which must be contracted,
is the class of matroids %that appear in (v): those
that can be constructed by attaching wheels to a spike with tip and cotip~\cite[Theorem~1.4]{Oxley2000b}.
Meanwhile, the class of $3$-connected matroids with precisely two non-essential elements, both of which must be deleted, are matroids that can be constructed by attaching wheels with common spokes~\cite[Theorem~1.3]{Oxley2000b}.
A matroid of case (vi) is obtained by deleting a further element from the spine of the paddle.

\medskip
We now focus on simple $3$-connected graphs. %, as in \cref{chaingraph}.
%Let $G$ be a graph on at least four edges with no isolated vertices.
%It is well known that the cycle matroid $M(G)$ is $3$-connected if and only if $G$ is $3$-connected and simple.
%Thus,
By interpreting \cref{conj1} for graphic matroids, we obtain a conjecture regarding the existence of ``detachable pairs'' in simple $3$-connected graphs.
%Thus, we can obtain an analogous conjecture for simple $3$-connected graphs by interpreting \cref{conj1} for graphic matroids.

We first require some definitions; these %The following definitions
follow \cite{Oxley2000b}.
Consider a copy of $K_4$ where the edges $e$ and $f$ are non-adjacent, and let $u$ and $v$ be vertices such that $u$ incident to $f$, and $v$ is incident to $e$.
A \emph{twisted wheel} is a graph that can be obtained by subdividing $e$ so that $j \ge 0$ new vertices are introduced, adding $j$ edges between each of these vertices and $u$; then subdividing $f$ so that $k \ge 0$ new vertices are introduced, and adding $k$ edges between each of the $k$ new vertices and $v$.

A \emph{multi-dimensional wheel} is constructed as follows: begin with the $3$-vertex path $uhv$, and add $k \ge 3$ parallel edges between $u$ and $v$.  For each of the $k$ parallel edges, subdivide it so that one or more new vertices are introduced, and join each new vertex to $h$.  
A graph obtained by then removing the edge $uh$ is an \emph{unhinged multi-dimensional wheel}.

We conjecture the following:

\begin{conjecture}
  Let $G$ be a simple $3$-connected graph with $|E(G)| \ge 13$. %can't do 9: consider $M(\bar{C_6})$.  Also can't do 12
  Then either
  \begin{enumerate}
    \item $G$ has a pair of edges $\{x,y\}$ such that either $G\ba x\ba y$ or $G/x/y$ is a simple $3$-connected graph,
    \item $G$ is a wheel,
    \item $G \cong K_{3,k}$ for some $k \ge 5$,
    \item $G$ is a twisted wheel, or
    \item $G$ is an unhinged multi-dimensional wheel. %includes $K_{3,n} plus an edge
  \end{enumerate}
\end{conjecture}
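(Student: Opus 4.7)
The plan is to derive the conjecture from Conjecture~\ref{conj1} applied to $M(G)$. Since $G$ is simple and $3$-connected, $M(G)$ is a $3$-connected matroid, and $|E(M(G))| = |E(G)| \ge 13$, so the six cases of Conjecture~\ref{conj1} apply to $M(G)$. The argument then dispatches the non-graphic outcomes and matches the remaining classes to the graph structures listed in the graph conjecture.

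First I would rule out the non-graphic possibilities. Spikes of rank at least~$4$ are non-graphic, and $|E(M)| \ge 13$ forces the rank of any spike in Conjecture~\ref{conj1}\,(ii) to be at least~$7$, so that case cannot arise. Whirls are non-graphic (for instance, $W^2 = U_{2,4}$ is the classical excluded minor, and higher whirls inherit this obstruction), eliminating the whirl sub-case of~(iv). In case~(iii), $M^*(K_{3,k})$ is graphic if and only if $K_{3,k}$ is planar, which fails for $k \ge 3$, so only $M(G) \cong M(K_{3,k})$ survives; Whitney's $2$-isomorphism theorem applied to the $3$-connected graph $K_{3,k}$ then gives $G \cong K_{3,k}$, with $k \ge 5$ forced by $|E(G)| \ge 13$.

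Next I would interpret the surviving cases. In case~(i), a detachable pair $\{x,y\}$ of $M(G)$ gives $M(G) \ba x \ba y$ or $M(G)/x/y$ that is $3$-connected; since any $3$-connected cycle matroid corresponds to a simple $3$-connected graph (parallel edges or loops in the contraction would produce parallel elements destroying $3$-connectedness of the matroid), this is precisely case~(i) of the graph conjecture. The wheel sub-case of~(iv) yields $G$ a wheel graph. For case~(v) I would invoke Oxley and Wu's characterisation~\cite{Oxley2000b} of $3$-connected matroids with exactly two non-essential elements of opposite type, and identify the graphic members of this class as the cycle matroids of twisted wheels. For case~(vi), when $M$ itself takes the attached-wheels-with-common-spokes-minus-a-spoke form, the graphic realisations are exactly the unhinged multi-dimensional wheels; the dual sub-case, where $M^*$ rather than $M$ has this form, must be shown either to be non-graphic or to coincide with one of the five graph classes already listed.

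The main obstacle is this last dual sub-case of~(vi), together with the careful matching between the generalised-parallel-connection constructions of Conjecture~\ref{conj1} and the graph-theoretic definitions of twisted wheel and unhinged multi-dimensional wheel. Matroid-theoretic wheel attachment along a triangle corresponds graphically to replacing a single edge by a fan of triangles sharing a common hub vertex, and verifying that every graphic matroid arising from the constructions of~(v) and~(vi) genuinely lies in these graph classes (and conversely) is the technical heart of the argument. A planarity and dual-graph analysis of the~(vi) construction is likely needed to handle the $M^*$ sub-case and to confirm that no further graph families need to be added to the conjecture.
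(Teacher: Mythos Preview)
The statement you are attempting to prove is explicitly a \emph{conjecture} in the paper, not a theorem; the paper offers no proof of it whatsoever. There is therefore nothing in the paper to compare your proposal against. More importantly, your proposed argument is not a proof: you derive the graph conjecture from Conjecture~\ref{conj1}, but Conjecture~\ref{conj1} is itself an open conjecture in the paper. What you have sketched is at best a reduction of one conjecture to another, not a proof of either.

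Even as a reduction, the argument has real gaps. The dual sub-case of Conjecture~\ref{conj1}(vi) is, as you acknowledge, unresolved; you would need to determine exactly which matroids of the form ``$M^*$ constructed by attaching wheels with common spokes $s$ and $t$, then delete $s$'' are graphic, and this is not a routine planarity check. Similarly, the identification in case~(v) of the graphic members of the attached-wheels-to-a-spike-with-tip-and-cotip class with the twisted wheels is asserted rather than argued; the Oxley--Wu reference characterises the matroid class, but you still need to isolate the graphic members and match them to the twisted-wheel definition. These are genuine technical obstacles, not bookkeeping, and until they are resolved you do not even have a complete conditional implication.
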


\bibliographystyle{abbrv}
\bibliography{lib}

\end{document}